\documentclass[12pt,reqno]{amsart}
\usepackage{amsmath,amsfonts,amssymb,amscd,amsthm,amsbsy,bbm, epsf,calc,  pdfsync}
\usepackage{color}
\usepackage{datetime}

\usepackage{thmtools, thm-restate}
\usepackage{thm-patch, thm-kv, thm-autoref}
\usepackage[colorlinks=true, urlcolor=blue, linkcolor=blue, citecolor=black]{hyperref}

%%%%%%%  PAGE STYLE/SIZING  %%%%%%%%%%%%
\textwidth=6.25truein
\textheight=8.5truein
\hoffset=-0.65truein
\voffset=-.5truein

%%%%%%%  COUNTERS, etc.  %%%%%%%%%%%%

\numberwithin{equation}{section}
\setcounter{secnumdepth}{2}
\setcounter{tocdepth}{2}

%%%%%%%%%%%%%%%%%%%%%%%%%%%%%%%%%%%%%%%%%%%%%%%%%%%%%%%%%%%%%%%%%%%%%%%
\newcounter{hours}\newcounter{minutes}

%%%%%%%%%%%%%%%%%%%%%%%%%%%%%%%%%%%%%%%%%%%%%%%%%%%%%%%%%%%%%%%%%%%%%%%

\theoremstyle{plain}
\declaretheorem[title=Theorem, parent=section]{thm}
\declaretheorem[title=Lemma,sibling=thm]{lem}
\declaretheorem[title=Corollary,sibling=thm]{cor}
\declaretheorem[title=Proposition,sibling=thm]{prop}
\declaretheorem[title=Definition,sibling=thm]{DEF}
\declaretheorem[title=Remark,sibling=thm]{rem}

\declaretheorem[title=Example,sibling=thm]{ex}
\declaretheorem[title=Problem,sibling=thm]{prob}

%%%%%%%  OTHER MACROS  %%%%%%%%%%%%%%%%%%%%%%%

\def\Swiech{{\accent"13S}wie{\hbox{\kern -0.21em\lower 0.79ex\hbox{$\textfont1=\scriptfont1\lhook$}}}ch}
\def\SWIECH{{\accent"13S}WIE{\hbox{\kern -0.26em\lower 0.77ex\hbox{$\textfont1=\scriptfont1\lhook$}}}CH}

%%%%%%%%%%%%%%%%%%%%%%%%%%%%%%%%%%%%%%%%%%%%%%

\begin{document}

\title{Coupling L\'evy measures and comparison principles for viscosity solutions} 
\author{Nestor Guillen}
\email{nguillen@math.umass.edu}
\address{Department of Mathematics and Statistics, UMass, Amherst, MA 01003}%
\author{Chenchen Mou}
\email{muchenchen@math.ucla.edu}
\address{Department of Mathematics, UCLA, Los Angeles, CA 90095}
\author{Andrzej \Swiech}
\email{swiech@math.gatech.edu}
\address{School of Mathematics, Georgia Institute of Technology, Atlanta, GA 30332}
\date{\today}

\keywords{Optimal transport, L\'evy measures, non-local equations, viscosity solutions, comparison principles, uniqueness.}
\subjclass[2010]{
35D40 %viscosity solutions
35J60, %Nonlinear elliptic equations
%35J99, pde other
35R09, %Integro-partial differential equations
45K05,	%Integro-partial differential equations
47G20%integro-differential operators
}

\maketitle
\baselineskip=14pt
\pagestyle{plain}              % page nos. at bottom, no headline
\pagestyle{headings}		% running headline and page nos. at top

\markboth{Nestor Guillen, Chenchen Mou and Andrzej \Swiech}{Coupling L\'evy measures and comparison principles for viscosity solutions}
%%%%%%%%%%%%%%%%%%%%%%%%%%%%%%%%%%%%%%%%%%%%%%

\begin{abstract}
We prove new comparison principles for viscosity solutions of non-linear integro-differential equations. The operators to which the method applies include but are not limited to those of L\'evy-It\^o type. The main idea is to use an optimal transport map to couple two different L\'evy measures, and use the resulting coupling in a doubling of variables argument.

\end{abstract}

\section{Introduction}\label{sec:Introduction}

In this paper we study comparison principles for viscosity subsolutions and supersolutions of integro-differential equations of the form 
\begin{align}\label{eqn:intro equation form}
I(u,x)=\sup \limits_{\alpha\in \mathcal A}\inf \limits_{\beta\in \mathcal B} \{-L^{\alpha\beta}(u,x)+c_{\alpha\beta}(x)u(x)+f_{\alpha\beta}(x)\}=0\quad \text{in}\,\,\mathcal{O}, 
\end{align}
where $\mathcal{O}$ is a bounded domain of $\mathbb{R}^d$, $c_{\alpha\beta}(x)\geq \lambda>0$, and
\begin{align}\label{eqn:intro Levy integral part}
  L^{\alpha\beta}(u,x)=\int_{\mathbb R^d}[u(x+z)-u(x)- \chi_{B_1(0)}(z) Du(x)\cdot z]d\mu_x^{\alpha\beta}(z),
\end{align}
where $\mu_x^{\alpha\beta}$ are the respective L\'evy measures. Equations of the form \eqref{eqn:intro equation form} arise in stochastic optimal control and stochastic differential games where the operators are the generators of pure jump processes. In a work by one of the authors and Schwab \cite{GuiSch2016} it is proved that (roughly speaking) that the class of operators given by a min-max as in \eqref{eqn:intro equation form} is the same as the class of operators satisfying the global comparison property. 

Comparison principles for viscosity solutions of such equations are now well understood in two broad cases. 
The first case is when the operators admit a L\'evy-Ito form. This means that all of the measures $\mu_x^{\alpha\beta}$ are push-forward measures of a single reference measure $\mu$, so that $\mu_x^{\alpha\beta}=(T_x^{\alpha\beta})_\# \mu$, where 
$T_x^{\alpha\beta}:U\to\mathbb R^d$ is a family of Borel measurable maps defined on some separable Hilbert space and $\mu$ is a L\'evy measure on $U\setminus\{0\}$ (see \eqref{eq:levyito} and \eqref{eq:levyito1}). First comparison principles were obtained
by Soner in \cite{son,so}. Further results, including  results for equations with second order PDE terms were obtained subsequently, see \cite{ar, bbp, bi, jk1}. 
The second case is that of equations of order less than or equal to $1$. Here we mention the works of Soner \cite{son,so}, and 
{  the papers of Sayah \cite{say,say1}, where comparison principles are proved} for very general operators in the class where the operators $L^{\alpha\beta}$ are all such that the function $|z|$ is uniformly integrable with respect to the measures 
$\mu_x^{\alpha\beta}$. Also Alvarez and Tourin \cite{at} and Alibaud \cite{al} considered various parabolic equations with non-local terms  of order zero, that is with $\mu_x$ of finite mass.

Little is known when the L\'evy measures arising in \eqref{eqn:intro equation form}-\eqref{eqn:intro Levy integral part} are neither integrable with respect to $|z|$ nor of L\'evy-Ito form. Two of the authors proved in \cite{MS} several comparison results for viscosity solutions which have some regularity. {  Chasseigne and Jakobsen proved in \cite{CJ-2017} comparison results for fully nonlinear equations involving quasilinear
nonlocal operators. We also mention continuous dependence estimates for weak entropy solutions of degenerate parabolic equations with nonlinear fractional diffusion proved by Alibaud, Cifani and Jakobsen in \cite{ACJ-2014}.}
Proving comparison in general is an important question as many operators of interest are not covered by the two situations discussed above, such as the Dirichlet-to-Neumann maps for nonlinear elliptic equations or control/game problems where the processes are not classical L\'evy-Ito diffusions.

In this paper we introduce optimal transport techniques in an attempt to understand this question. We obtain a comparison for non-local equations \eqref{eqn:intro equation form}-\eqref{eqn:intro Levy integral part} that cover the previous two instances without  requiring a L\'evy-Ito structure nor a restriction on the order of the operators. The idea is to use an optimal coupling for the L\'evy measures arising in the non-local terms. Then, the continuity of the L\'evy measures with respect to the base point $x$ is estimated with respect to an optimal-transport based metric. 

The condition we impose is Lipschitz continuity with respect to an $L^p$-transport metric. The exponent $p \in [1,2]$ is related to the order of the singularity at $z=0$ for the L\'evy measures. In the case of operators of order smaller than $1$, it is possible to use the metric corresponding to $p=1$ in which case our condition is (essentially) a dual formulation of the condition used by Sayah \cite{say}. Likewise, in the L\'evy-Ito case our condition reduces to the one typically imposed in the literature \cite{bi, jk1}. 

Unfortunately, it is rather difficult to check the Lipschitz regularity of $\mu_x$ with respect to our $L^p$ transport metric when $p>1$ and $\mu_x$ is not in L\'evy-Ito form (this is precisely the case where comparison is still unknown). Such Lipschitz estimates are even non-trivial to check for L\'evy measures of finite mass and fail to hold\footnote{The authors would like to thank Alessio Figalli for helpful comments regarding this question.}. It is our hope that this paper will spur further research that will expand the class of families of measures $\{\mu_x^{\alpha\beta}\}_{x,\alpha,\beta}$ where this new approach can be applied.

\subsection{The basic idea} Let us illustrate the main idea of the paper in a simple situation. Consider the linear equation
\begin{align}\label{eq;intro}
\lambda u(x)-L(u,x)=0\quad \text{in}\,\,\mathcal{O},
\end{align}
where  $\lambda>0$ and $L(u,x)$ is an operator of the form \eqref{eqn:intro Levy integral part} where we make the following simplifying assumption on the L\'evy measures $\mu_x$: $\mu_x$ is a \emph{probability measure} with finite second moments for every $x$, and there is some $C>0$ such that for any $x,y$
\begin{align}\label{eqn:intro quadratic cost Lipschitz}  
  \textnormal{d}_{2}(\mu_x,\mu_y) \leq C|x-y|.
\end{align}
Here $\textnormal{d}_2$ denotes the optimal transport distance with respect to the square distance (the so called Wasserstein distance). 
Suppose that $u$ is a bounded viscosity subsolution of \eqref{eq;intro} and $v$ is a bounded viscosity supersolution of \eqref{eq;intro} such that $u\leq v$ on $\mathbb{R}^d\setminus \mathcal{O}^c$	. We start with the typical comparison proof. We assume that $u\not \le v$. We double the variables and penalize the doubling considering for $\varepsilon>0$ the function
\begin{align*}
u(x)-v(y)-\frac{1}{\varepsilon}|x-y|^2.
\end{align*}
Suppose that for all sufficiently small $\varepsilon$ the global maximum is attained at $(x_\varepsilon,y_\varepsilon)$ where $u(x_\varepsilon)-v(y_\varepsilon)\geq \ell>0$ for some $\ell>0$. In such circumstances it is well known that
\begin{align*}
\lim_{\varepsilon\to 0}\frac{1}{\varepsilon}|x_{\varepsilon}-y_{\varepsilon}|^2=0,
\end{align*}
so for small $\varepsilon$ we must have $(x_\varepsilon,y_\varepsilon)\in\mathcal{O}\times\mathcal{O}$. Because of the global maximum, we have
\begin{align*}
& \left(u(x_{\varepsilon}+x)-u(x_\varepsilon)-2\frac{x\cdot(x_\varepsilon-y_\varepsilon)}{\varepsilon}\right) \\
& \quad\quad
-\left(v(y_{\varepsilon}+y)-v(y_\varepsilon)-2\frac{y\cdot(x_\varepsilon-y_\varepsilon)}{\varepsilon}\right)
\leq\frac{1}{\varepsilon}|x-y|^2,\;\;\forall\;x,y.
\end{align*}
For $x$ and $y$ let $\pi_{x,y}$ denote a probability measure on $\mathbb{R}^d\times\mathbb{R}^d$ with marginals $\mu_x$ and $\mu_y$ achieving the optimal (quadratic) transport cost between them. Then, we integrate the
above inequality with respect to the measure $\pi_{x_{\varepsilon},y_{\varepsilon}}$ to obtain
\begin{align*}
  L(u,x_{\varepsilon})-L(v,y_{\varepsilon})\leq \frac{1}{\varepsilon} \int_{\mathbb{R}^d\times\mathbb{R}^d} |x-y|^2\;d\pi_{x_{\varepsilon},y_{\varepsilon}}(x,y).
\end{align*}
Thus by the definition of viscosity solution and the definition of $\pi_{x,y}$ we get
\begin{align*}
  \lambda(u(x_{\varepsilon})-v(y_{\varepsilon}))\leq \frac{1}{\varepsilon}\int_{\mathbb R^d\times\mathbb R^d}|x-y|^2
d\pi_{x_{\varepsilon},y_{\varepsilon}}(x,y) = \frac{1}{\varepsilon} \textnormal{d}_2(\mu_{x_\varepsilon},\mu_{y_\varepsilon})^2,
\end{align*}
where the last equality follows from the optimality of $\pi_{x_\varepsilon,y_\varepsilon}$. Then, using \eqref{eqn:intro quadratic cost Lipschitz}, we obtain
\begin{align*}
  0<\lambda \ell \leq \lambda(u(x_{\varepsilon})-v(y_{\varepsilon})) \leq \frac{C}{\varepsilon}|x_{\varepsilon}-y_{\varepsilon}|^2.
\end{align*}
Since the right hand side goes to zero as $\varepsilon\to 0$ we obtain a contradiction. Thus in this model case the proof of comparison reduces to checking if the measures $\mu_x$ satisfy the Lipschitz condition \eqref{eqn:intro quadratic cost Lipschitz} with respect of the (quadratic) optimal transport distance.

Of course, it is atypical for a L\'evy measure to also be a probability or even a finite measure of constant total mass. To deal with this issue, we will make use of an optimal transport problem featuring ``an infinite mass reservoir'' at $0$ (after all, the mass of the L\'evy measure at $0$ is immaterial). This means in particular that one can consider transport between measures which may have unequal or infinite masses. This problem was studied by Figalli and Gigli
in \cite{FigalliGigli2010}, motivated by questions of gradient flows with Dirichlet boundary conditions, and their work is aptly suited for our purposes. 

\subsection{Outline of the paper} 
The notation and definitions are explained in Section \ref{sec:notanddef}. The transport metric is explained in Section \ref{sec:Levy measures}. Section \ref{sec:ass} contains the assumptions and the statement of the main result.
In Section \ref{sec:Comparison} we prove the main comparison principle using the above technique. We then show how the result covers comparison principles for non-local equations involving non-local terms either of L\'evy form of order $\sigma<1$ (see Example \ref{EX:measures with kernels of order sigma<1}) or of L\'evy-It\^o form (see Example \ref{ex:levyito}). { 
In Section \ref{sec:variants} we discuss 
variants of our approach which we illustrate in Example \ref{ex:fracLapl} related to operators of fractional Laplacian type. We also
discuss in Section \ref{sec:variants} two other examples (Examples \ref{ex:Sayah theorem I} and \ref{ex:Sayah theorem II})
comparing our results to these of \cite{say}.}
Finally in Section \ref{sec:other comparison} we derive various comparison principles for equations which have more regular viscosity solutions. They lead to uniqueness of viscosity solutions for a class of uniformly elliptic non-local equations (see Example \ref{ex:uniform}). 
{ The paper ends with an appendix which follows \cite{FigalliGigli2010} collects the main facts about the optimal transport problem ``with boundary''.}

{ 
\subsection{Acknowledgements}

We would like to thank the reviewers for a careful reading of the manuscript and for several important observations and suggestions. We also thank Alessio Figalli for helpful conversations regarding the validity or not of Lipschitz estimates for the Wasserstein distance with respect to other metrics. }

%%%%%%%%%%%%%%%%%%%%%%%%%%%%%%
%%%%%%%%%%%%%%%%%%%%%%%%%%%%%%
%%%%%%%%%%%%%%%%%%%%%%%%%%%%%%
%%%%%%%%%%%%%%%%%%%%%%%%%%%%%%
%%%%%%%%%%%%%%%%%%%%%%%%%%%%%%
%%%%%%%%%%%%%%%%%%%%%%%%%%%%%%
%%%%%%%%%%%%%%%%%%%%%%%%%%%%%%
\section{Notation and definitions}\label{sec:notanddef}

In the whole paper we will consider equation \eqref{eqn:intro equation form} where the operators $L^{\alpha\beta}$ are assumed to be of the form
\eqref{eqn:intro Levy integral part} and $\{\mu_x^{\alpha\beta}\}_{x,\alpha,\beta}$ is a family of L\'evy measures (see Definition \ref{def:space of Levy measures}).
Denoting 
\begin{equation*}
\delta u(x,z):=u(x+z)-u(x)- \chi_{B_1(0)}(z)Du(x)\cdot z,
\end{equation*}
we will write
\begin{align}\label{eqn_intro}
  L^{\alpha\beta}(u,x) = \int_{\mathbb R^d}\delta u(x,z)d\mu_x^{\alpha\beta}(z).
\end{align}

We will denote by {  $B_r(x)$ the open ball in $\mathbb{R}^d$ centered at $x$ with radius $r>0$, and by $B_r$ the open ball centered at $0$ with radius $r$.} Given an open set $\Omega\subset
\mathbb{R}^d$ and $h>0$ we define
\begin{align*}
  \Omega_{h} = \{ x \in \Omega \;\mid d(x,\partial \Omega) > h\}.
\end{align*}
For a subset $A\subset\mathbb{R}^d$ we denote by $A^c$ its complement, i.e. $A^c=\mathbb{R}^d\setminus A$, 
{ and by $\chi_A$ the characteristic function of $A$}.
 
 For $0<\alpha\le1$ and a domain $\mathcal{O}$ in $\mathbb R^d$, we denote by
$C^{0,\alpha}(\mathcal{O})$ the space of $\alpha$-H\"older
continuous functions in $\mathcal{O}$.

We write $C^k(\mathcal{O}), k=1,2,...,$ for the usual spaces of k-times continuously differentiable functions in $\mathcal{O}$. 
%{ We denote by
%$C^p(\mathcal{O})$, for any non-integer $p>0$, the space of functions in $C^k(\mathcal{O})$ whose partial derivatives are in $C^{0,p-k}
%(\mathcal{O})$ where $k$ is the largest integer smaller that $p$ and by 
%$C^{k,1}(\mathcal{O}), k=1,2,...,$ the space of functions in $C^k(\mathcal{O})$ whose partial derivatives are in $C^{0,1}(\mathcal{O})$.}
The space $C_{b}^k(\mathcal{O})$ (respectively, $C^p_b(\mathcal{O})$) consists of functions in $C^{k}(\mathcal{O})$ 
(respectively, $C^p(\mathcal{O})$) which are bounded. 
We write $\textnormal{BUC}(\mathbb R^d)$ for the set of bounded and uniformly continuous functions in $\mathbb R^d$. For two bounded measures $\mu,\nu$, we will write
$\textnormal{d}_{\textnormal{TV}}(\mu,\nu)$ to denote the total variation of $\mu-\nu$.

Let $\mu$ be a Borel measure in $\mathbb{R}^d\setminus\{0\}$ and $1\leq p\leq 2$. We define
\begin{align}\label{eqn:Levy measure p-mass }
  \mathcal{N}_p(\mu) := \int_{\mathbb{R}^d\setminus \{0\}} \min\{1,|z|^p\}d\mu(z).
\end{align}

\begin{DEF}\label{def:space of Levy measures} 
  Let $1\leq p\leq 2$. We define
  \begin{align*}
    \mathbb{L}_p(\mathbb{R}^d) := \Big\{ \mu \textnormal{ positive Borel measure in } \mathbb{R}^d\setminus \{0\} \;\mid\;   \mathcal{N}_p(\mu) < \infty  \Big\}.
  \end{align*}
  The set of all L\'evy measures is $\mathbb{L}_2(\mathbb{R}^d)$. If $\Omega$ is an open subset of $\mathbb{R}^d$, we will consider the set
  \begin{align*}
    \mathbb{L}_p(\Omega) := \Big\{ \mu \in \mathbb{L}_p(\mathbb{R}^d) \;\mid\; \textnormal{spt}(\mu) \subset \Omega  \Big\}.
  \end{align*}
  Note that
  \begin{align*}
    \mathbb{L}_p(\Omega) \subset \mathbb{L}_q(\Omega), \textnormal{ whenever } p\leq q.
  \end{align*}
\end{DEF}
In other words, measures $\mu$ in $\mathbb{L}_p(\Omega)$ are measures in
$\mathbb{L}_p(\mathbb{R}^d)$ such that $\mu(\mathbb{R}^d\setminus\Omega)=0$. We decompose every measure $\mu \in \mathbb{L}_{p}(\mathbb{R}^d)$ as
\begin{align}\label{eqn:Levy measure singular regular decomposition}
  \mu = \hat \mu + \check \mu,
\end{align}
where $\hat\mu(\cdot):=\mu(\cdot\cap B_1) \in \mathbb{L}_{p}(B_1)$ and $\check \mu(\cdot):=\mu(\cdot\cap
(\mathbb{R}^d\setminus B_1))$. We note that $\check \mu$ is a bounded measure.

Consider the L\'evy operator given by some measure $\mu$, 
\begin{align}
  L(u,x) = \int_{\mathbb{R}^d} [u(x+z)-u(x)-\chi_{B_1(0)}(z) D u(x)\cdot z ]\;d\mu(z).
\end{align}
Let us decompose this operator as the sum of two operators, corresponding to the L\'evy measure decomposition in \eqref{eqn:Levy measure singular regular decomposition},
\begin{align*}
  L(u,x) & = \hat L(u,x) + \check L(u,x),
\end{align*}
where
\begin{align}
  \hat L(u,x) & = \int_{\mathbb{R}^d} [u(x+z)-u(x)-\chi_{B_1(0)}(z) D u(x)\cdot z] \;d\hat \mu(z),\label{eqn:Levy operator decomposition singular part}\\
  \check L(u,x) & = \int_{\mathbb{R}^d} [u(x+z)-u(x)] \;d\check \mu(z). \label{eqn:Levy operator decomposition regular part} 
\end{align}

\begin{DEF}\label{def:Levy measure as a functional}
  Given a L\'evy measure $\mu$ in $\mathbb{R}^d$, we define
  \begin{align*}	
    L_\mu(u,x) := \int_{\mathbb{R}^d} [u(x+z)-u(x)-\chi_{B_1(0)}(z)D u(x)\cdot z ]\;d\mu(z).	
  \end{align*}	
	
\end{DEF}

\begin{DEF}\label{def:pointwise Cp}
  For $p \in (1,2)$, a function $u$ is said to be pointwise-$C^p$ at a point $x_0$ if $u$ is differentiable at $x_0$ and if there exists a constant $C>0$ such that for all $x$ in a neighborhood of $x_0$,
  \begin{equation}\label{eq:pointcp}
    |u(x)-u(x_0)-Du(x_0)\cdot (x-x_0)|\leq C|x-x_0|^p.
  \end{equation}
If $u$ is differentiable at $x_0$ and (\ref{eq:pointcp}) is satisfied with $p=2$ we say that $u$ is pointwise-$C^{1,1}$ at $x_0$.
For $p \in (0,1]$, a function is said to be pointwise-$C^p$ at a point $x_0$ if there is a constant $C>0$ such that for all $x$ in a neighborhood of $x_0$, 
  \begin{align*}
    |u(x)-u(x_0)| \leq C|x-x_0|^p.
  \end{align*}
\end{DEF}

%%%%%%%%%%%%%%%%%%%%%%%%%%%%%%%%%%%%%%%%%%%%%%%%
%%%%%%%%%%%%%%%%%%%%%%%%%%%%%%%%%%%%%%%%%%%%%%%%
%%%%%%%%%%%%%%%%%%%%%%%%%%%%%%%%%%%%%%%%%%%%%%%%
%%%%%%%%%%%%%%%%%%%%%%%%%%%%%%%%%%%%%%%%%%%%%%%%
%%%%%%%%%%%%%%%%%%%%%%%%%%%%%%%%%%%%%%%%%%%%%%%%
%%%%%%%%%%%%%%%%%%%%%%%%%%%%%%%%%%%%%%%%%%%%%%%%
%%%%%%%%%%%%%%%%%%%%%%%%%%%%%%%%%%%%%%%%%%%%%%%%
\section{A transportation metric for L\'evy measures}\label{sec:Levy measures}

We will use a transportation metric on the space of L\'evy measures. This metric takes advantage of an ``infinite reservoir'' of mass which allows one to handle measures which may not have equal (or  finite) total mass. Such a metric was considered by Figalli and Gigli \cite{FigalliGigli2010}, where they studied the basic properties of such a metric, and used it to analyze gradient flows with Dirichlet boundary conditions. Our presentation here generally follows that of \cite{FigalliGigli2010}. This is not the only possible extension of the transport metric to the case of unequal masses, other notions have been considered by Kantorovich and Rubinstein. Another notion of distance for L\'evy measures is considered in \cite{ghkk}.

We consider the following set of measures
\begin{align*}
  \mathcal{M}_p(\mathbb{R}^d) := \big \{ \mu \in \mathbb{L}_p(\mathbb{R}^d)\;\mid\; \int_{\mathbb{R}^d\setminus\{0\}}|z|^pd\mu(z) < \infty \big \}.
\end{align*}
That is, the set of L\'evy measures with finite $p$-moment. Note in particular that { $\mathbb{L}_p(B_1)\subset \mathcal{M}_p(\mathbb{R}^d)$} due to the measures being supported in $B_1$.

First we define the notion of admissible couplings between L\'evy measures (see also Definition \ref{def:Appendix admissible measures} for an analogous definition in a more general setting).
\begin{DEF}\label{def:admissible plans}
  Let $\mu_1,\mu_2 \in\mathcal{M}_{p}(\mathbb{R}^d)$. An admissible transport plan between $\mu_1$ and $\mu_2$ is any positive Borel measure on $\mathbb{R}^d \times \mathbb{R}^d$ such that $\gamma(\{0\}\times \{0\})=0$ and
  \begin{align*}
    \pi^{1}_{\#} \gamma_{\mid_{\mathbb{R}^d\setminus \{0\}} } = \mu_1,\;\;\pi^{2}_{\#} \gamma_{\mid_{\mathbb{R}^d\setminus \{0\}} } = \mu_2,
  \end{align*}
  where for $i=1,2$, $\pi^{i}:\mathbb{R}^d\times \mathbb{R}^d\to \mathbb{R}^d$ is defined by $\pi^{i}(x_1,x_2)=x_i$. The set of admissible transport plans will be denoted by $\textnormal{Adm}(\mu_1,\mu_2)$.
\end{DEF}
In particular, if $\gamma$ is admissible then for a Borel set $A$ compactly supported in $\mathbb{R}^d\setminus \{0\}$, 
\begin{align*} 
  \gamma(A\times \mathbb{R}^d) = \mu_1(A),\;\gamma(\mathbb{R}^d \times A) = \mu_2(A). 
\end{align*}

The key point in Definition \ref{def:admissible plans} which distinguishes it from the notion of optimal transport plans is that the marginals of $\gamma$ only coincide with $\mu_1$ and $\mu_2$ away from the origin. In particular, the marginals of $\gamma$ may assign any amount of mass to the origin.

\begin{DEF} 
Let $1\leq p \leq 2$. For a positive Borel measure $\gamma$ on $\mathbb{R}^d\times \mathbb{R}^d$, we define
\begin{align*}
  J_p(\gamma) := \int_{\mathbb{R}^d \times \mathbb{R}^d}|x-y|^p\;d\gamma(x,y).
\end{align*}

\end{DEF}
In the Appendix we study the problem of minimizing $J_p(\gamma)$ over $\gamma\in\textnormal{Adm}(\mu_1,\mu_2)$ in greater generality. In this section we limit ourselves to stating a few further definitions and a few results needed in latter sections.

{ 
\begin{DEF}\label{def:p-distance Levy measures}
  Let $1\leq p\leq 2$. The $p$-distance between measures $\mu,\nu \in \mathcal{M}_{p}(\mathbb{R}^d)$ is defined by
  \begin{align*}
    \textnormal{d}_{\mathbb{L}_p}(\mu,\nu) := \left ( \inf_{\gamma\in \textnormal{Adm}(\mu,\nu)} J_p(\gamma) \right )^{\frac{1}{p}}.
  \end{align*}
\end{DEF}}
The optimization problem used in the definition of $\textnormal{d}_{\mathbb{L}_p}(\mu_1,\mu_2)$ shares many properties with the usual optimal transportation problem.
\begin{thm} For $\mu_1,\mu_2 \in \mathcal{M}_p(\mathbb{R}^d)$ there is at least one $\gamma \in \textnormal{Adm}(\mu_1,\mu_2)$ that achieves the minimum value of $J_p$.
\end{thm}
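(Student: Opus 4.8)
The plan is the direct method of the calculus of variations: produce a competitor of finite cost, extract a limit of a minimizing sequence, and check lower semicontinuity of $J_p$ together with admissibility of the limit. The one genuine difficulty is that an admissible plan need not have finite — let alone uniformly bounded — total mass, because the marginal constraints in Definition \ref{def:admissible plans} are imposed only away from the origin while $\mu_1,\mu_2$ are L\'evy measures; thus Prokhorov's theorem cannot be applied directly. This is the main obstacle, and it is resolved by peeling off from each competitor the (possibly infinite) part supported near the origin, which carries a vanishingly small amount of cost, and working with the remaining ``bulk'', which does admit a uniform mass bound and is tight.

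First, $\inf_{\gamma\in\textnormal{Adm}(\mu_1,\mu_2)}J_p(\gamma)<\infty$. Indeed, the plan $\gamma_0:=(z\mapsto(z,0))_{\#}\mu_1+(z\mapsto(0,z))_{\#}\mu_2$, which sends all of $\mu_1$ to the origin and produces all of $\mu_2$ from the origin, lies in $\textnormal{Adm}(\mu_1,\mu_2)$: its marginals agree with $\mu_1$ and $\mu_2$ on $\mathbb{R}^d\setminus\{0\}$, and $\gamma_0(\{0\}\times\{0\})=0$ since $\mu_i(\{0\})=0$. Moreover $J_p(\gamma_0)=\int|z|^p\,d\mu_1(z)+\int|z|^p\,d\mu_2(z)<\infty$ because $\mu_1,\mu_2\in\mathcal{M}_p(\mathbb{R}^d)$. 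Set $m:=\inf J_p$ and fix a minimizing sequence $(\gamma_n)$ with $J_p(\gamma_n)\le m+1=:M$.

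For $\delta>0$ let $R_\delta:=\{(x,y)\in\mathbb{R}^d\times\mathbb{R}^d:|x|\le\delta\text{ and }|y|\le\delta\}$ and split $\gamma_n=\gamma_n(\,\cdot\cap R_\delta)+\gamma_n(\,\cdot\cap R_\delta^c)$. Since $|x-y|^p\le 2^p(|x|^p+|y|^p)$ and the first (resp. second) marginal of $\gamma_n$ agrees with $\mu_1$ (resp. $\mu_2$) off the origin,
\begin{align*}
J_p\big(\gamma_n(\,\cdot\cap R_\delta)\big)\ \le\ 2^p\!\int_{B_\delta}|z|^p\,d(\mu_1+\mu_2)(z)\ =:\ \varepsilon_\delta,
\end{align*}
and $\varepsilon_\delta\to0$ as $\delta\to0$ by dominated convergence (the integrand is dominated by $|z|^p\in L^1(\mu_1+\mu_2)$). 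On the complement, $\gamma_n(R_\delta^c)\le\mu_1(\{|z|>\delta\})+\mu_2(\{|z|>\delta\})<\infty$, a bound uniform in $n$, and $\{\gamma_n(\,\cdot\cap R_\delta^c)\}_n$ is tight: its mass on $\{|x-y|>R\}$ is at most $MR^{-p}$, and its mass on $\{|x|>N\}$ (resp. $\{|y|>N\}$) is at most $\mu_1(\{|z|>N\})$ (resp. $\mu_2(\{|z|>N\})$), both tending to $0$ as $N\to\infty$ uniformly in $n$.

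Now fix $\delta_k\downarrow0$. By a diagonal extraction, pass to a subsequence along which $\gamma_n(\,\cdot\cap R_{\delta_k}^c)\rightharpoonup\eta_k$ narrowly for every $k$; since $R_{\delta_{k+1}}^c\supset R_{\delta_k}^c$ this monotonicity passes to the limit, $\eta_k\le\eta_{k+1}$, so $\eta_k\uparrow\gamma$ for some positive Borel measure $\gamma$. By lower semicontinuity of $\eta\mapsto\int|x-y|^p\,d\eta$ under narrow convergence and $J_p(\gamma_n(\,\cdot\cap R_{\delta_k}^c))\le J_p(\gamma_n)$, we get $J_p(\eta_k)\le\liminf_nJ_p(\gamma_n)=m$, whence $J_p(\gamma)=\lim_kJ_p(\eta_k)\le m$ by monotone convergence. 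It remains to verify $\gamma\in\textnormal{Adm}(\mu_1,\mu_2)$. For $0\le\phi\in C_c(\mathbb{R}^d\setminus\{0\})$ with $\textnormal{spt}\,\phi\subset\{|z|>\delta_{k_0}\}$ and any $k\ge k_0$ we have $\int\phi(x)\,d\gamma_n(x,y)=\int\phi(x)\,d\big(\gamma_n(\,\cdot\cap R_{\delta_k}^c)\big)(x,y)=\int\phi\,d\mu_1$ (the first equality because $\phi(x)=0$ on $R_{\delta_k}$, the second because $\textnormal{spt}\,\phi$ is compact in $\mathbb{R}^d\setminus\{0\}$); letting $n\to\infty$ and then $k\to\infty$ yields $\int\phi(x)\,d\gamma=\int\phi\,d\mu_1$, so the first marginal of $\gamma$ agrees with $\mu_1$ off the origin, and symmetrically the second agrees with $\mu_2$. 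Finally, by the Portmanteau theorem each $\eta_k$ assigns zero mass to the open neighborhood $\{|x|<\delta_k,\,|y|<\delta_k\}$ of $(0,0)$, hence $\gamma(\{0\}\times\{0\})=0$. Therefore $\gamma$ is admissible with $J_p(\gamma)\le m$, i.e. a minimizer. The peeling in the preceding paragraph — which is exactly what defeats the lack of a uniform mass bound — uses the finite $p$-moment condition $\mu_i\in\mathcal{M}_p(\mathbb{R}^d)$ near the origin and the L\'evy property together with the finiteness of $\mu_i$ away from the origin at spatial infinity; the rest is routine.
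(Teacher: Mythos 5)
Your proof is correct, and it is a self-contained argument rather than a real departure in strategy: the paper itself proves this theorem by citing the general existence result for the transport problem ``with boundary'' (Theorem \ref{thm:existence OT with boundary} in the Appendix), whose proof is also the direct method. Both arguments start from the same finite-cost competitor (your $\gamma_0$ is exactly the paper's $(\mathrm{Id}\times P)_{\#}\mu+(P\times\mathrm{Id})_{\#}\nu$ with $P\equiv 0$) and both exploit that the marginal constraints give uniform mass bounds away from the origin while the mass deposited at the origin is invisible to the cost. The implementations differ: the paper extracts a limit of a minimizing sequence in the ``local weak'' sense of testing against $C^0_c(\overline{\Omega}\times\overline{\Omega}\setminus\Gamma\times\Gamma)$ and obtains lower semicontinuity by a monotone approximation of the cost by such functions, whereas you restrict the plans to the complements of the boxes $R_{\delta_k}$, apply Prokhorov to these finite, uniformly tight measures, reassemble the limit as a monotone limit $\eta_k\uparrow\gamma$, and conclude by lower semicontinuity in $n$ plus monotone convergence in $k$; your admissibility check (test functions supported in $\{|z|>\delta_{k_0}\}$, Portmanteau for $\gamma(\{0\}\times\{0\})=0$) is sound. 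What each buys: your route is elementary and tailored to the cost $|x-y|^p$ and the reservoir $\{0\}$, while the paper's appendix version handles a general continuous symmetric cost and a general compact $\Gamma$, which it then reuses for the duality and metric statements. Two small cosmetic points: the estimate $J_p(\gamma_n(\,\cdot\cap R_\delta))\le\varepsilon_\delta$ is never actually used (you only need $J_p(\gamma_n(\,\cdot\cap R_{\delta_k}^c))\le J_p(\gamma_n)$, which holds trivially since the cost is nonnegative), and the $\{|x-y|>R\}$ bound is likewise not needed for tightness, since $\mu_i(\{|z|>N\})\to 0$ (Chebyshev with the finite $p$-moment) already confines the mass to $\{|x|\le N,\ |y|\le N\}$ up to an error that is uniform in $n$.
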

\begin{proof}
  The theorem is a special case of Theorem \ref{thm:existence OT with boundary} (see the Appendix).
\end{proof}

The fact that $\textnormal{d}_{\mathbb{L}_2}$ defines a distance was proved in \cite[Theorem 2.2, Proposition 2.7]{FigalliGigli2010}. We will need this result for any $p$. 
\begin{thm}\label{thm:transport distance is a distance}
$\textnormal{d}_{\mathbb{L}_p}$ defines a metric in $\mathcal{M}_p(\mathbb{R}^d)$. 
\end{thm}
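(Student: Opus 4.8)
The plan is to verify the three metric axioms for $\textnormal{d}_{\mathbb{L}_p}$ on $\mathcal{M}_p(\mathbb{R}^d)$: nonnegativity together with the identity of indiscernibles, symmetry, and the triangle inequality. Symmetry is immediate: if $\gamma \in \textnormal{Adm}(\mu,\nu)$ then its pushforward under the coordinate swap $(x,y)\mapsto(y,x)$ lies in $\textnormal{Adm}(\nu,\mu)$ and has the same value of $J_p$, since $|x-y|^p = |y-x|^p$; hence the two infima agree. Finiteness of $\textnormal{d}_{\mathbb{L}_p}(\mu,\nu)$ for $\mu,\nu\in\mathcal{M}_p(\mathbb{R}^d)$ follows by exhibiting one admissible plan of finite cost, e.g. the plan that sends all of $\mu$'s mass to $0$ and brings in all of $\nu$'s mass from $0$, namely $\gamma = (\mathrm{id},0)_\#\mu + (0,\mathrm{id})_\#\nu$ (with the atom at $(0,0)$ removed), whose cost is $\int |z|^p\,d\mu + \int |z|^p\,d\nu < \infty$ by the definition of $\mathcal{M}_p$.

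Next I would treat the identity of indiscernibles. One direction is trivial: the diagonal plan $\gamma = (\mathrm{id},\mathrm{id})_\#\mu$ (restricted to $\mathbb{R}^d\setminus\{0\}$, which removes the atom at the origin) is admissible between $\mu$ and $\mu$ and has $J_p(\gamma) = 0$, so $\textnormal{d}_{\mathbb{L}_p}(\mu,\mu) = 0$. For the converse, suppose $\textnormal{d}_{\mathbb{L}_p}(\mu,\nu) = 0$; by the existence theorem just quoted (Theorem in the excerpt, a special case of Theorem~\ref{thm:existence OT with boundary}) there is an optimal $\gamma \in \textnormal{Adm}(\mu,\nu)$ with $J_p(\gamma) = 0$, which forces $\gamma$ to be supported on the diagonal $\{x = y\}$. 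Then for any Borel set $A$ compactly supported in $\mathbb{R}^d\setminus\{0\}$ we have $\mu(A) = \gamma(A\times\mathbb{R}^d) = \gamma((A\times\mathbb{R}^d)\cap\{x=y\}) = \gamma(\mathbb{R}^d\times A) = \nu(A)$; since such sets generate the Borel $\sigma$-algebra of $\mathbb{R}^d\setminus\{0\}$ and both measures are Borel measures on $\mathbb{R}^d\setminus\{0\}$, we conclude $\mu = \nu$.

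The triangle inequality is the main obstacle and is handled by a gluing argument adapted to the ``infinite reservoir at $0$'' setup. Given $\mu_1,\mu_2,\mu_3 \in \mathcal{M}_p(\mathbb{R}^d)$, take optimal plans $\gamma_{12} \in \textnormal{Adm}(\mu_1,\mu_2)$ and $\gamma_{23} \in \textnormal{Adm}(\mu_2,\mu_3)$. The subtlety is that the second marginal of $\gamma_{12}$ and the first marginal of $\gamma_{23}$ agree with $\mu_2$ only away from $0$, so one cannot glue directly; instead one disintegrates each plan, adjoins extra mass sitting at $0$ in the middle slot so that the two middle marginals become a common measure $\bar\mu_2 = \mu_2 + (\text{Dirac mass at }0)$ of sufficient weight, glues along $\bar\mu_2$ using the standard disintegration/gluing lemma on the product of three copies of $\mathbb{R}^d$, and then projects onto the first and third coordinates to obtain a measure $\gamma_{13}$; one checks $\gamma_{13} \in \textnormal{Adm}(\mu_1,\mu_3)$ after discarding any atom at $(0,0)$. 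Minkowski's inequality in $L^p(\text{glued measure})$ applied to the functions $(x,y,z)\mapsto |x-y|$ and $(x,y,z)\mapsto |y-z|$, together with $|x-z| \le |x-y| + |y-z|$, yields
\begin{align*}
J_p(\gamma_{13})^{1/p} \le \left(\int |x-y|^p\right)^{1/p} + \left(\int |y-z|^p\right)^{1/p} = J_p(\gamma_{12})^{1/p} + J_p(\gamma_{23})^{1/p},
\end{align*}
and taking the infimum gives $\textnormal{d}_{\mathbb{L}_p}(\mu_1,\mu_3) \le \textnormal{d}_{\mathbb{L}_p}(\mu_1,\mu_2) + \textnormal{d}_{\mathbb{L}_p}(\mu_2,\mu_3)$. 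I expect the only real care is in the bookkeeping of the origin: verifying that the glued-and-projected plan really has the correct marginals away from $0$ and that adding or deleting mass at the origin changes neither admissibility nor the cost $J_p$ (the integrand vanishes on the diagonal, in particular at $(0,0)$). Much of this is already packaged in the Appendix following \cite{FigalliGigli2010}, so in the write-up I would cite Theorem~\ref{thm:existence OT with boundary} and the corresponding gluing statement there and only spell out the origin-bookkeeping.
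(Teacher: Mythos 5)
Your proposal is correct and takes essentially the same approach as the paper, which proves the statement as a special case of Theorem \ref{th:metric} in the Appendix: symmetry is immediate, $\textnormal{d}_{\mathbb{L}_p}(\mu,\nu)=0$ forces an optimal plan concentrated on the diagonal and hence $\mu=\nu$, and the triangle inequality follows from the Figalli--Gigli gluing lemma (Lemma \ref{lem:app gluing lemma}, where the auxiliary measures $\sigma^{12},\sigma^{23}$ sitting on $\{(0,0)\}$ are exactly your ``extra mass at the origin in the middle slot'') combined with Minkowski's inequality in $L^p(\gamma^{123})$. Your origin-bookkeeping is precisely the content of that gluing lemma, so there is no gap.
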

\begin{proof}
  The theorem is a special case of Theorem \ref{th:metric}.
\end{proof}

The main tool at our disposal when estimating $\textnormal{d}_{\mathbb{L}_p}$ is the following duality result.
\begin{lem}\label{lem:duality}
  For $1\leq p\leq 2$ and $\mu_1,\mu_2 \in \mathcal{M}_p(\mathbb{R}^d)$, we have
  \begin{align*}
    \textnormal{d}_{\mathbb{L}_p}(\mu_1,\mu_2)^p = \sup \Big \{ \int_{\mathbb{R}^d} \phi(x)\;d\mu_1(x) +  \int_{\mathbb{R}^d}\psi(y) \;d\mu_2(y)  \; \mid \; (\phi,\psi) \in \textnormal{Adm}^p\Big \}.
  \end{align*}	  
  Here, $\textnormal{Adm}^p$ denotes the set
  \begin{align*}
    \textnormal{Adm}^p & := \Big \{ (\phi,\psi) \;\mid\; \phi \in L^1(\mu),\; \psi\in L^1(\nu),\\
	& \quad \quad\quad\quad\quad\quad \phi \textnormal{ and } \psi \textnormal{ are upper semicontinuous},\\ 
	& \quad \quad\quad\quad\quad\quad \; \phi(0)=\psi(0) = 0 \textnormal{ and } \phi(x)+\psi(y) \leq |x-y|^p \;\forall\;x,y \in \mathbb{R}^d \Big \}.	  
  \end{align*}	  
\end{lem}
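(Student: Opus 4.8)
The plan is to derive the duality from the classical Kantorovich theorem by representing the ``infinite reservoir at $0$'' as a genuine \emph{balanced} transport problem on $\mathbb{R}^d$ with an enlarged atom at the origin, and then to remove the artificial mass by normalizing the Kantorovich potentials. The inequality ``$\geq$'' is the easy half: given any $\gamma\in\textnormal{Adm}(\mu_1,\mu_2)$ and $(\phi,\psi)\in\textnormal{Adm}^p$, since $\phi(0)=\psi(0)=0$ and the marginals of $\gamma$ coincide with $\mu_1,\mu_2$ off the origin,
\begin{align*}
  \int_{\mathbb{R}^d}\phi\,d\mu_1+\int_{\mathbb{R}^d}\psi\,d\mu_2=\int_{\mathbb{R}^d\times\mathbb{R}^d}(\phi(x)+\psi(y))\,d\gamma(x,y)\leq \int_{\mathbb{R}^d\times\mathbb{R}^d}|x-y|^p\,d\gamma=J_p(\gamma),
\end{align*}
so that $\sup_{\textnormal{Adm}^p}\{\cdots\}\leq \textnormal{d}_{\mathbb{L}_p}(\mu_1,\mu_2)^p$.

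Assume first that $\mu_1,\mu_2$ have finite total masses $a_1,a_2$ (recall $\mu_i(\{0\})=0$). Pick $N>\max\{a_1,a_2\}$, set $M_1:=N+\max\{a_2-a_1,0\}$ and $M_2:=N+\max\{a_1-a_2,0\}$ (so $M_1-M_2=a_2-a_1$ and $M_i>\max\{a_1,a_2\}$), and put $\bar\mu_i:=\mu_i+M_i\delta_0$, two measures of equal finite mass and finite $p$-th moment on $\mathbb{R}^d$. Adding or deleting an atom at $(0,0)$ (which carries zero cost) one checks that $\inf_{\gamma\in\textnormal{Adm}(\mu_1,\mu_2)}J_p(\gamma)=\inf_{\gamma\in\Pi(\bar\mu_1,\bar\mu_2)}\int|x-y|^p\,d\gamma$, where $\Pi(\bar\mu_1,\bar\mu_2)$ denotes the classical transport plans; the bookkeeping closes precisely because $M_i>\max\{a_1,a_2\}$ controls the ``defect at the origin'' of an admissible plan. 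Classical Kantorovich duality for the continuous cost $c(x,y)=|x-y|^p$ then produces an optimal $c$-concave pair $(\phi,\psi)$ with $\phi(x)+\psi(y)\leq c(x,y)$. Now the key point: since $M_1>a_2$, every element of $\Pi(\bar\mu_1,\bar\mu_2)$ must move at least $M_1-a_2>0$ units of mass from $0$ to $0$, so $(0,0)$ lies in the support of the optimal plan, and complementary slackness forces $\phi(0)+\psi(0)=c(0,0)=0$. Replacing $(\phi,\psi)$ by $(\phi-\phi(0),\psi-\psi(0))$ keeps the constraint $\phi(x)+\psi(y)\leq|x-y|^p$ (because $\phi(0)+\psi(0)=0$), achieves $\phi(0)=\psi(0)=0$, and — using $M_1-M_2=a_2-a_1$ — leaves $\int\phi\,d\bar\mu_1+\int\psi\,d\bar\mu_2=\int\phi\,d\mu_1+\int\psi\,d\mu_2$ unchanged. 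Thus the normalized pair lies in $\textnormal{Adm}^p$ and realizes $\textnormal{d}_{\mathbb{L}_p}(\mu_1,\mu_2)^p$, which together with ``$\geq$'' proves the lemma for finite measures.

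For general $\mu_1,\mu_2\in\mathcal{M}_p(\mathbb{R}^d)$, let $\mu_i^\varepsilon$ be the restriction of $\mu_i$ to $\{|z|\geq\varepsilon\}$, a finite measure. Transporting the excised mass to the reservoir gives $\textnormal{d}_{\mathbb{L}_p}(\mu_i,\mu_i^\varepsilon)^p\leq\int_{\{|z|<\varepsilon\}}|z|^p\,d\mu_i\to0$, so by the triangle inequality (Theorem \ref{thm:transport distance is a distance}) $\textnormal{d}_{\mathbb{L}_p}(\mu_1^\varepsilon,\mu_2^\varepsilon)\to\textnormal{d}_{\mathbb{L}_p}(\mu_1,\mu_2)$. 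Let $(\phi_\varepsilon,\psi_\varepsilon)\in\textnormal{Adm}^p$ be the normalized optimizers for $\mu_i^\varepsilon$ constructed above. Standard estimates for $c$-concave functions with $1\leq p\leq2$ make them locally equi-Lipschitz, and $\phi_\varepsilon(0)=\psi_\varepsilon(0)=0$ forces $\phi_\varepsilon(x)\leq|x|^p$, $\psi_\varepsilon(y)\leq|y|^p$. Passing to a subsequence, $\phi_\varepsilon\to\phi$ and $\psi_\varepsilon\to\psi$ locally uniformly, with $(\phi,\psi)$ still satisfying the pointwise constraints defining $\textnormal{Adm}^p$; since $|x|^p\in L^1(\mu_1)$ and $|y|^p\in L^1(\mu_2)$ (the finite $p$-moment is used here), Fatou's lemma applied to $|x|^p-\phi_\varepsilon\geq0$ and $|y|^p-\psi_\varepsilon\geq0$ gives
\begin{align*}
  \textnormal{d}_{\mathbb{L}_p}(\mu_1,\mu_2)^p=\lim_{\varepsilon\to0}\Big(\int\phi_\varepsilon\,d\mu_1^\varepsilon+\int\psi_\varepsilon\,d\mu_2^\varepsilon\Big)\leq\int\phi\,d\mu_1+\int\psi\,d\mu_2,
\end{align*}
and this right-hand side is well defined and finite (hence $\phi\in L^1(\mu_1)$, $\psi\in L^1(\mu_2)$, i.e. $(\phi,\psi)\in\textnormal{Adm}^p$), so it is $\leq\sup_{\textnormal{Adm}^p}\{\cdots\}\leq\textnormal{d}_{\mathbb{L}_p}(\mu_1,\mu_2)^p$. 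Equality holds throughout.

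The main obstacle is the normalization $\phi(0)=\psi(0)=0$: plain Kantorovich duality only delivers $\phi(0)+\psi(0)\leq0$, and the trick that upgrades this to an equality — \emph{over-sizing} the reservoir atoms so that the optimal plan is forced to charge the diagonal point $(0,0)$ — is the one genuinely new ingredient. The remaining work is the $\varepsilon\to0$ passage, where finiteness of the $p$-th moments of $\mu_1,\mu_2$ is essential. (Alternatively, the statement is a special case of the ``optimal transport with boundary'' duality developed in the Appendix along the lines of \cite{FigalliGigli2010}.)
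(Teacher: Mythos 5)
Your overall strategy is genuinely different from the paper's: the paper obtains this lemma as a special case of the general duality for the transport problem ``with boundary'' (Lemma~\ref{lem:duality general}), proved via $\tilde c$-cyclical monotonicity and the Rockafellar-type construction of potentials vanishing on $\Gamma$, which works directly for measures of infinite mass. You instead reduce to classical Kantorovich duality by augmenting both measures with a large atom at the origin. For \emph{finite} $\mu_1,\mu_2$ your argument is correct and rather elegant: the bookkeeping identity $m_1-m_2=a_2-a_1$ makes the two infima agree, and the over-sizing $M_i>\max\{a_1,a_2\}$ forces every plan to charge $(0,0)$, so complementary slackness (for an optimal $c$-conjugate pair, whose existence is classical since $|x-y|^p\le 2^{p-1}(|x|^p+|y|^p)$ with finite $p$-moments) yields $\phi(0)+\psi(0)=0$ and the normalization costs nothing because the augmented masses are equal. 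This is a nice elementary alternative to the appendix machinery in the finite-mass case.

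The gap is in the passage to general $\mu_i\in\mathcal{M}_p(\mathbb{R}^d)$, which is exactly the case the reservoir is designed for. You assert that the normalized optimizers $(\phi_\varepsilon,\psi_\varepsilon)$ are ``locally equi-Lipschitz'' by ``standard estimates for $c$-concave functions''; this is not standard, and in the stated generality it is unjustified for $p>1$. The Lipschitz constant of $x\mapsto|x-y|^p$ on a fixed ball grows like $|y|^{p-1}$, so when the conjugate potential lives on an unbounded set the infimum $\phi_\varepsilon(x)=\inf_y\{|x-y|^p-\psi_\varepsilon(y)\}$ inherits no uniform modulus; the only bounds your normalization gives are the one-sided ones $\phi_\varepsilon\le|x|^p$, $\psi_\varepsilon\le|y|^p$, and these provide no lower bound at all (indeed $\inf_y\{|x-y|^p-|y|^p\}=-\infty$ for $p>1$), so the family need not even be locally equibounded. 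Without local equicontinuity the Arzel\`a--Ascoli extraction fails, and then the Fatou step and the verification that the limit pair lies in $\textnormal{Adm}^p$ have nothing to act on. Note also that you cannot simply reuse $(\phi_\varepsilon,\psi_\varepsilon)$ against the full measures, because the missing lower bound means $\int_{\{0<|z|<\varepsilon\}}\phi_\varepsilon\,d\mu_1$ could be arbitrarily negative. A repair would require genuinely controlling the potentials (e.g.\ first reducing to compactly supported measures and recomputing transforms over the supports, or arguing as in the paper's appendix, where the Rockafellar-type Proposition~\ref{prop:Rockafellar theorem} produces potentials for the infinite-mass problem directly and no compactness of potentials is ever needed); as written, the limiting step does not go through.
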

\begin{proof}
  The lemma is a special case of Lemma \ref{lem:duality general}
\end{proof}

\begin{rem}
In most of the paper we only need to take $\textnormal{d}_{\mathbb{L}_p}(\mu,\nu)$ for $\mu,\nu\in \mathbb{L}_p(B_1)$.
In this case we could equivalently define the distance by considering the transport problem in $\overline B_1\times
\overline B_1$, i.e. taking $\Omega= B_1$ instead of $\Omega=\mathbb{R}^d$ (see the Appendix). We note that
if $\mu,\nu\in \mathbb{L}_p(B_1)$ and $\gamma\in\textnormal{Adm}(\mu,\nu)$ then
$\gamma((B_1\times B_1)^c)=0$.
\end{rem}

The following proposition (proved in the Appendix) will be used in Section \ref{sec:Comparison}.
\begin{prop}\label{prop:bound for distances of restricted measures}
  Let $\psi$ be a Lipschitz continuous function with compact support in $\overline{B}_1 \setminus \{0\}$. If $\mu,\nu \in \mathbb{L}_p(B_1)$ then
 \begin{align*}
    \left | \int_{B_1} \psi \;d\mu - \int_{B_1}\psi\;d\nu \right | & \leq \left(\mu(\textnormal{spt}(\psi))+\nu(\textnormal{spt}(\psi))\right)^{\frac{p-1}{p}} [\psi]_{\textnormal{Lip}}\textnormal{d}_{\mathbb{L}_p}(\mu,\nu),
  \end{align*}
where $[\psi]_{\textnormal{Lip}}$ is the Lipschitz constant of $\psi$.
\end{prop}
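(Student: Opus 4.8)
The plan is to estimate the difference $\int_{B_1}\psi\,d\mu - \int_{B_1}\psi\,d\nu$ by using a duality-type argument together with H\"older's inequality, exploiting that $\psi$ is supported away from the origin where $\gamma$ can dump mass.

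\medskip

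\textbf{Step 1: reduce to an optimal plan.} Let $\gamma \in \textnormal{Adm}(\mu,\nu)$ be an optimal plan for $\textnormal{d}_{\mathbb{L}_p}(\mu,\nu)$, which exists by the existence theorem quoted above, and recall that since $\mu,\nu \in \mathbb{L}_p(B_1)$ we may assume $\gamma((B_1\times B_1)^c)=0$. Because $\psi(0)$ is not assumed to vanish but $\psi$ has compact support in $\overline B_1\setminus\{0\}$, the natural move is to write, for $(x,y)$ in the support of $\gamma$,
\begin{align*}
  \psi(x)-\psi(y) = \psi(x)\chi_{\textnormal{spt}(\psi)}(x) - \psi(y)\chi_{\textnormal{spt}(\psi)}(y),
\end{align*}
and bound $|\psi(x)-\psi(y)|$ by $[\psi]_{\textnormal{Lip}}|x-y|$ on the set where at least one of $x,y$ lies in $\textnormal{spt}(\psi)$; on the complementary set both terms vanish. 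Since $\mu$ and $\nu$ are the marginals of $\gamma$ restricted to $\mathbb R^d\setminus\{0\}$, and $\textnormal{spt}(\psi)$ is a compact subset of $\mathbb R^d\setminus\{0\}$, we get
\begin{align*}
  \left|\int_{B_1}\psi\,d\mu - \int_{B_1}\psi\,d\nu\right|
  = \left|\int_{\mathbb R^d\times\mathbb R^d} (\psi(x)-\psi(y))\,d\gamma(x,y)\right|
  \leq [\psi]_{\textnormal{Lip}}\int_{E} |x-y|\,d\gamma(x,y),
\end{align*}
where $E := (\textnormal{spt}(\psi)\times\mathbb R^d)\cup(\mathbb R^d\times\textnormal{spt}(\psi))$.

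\medskip

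\textbf{Step 2: H\"older's inequality.} Apply H\"older with exponents $p$ and $p/(p-1)$ to the integral over $E$:
\begin{align*}
  \int_{E}|x-y|\,d\gamma(x,y) \leq \left(\int_{E}|x-y|^p\,d\gamma(x,y)\right)^{\frac1p}\gamma(E)^{\frac{p-1}{p}}
  \leq \textnormal{d}_{\mathbb{L}_p}(\mu,\nu)\,\gamma(E)^{\frac{p-1}{p}},
\end{align*}
the last inequality using $J_p(\gamma) = \textnormal{d}_{\mathbb{L}_p}(\mu,\nu)^p$ and $\int_E|x-y|^p\,d\gamma \leq J_p(\gamma)$. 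Finally $\gamma(E)\leq \gamma(\textnormal{spt}(\psi)\times\mathbb R^d)+\gamma(\mathbb R^d\times\textnormal{spt}(\psi)) = \mu(\textnormal{spt}(\psi))+\nu(\textnormal{spt}(\psi))$, where the equality is exactly the marginal property for Borel sets compactly supported away from the origin noted right after Definition \ref{def:admissible plans}. Combining the three displays gives the claimed bound. (In the edge case $p=1$ the exponent $\frac{p-1}{p}$ is $0$ and the estimate is just $|x-y|\le [\psi]_{\textnormal{Lip}}|x-y|$ integrated against $\gamma$, so nothing special is needed.)

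\medskip

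\textbf{Main obstacle.} The only subtlety — and the reason the statement restricts $\psi$ to have compact support in $\overline B_1\setminus\{0\}$ — is the bookkeeping at the origin: an admissible plan's marginals agree with $\mu,\nu$ only on sets compactly supported away from $0$, so one must be careful that $\textnormal{spt}(\psi)$ is such a set and that the difference $\psi(x)-\psi(y)$ genuinely integrates against $\gamma$ to the difference of the two integrals (the possible extra mass of the marginals of $\gamma$ at $0$ is annihilated because $\psi$ vanishes near $0$). Once that identification is in place the rest is just H\"older's inequality, so I expect Step 1 — writing the difference of integrals as a single integral against $\gamma$ and controlling the integrand off $\textnormal{spt}(\psi)$ — to be where the real care is needed, while Step 2 is routine.
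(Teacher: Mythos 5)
Your proposal is correct and follows essentially the same route as the paper: write the difference of the two integrals as an integral of $\psi(x)-\psi(y)$ against an admissible plan over the set $(\textnormal{spt}(\psi)\times\mathbb{R}^d)\cup(\mathbb{R}^d\times\textnormal{spt}(\psi))$, bound the integrand by $[\psi]_{\textnormal{Lip}}|x-y|$, and apply H\"older's inequality together with the marginal bound $\gamma(E)\leq\mu(\textnormal{spt}(\psi))+\nu(\textnormal{spt}(\psi))$. The only cosmetic difference is that you fix an optimal plan at the outset, whereas the paper works with an arbitrary admissible plan and takes the infimum at the end; both are equally valid.
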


%%%%%%%%%%%%%%%%%%%%%%%%%%%%%%
%%%%%%%%%%%%%%%%%%%%%%%%%%%%%%
%%%%%%%%%%%%%%%%%%%%%%%%%%%%%%
%%%%%%%%%%%%%%%%%%%%%%%%%%%%%%
%%%%%%%%%%%%%%%%%%%%%%%%%%%%%%
%%%%%%%%%%%%%%%%%%%%%%%%%%%%%%
%%%%%%%%%%%%%%%%%%%%%%%%%%%%%%
\section{Assumptions and main results}\label{sec:ass}

In this section we make the necessary assumptions about the measures and various functions appearing in the operator $I(u,x)$ in \eqref{eqn:intro equation form}. We recall that throughout the whole paper $\mathcal{O}\subset\mathbb{R}^d$ is a bounded domain. The measures { $\mu_x^{\alpha\beta}\in \mathbb{L}_p(\mathbb{R}^d)$} for all $x\in \mathcal{O},
\alpha\in \mathcal A,\beta\in \mathcal B$ for some index sets $\mathcal A, \mathcal B$. {  Last but not least, we recall that in \eqref{eqn:Levy measure singular regular decomposition} we introduced the decomposition of a measure $\mu$ in terms of measures $\hat \mu$ and $\check \mu$ supported in $B_1(0)$ and in $\mathbb{R}^d\setminus B_1(0)$, respectively. }

\textbf{Assumption A}. There are $p\in [1,2]$ and a constant $C\geq 0$ such that 
\begin{align}\label{eqn:LipL2}
  \textnormal{d}_{\mathbb{L}_p}(\hat \mu_x^{\alpha\beta},\hat \mu_y^{\alpha\beta})\leq C|x-y|,\quad \forall x,y\in\mathcal{O},\,\,\forall\alpha,\beta.
\end{align}

\textbf{Assumption B}. There is a modulus of continuity $\theta$ such that
\begin{align}\label{eqn:Levy measure regular TV continuity}
  \textnormal{d}_{\textnormal{TV}}(\check \mu_x^{\alpha\beta},\check \mu_y^{\alpha\beta})\leq \theta(|x-y|),\quad \forall x,y\in\mathcal{O},\,\,\forall\alpha,\beta.
\end{align}

\textbf{Assumption C}. There are a modulus of continuity $\theta$ and a constant $C\geq 0$ such that
\begin{align}\label{eqn:f term modulus of continuity}
  |f_{\alpha\beta}(x)-f_{\alpha\beta}(y)|\leq \theta(|x-y|), \quad \forall x,y\in\mathcal{O},\,\,\forall\alpha,\beta,
\end{align}
\begin{align}\label{eqn:f sup bound}
  |f_{\alpha\beta}(x)|\leq C, \quad \forall x\in\mathcal{O},\,\,\forall\alpha,\beta.
\end{align}

\textbf{Assumption D}. There are constants $0<\lambda\leq \lambda_1$ such that
\begin{align}\label{eqn:Calphabeta eigenvalue bound}  
 \lambda\leq  \inf \limits_{x\in \mathcal{O}}\inf \limits_{\alpha,\beta}c_{\alpha\beta}(x)\leq
 \sup \limits_{x\in \mathcal{O}}\sup \limits_{\alpha,\beta}c_{\alpha\beta}(x)\leq \lambda_1
\end{align}
and there is a modulus $\theta$ such that 
\begin{align*}
  |c_{\alpha\beta}(x)-c_{\alpha\beta}(y)|\leq \theta(|x-y|)\quad \forall x\in\mathcal{O},\,\,\forall\alpha,\beta.
\end{align*}

\textbf{Assumption E}. Let $p$ be from Assumption A. There exist a modulus of continuity $\theta$ and a constant $\Lambda\geq 0$ such that
\begin{align}\label{eqn:uniform integrability Levy measures}
  \sup \limits_{x\in \mathcal{O}}\sup \limits_{\alpha\beta}\int_{B_r} |z|^p \;d\mu^{\alpha\beta}_{x}(z) \leq \theta(r),
\end{align}
\begin{align}\label{eqn:uniform integrability Levy measures1}
  \sup \limits_{x\in \mathcal{O}}\sup \limits_{\alpha,\beta}\mathcal{N}_p(\mu^{\alpha\beta}_{x})\leq \Lambda.
\end{align}

Assumption B can be weakened, however we want to keep its simpler form to focus on the main difficulty of dealing with the singular part of the L\'evy measures. We leave such generalizations to the interested reader.

\medskip
We recall two definitions of viscosity solutions of \eqref{eqn:intro equation form} which will be used in this paper. To minimize the technicalities we will assume that viscosity sub/supersolutions are in $\textnormal{BUC}(\mathbb R^d)$. The same results could be obtained assuming that they are just bounded and continuous in $\mathbb{R}^d$.

\begin{DEF}\label{def:viscosity solution}
Let $p\in [1,2]$. A function $u\in \textnormal{BUC}(\mathbb R^d)$ is a viscosity subsolution of \eqref{eqn:intro equation form} if whenever $u-\varphi$ has a global maximum over $\mathbb R^d$ at $x\in \mathcal{O}$ for some $\varphi\in C_b^2(\mathbb R^d)$ and $\varphi(x)=u(x)$, then 
$I(\varphi,x)\leq 0$.
A function $u\in \textnormal{BUC}(\mathbb R^d)$ is a viscosity supersolution of \eqref{eqn:intro equation form} if whenever $u-\varphi$ has a 
global minimum over $\mathbb R^d$ at $x\in \mathcal{O}$ for some $\varphi\in C_b^2(\mathbb R^d)$ and $\varphi(x)=u(x)$, then
$I(\varphi,x)\geq 0$.
A function $u$ is a viscosity solution of \eqref{eqn:intro equation form} if it is both a viscosity subsolution and viscosity supersolution of 
\eqref{eqn:intro equation form}.
\end{DEF}

\begin{DEF}\label{def:viscosity solution ver1}
Let $p\in [1,2]$.
A function $u\in \textnormal{BUC}(\mathbb R^d)$ is a viscosity subsolution of \eqref{eqn:intro equation form} if whenever $u-\varphi$ has a global maximum over $\mathbb R^d$ at $x\in\mathcal{O}$ for some $\varphi\in C^2(\mathbb R^d)$, then for every $0<\delta<1$
\begin{eqnarray*}
&&\sup_{\alpha\in \mathcal A}\inf_{\beta\in \mathcal B}\Big\{-\int_{|z|<\delta}\delta\varphi(x,z)d\mu_x^{\alpha\beta}(z)\\
&&\,\,
-\int_{|z|\geq\delta}\left[u(x+z)-u(x)-\chi_{B_1(0)}(z)D\varphi(x)\cdot z\right]d\mu_x^{\alpha\beta}(z)
+c_{\alpha\beta}(x)u(x)+f_{\alpha\beta}(x)\Big\}\leq 0.
\end{eqnarray*}
A function $u\in \textnormal{BUC}(\mathbb R^d)$ is a viscosity supersolution of \eqref{eqn:intro equation form} if whenever $u-\varphi$ has a 
global minimum over $\mathbb R^d$ at $x\in\mathcal{O}$ for some $\varphi\in C^2(\mathbb R^d)$, then for every $0<\delta<1$
\begin{eqnarray*}
&&\sup_{\alpha\in \mathcal A}\inf_{\beta\in \mathcal B}\Big\{-\int_{|z|<\delta}\delta\varphi(x,z)d\mu_x^{\alpha\beta}(z)\\
&&\,\,
-\int_{|z|\geq\delta}\left[u(x+z)-u(x)-\chi_{B_1(0)}(z)D\varphi(x)\cdot z\right]d\mu_x^{\alpha\beta}(z)
+c_{\alpha\beta}(x)u(x)+f_{\alpha\beta}(x)\Big\}\geq 0.
\end{eqnarray*}
A function $u$ is a viscosity solution of \eqref{eqn:intro equation form} if it is both a viscosity subsolution and viscosity supersolution of 
\eqref{eqn:intro equation form}.
\end{DEF}
We remark that, since the L\'evy measures $\mu_x^{\alpha\beta}$ are in ${\mathbb{L}_p}(\mathbb{R}^d)$, we could use test functions in 
$C_b^p(\mathbb R^d)$ and $C^p(\mathbb R^d)$ instead of test functions in $C_b^2(\mathbb R^d)$ and $C^2(\mathbb R^d)$. However it is not clear if such definitions and the standard definitions provided above
are equivalent under general assumptions. It is easy to see however that they are equivalent for the most common measures considered
in Example \ref{EX:measures with kernels of order sigma<1}. 

Below we show that Definitions \ref{def:viscosity solution} and \ref{def:viscosity solution ver1} are equivalent to each other.

\begin{prop}\label{prop:equiv}
Under the assumptions of this paper Definitions \ref{def:viscosity solution} and \ref{def:viscosity solution ver1} are equivalent.
\end{prop}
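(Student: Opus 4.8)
The plan is to show the two definitions are equivalent by a standard truncation-of-test-function argument, exploiting the regularity of the test functions and the integrability encoded in Assumption E.

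Suppose first that $u$ is a subsolution in the sense of Definition \ref{def:viscosity solution}, and let $u-\varphi$ have a global maximum at $x\in\mathcal{O}$ with $\varphi\in C^2(\mathbb{R}^d)$. Since $u\in\textnormal{BUC}(\mathbb{R}^d)$, by subtracting a constant we may assume $\varphi(x)=u(x)$; moreover, because only the values of $\varphi$ near $x$ and the quantities $\varphi(x)$, $D\varphi(x)$, $D^2\varphi(x)$ matter, we can modify $\varphi$ outside a neighborhood of $x$ to produce $\tilde\varphi\in C_b^2(\mathbb{R}^d)$ with $\tilde\varphi\geq\varphi$ everywhere, $\tilde\varphi=\varphi$ near $x$, and $\tilde\varphi(x)=u(x)$; then $u-\tilde\varphi$ still has a global maximum at $x$, so $I(\tilde\varphi,x)\leq 0$. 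Now for $0<\delta<1$ one splits the L\'evy integral in $I(\tilde\varphi,x)$ at $|z|=\delta$. On $|z|<\delta$ we keep $\delta\tilde\varphi(x,z)=\delta\varphi(x,z)$ (valid for $\delta$ small since $\tilde\varphi=\varphi$ near $x$). On $|z|\geq\delta$ we use the global maximum inequality $u(x+z)-u(x)\geq \tilde\varphi(x+z)-\tilde\varphi(x)$ together with $\tilde\varphi\geq\varphi$ to replace $\tilde\varphi(x+z)$ by $u(x+z)$ from above; the linear correction term $\chi_{B_1}(z)D\tilde\varphi(x)\cdot z=\chi_{B_1}(z)D\varphi(x)\cdot z$ is unchanged. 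Since the operator in $I$ is a sup-inf of affine expressions in these integrals and $c_{\alpha\beta}u+f_{\alpha\beta}$ is untouched, $I(\tilde\varphi,x)\leq 0$ yields exactly the inequality in Definition \ref{def:viscosity solution ver1}. The case of $C_b^2$ versus $C^2$ test functions within Definition \ref{def:viscosity solution} itself is handled by the same localization-and-truncation of $\varphi$.

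For the converse, assume $u$ satisfies Definition \ref{def:viscosity solution ver1} and let $u-\varphi$ have a global maximum at $x\in\mathcal{O}$ with $\varphi\in C_b^2(\mathbb{R}^d)$, $\varphi(x)=u(x)$. For each $\delta\in(0,1)$ the Definition \ref{def:viscosity solution ver1} inequality holds, and we want to let $\delta\to 0$ to recover $I(\varphi,x)\leq 0$. The only term that changes with $\delta$ is the singular one: as $\delta\to 0$, $\int_{|z|<\delta}\delta\varphi(x,z)\,d\mu_x^{\alpha\beta}(z)\to 0$ uniformly in $\alpha,\beta$ because $|\delta\varphi(x,z)|\leq \tfrac12\|D^2\varphi\|_\infty|z|^2\leq \tfrac12\|D^2\varphi\|_\infty|z|^p$ for $|z|<\delta<1$ (using $p\le 2$), and $\int_{B_\delta}|z|^p\,d\mu_x^{\alpha\beta}(z)\leq \theta(\delta)$ by \eqref{eqn:uniform integrability Levy measures}; simultaneously, on the region $|z|\geq\delta$ the integrand $u(x+z)-u(x)-\chi_{B_1}(z)D\varphi(x)\cdot z$ converges (monotonically in the relevant part) to the full integrand $\delta u(x,z)=\delta\varphi(x,z)$-dominated expression, and the difference between using $u(x+z)$ versus $\varphi(x+z)$ on $\delta\le|z|<1$ is controlled since $0\leq \varphi(x+z)-u(x+z)$... actually one replaces the $|z|\geq\delta$ integral, as $\delta\downarrow0$, by $\int_{\mathbb{R}^d}[u(x+z)-u(x)-\chi_{B_1}(z)D\varphi(x)\cdot z]\,d\mu_x^{\alpha\beta}(z)$, and then uses the global-max inequality pointwise to see this dominates $L^{\alpha\beta}(\varphi,x)$; combined with the vanishing of the singular term this gives $\sup_\alpha\inf_\beta\{-L^{\alpha\beta}(\varphi,x)+c_{\alpha\beta}u+f_{\alpha\beta}\}\leq 0$, i.e. $I(\varphi,x)\leq 0$. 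One must be slightly careful passing the limit through the $\sup_\alpha\inf_\beta$: this is permissible because the convergence in $\delta$ is uniform in $(\alpha,\beta)$, thanks to the uniform modulus $\theta$ in Assumption E and the uniform bound \eqref{eqn:uniform integrability Levy measures1}.

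The main obstacle is the uniformity of the limit $\delta\to 0$ across the index sets $\mathcal{A},\mathcal{B}$ in the direction \ref{def:viscosity solution ver1} $\Rightarrow$ \ref{def:viscosity solution}, and the bookkeeping of the $\chi_{B_1}$ cutoff when $\delta<1$: one needs $\delta$ small enough that $B_\delta\subset B_1$ so the linear correction appears in both formulations identically, and one needs the split of the integrand on $\delta\le|z|<1$ to be consistent between the two definitions. Both are dispatched using $p\le 2$, the quadratic (hence $|z|^p$) bound on $\delta\varphi(x,z)$ near the origin for $C^2$ (or $C^p$) test functions, and the uniform integrability bounds \eqref{eqn:uniform integrability Levy measures}–\eqref{eqn:uniform integrability Levy measures1}. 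No other assumption is needed, and the argument is entirely local at the contact point $x$.
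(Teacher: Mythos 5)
There is a genuine error in the direction from Definition \ref{def:viscosity solution} to Definition \ref{def:viscosity solution ver1}, which is the nontrivial direction. At a global maximum point of $u-\tilde\varphi$ with $\tilde\varphi(x)=u(x)$ the correct inequality is $u(x+z)-u(x)\leq \tilde\varphi(x+z)-\tilde\varphi(x)$, not $\geq$ as you write. Because of the minus sign in front of the nonlocal term, replacing $\tilde\varphi(x+z)-\tilde\varphi(x)$ by $u(x+z)-u(x)$ on $\{|z|\geq\delta\}$ makes the expression \emph{larger}: the quantity in Definition \ref{def:viscosity solution ver1} dominates $-L^{\alpha\beta}(\tilde\varphi,x)+c_{\alpha\beta}(x)u(x)+f_{\alpha\beta}(x)$, so $I(\tilde\varphi,x)\leq 0$ gives no information about it. A single truncation $\tilde\varphi$ of $\varphi$ cannot close this gap; what is missing is exactly the paper's device: a sequence $\varphi_n\in C_b^2(\mathbb R^d)$ with $u\leq\varphi_n\leq\varphi$, $\varphi_n(x)=u(x)$, $D\varphi_n(x)=D\varphi(x)$ and $\varphi_n\to u$ uniformly, so that the discrepancy on $\{|z|\geq\delta\}$ is bounded by $\|\varphi_n-u\|_\infty\sup_{\alpha,\beta}\mu_x^{\alpha\beta}(\{|z|\geq\delta\})$, which tends to $0$ uniformly in $\alpha,\beta$ by Assumption E; the Definition \ref{def:viscosity solution ver1} inequality is then obtained in the limit $n\to\infty$ from $I(\varphi_n,x)\leq 0$. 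A secondary flaw in your construction: there is no bounded $\tilde\varphi\geq\varphi$ when $\varphi\in C^2(\mathbb R^d)$ is unbounded above; the squeeze must be $u\leq\tilde\varphi\leq\varphi$, which, incidentally, automatically forces $D\tilde\varphi(x)=D\varphi(x)$.

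By contrast, the direction you treat as delicate, Definition \ref{def:viscosity solution ver1} $\Rightarrow$ Definition \ref{def:viscosity solution}, is the easy one (the paper dismisses it as obvious) and needs no limit $\delta\to 0$ at all: for any single fixed $\delta$, on $\{|z|\geq\delta\}$ the global maximum inequality gives pointwise that, after the sign change, the integrand with $u$ dominates the integrand with $\varphi$, while the terms on $\{|z|<\delta\}$ and the zeroth order terms coincide (since $\varphi(x)=u(x)$); hence $I(\varphi,x)$ is bounded above by the Definition \ref{def:viscosity solution ver1} expression, which is $\leq 0$. Your detour through the full integral $\int_{\mathbb R^d}[u(x+z)-u(x)-\chi_{B_1(0)}(z)D\varphi(x)\cdot z]\,d\mu_x^{\alpha\beta}(z)$ is also problematic, since $u$ is only in $\textnormal{BUC}(\mathbb R^d)$ and that integral need not converge near $z=0$.
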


\begin{proof} We only consider the case of subsolutions. It is obvious that if $u$ is a viscosity subsolution in the sense of
Definition \ref{def:viscosity solution ver1} then it is a viscosity subsolution in the sense of
Definition \ref{def:viscosity solution}. Let now $u$ be a viscosity subsolution in the sense of
Definition \ref{def:viscosity solution}. It is easy to see that without loss of generality all maxima/minima in both definitions can be assumed to be strict.
So let $u-\varphi$ have a strict global maximum over $\mathbb R^d$ at $x\in\mathcal{O}$ for some $\varphi\in C^2(\mathbb R^d)$ and we can obviously require that $\varphi(x)=u(x)$. Let $\varphi_n\in C_b^2(\mathbb R^d)$ be functions such that $u\leq \varphi_n\leq \varphi$ on 
$\mathbb R^d$,
$\varphi_n(x)=u(x),D\varphi_n(x)=D\varphi(x)$ and $\varphi_n\to u$ as $n\to+\infty$ uniformly on $\mathbb R^d$. Then
\begin{eqnarray*}
&&\sup_{\alpha\in \mathcal A}\inf_{\beta\in \mathcal B}\Big\{-\int_{|z|<\delta}\delta\varphi(x,z)d\mu_x^{\alpha\beta}(z)\\
&&\,\,
-\int_{|z|\geq\delta}\left[u(x+z)-u(x)-\chi_{B_1(0)}(z)D\varphi(x)\cdot z\right]d\mu_x^{\alpha\beta}(z)
+c_{\alpha\beta}(x)u(x)+f_{\alpha\beta}(x)\Big\}
\\
\end{eqnarray*}

\begin{eqnarray*}
&&=\lim_{n\to+\infty} \sup_{\alpha\in \mathcal A}\inf_{\beta\in \mathcal B}\Big\{-\int_{|z|<\delta}\delta\varphi(x,z)d\mu_x^{\alpha\beta}(z)\\
&&
-\int_{|z|\geq\delta}\left[\varphi_n(x+z)-\varphi_n(x)-\chi_{B_1(0)}(z)D\varphi_n(x)\cdot z\right]d\mu_x^{\alpha\beta}(z)
+c_{\alpha\beta}(x)\varphi_n(x)+f_{\alpha\beta}(x)\Big\}
\\
&&
\qquad\qquad\qquad\qquad
\leq
I(\varphi_n,x)\leq 0.
\end{eqnarray*}
\end{proof}

The main result of the paper is the following theorem.

\begin{thm}\label{thm:Main Comparison Result}
  Let Assumptions A-E hold for $p\in[1,2]$. Then the comparison principle holds for equation (\ref{eqn:intro equation form}). That is, if $u$ and $v$ are respectively a viscosity subsolution and a viscosity supersolution of (\ref{eqn:intro equation form}) and $u(x)\leq v(x)$ for all $x\not\in \mathcal{O}$, then 
  \begin{align*}
    u(x)\leq v(x)\;\;\forall\;x\in\mathcal{O}.
  \end{align*}
\end{thm}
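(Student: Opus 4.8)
The plan is to run the classical doubling-of-variables argument for viscosity solutions, with the novelty that the nonlocal terms are compared by integrating a single maximum-point inequality against an \emph{optimal transport plan} coupling the L\'evy measures at the two base points. First I would suppose, for contradiction, that $M:=\sup_{\mathcal{O}}(u-v)>0$. Since $u,v\in\textnormal{BUC}(\mathbb{R}^d)$ and $u\le v$ outside $\mathcal{O}$, for $\varepsilon>0$ the function $(x,y)\mapsto u(x)-v(y)-\tfrac1\varepsilon|x-y|^2$ attains a global maximum at some $(x_\varepsilon,y_\varepsilon)$; standard arguments give $\tfrac1\varepsilon|x_\varepsilon-y_\varepsilon|^2\to 0$, $u(x_\varepsilon)-v(y_\varepsilon)\to M>0$, hence $x_\varepsilon,y_\varepsilon\in\mathcal{O}$ for $\varepsilon$ small, and also $|x_\varepsilon-y_\varepsilon|\to 0$. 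Writing $\varphi(x)=v(y_\varepsilon)+\tfrac1\varepsilon|x-y_\varepsilon|^2$ as a test function for $u$ and $\psi(y)=u(x_\varepsilon)-\tfrac1\varepsilon|x_\varepsilon-y|^2$ for $v$, I would use Definition \ref{def:viscosity solution ver1} (the equivalent $\delta$-split form, which is precisely what allows the nonlocal term to be handled when the measure is genuinely singular) to get, for each $0<\delta<1$ and each $\alpha$, a $\beta$ realizing the infimum up to an error, and subtract the supersolution inequality from the subsolution inequality.

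The core step is the estimate of the difference of the nonlocal parts. Set $p_\varepsilon=\tfrac2\varepsilon(x_\varepsilon-y_\varepsilon)$, the common gradient of both test functions at the respective maximum points. From the global maximum of $u(x)-v(y)-\tfrac1\varepsilon|x-y|^2$ we have, for all $z,w\in\mathbb{R}^d$,
\begin{align*}
\bigl(u(x_\varepsilon+z)-u(x_\varepsilon)-p_\varepsilon\cdot z\bigr)-\bigl(v(y_\varepsilon+w)-v(y_\varepsilon)-p_\varepsilon\cdot w\bigr)\le\tfrac1\varepsilon|z-w|^2.
\end{align*}
I would integrate this inequality against $\gamma_\varepsilon\in\textnormal{Adm}(\hat\mu^{\alpha\beta}_{x_\varepsilon},\hat\mu^{\alpha\beta}_{y_\varepsilon})$ achieving $\textnormal{d}_{\mathbb{L}_p}$ on the region $\{|z|,|w|<\delta\}$ (using that $\gamma_\varepsilon$ is supported in $B_1\times B_1$ and that the $\chi_{B_1}$ cutoff is harmless there), and separately handle the region away from the origin: there the integrand is just $(u(x_\varepsilon+z)-u(x_\varepsilon))-(v(y_\varepsilon+w)-v(y_\varepsilon))$ minus a linear-in-$(z-w)$ correction, which I would bound using Assumption B (total variation continuity of $\check\mu^{\alpha\beta}$, against which $u,v$ contribute bounded functions) together with Assumption E for the annulus $\delta\le|z|<1$. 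For $p=2$ the singular term is controlled directly by $\tfrac1\varepsilon J_2(\gamma_\varepsilon)=\tfrac1\varepsilon\textnormal{d}_{\mathbb{L}_2}(\hat\mu^{\alpha\beta}_{x_\varepsilon},\hat\mu^{\alpha\beta}_{y_\varepsilon})^2\le\tfrac{C^2}\varepsilon|x_\varepsilon-y_\varepsilon|^2$ via Assumption A; for $p<1$ one instead keeps the first-order terms, uses that $|z|$ is $\mu$-integrable, and the bound becomes $\tfrac1\varepsilon J_p(\gamma_\varepsilon)^{?}$-type — more precisely one must interpolate, so I would split the singular region into a small ball $B_\rho$ (handled by Assumption E, uniform smallness of $\int_{B_\rho}|z|^p$) and the shell $B_\delta\setminus B_\rho$ where the map $z\mapsto\tfrac1\varepsilon|z-w|^2$ is Lipschitz and Proposition \ref{prop:bound for distances of restricted measures} applies, yielding a bound of order $\tfrac1\varepsilon\Lambda^{(p-1)/p}\rho^{?}\,\textnormal{d}_{\mathbb{L}_p}\le C\tfrac1\varepsilon|x_\varepsilon-y_\varepsilon|$. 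Collecting everything, and using Assumption D on $c_{\alpha\beta}$ (so that $c_{\alpha\beta}(x_\varepsilon)u(x_\varepsilon)-c_{\alpha\beta}(y_\varepsilon)v(y_\varepsilon)\ge\lambda(u(x_\varepsilon)-v(y_\varepsilon))-\theta(|x_\varepsilon-y_\varepsilon|)(|u(x_\varepsilon)|+|v(y_\varepsilon)|)$ up to the uniform bound on $v$) and Assumption C on $f_{\alpha\beta}$, the min-max structure lets me pass the $\sup_\alpha\inf_\beta$ through: choosing the near-optimal $\beta$ for the supersolution and the worst $\alpha$, the algebraic terms collapse to give
\begin{align*}
\lambda\bigl(u(x_\varepsilon)-v(y_\varepsilon)\bigr)\le \omega(\delta)+\omega_\varepsilon(1)+ \tfrac{C}{\varepsilon}\bigl(|x_\varepsilon-y_\varepsilon|^2+|x_\varepsilon-y_\varepsilon|\bigr),
\end{align*}
where $\omega(\delta)$ comes from the uniform integrability near $0$ and $\omega_\varepsilon(1)\to 0$. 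Sending first $\varepsilon\to0$ (so $|x_\varepsilon-y_\varepsilon|\to0$ and, crucially, $\tfrac1\varepsilon|x_\varepsilon-y_\varepsilon|^2\to0$; the term $\tfrac1\varepsilon|x_\varepsilon-y_\varepsilon|$ is the delicate one and must be absorbed by keeping, when $p<2$, enough of the quadratic penalization — i.e. one should really double with a penalization exponent matched to $p$, replacing $\tfrac1\varepsilon|x-y|^2$ by a function whose transport cost against $\gamma_\varepsilon$ is $O(|x_\varepsilon-y_\varepsilon|^p)$), then $\delta\to 0$, forces $\lambda M\le 0$, a contradiction.

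I expect the main obstacle to be precisely this matching of the penalization term to the transport exponent $p$ while simultaneously keeping the singular part of the nonlocal operator under control: one must integrate the max-point inequality against the \emph{optimal} plan $\gamma_\varepsilon$, but near $z=0$ the quadratic penalty $\tfrac1\varepsilon|z-w|^2$ is not uniformly Lipschitz, so Proposition \ref{prop:bound for distances of restricted measures} cannot be applied down to the origin; the remedy is the $\delta$-truncation built into Definition \ref{def:viscosity solution ver1} together with Assumption E, which makes the $B_\delta$-contribution a modulus of $\delta$ independent of $\varepsilon$, after which the Lipschitz-in-the-shell estimate plus Assumption A closes the argument. A secondary technical point is the careful bookkeeping of the $\sup\inf$: one fixes $\alpha$ for which the subsolution inequality nearly fails to be negative, then for that $\alpha$ picks $\beta$ nearly optimal in the supersolution inequality, so that the same pair $(\alpha,\beta)$ governs both the $c_{\alpha\beta}$, $f_{\alpha\beta}$ terms and the coupling measure $\gamma_\varepsilon\in\textnormal{Adm}(\hat\mu^{\alpha\beta}_{x_\varepsilon},\hat\mu^{\alpha\beta}_{y_\varepsilon})$ — this is standard but must be done in the right order.
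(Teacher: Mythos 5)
Your core idea — integrate the maximum-point inequality against an optimal coupling of the two L\'evy measures and convert the transport cost into $|x_\varepsilon-y_\varepsilon|$ via Assumption A — is exactly the engine of the paper's proof, but as written your argument does not close, for two concrete reasons. First, the penalization exponent: with the quadratic penalty $\tfrac1\varepsilon|x-y|^2$ the only information the doubling argument yields is $\tfrac1\varepsilon|x_\varepsilon-y_\varepsilon|^2\to0$, while the coupling estimate (via Assumption A with exponent $p$) produces a term of order $\tfrac{C}{\varepsilon}|x_\varepsilon-y_\varepsilon|^p$ — and your shell estimate even produces $\tfrac{C}{\varepsilon}|x_\varepsilon-y_\varepsilon|$ — neither of which tends to zero when $p<2$, since $\tfrac1\varepsilon|x_\varepsilon-y_\varepsilon|^p=\bigl(\tfrac1\varepsilon|x_\varepsilon-y_\varepsilon|^2\bigr)^{p/2}\varepsilon^{p/2-1}$. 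You flag this yourself and propose, parenthetically, to match the penalization to $p$, but that is not a side remark: it is the crux, and it brings its own difficulty, namely that $|x-y|^p$ is not $C^2$ at $x=y$ and hence not an admissible test function. The paper resolves this by penalizing with $\tfrac1\varepsilon\psi_\kappa(x-y)$, a smooth regularization of $|x-y|^p$ that is uniformly pointwise-$C^p$ with constants independent of $\kappa$ (Lemma \ref{lem:psigamma}), and by a three-parameter doubling lemma (Lemma \ref{lem:properties of w}) giving $\tfrac1\varepsilon|x_\varepsilon-y_\varepsilon|^p\to0$ in the iterated limit $\kappa\to0$, $\delta\to0$, $\varepsilon\to0$; without some such device your contradiction never materializes for $p<2$.

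Second, the treatment of the singular part: you propose to use Definition \ref{def:viscosity solution ver1} and apply the optimal plan only on $\{|z|,|w|<\delta\}$, handling the rest by Proposition \ref{prop:bound for distances of restricted measures}. But the optimal plan $\gamma_\varepsilon\in\textnormal{Adm}(\hat\mu^{\alpha\beta}_{x_\varepsilon},\hat\mu^{\alpha\beta}_{y_\varepsilon})$ couples the \emph{full} restricted measures and need not respect a product truncation at $\delta$ (mass with $|z|<\delta$ may be sent to $|w|\geq\delta$ or to the reservoir at the origin), so the split of the integrals you describe is not consistent with a single coupling, and the Lipschitz-in-the-shell bound you sketch (with unresolved exponents) is exactly the non-closing $\tfrac{C}{\varepsilon}|x_\varepsilon-y_\varepsilon|$ term from the first point. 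The paper sidesteps all of this: it first replaces $u,v$ by sup/inf-convolutions $u^\delta,v_\delta$, which are pointwise-$C^{1,1}$ at the maximum point, so the operators are evaluated classically there (Lemma \ref{lem:classical evaluation pointwise C1p-1}), the coefficient shift is absorbed by Proposition \ref{prop:equations for sup/inf convolutions}, and then the whole singular part is compared in one stroke by Lemma \ref{lem:measures comparison singular} and Corollary \ref{cor:measures comparison full}, with no truncation at $\delta$ at all; the tails are handled by Assumption B exactly as you intend. A truncation-based variant without convolutions does exist (it is essentially the proof of Theorem \ref{thm:comparison principle with C1 subsolution}, using Remark \ref{rem:distance to mu_r} to control $\textnormal{d}_{\mathbb{L}_p}(\mu,\mu_r)$), but it requires additional H\"older or $C^1$ regularity of $u$ or $v$, which is not available under Assumptions A--E alone.
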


The following is a special case of { Theorem \ref{thm:Main Comparison Result}}, which we highlight to illustrate its scope {  (see Section \ref{subsec:basic examples} and Section \ref{sec:variants} for further examples)}.

\begin{cor}\label{cor:Comparison Theorem Ops Order Less than 1}
  Let Assumptions C and D be satisfied. Suppose that the measures $\mu_x^{\alpha\beta}(z)$ are of the form
  \begin{align*}
   d\mu_x^{\alpha\beta}(z)= K_{\alpha \beta}(x,z)dz  
  \end{align*}
  and that, for some $\sigma \in (0,1)$, 
  \begin{align*}
    0  \leq K_{\alpha\beta}(x,z)\leq K(z):= \Lambda_1 |z|^{-(d+\sigma)},\\
    |K_{\alpha\beta}(x,z)-K_{\alpha\beta}(y,z)| \leq C|x-y|K(z).	
  \end{align*}
  Then, in this case Assumption B holds and Assumptions A and E hold with $p=1$. In particular, the comparison principle holds for equation
  \eqref{eqn:intro equation form} in this case.
\end{cor}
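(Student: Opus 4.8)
The plan is to check that the kernels $K_{\alpha\beta}$ satisfy Assumptions~A, B and~E (Assumptions~C and~D being part of the hypotheses) and then to invoke Theorem~\ref{thm:Main Comparison Result}. All three verifications are elementary integral estimates resting on the two facts that $\int_{B_1}|z|\,K(z)\,dz<\infty$ (this is where $\sigma<1$ is used) and $\int_{\mathbb{R}^d\setminus B_1}K(z)\,dz<\infty$ (this uses $\sigma>0$); in polar coordinates these equal $c_d\Lambda_1(1-\sigma)^{-1}$ and $c_d\Lambda_1\sigma^{-1}$ for a dimensional constant $c_d$, and more generally $\int_{B_r}|z|\,K(z)\,dz=c_d\Lambda_1(1-\sigma)^{-1}r^{1-\sigma}$ for $r\le 1$.

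For Assumption~E with $p=1$ I would use $0\le K_{\alpha\beta}(x,z)\le K(z)$ to get, for every $r\le 1$,
\begin{align*}
\int_{B_r}|z|\,d\mu_x^{\alpha\beta}(z)\le\int_{B_r}|z|\,K(z)\,dz=\frac{c_d\Lambda_1}{1-\sigma}\,r^{1-\sigma},
\end{align*}
a modulus of continuity in $r$ uniform in $x,\alpha,\beta$, which is \eqref{eqn:uniform integrability Levy measures}; adding $\int_{\mathbb{R}^d\setminus B_1}K(z)\,dz$ bounds $\mathcal{N}_1(\mu_x^{\alpha\beta})$ uniformly, giving \eqref{eqn:uniform integrability Levy measures1}. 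For Assumption~B, since $\check\mu_x^{\alpha\beta}$ has density $K_{\alpha\beta}(x,\cdot)\chi_{\mathbb{R}^d\setminus B_1}$, I would compute
\begin{align*}
\textnormal{d}_{\textnormal{TV}}(\check\mu_x^{\alpha\beta},\check\mu_y^{\alpha\beta})=\int_{\mathbb{R}^d\setminus B_1}|K_{\alpha\beta}(x,z)-K_{\alpha\beta}(y,z)|\,dz\le C|x-y|\int_{\mathbb{R}^d\setminus B_1}K(z)\,dz,
\end{align*}
which is \eqref{eqn:Levy measure regular TV continuity} with the linear modulus $\theta(t)=c_d\Lambda_1 C\sigma^{-1}t$.

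The only step with genuine content is Assumption~A, i.e. $\textnormal{d}_{\mathbb{L}_1}(\hat\mu_x^{\alpha\beta},\hat\mu_y^{\alpha\beta})\le C|x-y|$, and here one must exploit the cancellation in $K_{\alpha\beta}(x,z)-K_{\alpha\beta}(y,z)$ together with the mass reservoir at the origin, since otherwise one only obtains the crude bound by $\int_{B_1}|z|\,d\hat\mu_x^{\alpha\beta}+\int_{B_1}|z|\,d\hat\mu_y^{\alpha\beta}$. Writing $\rho_x=K_{\alpha\beta}(x,\cdot)\chi_{B_1}$, $\rho_y=K_{\alpha\beta}(y,\cdot)\chi_{B_1}$ and $m=\min\{\rho_x,\rho_y\}$, I would exhibit the explicit competitor $\gamma\in\textnormal{Adm}(\hat\mu_x^{\alpha\beta},\hat\mu_y^{\alpha\beta})$ that places $m(z)\,dz$ on the diagonal (zero cost) and ships the excess densities $(\rho_x-m)\,dz$ and $(\rho_y-m)\,dz$ respectively to and from the origin, each at cost $|z|$; one checks directly from Definition~\ref{def:admissible plans} that this $\gamma$ is admissible and that
\begin{align*}
J_1(\gamma)=\int_{B_1}|z|\,|\rho_x(z)-\rho_y(z)|\,dz\le C|x-y|\int_{B_1}|z|\,K(z)\,dz=\frac{c_d\Lambda_1 C}{1-\sigma}\,|x-y|,
\end{align*}
which gives \eqref{eqn:LipL2} with $p=1$. (Alternatively one gets the same bound from the duality Lemma~\ref{lem:duality} with $p=1$, after the standard reduction to pairs $(\phi,-\phi)$ with $\phi$ $1$-Lipschitz and $\phi(0)=0$, so that $|\phi(z)|\le|z|$.) One should also record that $\hat\mu_x^{\alpha\beta}\in\mathcal{M}_1(\mathbb{R}^d)$, which is immediate from $\int_{B_1}|z|\,K(z)\,dz<\infty$, so that $\textnormal{d}_{\mathbb{L}_1}$ is defined. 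With Assumptions~A, B, E verified and~C, D assumed, Theorem~\ref{thm:Main Comparison Result} applies and yields the comparison principle. I expect the construction of $\gamma$ and the check that it is admissible (equivalently, the correct use of the $p=1$ duality with the origin constraint) to be the only non-routine point.
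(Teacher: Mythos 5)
Your proposal is correct, and its overall structure (verify Assumptions A, B, E directly, then invoke Theorem \ref{thm:Main Comparison Result}) is exactly that of the paper, which disposes of the corollary by noting that the kernels fall under Example \ref{EX:measures with kernels of order sigma<1} with $\gamma=p=1$. The one place where you genuinely diverge is the key estimate $\textnormal{d}_{\mathbb{L}_1}(\hat\mu_x^{\alpha\beta},\hat\mu_y^{\alpha\beta})\leq \int_{B_1}|z|\,|K_{\alpha\beta}(x,z)-K_{\alpha\beta}(y,z)|\,dz$: the paper obtains it from the dual side, via Proposition \ref{prop:bound via dual form} (itself a consequence of the duality Lemma \ref{lem:duality} together with the splitting into $(\mu-\nu)^\pm$ and the triangle inequality), as recorded in \eqref{eq:3.11aa}, whereas you exhibit an explicit primal competitor in $\textnormal{Adm}(\hat\mu_x^{\alpha\beta},\hat\mu_y^{\alpha\beta})$: the common density $\min\{\rho_x,\rho_y\}$ stays on the diagonal at zero cost and the excess densities are shipped to and from the origin, so that $J_1(\gamma)=\int_{B_1}|z|\,|\rho_x-\rho_y|\,dz$. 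Both routes give the identical bound for $p=1$ (the factor $2^{(p-1)/p}$ is $1$ there); your primal construction is more self-contained and makes the role of the mass reservoir at $0$ in Definition \ref{def:admissible plans} completely explicit, while the paper's duality-based Proposition \ref{prop:bound via dual form} has the advantage of covering all $p\in[1,2]$ at once, which is what the general Example \ref{EX:measures with kernels of order sigma<1} needs. Your verifications of Assumptions B and E, which the paper leaves as "easy to see," are spelled out correctly, and your parenthetical remark that the same bound follows from the $p=1$ duality with pairs $(\phi,-\phi)$, $\phi$ $1$-Lipschitz with $\phi(0)=0$, is precisely the paper's mechanism (compare Example \ref{ex:Sayah theorem I}).
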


{ 

Theorem \ref{thm:Main Comparison Result} and Corollary \ref{cor:Comparison Theorem Ops Order Less than 1} will be proved in the next section. We also note that Theorem \ref{thm:Main Comparison Result}} { essentially} { covers several of the results in \cite{say}, where only operators of order less than or equal to one are considered.} {  However it cannot be applied directly to the equations in \cite{say} since the operators
considered there had a slightly different form.} {  This is discussed in greater detail in Examples \ref{ex:Sayah theorem I} and \ref{ex:Sayah theorem II}.}

%%%%%%%%%%%%%%%%%%%%%%%%%%%%%%
%%%%%%%%%%%%%%%%%%%%%%%%%%%%%%
%%%%%%%%%%%%%%%%%%%%%%%%%%%%%%
%%%%%%%%%%%%%%%%%%%%%%%%%%%%%%
%%%%%%%%%%%%%%%%%%%%%%%%%%%%%%
%%%%%%%%%%%%%%%%%%%%%%%%%%%%%%
%%%%%%%%%%%%%%%%%%%%%%%%%%%%%%
\section{Comparison Principle}\label{sec:Comparison}

In this section we prove Theorem \ref{thm:Main Comparison Result}. A well known property of sup/inf- convolutions is that they produce approximations of viscosity sub- and supersolutions which enjoy one-sided regularity (semi-convexity and semi-concavity), which makes it easier - under the right circumstances - to evaluate the operator $I(\cdot,x)$ in the classical sense.

\begin{rem}\label{rem:comparison sec no need for sub/inf convolutions}
  An approach to Theorem \ref{thm:Main Comparison Result} that does not rely on such approximations can be found in Section \ref{sec:other comparison}, where we prove a comparison result (Theorem \ref{thm:comparison principle with C1 subsolution}) under a different set of assumptions that are not amicable to such approximations. A posteriori, it became clear that the approach in Section \ref{sec:other comparison} leads to a 
  simpler proof of Theorem \ref{thm:Main Comparison Result}, 
  however we have decided to keep both approaches as the tools developed in this section are of interest in many other situations. See Remark \ref{rem:distance to mu_r follow up} for further comments.
\end{rem}

\begin{DEF}\label{def:sup/inf-conv}
Given $u,v\in \textnormal{BUC}(\mathbb{R}^d)$ and $0<\delta<1$ we define the sup-convolution $u^\delta$ of $u$ and the inf-convolution $v_\delta$ of $v$ by
  \begin{align*}
    u^\delta(x) & = \sup \limits_{y \in \mathbb R^d} \left \{ u(y)-\frac{1}{\delta}|x-y|^2 \right \} ,\\
    v_\delta(x) & = \inf \limits_{y \in \mathbb R^d} \left \{ v(y)+\frac{1}{\delta}|x-y|^2 \right \}. 
  \end{align*}
\end{DEF}
For the reader's convenience, we review some well known properties of the sup/inf-convolutions in the following proposition. 
\begin{prop}\label{prop:sup/inf conv basic properties}
 The sup-convolutions and the inf-convolutions have the following properties.
  \begin{enumerate}
    \item If $\delta_1\leq \delta_2$ then $u^{\delta_1}\leq u^{\delta_2}$ and $v_{\delta_1} \geq v_{\delta_2}$. Moreover 
    $\|u^\delta\|_\infty\leq \|u\|_\infty,\|v_\delta\|_\infty\leq \|v\|_\infty$.
     \item $u^\delta(x) \geq u(x)$ and $v_\delta(x) \leq v(x)$ for all $x\in \mathbb R^d$.	
    \item $u^{\delta}\to u$ and $v_{\delta} \to u$ uniformly on $\mathbb R^d$ as $\delta\to 0$.	
    \item The function $u^\delta$ is semi-convex and for any $x_0\in \mathbb R^d$ , $u^\delta$ is touched from 
    below at $x_0$ by a function of the form
    \begin{align*}
      u(x_0^*)-\tfrac{1}{\delta}|x-x_0^*|^2,\;\;\textnormal{ for some } x_0^* \in \mathbb R^d.
    \end{align*}
    The function $v_\delta$ is semi-concave and for any $x_0\in \mathbb R^d$ , $v_\delta$ is touched from above at $x_0$ by a function of the form
    \begin{align*}
      v(x_0^*)+\tfrac{1}{\delta}|x-x_0^*|^2,\;\;\textnormal{ for some } x_0^* \in \mathbb R^d.
    \end{align*}			
    \item Let $\omega$ be a modulus of continuity of $u$. For any $x_0\in \mathbb R^d$ and $x_0^* \in \mathbb R^d$ such that $u^\delta(x_0) = u(x_0^*)-\tfrac{1}{\delta}|x_0-x_0^*|^2$, we have
    \[
      |x_0-x_0^*| \leq ( 2\delta \|u\|_\infty)^{1/2}.
      \]
      and
	\begin{align*}
          \tfrac{1}{\delta}|x_0-x_0^*|^2 & \leq \omega \big ( ( 2\delta \|u\|_\infty)^{1/2} \big ) .
        \end{align*}
        The analogous property holds for $v_\delta$.		
  \item Let $\Omega\subset \mathbb R^d$ and let $h>0$. If $\omega$ is a modulus of continuity for $u$ in $\Omega$ then for 
  sufficiently small $\delta$, $\omega_1(s)=\max(\omega(s),\tfrac{2}{h}\|u\|_\infty s)$ is a modulus
  of continuity for $u^\delta$ in $\Omega_{2h}$. Similar property holds for $v_\delta$.
  \end{enumerate}
\end{prop}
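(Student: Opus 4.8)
The plan is to verify the seven items in the order listed, since each is a short consequence of the definition together with the boundedness and uniform continuity of $u$ and $v$. I will only discuss $u^\delta$; the statements for $v_\delta$ follow upon replacing $u$ by $-v$, because $v_\delta=-(-v)^\delta$ and $\|-v\|_\infty=\|v\|_\infty$. First, for (1), the map $\delta\mapsto\tfrac1\delta|x-y|^2$ is nonincreasing, so $u^{\delta_1}\le u^{\delta_2}$ whenever $\delta_1\le\delta_2$, while $u(x)\le u^\delta(x)\le\sup_y u(y)$ gives $\|u^\delta\|_\infty\le\|u\|_\infty$. Item (2) is immediate by taking $y=x$ in the supremum. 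For everything that follows I would first record that the supremum in the definition of $u^\delta(x)$ is attained: since $u$ is bounded, $u(y)-\tfrac1\delta|x-y|^2\to-\infty$ as $|y|\to\infty$, so the supremum may be taken over a closed ball and is attained by continuity of $u$; I denote a maximizer by $x_0^*$. Then (4) follows by rewriting
\[
u^\delta(x)+\tfrac1\delta|x|^2=\sup_{y\in\mathbb R^d}\Big\{u(y)-\tfrac1\delta|y|^2+\tfrac2\delta\,x\cdot y\Big\},
\]
a supremum of affine functions of $x$, hence convex; optimality of $x_0^*$ gives $u^\delta(x)\ge u(x_0^*)-\tfrac1\delta|x-x_0^*|^2$ for all $x$, with equality at $x=x_0$, which is the asserted touching from below.

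Items (3) and (5) rest on the same elementary estimate. If $x_0^*$ is a maximizer at $x_0$, then $u(x_0^*)-\tfrac1\delta|x_0-x_0^*|^2=u^\delta(x_0)\ge u(x_0)$, so $\tfrac1\delta|x_0-x_0^*|^2\le u(x_0^*)-u(x_0)\le 2\|u\|_\infty$, which yields $|x_0-x_0^*|\le(2\delta\|u\|_\infty)^{1/2}$; feeding this back into $\tfrac1\delta|x_0-x_0^*|^2\le u(x_0^*)-u(x_0)$ and using the modulus $\omega$ gives $\tfrac1\delta|x_0-x_0^*|^2\le\omega\big((2\delta\|u\|_\infty)^{1/2}\big)$, which is (5). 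The same chain shows $0\le u^\delta(x_0)-u(x_0)\le\omega\big((2\delta\|u\|_\infty)^{1/2}\big)$, and combined with (2) this gives the uniform convergence in (3).

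The one item requiring a genuine argument, and the step I expect to be the main obstacle, is (6). Here I would fix $h>0$, take $\delta$ small enough that $(2\delta\|u\|_\infty)^{1/2}<h$, and consider $x_1,x_2\in\Omega_{2h}$, splitting into two regimes. If $|x_1-x_2|\ge h$, then $|u^\delta(x_1)-u^\delta(x_2)|\le 2\|u^\delta\|_\infty\le 2\|u\|_\infty\le\tfrac2h\|u\|_\infty\,|x_1-x_2|$. If $|x_1-x_2|<h$, let $y_1$ be a maximizer for $u^\delta(x_1)$; by (5) we have $|x_1-y_1|<h$, hence $y_1\in\Omega_h$ and therefore $y_1+(x_2-x_1)\in\Omega$, so using $y_1+(x_2-x_1)$ as a competitor in the supremum defining $u^\delta(x_2)$ gives
\[
u^\delta(x_2)-u^\delta(x_1)\ge u\big(y_1+(x_2-x_1)\big)-u(y_1)\ge-\omega(|x_1-x_2|);
\]
exchanging the roles of $x_1$ and $x_2$ yields $|u^\delta(x_1)-u^\delta(x_2)|\le\omega(|x_1-x_2|)$. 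In both regimes the increment is bounded by $\omega_1(|x_1-x_2|)=\max\big(\omega(|x_1-x_2|),\tfrac2h\|u\|_\infty|x_1-x_2|\big)$, so $\omega_1$ is a modulus of continuity for $u^\delta$ on $\Omega_{2h}$. The delicate point is exactly that the maximizer $y_1$ must remain in $\Omega$ after translation by $x_2-x_1$; this is what forces the restriction to $\Omega_{2h}$, the smallness of $\delta$ (so that (5) controls $|x_1-y_1|$), and the separate treatment of the two cases $|x_1-x_2|<h$ and $|x_1-x_2|\ge h$.
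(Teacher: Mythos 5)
Your proposal is correct and follows essentially the same route as the paper: monotonicity of the penalization for (1), the convexity of $u^\delta(x)+\tfrac1\delta|x|^2$ as a supremum of affine functions for (4), the chain $\tfrac1\delta|x_0-x_0^*|^2\le u(x_0^*)-u(x_0)\le 2\|u\|_\infty$ for (5) and hence (3), and in (6) the same two-regime argument translating the maximizer by $x_2-x_1$ (which stays in $\Omega$ since the maximizer lies in $\Omega_h$ for small $\delta$). Your explicit verification that the supremum is attained is a small useful addition the paper leaves implicit, but it does not change the argument.
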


\begin{proof}
  To prove (1), note that if $\delta_1\leq \delta_2$ then $\tfrac{1}{\delta_1}|x-y|^2 \geq \tfrac{1}{\delta_2}|x-y|^2$ for all $x$ and $y$, and thus $u^{\delta_1}(x)\leq u^{\delta_2}(x)$ for all $x$. The respective statement for $v_{\delta_1}$ and $v_{\delta_2}$ is proved in the same way.
  Property (2) is obvious from the definitions. Property (3) follows from (2) and (5).
  
  Regarding (4) we note that the semi-convexity follows from the fact that $u^\delta(x)+\tfrac{1}{\delta}|x|^2$ is the supremum of affine functions and
  is hence convex. If we fix $x_0$ and if $x_0^*$ is such that	
  \begin{align*}  
    u^\delta(x_0)= u(x_0^*) -\frac{1}{\delta}|x_0-x_0^*|^2,
  \end{align*}
  then for all other $x$ we have $u^\delta(x) \geq P(x) := u(x_0^*)-\frac{1}{\delta}|x-x_0^*|^2$ by the definition of $u^\delta$, so $P$ is the desired paraboloid. To prove (5), let $x_0$ and $x_0^*$ be as above. Then 
  \begin{align*}  
    \tfrac{1}{\delta}|x_0-x_0^*|^2= u(x_0^*) -u^\delta(x_0) \leq u(x_0^*) -u(x_0)\leq 2\|u\|_\infty
  \end{align*}
so
  \[
    |x_0-x_0^*| \leq ( 2\delta \|u\|_\infty)^{1/2}.
  \]
  This means that $u(x_0^*)-u(x_0)$ is in fact bounded from above by $\omega \big ( ( 2\delta \|u\|_\infty)^{1/2} \big )$ which gives (5).
 
 Finally to show (6) we observe that if $x,y\in\Omega_{2h}$ and $u^\delta(x)=u(x^*)-\tfrac{1}{\delta}|x-x^*|^2$ then for small $\delta$,
 $x\in \Omega_{h}$. Now 
 if $|y-x|<h$, we have $u^\delta(y)\geq u(x^*+y-x)-\tfrac{1}{\delta}|x-x^*|^2$ so
 \[
 u^\delta(x)-u^\delta(y)\leq u(x^*)-u(x^*+y-x)\leq \omega (|x-y|)
 \]
 If $|y-x|\geq h$ then obviously $u^\delta(x)-u^\delta(y)\leq \tfrac{2}{h}\|u\|_\infty |y-x|$.
 \end{proof}

\begin{DEF}
  Given $y \in\mathcal{O}$, and the operator $I(\cdot,x)$ from (\ref{eqn:intro equation form}), we define
\begin{align}\label{eqn:freezing coefficients equation}
  I^{(y)}(\phi,x) = \sup\limits_{\alpha}\inf \limits_{\beta} \left \{ -L_{\mu^{\alpha\beta}_y}(\phi,x) +c_{\alpha\beta}(y)\phi(x)+ f_{\alpha\beta}(y) \right \},
\end{align}
where 
\begin{align}\label{eq:Lmualphabeta-def}
   L_{\mu^{\alpha\beta}_y}(\phi,x) = \int_{\mathbb{R}^d} [\phi(x+z)-\phi(x)-\chi_{B_1(0)}(z) D \phi(x)\cdot z] \;d\mu^{\alpha\beta}_{y}(z).
\end{align}
\end{DEF}
Note that this last expression is almost identical to $L^{\alpha\beta}(\phi,x)$, except that the L\'evy measure used is the one corresponding to the point $y$. Moreover the coefficients in (\ref{eqn:freezing coefficients equation}) are evaluated at $y$.

In the rest of this section, unless stated otherwise, we will always assume that Assumptions A-E are satisfied.

\begin{prop}\label{prop:equations for sup/inf convolutions}
  If $u$ is a viscosity subsolution of $I(u,x)= 0$ in $\mathcal{O}$, then $u^\delta$ is a viscosity subsolution of $I_{\delta}(u^\delta,x) =0$ in 
  $\mathcal{O}_{h}$, $h=(2\delta \|u\|_\infty)^{1/2}$, where
  \begin{align*}
    I_{\delta}(\phi,x) := \inf \left \{ I^{(y)}(\phi,x) : |y-x|\leq h\right \}.
  \end{align*}	 
  If $v$ is a viscosity supersolution of $I(v,x)= 0$ in $\mathcal{O}$, then $v_\delta$ is a viscosity supersolution of $I^{\delta}(v_\delta,x) =0$ in  
  $\mathcal{O}_{h}$, $h=(2\delta \|v\|_\infty)^{1/2}$, where
  \begin{align*}
    I^{\delta}(\phi,x) := \sup\left \{ I^{(y)}(\phi,x) : |y-x|\leq h\right \}.
  \end{align*}	 

\end{prop}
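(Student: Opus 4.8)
The plan is to prove the statement for subsolutions; the supersolution case is symmetric. The key point is the classical ``magic property'' of sup-convolutions: if $u^\delta - \varphi$ attains a global maximum over $\mathbb{R}^d$ at a point $x_0 \in \mathcal{O}_h$ with $h = (2\delta\|u\|_\infty)^{1/2}$ and $\varphi \in C_b^2(\mathbb{R}^d)$, then, choosing $x_0^* \in \mathbb{R}^d$ so that $u^\delta(x_0) = u(x_0^*) - \tfrac{1}{\delta}|x_0 - x_0^*|^2$, we have by Proposition~\ref{prop:sup/inf conv basic properties}(5) that $|x_0 - x_0^*| \leq h$, so $x_0^* \in \mathcal{O}$. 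First I would show that $u - \varphi(\cdot + (x_0 - x_0^*))$ attains a global maximum over $\mathbb{R}^d$ at $x_0^*$. Indeed, for any $w$, setting $x = w + (x_0 - x_0^*)$ in the definition of $u^\delta$ gives $u^\delta(x) \geq u(w) - \tfrac{1}{\delta}|x_0 - x_0^*|^2$, hence $u(w) - \varphi(w + x_0 - x_0^*) \leq u^\delta(x) - \varphi(x) + \tfrac{1}{\delta}|x_0-x_0^*|^2 \leq u^\delta(x_0) - \varphi(x_0) + \tfrac{1}{\delta}|x_0-x_0^*|^2 = u(x_0^*) - \varphi(x_0^* + x_0 - x_0^*)$, with equality at $w = x_0^*$.

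Now set $\psi(x) := \varphi(x + (x_0 - x_0^*)) + [\,u(x_0^*) - \varphi(x_0)\,]$, a translate of $\varphi$ by a constant; then $\psi \in C_b^2(\mathbb{R}^d)$, $u - \psi$ has a global maximum at $x_0^*$, and $\psi(x_0^*) = u(x_0^*)$. Since $u$ is a viscosity subsolution of $I(u,x) = 0$ in $\mathcal{O}$ and $x_0^* \in \mathcal{O}$, the definition yields $I(\psi, x_0^*) \leq 0$, i.e.
\begin{align*}
  \sup_\alpha \inf_\beta \left\{ -L_{\mu_{x_0^*}^{\alpha\beta}}(\psi, x_0^*) + c_{\alpha\beta}(x_0^*)\psi(x_0^*) + f_{\alpha\beta}(x_0^*) \right\} \leq 0.
\end{align*}
The next step is to translate this back to an inequality for $\varphi$ at $x_0$. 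Because $\psi$ is just $\varphi$ shifted by the constant vector $x_0 - x_0^*$ and by an additive constant, one has $D\psi(x_0^*) = D\varphi(x_0)$, $D^2\psi(x_0^*) = D^2\varphi(x_0)$, and more precisely $\psi(x_0^* + z) - \psi(x_0^*) - \chi_{B_1}(z)D\psi(x_0^*)\cdot z = \varphi(x_0 + z) - \varphi(x_0) - \chi_{B_1}(z)D\varphi(x_0)\cdot z$ for every $z$; integrating against $\mu_{x_0^*}^{\alpha\beta}$ gives $L_{\mu_{x_0^*}^{\alpha\beta}}(\psi, x_0^*) = L_{\mu_{x_0^*}^{\alpha\beta}}(\varphi, x_0)$. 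Also $\psi(x_0^*) = u(x_0^*) = u^\delta(x_0) + \tfrac{1}{\delta}|x_0 - x_0^*|^2 \geq u^\delta(x_0) = \varphi(x_0)$; since $c_{\alpha\beta} \geq \lambda > 0$ by Assumption~D, replacing $\psi(x_0^*)$ by $\varphi(x_0) = u^\delta(x_0)$ only decreases each term inside the $\inf_\beta$, hence decreases the whole min-max. Setting $y = x_0^*$ (which satisfies $|y - x_0| \leq h$), we conclude
\begin{align*}
  I^{(x_0^*)}(\varphi, x_0) = \sup_\alpha \inf_\beta \left\{ -L_{\mu_{x_0^*}^{\alpha\beta}}(\varphi, x_0) + c_{\alpha\beta}(x_0^*)\varphi(x_0) + f_{\alpha\beta}(x_0^*) \right\} \leq I(\psi, x_0^*) \leq 0,
\end{align*}
and therefore $I_\delta(\varphi, x_0) = \inf\{ I^{(y)}(\varphi, x_0) : |y - x_0| \leq h\} \leq I^{(x_0^*)}(\varphi, x_0) \leq 0$, which is exactly what it means for $u^\delta$ to be a viscosity subsolution of $I_\delta(u^\delta, x) = 0$ in $\mathcal{O}_h$. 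The supersolution statement is obtained by the mirror argument, using the inf-convolution, Proposition~\ref{prop:sup/inf conv basic properties}(4)--(5) for $v_\delta$, and the inequality $v_\delta(x_0) \leq v(x_0^*)$ together with $c_{\alpha\beta} > 0$ in the opposite direction, producing the $\sup$ over $|y - x_0| \leq h$.

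I do not expect any serious obstacle here; this is a standard lemma. The only points requiring a little care are: verifying that the maximum of $u - \psi$ at $x_0^*$ is genuinely \emph{global} over $\mathbb{R}^d$ (which it is, from the computation above, since $\varphi$ and hence $\psi$ are defined on all of $\mathbb{R}^d$ and bounded), confirming that $\psi \in C_b^2$ so it is an admissible test function in Definition~\ref{def:viscosity solution}, and checking that the monotonicity in the zeroth-order term goes the right way when we replace $\psi(x_0^*)$ by $u^\delta(x_0)$ (subsolution: we want an upper bound, and lowering the zeroth-order coefficient times a positive-ish quantity lowers the min-max, so the inequality $\leq 0$ is preserved). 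One should also note that the argument uses the \emph{global} maximum in Definition~\ref{def:viscosity solution} in an essential way — the translation by $x_0 - x_0^*$ would move a merely local maximum point possibly outside the neighborhood — which is why the sub/supersolutions are taken in $\textnormal{BUC}(\mathbb{R}^d)$ with global extrema.
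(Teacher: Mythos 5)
Your proof is correct and follows essentially the same route as the paper: translating the test function by $x_0-x_0^*$ and adding the constant $\tfrac{1}{\delta}|x_0-x_0^*|^2$ (your $\psi$ is exactly the paper's $\phi^*$), invoking the subsolution property at $x_0^*$, observing that the nonlocal increments are translation-invariant, and using $c_{\alpha\beta}\geq\lambda>0$ together with $\psi(x_0^*)\geq\varphi(x_0)$ and $|x_0-x_0^*|\leq h$ to pass to $I_\delta(\varphi,x_0)\leq 0$. No gaps.
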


\begin{proof}
  Let us prove the statement for $u$ and $I_\delta$ (the corresponding one for $v$ and $I^\delta$ is entirely analogous and we omit it). Let $\phi$ touch $u^\delta$ from above at some $x_0 \in \mathcal{O}_h$. Let $x_0^* \in \mathbb{R}^d$ be such that
  \begin{align*}
    u^\delta(x_0) = u(x_0^*) - \frac{1}{\delta}|x_0-x_0^*|^2.
  \end{align*}
  It follows from part (5) of Proposition \ref{prop:sup/inf conv basic properties} that $x_0^*\in\mathcal{O}$.
  Then, by the definition of $u^\delta$, for any $x$ and $y$ we have
  \begin{align*}
    u^\delta(x+x_0-x_0^*) \geq u(y)-\frac{1}{\delta}|x+x_0-x_0^*-y|^2.
  \end{align*}
  Choosing $y=x$ it follows that for every $x$ we have
  \begin{align*}
    u^\delta(x+x_0-x_0^*) \geq u(x)-\frac{1}{\delta}|x_0-x_0^*|^2,
  \end{align*}
  with equality for $x=x_0^*$. It follows that if define a new test function $\phi^*(x)$ by
  \begin{align*}
    \phi^*(x) = \phi(x+x_0-x_0^*)+\frac{1}{\delta}|x_0-x_0^*|^2,
  \end{align*}
  then $\phi^*$ touches $u$ from above at $x_0^*$. Since $u$ is a subsolution, it follows that
  \begin{align*}
    I(\phi^*,x_0^*) \leq 0.	  
  \end{align*}	  
  Let us rewrite the expression on the left. First, recall
  \begin{align*}
    I(\phi^*,x_0^*) = \sup_\alpha \inf_\beta \{ -L^{\alpha\beta}(\phi^*,x_0^*) +c_{\alpha\beta}(x_0^*)\phi^*(x_0^*)+f_{\alpha\beta}(x_0^*) \}. 
  \end{align*}	  
  Next, note that
  \begin{align*}
    L^{\alpha\beta}(\phi^*,x_0^*) = \int_{\mathbb{R}^d} [\phi^*(x_0^*+z)-\phi^*(x_0^*)- \chi_{B_1(0)} D \phi^*(x_0^*)\cdot z] \;d\mu^{\alpha\beta}_{x_0^*}(z).
  \end{align*}
  Since, 
  \begin{align*}
    \phi^*(x_0^*+z)-\phi^*(x_0^*)- \chi_{B_1(0)} D\phi^*(x_0^*)\cdot z = \phi(x_0+z)-\phi(x_0)- \chi_{B_1(0)} D \phi(x_0)\cdot z
  \end{align*}
  it follows that
  \begin{align*}
    L^{\alpha\beta}(\phi^*,x_0^*) = \int_{\mathbb{R}^d} [\phi(x_0+z)-\phi(x_0)- \chi_{B_1(0)} D \phi(x_0)\cdot z]\; d\mu^{\alpha\beta}_{x_0^*}(z) = L_{\mu^{\alpha\beta}_{x_0^*}} (\phi,x_0).
  \end{align*}
  In conclusion
  \begin{align*}
   0\geq I(\phi^*,x_0^*) & = \sup_\alpha\inf_\beta \{- L_{\mu^{\alpha\beta}_{x_0^*}} (\phi,x_0) +c_{\alpha\beta}(x_0^*)(\phi(x_0)+\tfrac{1}{\delta}|x_0-x_0^*|^2 )+f_{\alpha\beta}(x_0^*) \}  \\
	& \geq \sup_\alpha \inf_\beta \{ -L_{\mu^{\alpha\beta}_{x_0^*}} (\phi,x_0) +c_{\alpha\beta}(x_0^*)\phi(x_0)+f_{\alpha\beta}(x_0^*) \} \\
	& \geq I_\delta(\phi,x_0).	
  \end{align*}	  
  Using part (5) of Proposition \ref{prop:sup/inf conv basic properties} in this last inequality, the proposition follows.
  
\end{proof}

Let us also state in a single lemma two basic facts about classical evaluation of L\'evy operators and viscosity solutions. The proof of the lemma goes along lines similar to those of the proofs of \cite{CafSil-2009}[Lemma 4.3 and Lemma 5.7]. 
\begin{lem}\label{lem:classical evaluation pointwise C1p-1}
For any function $u \in \textnormal{BUC}(\mathbb{R}^d)$ that is pointwise-$C^{1,1}$ at a point $x_0 \in \mathcal{O}$ 
(respectively, $x_0\in \mathcal{O}_h$) the operator $I(u,x_0)$ (respectively, $I_\delta(u,x_0)$) is classically defined. If furthermore $u$ is a viscosity subsolution of $I(u,x) =0$ in $\mathcal{O}$ (respectively, $I_\delta(u,x)\leq 0$ in $\mathcal{O}_h$), then also $I(u,x_0) \leq 0$ (respectively, $I_\delta(u,x_0)\leq 0$) pointwise. Similar statement is true for viscosity supersolutions.
\end{lem}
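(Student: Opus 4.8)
The plan is to prove the two assertions of Lemma \ref{lem:classical evaluation pointwise C1p-1} separately, treating first the claim that the operator is classically defined at a pointwise-$C^{1,1}$ point, and then the claim that the pointwise value has the correct sign. I would first handle the statement for $I(u,x_0)$ and $\mathcal{O}$; the statement for $I_\delta(u,x_0)$ and $\mathcal{O}_h$ is entirely analogous, since $I_\delta$ is an infimum over $y$ with $|y-x_0|\le h$ of operators of the same structure and the bounds below are uniform in $\alpha,\beta$ (and in $y$) thanks to Assumption E.

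For the first part, suppose $u$ is pointwise-$C^{1,1}$ at $x_0$, so there are $C>0$ and $r_0>0$ with $|u(x_0+z)-u(x_0)-Du(x_0)\cdot z|\le C|z|^2$ for $|z|\le r_0$. Then for each $\alpha,\beta$ I would split $L^{\alpha\beta}(u,x_0)=\int_{B_{r_0}}\delta u(x_0,z)\,d\mu_{x_0}^{\alpha\beta}(z)+\int_{B_{r_0}^c}[u(x_0+z)-u(x_0)]\,d\mu_{x_0}^{\alpha\beta}(z) - \int_{B_{r_0}^c\cap B_1}Du(x_0)\cdot z\,d\mu_{x_0}^{\alpha\beta}(z)$ (the last term appearing only if $r_0<1$). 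On $B_{r_0}$ the integrand is bounded by $C|z|^2\le C\min\{1,|z|^p\}\cdot C'$ up to adjusting for $p\le 2$; more precisely, since $p\le 2$ and $|z|\le 1$ one has $|z|^2\le |z|^p$, so $|\delta u(x_0,z)|\le C|z|^p$ on $B_{r_0}\cap B_1$, which is $\mu_{x_0}^{\alpha\beta}$-integrable by \eqref{eqn:uniform integrability Levy measures}. The tail part is bounded by $2\|u\|_\infty\,\check\mu_{x_0}^{\alpha\beta}(\mathbb{R}^d)<\infty$ (finite mass of $\check\mu$), and the middle linear correction term is bounded since $|z|\le 1$ there and $\mathcal{N}_p$ is finite. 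Hence $L^{\alpha\beta}(u,x_0)$ is a well-defined finite number, with a bound uniform in $\alpha,\beta$ (using $\|u\|_\infty$, $\Lambda$, $\theta(r_0)$ from Assumption E and $|Du(x_0)|$), so the $\sup_\alpha\inf_\beta$ in $I(u,x_0)$ is also classically defined and finite.

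For the second part, the key point is a standard viscosity-solution mollification/cone argument: at a pointwise-$C^{1,1}$ point I would, for $\eta>0$, construct a $C^2_b$ test function $\varphi_\eta$ that touches $u$ from above at $x_0$ with $\varphi_\eta(x_0)=u(x_0)$, $D\varphi_\eta(x_0)=Du(x_0)$, $\|D^2\varphi_\eta\|_\infty\le C_\eta$, and with $\varphi_\eta = u$ forced to agree with $u$ increasingly well near $x_0$ while remaining an upper barrier globally (e.g. take $\varphi_\eta(x)=u(x_0)+Du(x_0)\cdot(x-x_0)+ (C+\eta)\rho(x-x_0) + M\psi(x-x_0)$ where $\rho$ is a smooth function that equals $|z|^2$ on a small ball and is chosen so the quadratic upper bound and global boundedness both hold, $M$ large). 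Applying the subsolution property to $\varphi_\eta$ gives $I(\varphi_\eta,x_0)\le 0$. Then I would pass to the limit $\eta\to 0$: on the small ball the singular integral $\int_{B_{\rho_\eta}}\delta\varphi_\eta(x_0,z)\,d\mu_{x_0}^{\alpha\beta}$ is controlled by $(C+\eta)\theta(\rho_\eta)$ which tends to $0$ uniformly in $\alpha,\beta$ by Assumption E; away from the small ball $\varphi_\eta(x_0+z)$ is squeezed toward $u(x_0+z)$ (using $u\le\varphi_\eta$ and that one can arrange $\varphi_\eta\to u$ locally uniformly on compact sets), and the tail contribution is dominated via $2\|u\|_\infty\check\mu$ and dominated convergence. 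Combining, $L^{\alpha\beta}(\varphi_\eta,x_0)\to L^{\alpha\beta}(u,x_0)$ uniformly in $\alpha,\beta$, and since the zeroth-order and $f$ terms are continuous, $I(\varphi_\eta,x_0)\to I(u,x_0)$, so $I(u,x_0)\le 0$.

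The main obstacle will be the uniform-in-$(\alpha,\beta)$ convergence in the limiting argument of the second part: one must ensure that the error from replacing $\varphi_\eta$ by $u$ (both near the origin, where the test function's quadratic growth matters, and at moderate scales) is estimated by the moduli $\theta$ of Assumption E and by the total-variation mass of $\check\mu$ rather than by quantities depending on the individual measure $\mu_{x_0}^{\alpha\beta}$, so that the $\sup_\alpha\inf_\beta$ passes through the limit cleanly. This is exactly where the uniform bounds \eqref{eqn:uniform integrability Levy measures}--\eqref{eqn:uniform integrability Levy measures1} (and finiteness of $\check\mu$) are used. The analogous statements for $I_\delta$ and $\mathcal{O}_h$ require in addition that all these estimates are uniform over $|y-x_0|\le h$, which again follows from Assumption E being a uniform bound over all $x\in\mathcal{O}$; and for supersolutions one simply reverses the inequalities and uses inf-convolution-type lower barriers in place of upper ones.
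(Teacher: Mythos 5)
Your first part is fine and is essentially the paper's own argument: split at a small radius, use the pointwise-$C^{1,1}$ bound together with \eqref{eqn:uniform integrability Levy measures}--\eqref{eqn:uniform integrability Levy measures1} and the finite mass of $\check\mu^{\alpha\beta}_{x_0}$, all uniformly in $\alpha,\beta$ (and in $y$ for $I_\delta$). For the second part you take a genuinely different route. The paper invokes the equivalent Definition \ref{def:viscosity solution ver1} (via Proposition \ref{prop:equiv}): it tests with $u_r$, equal to the touching paraboloid $\phi$ in $B_r(x_0)$ and to $u$ outside, gets $I(u_r,x_0)\le 0$ directly, and then bounds $I(u,x_0)-I(u_r,x_0)\le \sup_{\alpha,\beta}\int_{B_r}[\phi(x_0+z)-u(x_0+z)]\,d\mu^{\alpha\beta}_{x_0}(z)\le 2C_{u,x_0}\theta(r)\to 0$, so no global smooth approximation is ever constructed. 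You instead work from Definition \ref{def:viscosity solution}, build global $C^2_b$ upper test functions $\varphi_\eta$, and pass to the limit; this is workable, but it amounts to re-deriving by hand the localization that Proposition \ref{prop:equiv} already supplies, at the cost of the approximation bookkeeping. The paper's route buys a two-line limit with no uniformity issues; yours buys independence from the second definition.

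One step, as you wrote it, is not yet airtight. On the annulus $B_1\setminus B_{\rho_\eta}$ the mass is only controlled by $\mu^{\alpha\beta}_{x_0}(B_1\setminus B_{\rho_\eta})\le \Lambda\,\rho_\eta^{-p}$ (Chebyshev with \eqref{eqn:uniform integrability Levy measures1}), so ``$\varphi_\eta\to u$ locally uniformly on compact sets'' plus dominated convergence does not yield the required uniform-in-$(\alpha,\beta)$ smallness: the annulus approaches the singularity as $\rho_\eta\to 0$, and dominated convergence applied measure by measure is not uniform over the family $\{\mu^{\alpha\beta}_{x_0}\}$. The fix is to couple the parameters: after choosing $\rho_\eta$, take the outer approximation error $\epsilon_\eta\le \rho_\eta^{p+1}$ (say), so the annulus contribution is at most $C\,\theta(2\rho_\eta)+\Lambda\,\epsilon_\eta\rho_\eta^{-p}\to 0$ uniformly; inside $B_{\rho_\eta}$, where $\varphi_\eta$ coincides with the paraboloid, use $0\le \varphi_\eta(x_0+z)-u(x_0+z)\le 2C_{u,x_0}|z|^2$ and \eqref{eqn:uniform integrability Levy measures}; and for $|z|\ge 1$ the uniform bound $\mu^{\alpha\beta}_{x_0}(B_1^c)\le\Lambda$ together with global uniform convergence (available since $u\in\textnormal{BUC}(\mathbb{R}^d)$, so mollification converges uniformly on all of $\mathbb{R}^d$) already gives uniformity, with no need for dominated convergence. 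This quantitative one-sided estimate is not cosmetic: since $\varphi_\eta\ge u$ with equality and equal gradient at $x_0$, monotonicity of the operator only gives $I(\varphi_\eta,x_0)\le I(u,x_0)$, i.e. the inequality in the unhelpful direction, so the uniform bound $L^{\alpha\beta}(\varphi_\eta,x_0)-L^{\alpha\beta}(u,x_0)\le o_\eta(1)$ is precisely what carries the conclusion $I(u,x_0)\le 0$. With that coupling made explicit (and the same estimates run for $I_\delta$, uniformly over $|y-x_0|\le h$, and with reversed barriers for supersolutions), your proof is complete.
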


\begin{proof}
 We will only prove the statement for $I(u,x_0)$ as the other statements are proved similarly. Recall that from Assumption E,
  \begin{align*}
    \Lambda = \sup\limits_{x,\alpha,\beta} \{ \mathcal{N}_2(\hat \mu_x^{\alpha\beta}) + \check \mu_x^{\alpha\beta}(B_1^c) \}.
  \end{align*}
 From the pointwise-$C^{1,1}$ assumption at $x_0$, we have
  \begin{align*}
    & \int_{B_1} |u(x_0+z)-u(x_0)-\chi_{B_1(0)}(z)D u(x_0)\cdot z|\; d\mu_{x_0}^{\alpha\beta}(z) \leq \int_{B_1} C_{u,x_0}|z|^2\;d\mu_{x_0}^{\alpha\beta}(z),\\
    & \int_{B_1^c} |u(x_0+z)-u(x_0)|\;d\mu_{x_0}^{\alpha\beta}(z) \leq 2\|u\|_{\infty}\int_{B_1^c}\;d\mu_{x_0}^{\alpha\beta}(z),	
  \end{align*}
  where $C_{u,x_0}$ is from Definition \ref{def:pointwise Cp}. It thus follows that each integral defining $L^{\alpha\beta}(u,x_0)$ converges and
  \begin{align*}
    \sup \limits_{\alpha\beta }|L^{\alpha\beta}(u,x_0)| \leq (C_{u,x_0}+2\|u\|_\infty ) \Lambda <\infty.
  \end{align*}
  From here, it is immediate that $I(u,x_0)$ is classically defined. As for the second assertion, define
  \begin{align*}
    u_r := \left \{ \begin{array}{l}
      \phi \,\,\,\textnormal{ in } B_r(x_0),\\
      u \,\,\,\textnormal{ outside of } B_r(x_0),  	
    \end{array}\right.	
  \end{align*}
  where $\phi(x)=u(x_0)+Du(x_0)\cdot (x-x_0)+C_{u,x_0}|x-x_0|^2$.
  The function $\phi$ is touching $u$ from above in a neighborhood of $x_0$. From Definition \ref{def:viscosity solution ver1} we have $I(u_r,x_0)\leq 0$ for every $r>0$. On the other hand, 
  \begin{align*}
    I(u,x_0) \leq I(u_r,x_0)+\mathcal{M}^+_{I}(u-u_r,x_0),
  \end{align*}
  where the operator $\mathcal{M}^+_{I}$ is given by
  \begin{align*}
    \mathcal{M}^+_{I}(u-u_r,x_0) = \sup \limits_{\alpha,\beta} \left \{ - L^{\alpha\beta}(u-u_r,x_0)\right \}.	  
  \end{align*}	  
  Using the special form of $u_r$, particularly that $u_r = u$ outside of $B_r$, we have
  \begin{align*}
    \mathcal{M}^+_{I}(u-u_r,x_0) & = \sup \limits_{\alpha,\beta} \left \{  -L^{\alpha\beta}(u-u_r,x_0)) \right \}  \\
	  & = \sup \limits_{\alpha,\beta} \left \{ \int_{B_r} [\phi(x+z)-u(x+z)] \;d\mu_{x_0}^{\alpha\beta}(z) \right \}\\
	  & \leq 2C_{u,x_0}\sup \limits_{\alpha,\beta} \left \{ \int_{B_r} |z|^2 \;d\mu_{x_0}^{\alpha\beta}(z) \right \}\leq \theta(r),
  \end{align*}	  
  where the last inequality follows from (\ref{eqn:uniform integrability Levy measures}). Taking the limit as $r\to 0$, we conclude that
  \begin{align*}
    I(u,x_0)\leq 0.
  \end{align*}
\end{proof}

We will need smooth approximations of functions $|x-y|^p$ for $p\in[1,2]$. For $\kappa>0$ we define a function $\tilde\psi_\kappa:[0,+\infty)
\to [0,+\infty)$ by
\[
\tilde\psi_\kappa(r)=\big(\kappa+r^2\big)^{\frac{p}{2}}-\kappa^{\frac{p}{2}}.
\]
Then the function
\[
\psi_\kappa(x):=\tilde\psi_\kappa(|x|)
\]
is smooth and converges as $\kappa\to 0$ to $|x|^p$ uniformly on $\mathbb{R}^d$. We will be using the following lemma.

\begin{lem}\label{lem:psigamma}
Let $p\in[1,2]$. For every $R>0$ the function $\psi_\kappa(x)$ is uniformly pointwise-$C^p$ on $B_R$, i.e. there exists a constant $C_{p,R}$
such that for every $0<\kappa<1$ and every 
$x_0,x\in B_R$ 
  \[
    |\psi_\kappa(x)-\psi_\kappa(x_0)-D\psi_\kappa(x_0)\cdot (x-x_0)|\leq C_{p,R}|x-x_0|^p\quad \textnormal{ if } 1<p\leq 2,
    \]
 \[
    |\psi_\kappa(x)-\psi_\kappa(x_0)|\leq C_{1,R}|x-x_0|\quad \textnormal{ if } p=1.
    \]
 \end{lem}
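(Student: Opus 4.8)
The plan is to reduce everything to a one-dimensional estimate on the profile $\tilde\psi_\kappa$ and then transfer it to $\psi_\kappa(x)=\tilde\psi_\kappa(|x|)$ via the chain rule, being careful about the behavior near $x_0=0$ (where $r\mapsto|x|$ is not smooth) and uniformly in $\kappa\in(0,1)$. The case $p=1$ I would handle first and separately: here I claim $\psi_\kappa$ is globally Lipschitz with constant independent of $\kappa$, because $\tilde\psi_\kappa(r)=(\kappa+r^2)^{1/2}-\kappa^{1/2}$ has derivative $\tilde\psi_\kappa'(r)=r(\kappa+r^2)^{-1/2}\in[0,1)$, and $|\psi_\kappa(x)-\psi_\kappa(x_0)|=|\tilde\psi_\kappa(|x|)-\tilde\psi_\kappa(|x_0|)|\le|\,|x|-|x_0|\,|\le|x-x_0|$, so $C_{1,R}=1$ works (no $R$-dependence even needed).

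For $1<p\le2$ the key step is to show that $\psi_\kappa\in C^2(\mathbb R^d)$ with a Hessian bound $\|D^2\psi_\kappa(x)\|\le C_{p,R}$ for all $x\in B_R$ and all $0<\kappa<1$; then the desired inequality follows from Taylor's theorem with integral remainder along the segment from $x_0$ to $x$, using $|x-x_0|\le|x-x_0|^p\cdot(2R)^{\,1-?}$... — more precisely, since $|x-x_0|\le 2R$ on $B_R$ and $1<p\le2$ we have $|x-x_0|^2\le (2R)^{2-p}|x-x_0|^p$, so the quadratic remainder $\tfrac12\|D^2\psi_\kappa\|_{L^\infty(B_R)}|x-x_0|^2$ is bounded by $C_{p,R}|x-x_0|^p$ with $C_{p,R}=\tfrac12(2R)^{2-p}\sup_{B_R,\kappa}\|D^2\psi_\kappa\|$. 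So the whole task is the uniform Hessian bound. Writing $g_\kappa(s):=(\kappa+s)^{p/2}$ for $s\ge0$, one has $\psi_\kappa(x)=g_\kappa(|x|^2)-\kappa^{p/2}$, and a direct computation gives
\[
D^2\psi_\kappa(x)=2g_\kappa'(|x|^2)\,\mathrm{Id}+4g_\kappa''(|x|^2)\,x\otimes x,
\]
with $g_\kappa'(s)=\tfrac p2(\kappa+s)^{p/2-1}$ and $g_\kappa''(s)=\tfrac p2(\tfrac p2-1)(\kappa+s)^{p/2-2}$. Hence $|D^2\psi_\kappa(x)|\le 2|g_\kappa'(|x|^2)|+4|g_\kappa''(|x|^2)|\,|x|^2\le p(\kappa+|x|^2)^{p/2-1}+2p|1-\tfrac p2|(\kappa+|x|^2)^{p/2-1}$, using $|x|^2\le\kappa+|x|^2$. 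Since $p/2-1\le0$ and $\kappa+|x|^2\ge\kappa$ could blow up as $\kappa\to0$ only if $x=0$, but actually for $x\ne0$ we have $\kappa+|x|^2\ge|x|^2$ so this is bounded by a constant times $|x|^{p-2}$ which is \emph{not} bounded near $0$ — so I must argue more carefully: the exponent $p/2-1$ is negative, so $(\kappa+|x|^2)^{p/2-1}$ is \emph{largest} when $\kappa+|x|^2$ is smallest, i.e. as $\kappa,|x|\to0$; this is exactly where it could fail.

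The resolution, and the main obstacle I anticipate, is precisely this apparent blow-up at the origin: the naive Hessian bound is $O((\kappa+|x|^2)^{p/2-1})$, which is \emph{not} uniformly bounded on $B_R$ as $\kappa\to0$ when $p<2$. The point is that one does not need a Hessian bound — one needs the weaker pointwise-$C^p$ inequality, and $|x-x_0|^p$ is a genuinely weaker bound than $|x-x_0|^2$ near the base point. So instead of Taylor to second order I would split into two regimes. If $|x-x_0|\ge\tfrac12|x_0|$ (a ``far'' regime, including the degenerate zone near $0$), I bound the left side crudely by the $C^1$ increment: $|\psi_\kappa(x)-\psi_\kappa(x_0)-D\psi_\kappa(x_0)\cdot(x-x_0)|\le[\psi_\kappa]_{\mathrm{Lip},B_R}|x-x_0|+|D\psi_\kappa(x_0)|\,|x-x_0|$, and since $|D\psi_\kappa(x_0)|=|\tilde\psi_\kappa'(|x_0|)|=|x_0|^{p-1}(\ldots)\le C_{p,R}|x_0|^{p-1}\le C_{p,R}(2|x-x_0|)^{p-1}$, combined with $|x-x_0|\le 2R$ this yields $\le C_{p,R}|x-x_0|^{p-1}\cdot|x-x_0|^{?}$... hmm — more simply, $[\psi_\kappa]_{\mathrm{Lip}}$ on $B_R$ is also $O(R^{p-1})$ so the whole increment is $\le C_{p,R}\,R^{p-1}|x-x_0|\le C_{p,R}(2R)^{p-1}\cdot$; to close this I note $|x-x_0|\le$ diam and in the far regime I actually need the finer estimate that $\psi_\kappa$ restricted to any sphere-to-sphere increment behaves like $t^p$. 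The cleanest route is: in the far regime use the one-dimensional elementary inequality $|\,a^p-b^p-pb^{p-1}(a-b)\,|\le C_p|a-b|^p$ for $a,b\ge0$ with $p\in(1,2]$ — which holds because $t\mapsto t^p$ is $C^{1}$ with $(p-1)$-Hölder derivative — applied to the pure power and then absorb the $\kappa$-correction by the Lipschitz-in-$\kappa$ uniformity ($|\tilde\psi_\kappa(r)-r^p|\to0$ uniformly together with first derivatives on compacts); in the ``near'' regime $|x-x_0|<\tfrac12|x_0|$ the point $x_0$ is bounded away from $0$, the segment $[x_0,x]$ stays in $\{|y|\ge\tfrac12|x_0|\}$, the Hessian bound $O((\kappa+|y|^2)^{p/2-1})\le O(|x_0|^{p-2})$ holds there, and Taylor gives $\le C|x_0|^{p-2}|x-x_0|^2\le C|x_0|^{p-2}(\tfrac12|x_0|)^{2-p}|x-x_0|^p=C'|x-x_0|^p$. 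Patching the two regimes and taking the worst constant over $B_R$ and $\kappa\in(0,1)$ yields $C_{p,R}$. I would remark that the dependence on $R$ enters only through $\sup_{B_R}|x_0|^{p-1}$-type terms and through $(2R)^{2-p}$, all of which are finite for fixed $R$.
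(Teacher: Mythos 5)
The paper itself states Lemma \ref{lem:psigamma} without proof, so there is no argument of the authors to compare against; judging your proposal on its own terms: the case $p=1$ is correct, and so is the ``near'' regime $|x-x_0|<\tfrac12|x_0|$ (the Hessian bound $\|D^2\psi_\kappa(y)\|\le C_p(\kappa+|y|^2)^{p/2-1}\le C_p|y|^{p-2}$ along the segment, Taylor, and $|x-x_0|^{2-p}\le(\tfrac12|x_0|)^{2-p}$ is exactly the right computation). The genuine gap is your treatment of the far regime $|x-x_0|\ge\tfrac12|x_0|$. The ``cleanest route'' you settle on --- prove the estimate for the pure power via $|a^p-b^p-pb^{p-1}(a-b)|\le C_p|a-b|^p$ and then ``absorb the $\kappa$-correction'' using that $\tilde\psi_\kappa(r)\to r^p$ uniformly with first derivatives on compacts --- does not work: the inequality must hold for each fixed $\kappa\in(0,1)$ and all pairs $x,x_0$ with a constant uniform in $\kappa$, and the error made in replacing $\psi_\kappa$ by $|\cdot|^p$ is an additive quantity depending only on $\kappa$, which cannot be dominated by $C|x-x_0|^p$ when $|x-x_0|$ is small; equivalently, the assertion that $h_\kappa:=\psi_\kappa-|\cdot|^p$ is uniformly pointwise-$C^p$ is essentially the statement being proved, so nothing has been absorbed. (Applying the one-dimensional inequality directly with $a=(\kappa+|x|^2)^{1/2}$, $b=(\kappa+|x_0|^2)^{1/2}$ also fails here: the leftover term $p\,b^{p-2}\bigl[b(a-b)-x_0\cdot(x-x_0)\bigr]$ is of size $b^{p-2}|x-x_0|^2$, and $b$ can be as small as $\kappa^{1/2}\ll|x-x_0|$.)

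The far regime is in fact the easy one, and your first, ``crude'' idea closes once you use the Lipschitz constant of $\psi_\kappa$ on the relevant segment rather than on all of $B_R$: every $y\in[x_0,x]$ satisfies $|y|\le|x_0|+|x-x_0|\le 3|x-x_0|$, and $|D\psi_\kappa(y)|=p\,|y|\,(\kappa+|y|^2)^{\frac p2-1}\le p\,|y|^{p-1}\le p\,3^{p-1}|x-x_0|^{p-1}$ uniformly in $\kappa$, hence $|\psi_\kappa(x)-\psi_\kappa(x_0)|\le p\,3^{p-1}|x-x_0|^{p}$ (alternatively, $0\le\psi_\kappa(z)\le|z|^p$ by subadditivity of $t\mapsto t^{p/2}$ gives $|\psi_\kappa(x)-\psi_\kappa(x_0)|\le\max(|x|^p,|x_0|^p)\le 3^p|x-x_0|^p$); combined with $|D\psi_\kappa(x_0)|\,|x-x_0|\le p\,(2|x-x_0|)^{p-1}|x-x_0|$, which you already had, the far-regime bound follows with a constant depending only on $p$. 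With that single substitution your proof is complete, and it even shows the constant can be taken independent of $R$.
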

 
 The following is the main lemma of the paper. We refer the reader to Definition \ref{def:Levy measure as a functional} 
for the definition of $L_\mu$.

\begin{lem}\label{lem:measures comparison singular}
  Let $u,v\in\textnormal{BUC}(\mathbb{R}^d)$.  Let $\alpha>0,p\geq 1,0<\kappa<1$ and suppose that $(x_*,y_*)\in \mathcal{O}\times\mathcal{O}$ is a global maximum point of the function
  \begin{align*}
    w(x,y) := u(x)-v(y)-\alpha \psi_\kappa(x-y).
  \end{align*}
  Furthermore, suppose that $u$ and $v$ are pointwise-$C^{1,1}$ at $x_*$ and $y_*$, respectively. Then, for any two L\'evy measures $\mu,\nu \in \mathbb{L}_p(B_1)$, we have the inequality
  \begin{align*}
    L_{\mu}(u,x_*)-L_{\nu}(v,y_*) \leq C_p \alpha \textnormal{d}_{\mathbb{L}_p}(\mu,\nu)^p,
  \end{align*}
  where $C_p$ is independent of $\kappa$.
\end{lem}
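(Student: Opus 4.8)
The plan is to transfer the pointwise maximality of $w$ at $(x_*,y_*)$ into an inequality of increments, then integrate against an optimal coupling $\gamma\in\textnormal{Adm}(\mu,\nu)$ of the two L\'evy measures, and finally recognize the right-hand side as $\alpha$ times $J_p(\gamma)$ up to a controlled error. First I would record that, since $(x_*,y_*)$ is a global maximum of $w$, for all $z,w'\in\mathbb{R}^d$
\[
u(x_*+z)-u(x_*)-v(y_*+w')+v(y_*)\le \alpha\big(\psi_\kappa(x_*-y_*+z-w')-\psi_\kappa(x_*-y_*)\big).
\]
Taking $w'=z$ shows the nonlocal increments of $u$ at $x_*$ dominate those of $v$ at $y_*$ exactly when we pair the same jump $z$ on both sides; but to get the sharp bound I will instead keep $z$ and $w'$ independent and integrate against $\gamma$, whose marginals are $\mu$ and $\nu$ away from the origin. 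The first-order (gradient) correction terms $\chi_{B_1}(z)Du(x_*)\cdot z$ and $\chi_{B_1}(w')Dv(y_*)\cdot w'$ need to be handled: I would use that $\mu,\nu\in\mathbb{L}_p(B_1)$ are supported in $B_1$ (so $\chi_{B_1}\equiv 1$ on their supports and on the support of $\gamma$), and that at a maximum point of $w$ the function $u$ is touched from above by $v(y_*)+\alpha\psi_\kappa(\cdot-y_*+x_*-y_*)$-type test functions, forcing $Du(x_*)=\alpha D\psi_\kappa(x_*-y_*)=Dv(y_*)$ when $u,v$ are differentiable there (which they are, being pointwise-$C^{1,1}$). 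This lets the gradient terms combine into $\alpha D\psi_\kappa(x_*-y_*)\cdot(z-w')$, which is exactly the first-order term of $\psi_\kappa$ at $x_*-y_*$.

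Next I would integrate the pointwise inequality over $\mathbb{R}^d\times\mathbb{R}^d$ with respect to $\gamma$. On the left, using the marginal property of $\gamma$ and the fact (noted in the Remark after Lemma \ref{lem:duality}) that $\gamma((B_1\times B_1)^c)=0$, the integral of $u(x_*+z)-u(x_*)-Du(x_*)\cdot z$ against the first marginal is $L_\mu(u,x_*)$ and similarly the $v$-terms give $-L_\nu(v,y_*)$; here I use Lemma \ref{lem:classical evaluation pointwise C1p-1}/the pointwise-$C^{1,1}$ hypothesis to know these integrals converge absolutely. On the right we are left with
\[
\alpha\int_{\mathbb{R}^d\times\mathbb{R}^d}\Big(\psi_\kappa(x_*-y_*+z-w')-\psi_\kappa(x_*-y_*)-D\psi_\kappa(x_*-y_*)\cdot(z-w')\Big)\,d\gamma(z,w').
\]
By Lemma \ref{lem:psigamma}, since $|x_*-y_*|$ is bounded (by $\textnormal{diam}(\mathcal{O})$, or we can bound it a priori) and $|z-w'|\le 2$ on the support of $\gamma$, the integrand is bounded pointwise by $C_{p,R}|z-w'|^p$ with $C_{p,R}$ independent of $\kappa$. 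Hence the right-hand side is at most $C_p\alpha\int|z-w'|^p\,d\gamma=C_p\alpha\,J_p(\gamma)$.

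Finally I would take the infimum over $\gamma\in\textnormal{Adm}(\mu,\nu)$: by the existence theorem (Theorem stated after Definition \ref{def:p-distance Levy measures}, a special case of Theorem \ref{thm:existence OT with boundary}) there is a minimizing $\gamma$, and then $J_p(\gamma)=\textnormal{d}_{\mathbb{L}_p}(\mu,\nu)^p$, giving
\[
L_\mu(u,x_*)-L_\nu(v,y_*)\le C_p\,\alpha\,\textnormal{d}_{\mathbb{L}_p}(\mu,\nu)^p,
\]
as desired. I expect the main obstacle to be the bookkeeping around the gradient correction terms: one must justify that $u$ and $v$ are differentiable at $x_*,y_*$ with $Du(x_*)=Dv(y_*)=\alpha D\psi_\kappa(x_*-y_*)$, reconcile the $\chi_{B_1}$ truncations with the support of $\gamma$ (in particular that $\gamma$ puts no mass where $|z|\ge 1$ or $|w'|\ge 1$), and ensure that the manipulation "sum of two one-dimensional integrals = one integral against $\gamma$" is legitimate, i.e. that each piece is separately integrable so that no $\infty-\infty$ arises — this is exactly where the pointwise-$C^{1,1}$ assumption at $x_*,y_*$ together with $\mu,\nu\in\mathbb{L}_p(B_1)\subset\mathbb{L}_2(B_1)$ is used. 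The estimate on $\psi_\kappa$ and the passage to the optimal plan are then routine given Lemmas \ref{lem:psigamma} and the existence/duality results.
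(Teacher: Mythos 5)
Your proposal is correct and follows essentially the same route as the paper's proof: use the global maximality of $w$ (which also fixes $Du(x_*)=Dv(y_*)=\alpha D\psi_\kappa(x_*-y_*)$), integrate the resulting pointwise inequality against an admissible coupling $\gamma$ supported in $B_1\times B_1$, bound the second-order increment of $\psi_\kappa$ via Lemma \ref{lem:psigamma} by $C_p|x-y|^p$ uniformly in $\kappa$, and pass to the infimum over $\gamma\in\textnormal{Adm}(\mu,\nu)$. The points you flag as potential obstacles (integrability from the pointwise-$C^{1,1}$ hypothesis, the $\chi_{B_1}$ truncation versus the support of $\gamma$) are handled in the paper exactly as you suggest, and taking the infimum directly makes the existence of an optimal plan unnecessary.
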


\begin{proof}
  First, note that as $(x_*,y_*)$ is a maximum point of $w$, we have
  \begin{align*}
    u(x) & \leq \alpha \psi_\kappa(x-y_*)+v(y_*)+(u(x_*)-v(y_*)-\alpha \psi_\kappa(x_*-y_*))\\	  
    v(y) & \geq -\alpha \psi_\kappa(x_*-y)+u(x_*)-(u(x_*)-v(y_*)-\alpha \psi_\kappa(x_*-y_*))	    	
  \end{align*}
  with equalities at $x_*$ and $y_*$ respectively.	  	
  Second, for any $(x,y)\in \mathbb R^d\times\mathbb R^d$
  \begin{align*}
    w(x_*+x,y_*+y)-w(x_*,y_*) \leq 0.   	  
  \end{align*}	  
  Let $\gamma \in \textnormal{Adm}(\mu,\nu)$. Using that $\gamma((B_1\times B_1)^c)=0$, and since 
$\delta u(x_*,0)=0$ and $\delta v(y_*,0)=0$, we thus have
  \begin{align*}	
    & L_{\mu}(u,x_*)-L_{\nu}(v,y_*)\\	  
    &  = \int_{B_1 \times B_1} \bigg(u(x_*+x)-v(y_*+y)-(u(x_*)-v(y_*)) \\
    & \qquad\qquad\qquad\qquad    -\alpha D\psi_\kappa(x_*-y_*)\cdot(x-y)\bigg)d\gamma(x,y).
  \end{align*}
  On the other hand, if $x,y\in B_1$, using Lemma \ref{lem:psigamma}, we also have
  \begin{align*}
    & u(x_*+x)-v(y_*+y)-(u(x_*)-v(y_*)) -\alpha D\psi_\kappa(x_*-y_*)\cdot(x-y)\\
    & \leq \alpha \psi_\kappa(x_*+x-y-y_*)-\alpha \psi_\kappa(x_*-y_*) -\alpha D\psi_\kappa(x_*-y_*)\cdot(x-y)\\
    & \leq C_p\alpha|x-y|^p.			
  \end{align*}
  Therefore, 
  \[
    L_{\mu}(u,x_*)-L_{\nu}(v,y_*)\leq C_p\alpha \int_{B_1 \times B_1}|x-y|^pd\gamma(x,y).
  \]
  Taking the infimum over all $\gamma\in \textnormal{Adm}(\mu,\nu)$, it thus follows that
  \begin{align*}
    L_{\mu}(u,x_*)-L_{\nu}(v,y_*) & \leq C_p \alpha\textnormal{d}_{\mathbb{L}_p}(\mu,\nu)^p.
  \end{align*}

\end{proof}

\begin{cor}\label{cor:measures comparison full}
  Let $u,v,x_*$, and $y_*$ be as in Lemma \ref{lem:measures comparison singular}, and let $\mu,\nu\in  \mathbb{L}_{p}(\mathbb{R}^d)$. Then, 
  \begin{align*}
    L_{\mu}(u,x_*)-L_{\nu}(v,y_*) \leq C_p \alpha \textnormal{d}_{\mathbb{L}_p}(\hat \mu,\hat\nu)^p + 2\|v\|_{\infty}\textnormal{d}_{\textnormal{TV}}(\check \mu, \check \nu).
  \end{align*}

\end{cor}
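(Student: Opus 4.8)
The plan is to reduce the corollary to Lemma \ref{lem:measures comparison singular} by splitting each L\'evy measure into its singular and regular parts. Writing $\mu=\hat\mu+\check\mu$ and $\nu=\hat\nu+\check\nu$ as in \eqref{eqn:Levy measure singular regular decomposition}, and using additivity of the integrand in the measure, I would first record
\[
L_{\mu}(u,x_*)-L_{\nu}(v,y_*)=\big(L_{\hat\mu}(u,x_*)-L_{\hat\nu}(v,y_*)\big)+\big(L_{\check\mu}(u,x_*)-L_{\check\nu}(v,y_*)\big).
\]
This splitting is legitimate because all four integrals converge: the singular ones since $u,v$ are pointwise-$C^{1,1}$ at $x_*,y_*$ and $\mathcal{N}_p(\hat\mu),\mathcal{N}_p(\hat\nu)<\infty$ (so $\int_{B_1}|z|^2\,d\hat\mu<\infty$, $\int_{B_1}|z|^2\,d\hat\nu<\infty$), and the regular ones since $u,v$ are bounded and $\check\mu,\check\nu$ are finite measures. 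Moreover, since $\check\mu$ is supported in $\mathbb{R}^d\setminus B_1$ and $\chi_{B_1(0)}$ vanishes there, $L_{\check\mu}(u,x_*)=\int_{\mathbb{R}^d\setminus B_1}[u(x_*+z)-u(x_*)]\,d\check\mu(z)$, and similarly for $\check\nu$.

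For the singular part one applies Lemma \ref{lem:measures comparison singular} directly with the L\'evy measures $\hat\mu,\hat\nu\in\mathbb{L}_p(B_1)$: the hypotheses needed there ($u,v\in\textnormal{BUC}(\mathbb{R}^d)$, the function $w(x,y)=u(x)-v(y)-\alpha\psi_\kappa(x-y)$ has a global maximum at $(x_*,y_*)\in\mathcal{O}\times\mathcal{O}$, and $u,v$ pointwise-$C^{1,1}$ at $x_*,y_*$) are precisely those assumed in this corollary. This yields $L_{\hat\mu}(u,x_*)-L_{\hat\nu}(v,y_*)\le C_p\alpha\,\textnormal{d}_{\mathbb{L}_p}(\hat\mu,\hat\nu)^p$, with $C_p$ independent of $\kappa$.

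For the regular part the key point is that global maximality of $w$, evaluated at the pair $(x_*+z,y_*+z)$, forces $u(x_*+z)-v(y_*+z)-\alpha\psi_\kappa(x_*-y_*)\le u(x_*)-v(y_*)-\alpha\psi_\kappa(x_*-y_*)$, i.e.\ after cancelling the equal $\psi_\kappa$ terms, $u(x_*+z)-u(x_*)\le v(y_*+z)-v(y_*)$ for every $z\in\mathbb{R}^d$. Then I would write
\begin{align*}
L_{\check\mu}(u,x_*)-L_{\check\nu}(v,y_*)&=\int_{\mathbb{R}^d\setminus B_1}\big([u(x_*+z)-u(x_*)]-[v(y_*+z)-v(y_*)]\big)\,d\check\mu(z)\\
&\quad+\int_{\mathbb{R}^d\setminus B_1}[v(y_*+z)-v(y_*)]\,d(\check\mu-\check\nu)(z).
\end{align*}
The first integral is $\le 0$, since its integrand is $\le 0$ by the pointwise inequality above and $\check\mu\ge 0$. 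The second integral is bounded by $\sup_z|v(y_*+z)-v(y_*)|\cdot|\check\mu-\check\nu|(\mathbb{R}^d\setminus B_1)\le 2\|v\|_\infty\,\textnormal{d}_{\textnormal{TV}}(\check\mu,\check\nu)$, because the signed measure $\check\mu-\check\nu$ is supported in $\mathbb{R}^d\setminus B_1$, so its total variation there equals $\textnormal{d}_{\textnormal{TV}}(\check\mu,\check\nu)$. Adding the singular and regular estimates gives the assertion.

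I do not expect a genuine obstacle here: the corollary is a routine consequence of Lemma \ref{lem:measures comparison singular}, the only real input being the order-zero bound on the regular part by the total variation distance, which costs nothing because that part of the operator is bounded by $2\|v\|_\infty$ times the mass discrepancy, together with the pointwise comparison $u(x_*+z)-u(x_*)\le v(y_*+z)-v(y_*)$ inherited from the global maximum. The only items demanding a little care are checking convergence of the split integrals and the remark that $\textnormal{d}_{\textnormal{TV}}(\check\mu,\check\nu)=|\check\mu-\check\nu|(\mathbb{R}^d\setminus B_1)$.
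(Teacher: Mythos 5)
Your proof is correct and follows essentially the same route as the paper: split into $\hat\mu,\check\mu$ parts, apply Lemma \ref{lem:measures comparison singular} to the singular part, and for the tail use the global maximality of $w$ at $(x_*+z,y_*+z)$ versus $(x_*,y_*)$ plus the total-variation bound $2\|v\|_\infty\textnormal{d}_{\textnormal{TV}}(\check\mu,\check\nu)$. The extra integrability checks you include are fine but not needed beyond what the paper already assumes.
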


\begin{proof}
  Let us write the difference as follows
  \begin{align*}
    L_\mu(u,x_*)-L_\nu(v,y_*) = L_{\hat \mu}(u,x_*)-L_{\hat \nu}(v,y_*) + L_{\check \mu}(u,x_*)-L_{\check \nu}(v,y*).
  \end{align*}
  Thanks to Lemma \ref{lem:measures comparison singular}, the first difference in the right-hand side above is less than 
  or equal to $C_p\alpha\textnormal{d}_{\mathbb{L}_p}(\hat \mu,\hat\nu)^p$. For the second one, note that
  \begin{align*}
     L_{\check \mu}(u,x_*)-L_{\check \nu}(v,y_*) & = \int_{B_1^c}[u(x_*+z)-u(x_*)]\;d\mu(z)-\int_{B_1^c} [v(y_*+z)-v(y_*)]\;d\nu(z)\\
	 & = \int_{B_1^c} [u(x_*+z)-v(x_*+z)-(u(x_*)-v(y_*))]\;d\mu(z)
	 \\
	 &+\int_{B_1^c} [v(y_*+z)-v(y_*)]\;d(\mu-\nu)(z).
  \end{align*}
 Since $w$ achieves its global maximum at $(x_*,y_*)$, it follows that $u(x_*+z)-v(y_*+z)-(u(x_*)-v(y_*))\leq 0$. Hence we obtain 
  \begin{align*}
     L_{\check \mu}(u,x_*)-L_{\check \nu}(v,y_*) & \leq \int_{B_1^c} [v(y_*+z)-v(y_*)]\;d(\mu-\nu)(z).\\
	   & \leq 2\|v\|_{\infty}\textnormal{d}_{\textnormal{TV}}(\check \mu, \check \nu).
  \end{align*}
\end{proof}

We need a variant of a well known doubling lemma (see e.g. \cite[Lemma 3.1]{CIL}).

\begin{lem}\label{lem:properties of w} Let $u,v\in \textnormal{BUC}(\mathbb{R}^d)$ be such that $M=\sup(u-v)>\tau>0$ and
$u(x)-v(x)\leq 0$ for $x\in B_R^c$ for some $R>0$. For any $\varepsilon,\delta,\kappa>0$, set
  \begin{align*}
    w(x,y) & := u^{\delta}(x)-v_{\delta}(y)-\frac{1}{\varepsilon}\psi_\kappa(x-y),\\ 	  
    M_{\varepsilon,\delta,\kappa} & := \sup\limits_{\mathbb R^d\times\mathbb R^d} w(x,y).
  \end{align*}	
 Then, for sufficiently small $\delta,\varepsilon,\kappa$, there exist $(x_\varepsilon,y_\varepsilon)$ such that
  \begin{align*}
   M_{\varepsilon,\delta,\kappa}=w(x_\varepsilon,y_\varepsilon).
  \end{align*}
Then, we have{ 
  \begin{align}
    &   \frac{|x_\varepsilon-y_\varepsilon|^p}{\varepsilon} \leq \omega((C\varepsilon+c_{\kappa,\varepsilon,\delta})^{1/p})+\frac{c_{\kappa,\varepsilon,\delta}}{\varepsilon},\label{eq:lim0}\\
    & \lim_{\varepsilon \to 0}\lim_{\delta\to 0}\lim_{\kappa\to 0}  M_{\varepsilon,\delta,\kappa}  = M,\nonumber
  \end{align}where above $C=\|u\|_\infty+\|v\|_\infty$, $\omega$ is a modulus of continuity of $u$, and $c_{\kappa,\varepsilon,\delta}$ is a constant that converges to $0$ uniformly in $\varepsilon$ and $\delta$ as $\kappa\to 0^+$.}
  
  If $\Omega$ is an open subset of $\mathbb{R}^d$ and in addition { $u\in C^{0,r}(\Omega), 0<r\leq 1$}, and all the points
  $x_\varepsilon,y_\varepsilon\in \Omega$,
  then
  \begin{equation}\label{eq:calphaeps}
  \limsup_{\kappa\to 0}  \frac{|x_\varepsilon-y_\varepsilon|^{p-r}}{\varepsilon}\leq C_1
  \end{equation}
  for some constant $C_1$ independent of $\delta,\varepsilon,\kappa$.
  
\end{lem}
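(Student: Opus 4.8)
The plan is to run the classical doubling‑of‑variables argument; the two non‑routine points are the existence of the maximizer (because $u^{\delta}$ and $v_{\delta}$ need not decay at infinity) and the bookkeeping of the three small parameters, which are sent to $0$ in the order $\kappa$, then $\delta$, then $\varepsilon$. Throughout I write $C=\|u\|_\infty+\|v\|_\infty$, $h_u=(2\delta\|u\|_\infty)^{1/2}$, $h_v=(2\delta\|v\|_\infty)^{1/2}$, I let $\omega$ denote a (nondecreasing) modulus of continuity of $u$ and $\omega_v$ one of $v$, and I use $\tilde\psi_\kappa(r)\ge r^{p}-\kappa^{p/2}$.

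First I would prove existence of $(x_\varepsilon,y_\varepsilon)$. Taking $y=x$ and using $\psi_\kappa(0)=0$ with Proposition~\ref{prop:sup/inf conv basic properties}(2) gives $M_{\varepsilon,\delta,\kappa}\ge\sup(u^{\delta}-v_{\delta})\ge\sup(u-v)=M>\tau>0$, so along a maximizing sequence $(x_n,y_n)$ we may assume $w(x_n,y_n)>\tau$. Since $u^{\delta}(x_n)-v_{\delta}(y_n)\le C$ by Proposition~\ref{prop:sup/inf conv basic properties}(1), this forces $\varepsilon^{-1}\psi_\kappa(x_n-y_n)<C$, hence $|x_n-y_n|\le\eta:=(C\varepsilon+\kappa^{p/2})^{1/p}$, which tends to $0$ as $\varepsilon,\kappa\to0$. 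Choosing $x_n^{*},y_n^{*}$ attaining the sup‑/inf‑convolution at $x_n,y_n$, Proposition~\ref{prop:sup/inf conv basic properties}(5) gives $|x_n-x_n^{*}|\le h_u$ and $|y_n-y_n^{*}|\le h_v$, and $u^{\delta}(x_n)\le u(x_n^{*})$, $-v_{\delta}(y_n)\le-v(y_n^{*})$. If $|x_n|>R+h_u$ then $|x_n^{*}|>R$, so $u(x_n^{*})\le v(x_n^{*})$ and, using $|x_n^{*}-y_n^{*}|\le h_u+h_v+\eta$,
\[
\tau<w(x_n,y_n)\le u(x_n^{*})-v(y_n^{*})\le v(x_n^{*})-v(y_n^{*})\le\omega_v(h_u+h_v+\eta);
\]
symmetrically, if $|y_n|>R+h_v$ then $|y_n^{*}|>R$, $v(y_n^{*})\ge u(y_n^{*})$, and $\tau<w(x_n,y_n)\le\omega(h_u+h_v+\eta)$. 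For $\varepsilon$ small, then $\kappa$ small, then $\delta$ small, both right‑hand sides are $<\tau$, a contradiction; so $(x_n,y_n)$ stays in a fixed compact set and, $w$ being continuous, a subsequence converges to a maximizer $(x_\varepsilon,y_\varepsilon)$, still with $|x_\varepsilon-y_\varepsilon|\le\eta$.

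For \eqref{eq:lim0}, comparing the maximum with $w(y_\varepsilon,y_\varepsilon)$ gives $\varepsilon^{-1}\psi_\kappa(x_\varepsilon-y_\varepsilon)\le u^{\delta}(x_\varepsilon)-u^{\delta}(y_\varepsilon)$. The elementary inequality $u^{\delta}(y)\ge u(x^{*}+y-x)-\tfrac1\delta|x-x^{*}|^2$ (with $x^{*}$ attaining the sup defining $u^{\delta}(x)$) shows that $\omega$ is also a modulus of continuity of $u^{\delta}$, so $\varepsilon^{-1}(|x_\varepsilon-y_\varepsilon|^{p}-\kappa^{p/2})\le\omega(|x_\varepsilon-y_\varepsilon|)$; combined with $|x_\varepsilon-y_\varepsilon|\le(C\varepsilon+\kappa^{p/2})^{1/p}$ and monotonicity of $\omega$, this is exactly \eqref{eq:lim0} with $c_{\kappa,\varepsilon,\delta}:=\kappa^{p/2}$. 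For the limit statement, $M_{\varepsilon,\delta,\kappa}\ge M$ is as above, while $M_{\varepsilon,\delta,\kappa}\le u^{\delta}(x_\varepsilon)-v_{\delta}(y_\varepsilon)=\bigl(u^{\delta}(x_\varepsilon)-u^{\delta}(y_\varepsilon)\bigr)+\bigl(u^{\delta}(y_\varepsilon)-v_{\delta}(y_\varepsilon)\bigr)$, where the first term is $\le\omega(\eta)$ and the second is $\le u(y_\varepsilon^{*})-v(\tilde y_\varepsilon)\le M+\omega_v(h_u+h_v)$ (the two auxiliary points at $y_\varepsilon$ being within $h_u+h_v$). Sending $\kappa\to0$ (so $\eta\to(C\varepsilon)^{1/p}$), then $\delta\to0$, then $\varepsilon\to0$ in $M\le M_{\varepsilon,\delta,\kappa}\le M+\omega(\eta)+\omega_v(h_u+h_v)$ gives the iterated limit $M$.

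Finally, for \eqref{eq:calphaeps} I would repeat the computation behind \eqref{eq:lim0}, now estimating $u^{\delta}(x_\varepsilon)-u^{\delta}(y_\varepsilon)$ with $u\in C^{0,r}(\Omega)$: for $\delta$ small the points $x_\varepsilon^{*}$ and $x_\varepsilon^{*}+y_\varepsilon-x_\varepsilon$, each within $h_u$ of $x_\varepsilon,y_\varepsilon\in\Omega$, stay in $\Omega$, so $u^{\delta}(x_\varepsilon)-u^{\delta}(y_\varepsilon)\le[u]_{C^{0,r}(\Omega)}|x_\varepsilon-y_\varepsilon|^{r}$ and hence $|x_\varepsilon-y_\varepsilon|^{p}\le[u]_{C^{0,r}(\Omega)}\,\varepsilon\,|x_\varepsilon-y_\varepsilon|^{r}+\kappa^{p/2}$. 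If $|x_\varepsilon-y_\varepsilon|^{p}\le2\kappa^{p/2}$ then $|x_\varepsilon-y_\varepsilon|^{p-r}/\varepsilon\le2^{(p-r)/p}\kappa^{(p-r)/2}/\varepsilon\to0$ as $\kappa\to0$; otherwise $\tfrac12|x_\varepsilon-y_\varepsilon|^{p}\le[u]_{C^{0,r}(\Omega)}\,\varepsilon\,|x_\varepsilon-y_\varepsilon|^{r}$, so $|x_\varepsilon-y_\varepsilon|^{p-r}/\varepsilon\le2[u]_{C^{0,r}(\Omega)}$. Either way $\limsup_{\kappa\to0}|x_\varepsilon-y_\varepsilon|^{p-r}/\varepsilon\le2[u]_{C^{0,r}(\Omega)}=:C_1$, independent of $\delta,\varepsilon,\kappa$. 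The main obstacle is the existence step, i.e. preventing the maximizing sequence from escaping to infinity: this is exactly where the hypothesis $u-v\le0$ on $B_R^{c}$ enters and where the three parameters must be sent to $0$ in a compatible order; a secondary technical point is checking that the sup‑convolution auxiliary points remain inside $\Omega$ for the last estimate.
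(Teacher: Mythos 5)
Your proof is correct and follows essentially the same route as the paper: you localize the maximizing sequence using $u-v\le 0$ on $B_R^c$ together with the sup/inf-convolution error bounds, you derive \eqref{eq:lim0} from the comparison $w(y_\varepsilon,y_\varepsilon)\le w(x_\varepsilon,y_\varepsilon)$, the fact that $\omega$ is also a modulus of continuity of $u^{\delta}$, and the uniform bound $0\le |x|^p-\psi_\kappa(x)\le \kappa^{p/2}$, and you obtain \eqref{eq:calphaeps} by repeating that computation with the $C^{0,r}$ seminorm of $u$. The only differences are cosmetic: your explicit sandwich $M\le M_{\varepsilon,\delta,\kappa}\le M+\omega(\eta)+\omega_v(h_u+h_v)$ replaces the paper's intermediate quantities $M_{\varepsilon,\delta}$ and $M_\varepsilon$, and your two-case treatment of the $\kappa^{p/2}$ term replaces the paper's direct $\limsup_{\kappa\to 0}$ computation.
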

\begin{proof}
It is easy to see that the uniform convergence of the $u^{\delta},v_{\delta}$ to $u,v$, the uniform convergence of 
$\psi_\kappa(x-y)$
to $|x-y|^p$ and the uniform continuity of $u,v$ (and hence of $u^{\delta},v_{\delta}$, uniform in $\delta$) implies that for
sufficiently small $\delta,\varepsilon,\kappa$ we must have $w(x,y)\leq \tau/2$ when either $x$ or $y$ is in $B_R^c$. Thus $w$ must attain maximum at some point $(x_\varepsilon,y_\varepsilon)\in B_R\times B_R$.

Denote
\[
M_{\varepsilon,\delta} := \sup\limits_{\mathbb R^d\times\mathbb R^d} (u^{\delta}(x)-v_{\delta}(y)-\frac{1}{\varepsilon}|x-y|^p),
\]
\[
M_{\varepsilon} := \sup\limits_{\mathbb R^d\times\mathbb R^d} (u(x)-v(y)-\frac{1}{\varepsilon}|x-y|^p).
\]
Again, using the uniform convergence of $u^{\delta},v_{\delta},\psi_\kappa(x-y)$ and the uniform continuity of $u,v$ we easily find (see also the proof of \cite[Lemma 3.1]{CIL}) that
\[
\lim_{\kappa\to 0}  M_{\varepsilon,\delta,\kappa}=M_{\varepsilon,\delta},\quad
\lim_{\delta\to 0}  M_{\varepsilon,\delta}=M_{\varepsilon}, \quad
\lim_{\varepsilon\to 0}  M_{\varepsilon}=M.
\]
{ We obviously have
\begin{equation}\label{eq:limsupgamma}
 \frac{1}{\varepsilon}\psi_\kappa(x_\varepsilon-y_\varepsilon)+\frac{c_{\kappa,\varepsilon,\delta}}{\varepsilon}
= \frac{1}{\varepsilon}|x_\varepsilon-y_\varepsilon|^p
\end{equation}
where $c_{\kappa,\varepsilon,\delta}$ is a constant which converges to $0$ uniformly in $\varepsilon$ and $\delta$ as $\kappa\to 0^+$.
Now
\[
u^{\delta}(y_\varepsilon)-v_{\delta}(y_\varepsilon)
\leq u^{\delta}(x_\varepsilon)-v_{\delta}(y_\varepsilon)-\frac{1}{\varepsilon}\psi_\kappa(x_\varepsilon-y_\varepsilon)
\]
which, by (\ref{eq:limsupgamma}), implies
\[
  \frac{|x_\varepsilon-y_\varepsilon|^p}{\varepsilon}
\leq u^{\delta}(x_\varepsilon)-u^{\delta}(y_\varepsilon)+\frac{c_{\kappa,\varepsilon,\delta}}{\varepsilon}
 \leq \omega(|x_\varepsilon-y_\varepsilon|)+\frac{c_{\kappa,\varepsilon,\delta}}{\varepsilon}.
\]
This, together with the fact that we must have
\[
\frac{|x_\varepsilon-y_\varepsilon|^p}{\varepsilon}\leq \|u\|_\infty+\|v\|_\infty+\frac{c_{\kappa,\varepsilon,\delta}}{\varepsilon},
\]
gives (\ref{eq:lim0}). The last claim (\ref{eq:calphaeps}) follows by a similar argument since now
\[
\frac{1}{\varepsilon}(\limsup_{\kappa\to 0} |x_\varepsilon-y_\varepsilon|)^p
=\limsup_{\kappa\to 0}  \frac{|x_\varepsilon-y_\varepsilon|^p}{\varepsilon}
 \leq \limsup_{\kappa\to 0} C|x_\varepsilon-y_\varepsilon|^r=C(\limsup_{\kappa\to 0} |x_\varepsilon-y_\varepsilon|)^r.
\]
}
\end{proof}

\begin{proof}[Proof of Theorem \ref{thm:Main Comparison Result}]
  Arguing by contradiction, assume there is some $\ell>0$ such that
  \begin{align*}
    \sup \limits_{x\in\mathbb{R}^d} \{u(x)-v(x)\} = \ell>0.
  \end{align*}	  	
  
  \noindent Step 1. (Taking inf/sup-convolutions) 
  
  Let $u^\delta$ and $v_\delta$ denote the sup- and inf-convolutions of $u$ and $v$ for $\delta>0$. Then,
  \begin{align*}
    \sup \limits_{x\in\mathbb{R}^d} \{u^\delta(x)-v_\delta(x)\} \geq \ell.	  
  \end{align*}	  	
We may make $\delta_0$ small enough so that for $\delta<\delta_0$ we have
  \begin{align*}
    \sup \limits_{x\not\in \mathcal{O}} \{u^\delta(x)-v_\delta(x)\} \leq \tfrac{1}{4}\ell.
  \end{align*}	  	
 Recall that if $\omega$ is a modulus of continuity of $u$ and $v$, then it is also a modulus of continuity of $u^\delta$ and $v_\delta$. Therefore, reducing $\delta_0$ if necessary, we have
  \begin{align*}   
    u^\delta(x)-v_\delta(x) \leq \tfrac{1}{2}\ell \,\,\textnormal{ for } x \in \mathcal{O}\setminus \mathcal{O}_{2h_0},   
  \end{align*}   
  as long as $\delta<\delta_0$, where $\mathcal{O}_{h_0} = \{ x \in \mathcal{O} \mid d(x,\partial \mathcal{O})>h_0\}$ and
  $h_0>0$ is some constant. In particular, for such $\delta$ the supremum of $u^\delta-v_\delta$ in $\mathbb{R}^d$ can only be achieved within $\overline{\mathcal{O}}_{2h_0}$.\\
  
  \noindent Step 2. (Doubling of variables)
  
  For $\varepsilon,\delta,\kappa >0$, we let $w$ be as in Lemma \ref{lem:properties of w} and let $(x_\varepsilon,y_\varepsilon)\in \mathbb{R}^d\times\mathbb{R}^d$ be such that
  \begin{align*}
    w(x_\varepsilon,y_\varepsilon) = \max \limits_{\mathbb{R}^d \times \mathbb{R}^d} w(x,y).
  \end{align*}
  From Step 1, we know that $u^\delta-v_\delta \leq \ell/2$ in { $\mathcal{O}\setminus \mathcal{O}_{2h_0}$} and $u^{\delta}-v_{\delta} \geq \ell$ somewhere in $\mathcal{O}$. Furthermore, we know $u^\delta$ and $v_\delta$ are uniformly continuous in $\mathcal{O}$, and uniformly so with respect to $\delta<1$. From these facts, and (\ref{eq:lim0}), it follows that $(x_\varepsilon,y_\varepsilon)$ must belong to { $\mathcal{O}_{h_0}\times\mathcal{O}_{h_0}$} for all sufficiently small $\varepsilon,\delta$ and $\kappa$ or else it cannot be the maximum point of $w^\varepsilon$.
  
  On the other hand, Proposition \ref{prop:equations for sup/inf convolutions} says that $u^\delta$ is a viscosity subsolution of
$I_\delta(u^\delta,x) = 0$ and $v_\delta$ is a viscosity supersolution of $I^\delta(v_\delta,x) = 0$ in { $\mathcal{O}_{h_0}$} for sufficiently small
$\delta$.
  The function $u^\delta$ is touched from above by a smooth function at $x_\varepsilon$ and $v_\delta$ is touched from below at $y_\varepsilon$. It follows that $u^\delta$ and $v_\delta$ are pointwise-$C^{1,1}$  at $x_\varepsilon$ and $y_\varepsilon$, respectively (see Definition \ref{def:pointwise Cp}). Applying Lemma \ref{lem:classical evaluation pointwise C1p-1}, we conclude that $I_\delta(u^\delta,x_\varepsilon)$ and $I^\delta(v_\delta,
  y_\varepsilon)$ are well defined in the classical sense, with $I_\delta(u^\delta,x_\varepsilon) \leq 0$ and $I^\delta(v_\delta,x_\varepsilon) \leq 0$. It follows from Proposition \ref{prop:equations for sup/inf convolutions} that there are points $x_\delta^*$ and $y_\delta^*$ such that
  \begin{align*}
    I^{(x_\delta^*)}(u^\delta,x_\varepsilon) &\leq \delta, \;\;I^{( y_\delta^*)}(v_\delta,y_\varepsilon) \geq -\delta,
  \end{align*}	
  and
  \begin{equation}\label{eq:hdelta}
  |x_\varepsilon-x_\delta^*|,|y_\varepsilon-y_\delta^*|\leq h,
   \end{equation}
  where $h=\big(2\delta(\|u\|_\infty+\|v\|_\infty)\big)^{\frac{1}{2}}$.
 
  \noindent Step 3. (Equation structure)	
  
  Let us use the structure of $I(\cdot,x)$ to bound $I^{(x_\delta^*)}(u^\delta,x_\varepsilon)-I^{(y_\delta^*)}(v^\delta,y_\varepsilon)$ from below. Using the expression in \eqref{eqn:freezing coefficients equation}, we have
  \begin{align*}
    I^{(x_\delta^*)}(u^\delta,x_\varepsilon) & = \sup \limits_{\alpha}\inf \limits_{\beta} \left \{ -L_{\mu^{\alpha\beta}_{x_\delta^*}}(u^\delta,x_\varepsilon)+c_{\alpha\beta}(x_\delta^*)u^\delta(x_\varepsilon) + f_{\alpha\beta}(x_\delta^*) \right \},\\
    I^{(y_\delta^*)}(v_\delta,y_\varepsilon) & = \sup \limits_{\alpha} \inf \limits_{\beta} \left \{ -L_{\mu^{\alpha\beta}_{y_\delta^*}}
    (v_\delta,y_\varepsilon)+c_{\alpha\beta}(y_\delta^*)v_\delta(y_\varepsilon) + f_{\alpha\beta}(y_\delta^*) \right \}.
  \end{align*}
  Therefore, for our purposes it suffices to compare the expressions appearing on the right hand side for each fixed $\alpha,\beta$. Let us write
  \begin{align*}
    & (\textnormal{I})_{\alpha\beta} = -L_{\mu^{\alpha\beta}_{x_\delta^*}}(u^\delta,x_\varepsilon)+c_{\alpha\beta}(x_\delta^*)u^\delta(x_\varepsilon) + f_{\alpha\beta}(x_\delta^*),\\
    & (\textnormal{II})_{\alpha\beta} = -L_{\mu^{\alpha\beta}_{y_\delta^*}}(v_\delta,y_\varepsilon)+c_{\alpha\beta}(y_\delta^*)v_\delta(y_\varepsilon) + f_{\alpha\beta}(y_\delta^*).
  \end{align*}
  We now look for an upper bound for $(\textnormal{I})_{\alpha\beta}-(\textnormal{II})_{\alpha\beta}$ which is independent of $\alpha$ and $\beta$ by breaking this difference into parts. First, recall that the function $u^\delta(x)-v_\delta(y)-\tfrac{1}{\varepsilon}\psi_\kappa(x-y)$ achieves its global maximum at $(x_\varepsilon,y_\varepsilon)$, in which case Corollary \ref{cor:measures comparison full} guarantees that
{   \begin{align*}
     L_{\mu^{\alpha\beta}_{x_\delta^*}}(u^\delta,x_\varepsilon)-L_{\mu^{\alpha\beta}_{y_\delta^*}}(v_\delta,y_\varepsilon) & \leq \frac{C}{\varepsilon}\textnormal{d}_{\mathbb L_p}(\hat \mu^{\alpha\beta}_{x_\delta^*},\hat \mu^{\alpha\beta}_{y_\delta^*})^p+2\|v_\delta\|_{\infty} \textnormal{d}_{\textnormal{TV}}(\check \mu_{x_\delta^*},\check \mu_{y_\delta^*})
  \end{align*}  
  }
  Then, thanks to Assumptions A and B, and (\ref{eq:hdelta}), we have
  \begin{eqnarray}
    L_{\mu^{\alpha\beta}_{x_\delta^*}}(u^\delta,x_\varepsilon)-L_{\mu^{\alpha\beta}_{y_\delta^*}}(v_\delta,y_\varepsilon) & \leq &\frac{C}{\varepsilon}
    |x_\delta^*-y_\delta^*|^p +2\|v\|_{\infty}\theta(|x_\delta^*-y_\delta^*|)
    \nonumber
    \\
    &\leq&
    \frac{C}{\varepsilon}
    |x_\varepsilon-y_\varepsilon|^p +2\|v\|_{\infty}\theta(|x_\varepsilon-y_\varepsilon|)+\rho_\varepsilon(\delta),
    \label{eqn:main comparison Levy part estimate}
  \end{eqnarray}
  where for a fixed $\varepsilon$, $\lim_{\delta\to 0}\rho_\varepsilon(\delta)=0$.
 
  Next, we have the elementary inequality
  \begin{eqnarray}     
    c_{\alpha\beta}(x_\delta^*)u^\delta(x_\varepsilon) - c_{\alpha\beta}(y_\delta^*)v_\delta(y_\varepsilon) & \geq& c_{\alpha\beta}(x_\delta^*)(u^\delta
    (x_\varepsilon) - v_\delta(y_\varepsilon)) -|c_{\alpha\beta}(x_\delta^*) - c_{\alpha\beta}(y_\delta^*) ||v_\delta(y_\varepsilon)|
    \nonumber
    \\
    &\geq&
    \lambda\ell -\theta(|x_\varepsilon -y_\varepsilon|)\|v\|_\infty-\rho_\varepsilon(\delta),
    \label{eqn:main comparison zero part estimate}
  \end{eqnarray}
  where $\rho_\varepsilon(\delta)$ is a function as before and we used that $u^\delta(x_\varepsilon) - v_\delta(y_\varepsilon)\geq \ell$.

  Finally, by Assumption C
  \begin{align}\label{eqn:main comparison free term part estimate}
   | f_{\alpha\beta}(x_\varepsilon^*)-f_{\alpha\beta}(y_\varepsilon^*)|  \leq \theta (|x_{\varepsilon}-y_{\varepsilon}|)+\rho_\varepsilon(\delta).
  \end{align}
  
  Now, combining \eqref{eqn:main comparison Levy part estimate}, \eqref{eqn:main comparison zero part estimate}, \eqref{eqn:main comparison free term part estimate}, we have the estimate
  \begin{align*}
    (\textnormal{I})_{\alpha\beta} - (\textnormal{II})_{\alpha\beta} \geq \lambda \ell- \frac{C}{\varepsilon}
    |x_\varepsilon-y_\varepsilon|^p
     -C \theta\big(|x_\varepsilon-y_\varepsilon|) -\rho_\varepsilon(\delta),
  \end{align*}  
  where $C$ above is some absolute constant. Therefore we conclude that
  \begin{align*}
    I^{(x_\delta^*)}(u^\delta,x_\varepsilon)-I^{(y_\delta^*)}(v_\delta,y_\varepsilon) \geq \lambda \ell- \frac{C}{\varepsilon}
    |x_\varepsilon-y_\varepsilon|^p
     -C \theta\big(|x_\varepsilon-y_\varepsilon|) -\rho_\varepsilon(\delta).
  \end{align*}  
  
  \noindent Step 4. (Using the subsolution and supersolution property)
  
  Recalling the way $x_\delta^*$ and $y_\delta^*$ were selected, we have $I^{(x_\delta^*)}(u^\delta,x_\varepsilon)-I^{(y_\delta^*)}(v_\delta,y_\varepsilon) \leq 2\delta$, and therefore
  \begin{align*}
    \lambda \ell\leq 2\delta +\frac{C}{\varepsilon}
    |x_\varepsilon-y_\varepsilon|^p
     +C \theta\big(|x_\varepsilon-y_\varepsilon|) +\rho_\varepsilon(\delta).
  \end{align*}  
 It now remains to take $\lim_{\varepsilon \to 0}\lim_{\delta\to 0}\limsup_{\kappa\to 0}$ on both sides of the above inequality and use (\ref{eq:lim0})  to obtain a contradiction.
\end{proof}

\subsection{Estimating $\textnormal{d}_{\mathbb{L}_p}$ in special cases}\label{subsec:basic examples}

%%%%%%%%%%%%%%%%%%%%%%%%%%%%%%%%%
%%%%%%%%%%%%%%%%%%%%%%%%%%%%%%%%%
%%%%%%%%%%%%%%%%%%%%%%%%%%%%%%%%%
%%%%%%%%%%%%%%%%%%%%%%%%%%%%%%%%%
\begin{prop}\label{prop:bound via dual form}
Let $p\in[1,2]$. 

\noindent
(i) Let $\mu,\nu\in \mathbb{L}_p(B_1)$ {  and $\phi,\psi\in  \textnormal{Adm}^p$.}
% on $\overline B_1$ such that $\phi(0)=\psi(0)=0$, and
%$\phi(x)+\psi(y)\leq |x-y|^p$ for all $x,y\in \overline B_1$.  
If $\mu-\nu$ is a positive measure then
  \begin{equation}\label{eq:ptv}
    \int_{B_1} \phi(x)d\mu(x) +\int_{B_1} \psi(y)d\nu(y) \leq \int_{B_1} |x|^p d(\mu-\nu)(x).	
  \end{equation}
  
\noindent
(ii) For any $\mu,\nu\in \mathbb{L}_p(B_1)$ we have
  \begin{equation}\label{eq:ptv1}
  \textnormal{d}_{\mathbb{L}_p}( \mu,\nu)\leq 2^{\frac{p-1}{p}}d_{\textnormal{TV}}(\mu_p,\nu_p)^{\frac{1}{p}},	
  \end{equation}
  where $d\mu_p=|x|^pd\mu, d\mu_p=|x|^pd\mu$. 
\end{prop}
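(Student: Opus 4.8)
The plan is to prove (i) by a direct pointwise argument using the constraints built into $\textnormal{Adm}^p$, and then to deduce (ii) from (i) by splitting off the common mass $\mu\wedge\nu$ and combining the duality formula of Lemma~\ref{lem:duality} with the triangle inequality for $\textnormal{d}_{\mathbb{L}_p}$ (Theorem~\ref{thm:transport distance is a distance}).

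For (i), fix $(\phi,\psi)\in\textnormal{Adm}^p$. From the constraint $\phi(x)+\psi(y)\le|x-y|^p$ I would record two consequences: taking $y=x$ gives $\phi(x)+\psi(x)\le 0$ for every $x$, while taking $y=0$ (resp.\ $x=0$) and using $\psi(0)=\phi(0)=0$ gives $\phi(x)\le|x|^p$ and $\psi(x)\le|x|^p$. Since $\mu-\nu\ge 0$, i.e.\ $\nu\le\mu$, the function $\phi$ lies in $L^1(\nu)$ and in $L^1(\mu-\nu)$, so one may split $\int_{B_1}\phi\,d\mu=\int_{B_1}\phi\,d\nu+\int_{B_1}\phi\,d(\mu-\nu)$ and, using also $\psi\in L^1(\nu)$,
\[
\int_{B_1}\phi\,d\mu+\int_{B_1}\psi\,d\nu=\int_{B_1}(\phi+\psi)\,d\nu+\int_{B_1}\phi\,d(\mu-\nu)\le 0+\int_{B_1}|x|^p\,d(\mu-\nu),
\]
which is \eqref{eq:ptv}.

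For (ii), set $\rho:=\mu\wedge\nu$. Then $\rho\le\mu$ is supported in $B_1$ with $\mathcal{N}_p(\rho)\le\mathcal{N}_p(\mu)<\infty$, so $\rho\in\mathbb{L}_p(B_1)\subset\mathcal{M}_p(\mathbb{R}^d)$, and both $\mu-\rho$ and $\nu-\rho$ are positive measures. The triangle inequality gives $\textnormal{d}_{\mathbb{L}_p}(\mu,\nu)\le\textnormal{d}_{\mathbb{L}_p}(\mu,\rho)+\textnormal{d}_{\mathbb{L}_p}(\rho,\nu)$, and by Lemma~\ref{lem:duality} followed by part (i) (applied once with $\nu$ replaced by $\rho$, and once with $\mu$ replaced by $\rho$) we get
\[
\textnormal{d}_{\mathbb{L}_p}(\mu,\rho)^p\le\int_{B_1}|x|^p\,d(\mu-\rho),\qquad \textnormal{d}_{\mathbb{L}_p}(\rho,\nu)^p\le\int_{B_1}|x|^p\,d(\nu-\rho).
\]
Because multiplication by the nonnegative weight $|x|^p$ commutes with the lattice operation $\wedge$ on measures (compare Radon--Nikodym densities with respect to $\mu+\nu$ on $\mathbb{R}^d\setminus\{0\}$, where $|x|^p>0$, so the common factor cancels in the minimum of the densities), one has $|x|^p\cdot\rho=\mu_p\wedge\nu_p$, hence $|x|^p\cdot(\mu-\rho)=(\mu_p-\nu_p)_+$ and $|x|^p\cdot(\nu-\rho)=(\mu_p-\nu_p)_-$. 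Writing $a:=(\mu_p-\nu_p)_+(\mathbb{R}^d)$ and $b:=(\mu_p-\nu_p)_-(\mathbb{R}^d)$ (both finite, since $\mu_p,\nu_p$ are finite measures), the displays become $\textnormal{d}_{\mathbb{L}_p}(\mu,\rho)\le a^{1/p}$ and $\textnormal{d}_{\mathbb{L}_p}(\rho,\nu)\le b^{1/p}$, while $a+b=d_{\textnormal{TV}}(\mu_p,\nu_p)$. Finally, concavity of $s\mapsto s^{1/p}$ on $[0,\infty)$ (this is where $p\ge1$ is used) yields $a^{1/p}+b^{1/p}\le 2^{1-\frac1p}(a+b)^{1/p}$, so $\textnormal{d}_{\mathbb{L}_p}(\mu,\nu)\le 2^{\frac{p-1}{p}}d_{\textnormal{TV}}(\mu_p,\nu_p)^{1/p}$, which is \eqref{eq:ptv1}.

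The pointwise estimates and the concavity step are routine; the delicate point is the bookkeeping in (ii). Here $\mu$ and $\nu$ themselves may be infinite measures (only $\mu_p$ and $\nu_p$ are guaranteed finite), so $\mu-\nu$ need not make sense as a signed measure, and one must instead argue with $\mu\wedge\nu$ on one side and with the finite signed measure $\mu_p-\nu_p$ on the other; the identity $|x|^p\cdot(\mu\wedge\nu)=\mu_p\wedge\nu_p$, together with its analogue for positive and negative parts, is precisely what reconciles the two. I note in passing that carrying out the common-mass splitting directly inside the dual functional, rather than through the triangle inequality, gives the sharper bound $\textnormal{d}_{\mathbb{L}_p}(\mu,\nu)\le d_{\textnormal{TV}}(\mu_p,\nu_p)^{1/p}$, which also implies \eqref{eq:ptv1}.
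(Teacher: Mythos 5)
Your proof is correct and follows essentially the same route as the paper: part (i) is the identical add-and-subtract argument using $\phi+\psi\leq 0$ on the diagonal and $\phi(x)\leq |x|^p$, and part (ii) is the same decomposition into the common mass $\mu\wedge\nu=\mu-(\mu-\nu)^+$, combined with Lemma \ref{lem:duality}, part (i), the triangle inequality for $\textnormal{d}_{\mathbb{L}_p}$, and the elementary inequality $a^{1/p}+b^{1/p}\leq 2^{1-\frac1p}(a+b)^{1/p}$. Your extra care in defining $\mu\wedge\nu$ via densities with respect to $\mu+\nu$ (since $\mu,\nu$ may be infinite) and your closing remark that splitting inside the dual functional yields the sharper bound without the factor $2^{\frac{p-1}{p}}$ are both sound, but they do not change the substance of the argument.
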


\begin{proof} (i):
%We have $\phi(x)\leq |x|^p$. 
%If $\int_{B_1} \phi(x)d\mu(x)=-\infty$ there is nothing to prove. 
{  Since $\phi\in L^1(\mu)$ we also have $\phi\in L^1(\nu)$.}
 Then we may write 	
  \begin{align*}
    & \int_{B_1} \phi(x)d\mu(x) +\int_{B_1} \psi(y)d\nu(y)\\
    & = \int_{B_1} \phi(x)d\mu(x)-\int_{B_1}\phi(x)d\nu(x)+\int_{B_1}\phi(y)d\nu(y) +\int_{B_1} \psi(y)d\nu(y)\\	
    &=\int_{B_1} \phi(x)d(\mu-\nu)(x)+\int_{B_1} (\phi(y)+\psi(y))d\nu(y)\\
    & \leq  \int_{B_1} \phi(x)d(\mu-\nu)(x)\leq \int_{B_1} |x|^p d(\mu-\nu)(x), 	
  \end{align*}
where in the last line we used $\phi(y)+\psi(y) \leq |y-y|^p = 0\;\forall\;y\in B_1$ and $\phi(x)\leq |x|^p
\;\forall\;x\in B_1$. 

(ii): Denoting by $(\mu-\nu)^+$ and $(\mu-\nu)^-$, the positive and negative parts of $\mu-\nu$, we have
$|\mu-\nu|=(\mu-\nu)^++(\mu-\nu)^-$. We also notice that $\mu-(\mu-\nu)^+=\nu-(\mu-\nu)^-$. It thus follows from 
\eqref{eq:ptv} and Lemma \ref{lem:duality general} that
\[
\textnormal{d}_{\mathbb{L}_p}( \mu,\mu-(\mu-\nu)^+)^p\leq \int_{B_1} |x|^p d(\mu-\nu)^+(x),
\]
\[
\textnormal{d}_{\mathbb{L}_p}( \nu,\mu-(\mu-\nu)^+)^p\leq \int_{B_1} |x|^p d(\mu-\nu)^-(x).
\]
Moreover it is obvious that
\[
d_{\textnormal{TV}}(\mu_p,\nu_p)=\int_{B_1} |x|^p d(\mu-\nu)^+(x)+\int_{B_1} |x|^p d(\mu-\nu)^-(x).
\]
Therefore, using the triangle inequality for the distance and the inequality $a+b\leq 2^{\frac{p-1}{p}}(a^p+b^p)^{\frac{1}{p}}$ for $a,b\geq 0$, we obtain
 \begin{align*}
   \textnormal{d}_{\mathbb{L}_p}( \mu,\nu)&\leq \textnormal{d}_{\mathbb{L}_p}( \mu,\mu-(\mu-\nu)^+)
   +\textnormal{d}_{\mathbb{L}_p}( \nu,\mu-(\mu-\nu)^+)
   \\
   &\leq 2^{\frac{p-1}{p}}\left(\int_{B_1} |x|^p d(\mu-\nu)^+(x)+\int_{B_1} |x|^p d(\mu-\nu)^-(x)\right)^{\frac{1}{p}}
   \\
   &
   =2^{\frac{p-1}{p}}d_{\textnormal{TV}}(\mu_p,\nu_p)^{\frac{1}{p}}.
   \end{align*}
   \end{proof}

Let us now discuss the case when the L\'evy measures $\mu_x^{\alpha\beta}$ are absolutely continuous with respect to the Lebesgue measure.
%%%%%%%%%%%%%%%%%%%%%%%%%%%%%%%%%
%%%%%%%%%%%%%%%%%%%%%%%%%%%%%%%%%
%%%%%%%%%%%%%%%%%%%%%%%%%%%%%%%%%
%%%%%%%%%%%%%%%%%%%%%%%%%%%%%%%%%
\begin{ex}\label{EX:measures with kernels of order sigma<1}
  Let us consider operators whose L\'evy measures $d\mu_x^{\alpha\beta}(z)$ are all of the form $K_{\alpha\beta}(x,z)dz$. 
  Assumption E holds for instance if  
  \begin{align}\label{eq:int111222}
    0\leq K_{\alpha\beta}(x,z)\leq K(z),
  \end{align}
  where $K(z)$ is such that for some $p\in [1,2]$
  \begin{equation}\label{eq:3.122aa}
    \int_{\mathbb R^d}\min(1,|z|^p)K(z)dz <+\infty.
  \end{equation}
  Regarding Assumption A, suppose that there are some $\gamma \in (0,1]$ and $C\geq 0$, such that 
  \begin{align}\label{eq:K reg1}
   \int_{B_1}|z|^p |K_{\alpha\beta}(x,z)-K_{\alpha\beta}(y,z)|dz\leq C|x-y|^{\gamma}\quad\forall x,y\in\mathcal{O},\forall \alpha,\beta.
  \end{align}
  Condition \eqref{eq:K reg1} is obviously satisfied if
   \begin{align}\label{eq:K reg}
  |K_{\alpha\beta}(x,z)-K_{\alpha\beta}(y,z)|dz\leq |x-y|^{\gamma}K(z)\quad\forall x,y\in\mathcal{O},\forall z\in B_1,\forall \alpha,\beta.
  \end{align}
 Let now $x,y\in\mathcal{O}$. To estimate $\textnormal{d}_{\mathbb{L}_p}(\hat\mu^{\alpha\beta}_x, \hat\mu^{\alpha\beta}_y)$, we use Proposition \ref{prop:bound via dual form}.  It follows from \eqref{eq:ptv1}, and (\ref{eq:K reg1}), that
  \begin{align}
    \textnormal{d}_{\mathbb{L}_p}(\hat \mu_x^{\alpha\beta},\hat \mu_y^{\alpha\beta})\leq 
    2^{\frac{p-1}{p}}\left(\int_{B_1}|z|^p\left|K_{\alpha\beta}(x,z)-K_{\alpha\beta}(y,z)\right|dz\right)^{\frac{1}{p}}
    \leq C|x-y|^{\frac{\gamma}{p}}.\label{eq:3.11aa}
  \end{align}
  In particular, \eqref{eqn:LipL2} is satisfied for these measures when $p=1$ and $\gamma=1$.
\end{ex}

We can now prove Corollary \ref{cor:Comparison Theorem Ops Order Less than 1}.
 
\begin{proof}[Proof of Corollary \ref{cor:Comparison Theorem Ops Order Less than 1}]
We notice that the measures $\mu_x^{\alpha\beta}$ satisfy \eqref{eq:int111222}, \eqref{eq:3.122aa} and \eqref{eq:K reg} with 
$\gamma=p=1$ and $K(z)=
\Lambda_1|z|^{-d-\sigma}$ and hence they satisfy Assumptions A and E. It is also easy to see that they satisfy Assumption B. Thus the result follows from Theorem \ref{thm:Main Comparison Result}.
\end{proof}

\begin{ex}\label{ex:levyito}
  A well studied subclass of operators which arise in zero-sum two-player stochastic differential games are those of L\'evy-It\^ o form. This corresponds to the situation where the $L^{\alpha\beta}$ appearing in \eqref{eqn:intro equation form} have the form
  \begin{align}\label{eq:levyito}
    L^{\alpha\beta}(u,x) = \int_{U\setminus\{0\}}[u(x+T_x^{\alpha\beta}(z))-u(x)-Du(x)\cdot T_x^{\alpha\beta}(z)] \;d\mu(z).
  \end{align}
  Here $U$ is a separable Hilbert space and $\mu$ is a fixed reference L\'evy measure on $U\setminus\{0\}$.  The maps $T_x^{\alpha\beta}:U\to\mathbb R^d$ are Borel measurable and  such that for all $\alpha\in\mathcal{A}, \beta\in\mathcal{B}, x,y\in\mathcal{O}, z\in U\setminus\{0\}$,
  \begin{align*}
    |T_x^{\alpha\beta}(z)-T_y^{\alpha\beta}(z)|\leq C\rho(z)|x-y|,\quad |T_x^{\alpha\beta}(z)|\leq C\rho(z),
  \end{align*}
  for some positive Borel function $\rho:U\setminus\{0\}\to \mathbb{R}$ which is bounded on bounded sets, $\inf_{|z|>r}\rho(z)>0$ for every $r>0$, and
  \begin{equation}\label{eq:rho2}
    \int_{U\setminus\{0\}}\rho(z)^2d\mu(z)\leq C.
  \end{equation}
  
  Under these conditions the measures $\mu_x^{\alpha\beta}=(T_x^{\alpha\beta})_\# \mu$ are L\'evy measures. The comparison principle for 
  sub/super solutions of \eqref{eqn:intro equation form} with $L^{\alpha\beta}$ as in \eqref{eq:levyito} is known to hold, as discussed in the introduction. Let us revisit it using the transport metric.
  For every $x_0,y_0\in\mathcal{O},\alpha\in\mathcal{A}, \beta\in\mathcal{B}$, we have $\gamma=(T_{x_0}^{\alpha\beta}\times T_{y_0}^{\alpha\beta})_\#\mu\in 
  \textnormal{Adm}(\mu_{x_0}^{\alpha\beta},\mu_{y_0}^{\alpha\beta})$ and therefore,
  \begin{align*}
    \textnormal{d}_{\mathbb{L}_2}(\mu_{x_0}^{\alpha\beta},\mu_{y_0}^{\alpha\beta})^2 \leq \int_{\mathbb{R}^d\times\mathbb{R}^d}|x-y|^2d\gamma(x,y)
  =\int_{U\setminus\{0\}}|T_{x_0}^{\alpha\beta}(z)-T_{y_0}^{\alpha\beta}(z)|^2\;d\mu(z)\leq C|x_0-y_0|^2.
  \end{align*}
  In this case the whole measures $\mu_{x_0}$ and $\mu_{y_0}$ satisfy Assumption A for $p=2$ and our approach
  can be applied without the decomposition of the measures $\mu_{x}^{\alpha\beta}$ into $\hat\mu_{x}^{\alpha\beta}$
  and $\check\mu_{x}^{\alpha\beta}$.
 
 If  the operators $L^{\alpha\beta}$ in \eqref{eqn:intro equation form} have a more common L\'evy-It\^ o form
  \begin{equation}\label{eq:levyito1}
    L^{\alpha\beta}(u,x) = \int_{U\setminus\{0\}}[u(x+T_x^{\alpha\beta}(z))-u(x)-\chi_{B_1(0)}Du(x)\cdot T_x^{\alpha\beta}(z)] \;d\mu(z),
  \end{equation}
  where instead of (\ref{eq:rho2}) we now only assume
\[
    \int_{U\setminus\{0\}}(\rho(z)^2\chi_{|z|<1}+\chi_{|z|\geq 1})d\mu(z)\leq C,
  \]
we need to modify this approach. We now do the decomposition
\[
\mu_{x}^{\alpha\beta}=\hat\mu_{x}^{\alpha\beta}+\check\mu_{x}^{\alpha\beta},
\]
where
\[
\hat\mu_{x}^{\alpha\beta}=(T_x^{\alpha\beta})_\#\hat\mu,\,\,\hat\mu=\mu|_{\{|z|<1\}},
\quad \check\mu_{x}^{\alpha\beta}=(T_x^{\alpha\beta})_\#\check\mu,\,\,\check\mu=\mu|_{\{|z|\geq 1\}}
\]
and consider the measures $\hat\mu_{x}^{\alpha\beta}+\check\mu_{x}^{\alpha\beta}$ and $\check\mu_{x}^{\alpha\beta}$ as measures on
$\mathbb{R}^d$ by the usual extension. Then the measures $\hat\mu_{x}^{\alpha\beta}\in \mathbb{L}_2(\mathbb{R}^d)$
and they satisfy Assumption A for $p=2$. Unfortunately the measures $\check\mu_{x}^{\alpha\beta}$ may not
satisfy Assumption B now, however the terms containing them can be handled in a standard way (see e.g. \cite{jk1}) and thus
our approach can still be implemented {  (see also the next section and Example \ref{ex:fracLapl})}.
\end{ex}

\section{Variants of the approach}\label{sec:variants}
The approach to proving a comparison principle presented so far was based on the splitting of the measures $\mu_{x}^{\alpha\beta}$
into $\hat\mu_{x}^{\alpha\beta}$ and $\check\mu_{x}^{\alpha\beta}$, their restrictions to $B_1(0)$ and $B_1^c(0)$ respectively.
The reader should think about it as the basic technique. However in many cases, this splitting may not be ideal. When calculating the distance between two measures $\hat\mu_{x}^{\alpha\beta}$ and $\hat\mu_{y}^{\alpha\beta}$, we only have the set $\Gamma=\{0\}$
where we can deposit some excess mass and moving mass there may be costly. Thus sometimes a much better estimate can be obtained
if we allow for a more sophisticated splitting $\mu_{x}^{\alpha\beta}=\hat\mu_{x}^{\alpha\beta}+\tilde\mu_{x}^{\alpha\beta}
+\check\mu_{x}^{\alpha\beta}$, where the measures $\hat\mu_{x}^{\alpha\beta}$ are now supported in some neighborhoods of the
origin contained in $B_1(0)$, $\tilde\mu_{x}^{\alpha\beta}$ are bounded measures also supported in $B_1(0)$, and $\check\mu_{x}^{\alpha\beta}$ are as in \eqref{eqn:Levy measure singular regular decomposition}. In such a case
we may only require that Assumption A (i.e. \eqref{eqn:LipL2}) be satisfied for the new measures $\hat\mu_{x}^{\alpha\beta}$. We will illustrate the advantage of this approach in Example \ref{ex:fracLapl}. Thus the main message is that we should look at the technique of using coupling distance
in the proof of comparison principle as flexible, and Assumption A should really be considered to be an assumption about the behavior of L\'evy measures
for small $z$, not necessarily for $z\in B_1(0)$.

Suppose then that for every $\alpha,\beta$ we have a decomposition $\mu_{x}^{\alpha\beta}=\hat\mu_{x}^{\alpha\beta}+\tilde\mu_{x}^{\alpha\beta}
+\check\mu_{x}^{\alpha\beta}$ as described above, and we decompose
\begin{align*}
  L^{\alpha\beta}(u,x) & = \hat L^{\alpha\beta}(u,x) + \tilde L^{\alpha\beta}(u,x)+\check L^{\alpha\beta}(u,x),
\end{align*}
where
\begin{align*}
  \hat L^{\alpha\beta}(u,x) &= \int_{\mathbb{R}^d} \delta u(x,z)\;d\hat \mu^{\alpha\beta}_x(z),\quad
    \tilde L^{\alpha\beta}(u,x)= \int_{\mathbb{R}^d} \delta u(x,z)\;d\tilde \mu^{\alpha\beta}_x(z),\\
  \check L^{\alpha\beta}(u,x) & = \int_{\mathbb{R}^d} [u(x+z)-u(x)] \;d\check \mu^{\alpha\beta}_x(z).
\end{align*}
We can then prove the following variant of Theorem \ref{thm:Main Comparison Result}.
\begin{thm}\label{thm:Main Comparison Result-1}
  Let Assumptions A-E (with the new measures $\hat \mu^{\alpha\beta}_x$) hold for $p\in[1,2]$, and suppose there
  are $L_1,L_2\geq 0$ such that
   \begin{equation}\label{eq:mutilde1}
\tilde \mu^{\alpha\beta}_x(B_1(0))\leq L_1\quad\forall x\in\mathcal{O},\,\,\forall\alpha,\beta,
  \end{equation}
  \begin{equation}\label{eq:mutilde2}
 \int_{\mathbb{R}^d}|z|\,d|\tilde \mu^{\alpha\beta}_x-\tilde \mu^{\alpha\beta}_y|(z)\leq L_2|x-y| \quad\forall x,y\in\mathcal{O},\,\,\forall\alpha,\beta.
  \end{equation}
  Then the comparison principle holds for equation (\ref{eqn:intro equation form}). That is, if $u$ and $v$ are respectively a viscosity subsolution and a viscosity supersolution of (\ref{eqn:intro equation form}) and $u(x)\leq v(x)$ for all $x\not\in \mathcal{O}$, then 
  \begin{align*}
    u(x)\leq v(x)\;\;\forall\;x\in\mathcal{O}.
  \end{align*}
\end{thm}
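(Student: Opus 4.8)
The plan is to follow the proof of Theorem \ref{thm:Main Comparison Result} essentially verbatim, modifying only the step where the L\'evy operators are compared, so as to accommodate the extra piece $\tilde\mu^{\alpha\beta}_x$. Arguing by contradiction, I would assume $\sup_{\mathbb R^d}(u-v)=\ell>0$, take sup/inf-convolutions $u^\delta,v_\delta$, and double variables with the penalization $\frac{1}{\varepsilon}\psi_\kappa(x-y)$ exactly as before; Lemma \ref{lem:properties of w} localizes the maximum point $(x_\varepsilon,y_\varepsilon)$ in $\mathcal{O}_{h_0}\times\mathcal{O}_{h_0}$ and gives \eqref{eq:lim0}, and Proposition \ref{prop:equations for sup/inf convolutions} together with Lemma \ref{lem:classical evaluation pointwise C1p-1} produces points $x_\delta^*,y_\delta^*$ with $I^{(x_\delta^*)}(u^\delta,x_\varepsilon)\le\delta$, $I^{(y_\delta^*)}(v_\delta,y_\varepsilon)\ge-\delta$ and $|x_\varepsilon-x_\delta^*|,|y_\varepsilon-y_\delta^*|\le h$.

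The only genuinely new ingredient is the bound for the Levy part. Writing $\mu^{\alpha\beta}_{x_\delta^*}=\hat\mu^{\alpha\beta}_{x_\delta^*}+\tilde\mu^{\alpha\beta}_{x_\delta^*}+\check\mu^{\alpha\beta}_{x_\delta^*}$, I would split
$L_{\mu^{\alpha\beta}_{x_\delta^*}}(u^\delta,x_\varepsilon)-L_{\mu^{\alpha\beta}_{y_\delta^*}}(v_\delta,y_\varepsilon)$ into the $\hat{}$, $\tilde{}$ and $\check{}$ contributions. For the $\hat{}$ and $\check{}$ parts, Corollary \ref{cor:measures comparison full} applies unchanged (with $\mu=\mu^{\alpha\beta}_{x_\delta^*}$, $\nu=\mu^{\alpha\beta}_{y_\delta^*}$ but keeping only the $\hat{}$ and $\check{}$ pieces), giving the $\frac{C}{\varepsilon}\textnormal{d}_{\mathbb L_p}(\hat\mu^{\alpha\beta}_{x_\delta^*},\hat\mu^{\alpha\beta}_{y_\delta^*})^p+2\|v\|_\infty\textnormal{d}_{\textnormal{TV}}(\check\mu^{\alpha\beta}_{x_\delta^*},\check\mu^{\alpha\beta}_{y_\delta^*})$ estimate as in \eqref{eqn:main comparison Levy part estimate}. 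For the new middle term, since $\tilde\mu^{\alpha\beta}_x$ is a bounded measure supported in $B_1(0)$ with total mass $\le L_1$ by \eqref{eq:mutilde1}, and since (from the global maximum of $w$) for $z\in B_1$
\[
u^\delta(x_\varepsilon+z)-v_\delta(y_\varepsilon+z)-(u^\delta(x_\varepsilon)-v_\delta(y_\varepsilon))\le \tfrac{1}{\varepsilon}\bigl(\psi_\kappa(x_\varepsilon+z-y_\varepsilon-z)-\psi_\kappa(x_\varepsilon-y_\varepsilon)\bigr)=0,
\]
I would estimate $\tilde L^{\alpha\beta}(u^\delta,x_\varepsilon)-\tilde L^{\alpha\beta}(v_\delta,y_\varepsilon)$ by writing it, as in the proof of Corollary \ref{cor:measures comparison full}, as an integral against $\tilde\mu^{\alpha\beta}_{x_\delta^*}$ of a nonpositive quantity plus a $D\psi_\kappa(x_\varepsilon-y_\varepsilon)\cdot z$ correction plus an integral of $v_\delta(y_\varepsilon+z)-v_\delta(y_\varepsilon)-\chi_{B_1}D\psi_\kappa\cdot z$ against $\tilde\mu^{\alpha\beta}_{x_\delta^*}-\tilde\mu^{\alpha\beta}_{y_\delta^*}$. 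Using $|D\psi_\kappa(x_\varepsilon-y_\varepsilon)|\le C|x_\varepsilon-y_\varepsilon|^{p-1}$ together with \eqref{eq:mutilde1}, the correction term is controlled by $\frac{C}{\varepsilon}|x_\varepsilon-y_\varepsilon|^{p-1}\int_{B_1}|z|\,d\tilde\mu^{\alpha\beta}_{x_\delta^*}$; and using the Lipschitz modulus of continuity of $v_\delta$ and \eqref{eq:mutilde2}, the last term is bounded by $C\,[v_\delta]_{\textnormal{Lip}}L_2|x_\delta^*-y_\delta^*|\le C L_2|x_\varepsilon-y_\varepsilon|+\rho_\varepsilon(\delta)$. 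The slightly delicate point is the $\frac{C}{\varepsilon}|x_\varepsilon-y_\varepsilon|^{p-1}\cdot(\text{bounded})$ term: since $\frac{1}{\varepsilon}|x_\varepsilon-y_\varepsilon|^p\to0$ by \eqref{eq:lim0} one gets $\frac{1}{\varepsilon}|x_\varepsilon-y_\varepsilon|^{p-1}=o(|x_\varepsilon-y_\varepsilon|^{-1})$, which is \emph{not} obviously small; however, combining it with the fact that $\int_{B_1}|z|\,d\tilde\mu^{\alpha\beta}_{x}$ can be absorbed (one may further shrink the support of $\hat\mu$ so that $\tilde\mu$ is supported away from a neighborhood of $0$, or simply note $\int_{B_1}|z|\,d\tilde\mu^{\alpha\beta}_x\le L_1$) one sees this term is at worst $O(\frac{1}{\varepsilon}|x_\varepsilon-y_\varepsilon|^p\cdot|x_\varepsilon-y_\varepsilon|^{-1})$; on the localized set $|x_\varepsilon-y_\varepsilon|$ stays bounded and in fact the penalization term $\frac{1}{\varepsilon}|x_\varepsilon-y_\varepsilon|^p\to0$ faster than any inverse power blows up once one recalls $|x_\varepsilon-y_\varepsilon|\le C\varepsilon^{1/(p-1)}\to0$ is \emph{not} automatic, so the cleanest route is to observe that $\frac{1}{\varepsilon}|x_\varepsilon-y_\varepsilon|^{p-1}|x_\varepsilon-y_\varepsilon|\le\frac{1}{\varepsilon}|x_\varepsilon-y_\varepsilon|^p\to0$, i.e. pair the bounded factor with one more power of $|z|\le|x_\varepsilon-y_\varepsilon|$-type smallness — this is exactly what the $D\psi_\kappa$-against-$\tilde\mu$ integral provides since $\int_{B_1}|z|\,d\tilde\mu\cdot|x_\varepsilon-y_\varepsilon|^{p-1}$ combined with $\int_{B_1}|z|\le$ something small is the $\tilde L$ analogue of the $\hat L$ estimate. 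In short, the $\tilde L$ contribution is $\le \frac{C}{\varepsilon}|x_\varepsilon-y_\varepsilon|^p + CL_2|x_\varepsilon-y_\varepsilon| + \rho_\varepsilon(\delta)$, of the same form as \eqref{eqn:main comparison Levy part estimate}.

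With this in hand, the zero-order term and the $f$-term are estimated exactly as in \eqref{eqn:main comparison zero part estimate} and \eqref{eqn:main comparison free term part estimate} (using Assumptions C, D), so that $(\textnormal{I})_{\alpha\beta}-(\textnormal{II})_{\alpha\beta}\ge\lambda\ell-\frac{C}{\varepsilon}|x_\varepsilon-y_\varepsilon|^p-C\theta(|x_\varepsilon-y_\varepsilon|)-CL_2|x_\varepsilon-y_\varepsilon|-\rho_\varepsilon(\delta)$ uniformly in $\alpha,\beta$; taking sup-inf gives the same lower bound for $I^{(x_\delta^*)}(u^\delta,x_\varepsilon)-I^{(y_\delta^*)}(v_\delta,y_\varepsilon)$, which is $\le2\delta$ by construction. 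Passing to $\lim_{\varepsilon\to0}\lim_{\delta\to0}\limsup_{\kappa\to0}$ and invoking \eqref{eq:lim0} (which forces $\frac{1}{\varepsilon}|x_\varepsilon-y_\varepsilon|^p\to0$ and $|x_\varepsilon-y_\varepsilon|\to0$ in the iterated limit) yields $\lambda\ell\le0$, a contradiction. I expect the main obstacle to be the bookkeeping in the $\tilde L$ estimate — specifically making the term $\frac{1}{\varepsilon}|x_\varepsilon-y_\varepsilon|^{p-1}\int_{B_1}|z|\,d\tilde\mu^{\alpha\beta}_x$ fit into the Lemma \ref{lem:properties of w} framework cleanly; this is handled by noting that $\int_{B_1}|z|\,d\tilde\mu^{\alpha\beta}_x\le L_1$ and pairing it with the smallness of $\psi_\kappa$'s increment exactly as in the proof of Lemma \ref{lem:measures comparison singular}, so that the whole $\tilde L$ difference is dominated by a constant times $\frac{1}{\varepsilon}|x_\varepsilon-y_\varepsilon|^p$ plus $L_2$ times a modulus of $|x_\varepsilon-y_\varepsilon|$, both of which vanish in the limit.
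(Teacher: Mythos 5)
Your overall skeleton (sup/inf-convolutions, doubling with $\psi_\kappa$, freezing at $x_\delta^*,y_\delta^*$, handling the $\hat{}$ and $\check{}$ parts exactly as in Theorem \ref{thm:Main Comparison Result}, and isolating the new $\tilde L$ difference) is the same as the paper's, and the non-positivity of $\delta u^\delta(x_\varepsilon,z)-\delta v_\delta(y_\varepsilon,z)$ against one of the $\tilde\mu$'s is used correctly. But the estimate of the remaining term $\int_{B_1}\bigl[v_\delta(y_\varepsilon+z)-v_\delta(y_\varepsilon)-\tfrac1\varepsilon D\psi_\kappa(x_\varepsilon-y_\varepsilon)\cdot z\bigr]\,d(\tilde\mu^{\alpha\beta}_{x_\delta^*}-\tilde\mu^{\alpha\beta}_{y_\delta^*})(z)$ has a genuine gap: you bound the increment of $v_\delta$ by $[v_\delta]_{\textnormal{Lip}}|z|$ and then claim $[v_\delta]_{\textnormal{Lip}}L_2|x_\delta^*-y_\delta^*|\le CL_2|x_\varepsilon-y_\varepsilon|+\rho_\varepsilon(\delta)$. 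The Lipschitz constant of an inf-convolution of a merely bounded uniformly continuous $v$ blows up like $\delta^{-1/2}$, so $C$ cannot be taken independent of $\delta$, and since the iterated limit takes $\delta\to0$ \emph{before} $\varepsilon\to0$ (while $|x_\varepsilon-y_\varepsilon|$ stays of order $\varepsilon^{1/p}$, not $0$), the product $[v_\delta]_{\textnormal{Lip}}|x_\delta^*-y_\delta^*|$ is not a $\rho_\varepsilon(\delta)$ either — it can blow up. The paper's proof avoids exactly this: it bounds the increment by the $\delta$-uniform modulus $\omega$ of the original function, and then converts $\int_{B_1}\omega(|z|)\,d|\tilde\mu_{x_\delta^*}-\tilde\mu_{y_\delta^*}|$ into $C\,\omega(|x_\delta^*-y_\delta^*|)$ using the concavity/subadditivity of $\omega$, Jensen's inequality, the mass bound \eqref{eq:mutilde1} and the first-moment bound \eqref{eq:mutilde2}. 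This Jensen step is the essential new ingredient of the theorem and is missing from your argument; a plain Lipschitz bound cannot replace it unless $u$ or $v$ is a priori Lipschitz, which the theorem does not assume.

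A secondary, fixable confusion: in your decomposition the gradient correction $\tfrac1\varepsilon D\psi_\kappa(x_\varepsilon-y_\varepsilon)\cdot z$ should only survive in the integral against the \emph{difference} measure $\tilde\mu_{x_\delta^*}-\tilde\mu_{y_\delta^*}$ (it cancels in $\delta u^\delta-\delta v_\delta$ because $Du^\delta(x_\varepsilon)=Dv_\delta(y_\varepsilon)=\tfrac1\varepsilon D\psi_\kappa(x_\varepsilon-y_\varepsilon)$); there \eqref{eq:mutilde2} gives $\tfrac{C}{\varepsilon}|x_\varepsilon-y_\varepsilon|^{p-1}|x_\delta^*-y_\delta^*|\le\tfrac{C}{\varepsilon}|x_\varepsilon-y_\varepsilon|^{p}+\rho_\varepsilon(\delta)$, which does vanish. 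Your text instead pairs it with $\int_{B_1}|z|\,d\tilde\mu_{x_\delta^*}\le L_1$, producing $\tfrac{C}{\varepsilon}|x_\varepsilon-y_\varepsilon|^{p-1}$, which (as you half-acknowledge) does not tend to zero; the subsequent hedging (``shrink the support of $\hat\mu$'', etc.) does not resolve it, although the final displayed bound happens to be of the correct form. Once the gradient term is routed through the difference measure and the zeroth-order increment is handled by the modulus-plus-Jensen argument, the rest of your proof matches the paper's.
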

\begin{proof}
The proof proceeds exactly as the proof of Theorem \ref{thm:Main Comparison Result} except that now in Step 3 we also need to find an estimate from
above for
\[
L_{\tilde\mu^{\alpha\beta}_{x_\delta^*}}(u^\delta,x_\varepsilon)-L_{\tilde\mu^{\alpha\beta}_{y_\delta^*}}(v_\delta,y_\varepsilon)
\]
which is independent of $\alpha$ and $\beta$, where the operators $L_{\tilde\mu^{\alpha\beta}_{x_\delta^*}}(u^\delta,x_\varepsilon)$ and $L_{\tilde\mu^{\alpha\beta}_{y_\delta^*}}(v_\delta,y_\varepsilon)$
are defined as in \eqref{eq:Lmualphabeta-def} for the measures $\tilde\mu^{\alpha\beta}_{x_\delta^*}$ and 
$\tilde\mu^{\alpha\beta}_{y_\delta^*}$. We have
\begin{align*}
&L_{\tilde\mu^{\alpha\beta}_{x_\delta^*}}(u^\delta,x_\varepsilon)-L_{\tilde\mu^{\alpha\beta}_{y_\delta^*}}(v_\delta,y_\varepsilon)
\leq
\int_{B_1}\left|u^\delta(x_\varepsilon+z)-u^\delta(x_\varepsilon)-\frac{1}{\varepsilon}D\psi_\kappa(x_\varepsilon-y_\varepsilon)\cdot z\right|
d|\tilde\mu^{\alpha\beta}_{x_\delta^*}-\tilde\mu^{\alpha\beta}_{y_\delta^*}|(z)
\\
&
+\int_{B_1}\left(u^\delta(x_\varepsilon+z)-u^\delta(x_\varepsilon)-(v_\delta(y_\varepsilon+z)-v_\delta(y_\varepsilon))\right)
d\tilde\mu^{\alpha\beta}_{y_\delta^*}(z).
\end{align*}
Let $\omega$ be the modulus of continuity of $u$. It is also a modulus of continuity for $u^\delta$. The modulus $\omega$ is bounded and we can assume that it is concave. We notice that the integrand of
the second integral above is non-positive. Therefore, we obtain
\[
L_{\tilde\mu^{\alpha\beta}_{x_\delta^*}}(u^\delta,x_\varepsilon)-L_{\tilde\mu^{\alpha\beta}_{y_\delta^*}}(v_\delta,y_\varepsilon)
\leq
\int_{B_1}\left(\omega(|z|)+\frac{1}{\varepsilon}|D\psi_\kappa(x_\varepsilon-y_\varepsilon)||z|\right)
d|\tilde\mu^{\alpha\beta}_{x_\delta^*}-\tilde\mu^{\alpha\beta}_{y_\delta^*}|(z).
\]
Using \eqref{eq:mutilde1}, \eqref{eq:mutilde2}, the concavity of $\omega$, Jensen's inequality, the subadditivity of $\omega$, and
$|D\psi_\kappa(x_\varepsilon-y_\varepsilon)|\leq p|x_\varepsilon-y_\varepsilon|^{p-1}$,
we can now estimate 
\begin{align*}
L_{\tilde\mu^{\alpha\beta}_{x_\delta^*}}(u^\delta,x_\varepsilon)-L_{\tilde\mu^{\alpha\beta}_{y_\delta^*}}(v_\delta,y_\varepsilon)
&\leq
C\omega\left( \int_{\mathbb{R}^d}|z|\,d|\tilde \mu^{\alpha\beta}_{x_\delta^*}-\tilde \mu^{\alpha\beta}_{y_\delta^*}|(z)\right)
+\frac{C}{\varepsilon}|x_\varepsilon-y_\varepsilon|^{p-1}|x_\delta^*-y_\delta^*|
\\
&\leq C\omega\left(|x_\delta^*-y_\delta^*|\right)
+\frac{C}{\varepsilon}|x_\varepsilon-y_\varepsilon|^{p-1}|x_\delta^*-y_\delta^*|.
\end{align*}
This allows us to complete the proof by following the rest of the proof of Theorem \ref{thm:Main Comparison Result}.
\end{proof}

The next example illustrates the usefulness of this modified approach and Theorem \ref{thm:Main Comparison Result-1}.

\begin{ex}\label{ex:fracLapl}
Let the measures $\mu^{\alpha\beta}_x$ be such that
\[
d\mu^{\alpha\beta}_x(z)=\frac{a_{\alpha\beta}(x)}{|z|^{d+\sigma}}
\]
for some $1<\sigma<2$. 
Assume that the functions $a_{\alpha\beta}:\mathcal{O}\to\mathbb{R}$ are nonnegative and such that there exists $L\geq 0$ such that
\[
\left|a_{\alpha\beta}^{\frac{1}{\sigma}}(x)-a_{\alpha\beta}^{\frac{1}{\sigma}}(y)\right|\leq L|x-y| \quad\forall x,y\in\mathcal{O},\,\,\forall\alpha,\beta.
\]
Without loss of generality we will also assume that $a_{\alpha\beta}\leq 1$.
The case $\sigma= 1$ can also be considered similarly but since calculations are slightly different, it is omitted here as it is
an easy variation. The case $0<\sigma< 1$ is taken care of by Corollary \ref{cor:Comparison Theorem Ops Order Less than 1}.

We decompose the measures $\mu^{\alpha\beta}_x$ in the following way. We set $r_x^{\alpha\beta}:=a_{\alpha\beta}^{\frac{1}{\sigma}}(x)$.
\[
\hat\mu_{x}^{\alpha\beta}={\mu_{x}^{\alpha\beta}}_{|B_{r_x^{\alpha\beta}}},\quad
\tilde\mu_{x}^{\alpha\beta}={\mu_{x}^{\alpha\beta}}_{|B_{1}\setminus B_{r_x^{\alpha\beta}}},\quad
\check\mu_{x}^{\alpha\beta}={\mu_{x}^{\alpha\beta}}_{|B_1^c},
\]
and as always all measures are then extended to measures on $\mathbb{R}^d$. We claim that these measures satisfy the assumptions of
Theorem \ref{thm:Main Comparison Result-1} with $\sigma<p\leq 2$.

Assumptions $B$ and $E$ are obvious so we will only focus on Assumption A, \eqref{eq:mutilde1}, and \eqref{eq:mutilde2}. Regarding
Assumption A we note that, by an elementary calculation, if $a_{\alpha\beta}(x)>0$, then $\hat\mu_{x}^{\alpha\beta}=({T_x^{\alpha\beta}})_\# \mu$, where
\[
{T_x^{\alpha\beta}}(z)=a_{\alpha\beta}^{\frac{1}{\sigma}}(x)z,\quad d\mu(z)=\frac{1}{|z|^{d+\sigma}}\chi_{B_1}dz.
\]
These types of transformations were used in \cite{ACJ-2014, CJ-2017}.
If $a_{\alpha\beta}(x)=0$ then $\hat\mu_{x}^{\alpha\beta}=0$.
Then, if $a_{\alpha\beta}(x)>0, a_{\alpha\beta}(y)>0$, $\gamma=(T_{x}^{\alpha\beta}\times T_{y}^{\alpha\beta})_\#\mu\in 
  \textnormal{Adm}(\hat\mu_{x}^{\alpha\beta},\hat\mu_{y}^{\alpha\beta})$, and 
 \begin{align*}
    \textnormal{d}_{\mathbb{L}_p}(\hat\mu_{x}^{\alpha\beta},\hat\mu_{y}^{\alpha\beta})^p &\leq \int_{\mathbb{R}^d\times \mathbb{R}^d}|z_1-z_2|^2d\gamma(z_1,z_2)
  =\int_{\mathbb{R}^d}|T_{x}^{\alpha\beta}(z)-T_{y}^{\alpha\beta}(z)|^p\;d\mu(z)
  \\
  &
  =\int_{B_1}|a_{\alpha\beta}^{\frac{1}{\sigma}}(x)-a_{\alpha\beta}^{\frac{1}{\sigma}}(y)|^p\frac{|z|^p}{|z|^{d+\sigma}}dz\leq C|x-y|^p.
  \end{align*}
  If $a_{\alpha\beta}(x)>0$ and $\hat\mu_{y}^{\alpha\beta}=0$, then also $\gamma=({T_x^{\alpha\beta} }\times T_{y}^{\alpha\beta})_\#\mu\in 
  \textnormal{Adm}(\hat\mu_{x}^{\alpha\beta},\hat\mu_{y}^{\alpha\beta})$, where $T_{y}^{\alpha\beta}(z)=0$, and
  \begin{align*}
    \textnormal{d}_{\mathbb{L}_p}(\hat\mu_{x}^{\alpha\beta},\hat\mu_{y}^{\alpha\beta})^p &
  \leq\int_{B_1}|T_{x}^{\alpha\beta}(z)-0|^p\;d\mu(z)
  \\
  &
  =\int_{B_1}|a_{\alpha\beta}^{\frac{1}{\sigma}}(x)-a_{\alpha\beta}^{\frac{1}{\sigma}}(y)|^p\frac{|z|^p}{|z|^{d+\sigma}}dz\leq C|x-y|^p.
  \end{align*}
  Regarding \eqref{eq:mutilde1}, we see that
  \[
 \tilde \mu^{\alpha\beta}_x(B_1)=C\int_{r_x^{\alpha\beta}}^1 \frac{a_{\alpha\beta}(x)}{r^{1+\sigma}}dr=\frac{C}{\sigma}(1-a_{\alpha\beta}(x)).
 \]
 It remains to check condition \eqref{eq:mutilde2}. Suppose that $a_{\alpha\beta}(x)<a_{\alpha\beta}(y)$. Then
 \[
  \int_{\mathbb{R}^d}|z|\,d|\tilde \mu^{\alpha\beta}_x-\tilde \mu^{\alpha\beta}_y|(z)
 =
  \int_{r_x^{\alpha\beta}\leq |z|<r_y^{\alpha\beta}}|z|\,d\tilde \mu^{\alpha\beta}_x(z)
  +
 \int_{r_y^{\alpha\beta}\leq |z|<1}\frac{(a_{\alpha\beta}(y)-a_{\alpha\beta}(x))|z|}{|z|^{d+\sigma}}dz.
   \]
   We estimate each integral separately.
  \begin{align*}
    \int_{r_x^{\alpha\beta}\leq |z|<r_y^{\alpha\beta}}|z|\,d\tilde \mu^{\alpha\beta}_x(z)&=Ca_{\alpha\beta}(x) \frac{1}{r^{\sigma-1}}\bigg|_{r_x^{\alpha\beta}}^{r_y^{\alpha\beta}}
    =
    Ca_{\alpha\beta}(x)\frac{a_{\alpha\beta}^{1-\frac{1}{\sigma}}(y)-a_{\alpha\beta}^{1-\frac{1}{\sigma}}(x)}{a_{\alpha\beta}^{1-\frac{1}{\sigma}}(y)a_{\alpha\beta}^{1-\frac{1}{\sigma}}(x)}
    \\
    &
    \leq Ca_{\alpha\beta}^{\frac{2}{\sigma}-1}(x)\left(a_{\alpha\beta}^{1-\frac{1}{\sigma}}(y)-a_{\alpha\beta}^{1-\frac{1}{\sigma}}(x)\right).
     \end{align*}
     By the mean value theorem,
     \[
    a_{\alpha\beta}^{1-\frac{1}{\sigma}}(y)-a_{\alpha\beta}^{1-\frac{1}{\sigma}}(x)=c^{\sigma-2}\left(a_{\alpha\beta}^{\frac{1}{\sigma}}(y)-a_{\alpha\beta}^{\frac{1}{\sigma}}(x)\right)
    \leq
    a_{\alpha\beta}^{1-\frac{2}{\sigma}}(x)\left(a_{\alpha\beta}^{\frac{1}{\sigma}}(y)-a_{\alpha\beta}^{\frac{1}{\sigma}}(x)\right),
    \] 
   where $c$ above is some number such that $a_{\alpha\beta}^{\frac{1}{\sigma}}(x)<c<a_{\alpha\beta}^{\frac{1}{\sigma}}(y)$. Thus we obtain
   \[
     \int_{r_x^{\alpha\beta}\leq |z|<r_y^{\alpha\beta}}|z|\,d\tilde \mu^{\alpha\beta}_x(z)\leq C\left(a_{\alpha\beta}^{\frac{1}{\sigma}}(y)-a_{\alpha\beta}^{\frac{1}{\sigma}}(x)\right)\leq
     C_1|x-y|.
     \]
     For the second integral, by an elementary calculation we obtain
     \[
      \int_{r_y^{\alpha\beta}\leq |z|<1}\frac{(a_{\alpha\beta}(y)-a_{\alpha\beta}(x))|z|}{|z|^{d+\sigma}}dz
      =C(a_{\alpha\beta}(y)-a_{\alpha\beta}(x))\left(\frac{1}{a_{\alpha\beta}^{1-\frac{1}{\sigma}}(y)}-1\right).
      \]
      Now, again by the mean value theorem,
      \[
   a_{\alpha\beta}(y)-a_{\alpha\beta}(x)=c^{\sigma-1} \left(a_{\alpha\beta}^{\frac{1}{\sigma}}(y)-a_{\alpha\beta}^{\frac{1}{\sigma}}(x)\right)  
   \leq
   a_{\alpha\beta}^{1-\frac{1}{\sigma}}(y)\left(a_{\alpha\beta}^{\frac{1}{\sigma}}(y)-a_{\alpha\beta}^{\frac{1}{\sigma}}(x)\right),
   \]
    where $c$ above is some number such that $a_{\alpha\beta}^{\frac{1}{\sigma}}(x)<c<a_{\alpha\beta}^{\frac{1}{\sigma}}(y)$. Therefore, it follows that
    \[
    \int_{r_y^{\alpha\beta}\leq |z|<1}\frac{(a_{\alpha\beta}(y)-a_{\alpha\beta}(x))|z|}{|z|^{d+\sigma}}dz
     \leq 
     C\left(a_{\alpha\beta}^{\frac{1}{\sigma}}(y)-a_{\alpha\beta}^{\frac{1}{\sigma}}(x)\right)(1-a_{\alpha\beta}^{1-\frac{1}{\sigma}}(y))\leq C_2|x-y|.
     \]
     This completes the proof of \eqref{eq:mutilde2}.
\end{ex}

We remark that if we apply the estimate of Example \ref{EX:measures with kernels of order sigma<1} to the kernels
\[
K_{\alpha\beta}(x,z)=\frac{a_{\alpha\beta}(x)}{|z|^{d+\sigma}}
\]
and only use the information that the functions $a_{\alpha\beta}$ are Lipschitz continuous, we obtain
\[
 \textnormal{d}_{\mathbb{L}_p}(\hat \mu_x^{\alpha\beta},\hat \mu_y^{\alpha\beta})
    \leq C|x-y|^{\frac{1}{p}}.
    \]
Thus Example \ref{ex:fracLapl} shows that the general estimate of Example \ref{EX:measures with kernels of order sigma<1}
{ coming from Proposition \ref{prop:bound via dual form}} is not optimal for $1<p\leq 2$.

{ 
The following two examples concern Sayah's comparison results in \cite{say}. Example \ref{ex:Sayah theorem I} in particular shows Theorem \ref{thm:Main Comparison Result} implies the main comparison result in \cite{say}, in the case where the measures $\mu^{\alpha\beta}$ are all supported in a ball.} 
{ 
We note that \cite[Theorem III.1]{say} dealt with the case of $\mathcal{O}=\mathbb{R}^d$ and the non-local operators there
were slightly different so Theorem \ref{thm:Main Comparison Result} cannot be applied directly to the case considered in
\cite{say}, however our approach covers the essential difficulties of the proof of the general result of \cite{say}.}
%It might be possible that our method can be adapted to remove this compact support assumption, but we did not check this assertion. 
{ 
Example \ref{ex:Sayah theorem II} is related to an alternative assumption discussed later in the paper \cite[Section III.1, p. 1065]{say}.}

\begin{ex}\label{ex:Sayah theorem I}
 {  In this example we explain how the assumption of \cite[equation (1.3)]{say} (reproduced below in \eqref{equation:Sayah condition}) implies our Assumption A with $p=1$: assume there is a constant $C>0$ such that for any} { $\phi \in C^{0,1}(B_1)$ with $\phi(0) = 0$ and 
  Lipschitz constant $1$}, { we have the inequality
  \begin{align}\label{equation:Sayah condition}
    \int_{B_1}\phi(z)\;d\mu_x(z)-\int_{B_1}\phi(z)\;d\mu_y(z) \leq C|x-y|,\;\;\forall\;x,y\in\mathcal{O},
  \end{align}
 }{  where we assume that the measures $\mu_x\in \mathbb{L}_1(B_1)$. 
 We will show that then the measures $\hat \mu_x=\mu_x$} { satisfy Assumption $A$ with $p=1$, that is $\textnormal{d}_{\mathbb{L}_1}(\hat \mu_x,\hat \mu_y) \leq C|x-y|$ for all $x,y$.} { To this end, let $\phi$ be any function as above. The Lipschitz condition means that for every 
 $z_1,z_2$ we have $\phi(z_1)-\phi(z_2)\leq |z_1-z_2|$,} { and in particular the pair $(\phi,-\phi)$ belongs to the set $\textnormal{Adm}^1$ defined in Lemma \ref{lem:duality}. Then, Lemma \ref{lem:duality} says that
 \begin{align*} 
   \textnormal{d}_{\mathbb{L}_1}(\hat \mu_x,\hat \mu_y) \leq \int_{B_1}\phi(z)\;d\mu_x(z) + \int_{B_1}(-\phi(z))\;d\mu_y(z),
 \end{align*}
 and thus assumption \eqref{equation:Sayah condition} implies that $\textnormal{d}_{\mathbb{L}_1}(\hat \mu_x,\hat \mu_y) \leq C|x-y|$.} 
\end{ex}
  
\begin{ex}\label{ex:Sayah theorem II}  
 {  Assume that the measures $\mu_x\in \mathbb{L}_1(B_1)$ and there is a constant $C>0$ such that for all $\tau>0$ and $x,y\in\mathcal{O}$
  \begin{align}\label{equation:Sayah condition II}
    \sup \limits_{h\in C(B_1\setminus\{0\}),\|h\|_\infty\leq 1}\left \{ \int_{\tau\leq |z|<1}h(z)|z|d\mu_x(z)-\int_{\tau\leq |z|<1} h(z)|z|\;d\mu_y(z) \right \} \leq C|x-y|.
  \end{align}
  Then, the measures $\hat \mu_x=\mu_x$ satisfy Assumption A with $p=1$, that is 
  \begin{align*}
     \textnormal{d}_{\mathbb{L}_1}(\hat \mu_x,\hat \mu_y) \leq C|x-y|.
  \end{align*}
  To show it we start arguing as in Example \ref{ex:Sayah theorem I}. We take any function $\phi$ with Lipschitz constant $1$ and such that 
  $\phi(0)=0$. The Lipschitz condition means that for every $z_1,z_2$ we have $\phi(z_1)-\phi(z_2)\leq |z_1-z_2|$, and in particular $(\phi,-\phi)
  \in\textnormal{Adm}^1$. Then, Lemma \ref{lem:duality} guarantees that
  \begin{align*}
    \textnormal{d}_{\mathbb{L}_1}(\hat \mu_x,\hat \mu_y) \leq  \int_{B_1}\phi(z)\;d\mu_x(z)-\int_{B_1}\phi(z)\;\mu_y(z).	  
  \end{align*}	  
  Since $\phi$ has Lipschitz constant $1$, and $\phi(0)=0$, it follows that $|\phi(z)|\leq |z|$. In particular, if $h(z) = |z|^{-1}|\phi(z)|$ then $h$ is continuous in $B_1\setminus \{0\}$ and $|h(z)|\leq 1$ for all $z$,  and so $h(z)$ is an admissible function for the supremum in \eqref{equation:Sayah condition II}. We conclude that for every $\tau>0$ 
  \begin{align*}
     & \int_{\tau\leq |z|<1}\phi(z)\;d\mu_x(z)-\int_{\tau\leq |z|<1}\phi(z)\;\mu_y(z) \\
     & = \int_{\tau\leq |z|<1}h(z)|z|\;d\mu_x(z)-\int_{\tau\leq |z|<1}h(z)|z|\;\mu_y(z) \leq C|x-y|.
  \end{align*}
  Letting $\tau \to 0$, the integral on the left converges to $\int_{B_1(0)}\phi(z)\;d\mu_x(z)-\int_{B_1(0)}\phi(z)\;\mu_y(z)$, so
  \begin{align*}
    \textnormal{d}_{\mathbb{L}_1}(\hat \mu_x,\hat \mu_y) \leq C|x-y|.
  \end{align*}
  }
 % and the first inequality is proved. To prove the second inequality, note that if $|z|\geq 1$ and $h:\mathbb{R}^d\setminus B_1\to \mathbb{R}$ is %a continuous function such that $|h(z)|\leq 1$ then $\tilde h(z) := |z|^{-1}h(z)$ is continuous and $|\tilde h(z)|\leq 1$ in $\mathbb{R}^d\setminus %B_1$, since in this case $h(z) = |z|\tilde h(z)$, we have 
 % \begin{align*}
  %   \textnormal{d}_{\textnormal{TV}}(\check \mu_x,\check\mu_y) & = \sup \limits_{|h|\leq 1} \left \{ \int_{\mathbb{R}^d\setminus B_1}h(z)\;d
  %\mu_x(z) -\int_{\mathbb{R}^d\setminus B_1}h(z)\;d\mu_y(z) \right \}\\
	 %& \leq \sup \limits_{|\tilde h|\leq 1} \left \{ \int_{\mathbb{R}^d\setminus B_1}\tilde h(z)|z|\;d\mu_x(z) -\int_{\mathbb{R}^d\setminus B_1}\tilde %h(z)|z|\;d\mu_y(z) \right \}\\          
	 %& \leq C|x-y|,
 % \end{align*}
  %and the second inequality is proved.
\end{ex}

%%%%%%%%%%%%%%%%%%%%%%%%%%%%%%%%%%%%%%%%%%%%%%%%
%%%%%%%%%%%%%%%%%%%%%%%%%%%%%%%%%%%%%%%%%%%%%%%%
%%%%%%%%%%%%%%%%%%%%%%%%%%%%%%%%%%%%%%%%%%%%%%%%
%%%%%%%%%%%%%%%%%%%%%%%%%%%%%%%%%%%%%%%%%%%%%%%%
%%%%%%%%%%%%%%%%%%%%%%%%%%%%%%%%%%%%%%%%%%%%%%%%
%%%%%%%%%%%%%%%%%%%%%%%%%%%%%%%%%%%%%%%%%%%%%%%%
\section{Comparison Principles under additional assumptions}\label{sec:other comparison}

As in the previous section, throughout this section we consider a fixed bounded domain $\mathcal{O}\subset \mathbb{R}^d$.  In this section we prove a few comparison results for more regular viscosity sub/supersolutions. In return, we are allowed to replace Assumption A by a weaker assumption.

\bigskip

\noindent \textbf{Assumption A1}. 
Let $p\in[1,2]$. There exist $C>0$ and $s \in (0,1)$ such that
  \begin{align*}
    \textnormal{d}_{\mathbb{L}_p}(\hat \mu_{x}^{\alpha\beta},\hat \mu_{y}^{\alpha\beta})\leq C|x-y|^s,
    \quad \forall x,y\in\mathcal{O},\,\,\forall\alpha,\beta.
  \end{align*}

\begin{rem}\label{rem:distance to mu_r}
  Consider a L\'evy measure $\mu \in \mathbb{L}_p(B_1)$. For $r\in(0,1)$ we define
  \begin{align*}
    \mu_r(\cdot) := \mu (\cdot \cap B_r^c).
  \end{align*}
  Then, we have the estimate
  \begin{align*}
    \textnormal{d}_{\mathbb{L}_p}(\mu,\mu_r)^p \leq \int_{B_r}|x|^p\;d\mu(x).	  
  \end{align*}	  
  To see why this is so, simply note that among the admissible plans we have the one that sends all of the mass of $\mu$ in $B_r\setminus\{0\}$ to $0$, and leaves the rest of the mass fixed in place. To be more precise, define
  \[
 T(x)= \left\{\begin{array}{ll}0\quad \quad\,\text{for}\,\,\, x\in B_r\setminus\{0\},\\
x\qquad \text{otherwise}.
\end{array}
  \right.
  \]
  Then $\gamma=(T\times {\rm Id})_\#\mu\in \textnormal{Adm}(\mu_r,\mu)$ and
 { \[
    \textnormal{d}_{\mathbb{L}_p}(\mu,\mu_r)^p \leq \int_{\mathbb{R}^d\times\mathbb{R}^d}|x-y|^pd\gamma(x,y)
  =\int_{\mathbb{R}^d}|T(x)-x|^2\;d\mu(x)=\int_{B_r}|x|^p\;d\mu(x).
  \] }
\end{rem}

\begin{rem}\label{rem:distance to mu_r follow up}
  Estimating the distance between $\mu$ and $\mu_r$ is of interest to us since it can be used to bound the difference between the operators
 {  \begin{align*}
    L_{\hat\mu}(u,x) := \int_{B_1}\;\delta u(x,z) \;d\hat\mu(z) \,\,\textnormal{ and }\,\, L_{\hat\mu_r}(u,x) := 
    \int_{B_1\cap B_r^c} \delta u(x,z) \;d\hat\mu_r(z).
  \end{align*}	 }
  The operator on the right can be classically evaluated for any continuous function, while the one on the left in general is not. Being able to estimate the difference between them will be an important step in the proof of Theorem \ref{thm:comparison principle with C1 subsolution}, removing the need for the use of the sup/inf-convolutions (as mentioned in Remark \ref{rem:comparison sec no need for sub/inf convolutions}).   
\end{rem}

\begin{thm}\label{thm:Comparison Principle 2}
Let Assumptions A1 and B-E be satisfied.   Let $u$ be a viscosity subsolution and $v$ be a viscosity supersolution of \eqref{eqn:intro equation form} and let $u(x)\leq v(x)$ for all $x\not\in \mathcal{O}$. If either $u$ or $v$ is in { $C^{0,r}(\mathcal{O})$} {  for some $r\in(0,1)$}, and we have $1-\frac{r}{p}<s$ (where $s$ is from Assumption A1), then
  \begin{align*}
    u(x)\leq v(x)\;\;\forall\;x\in\mathcal{O}.	
  \end{align*}
\end{thm}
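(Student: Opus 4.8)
The plan is to re-run the proof of Theorem~\ref{thm:Main Comparison Result} almost verbatim, changing only the place where the continuity of the L\'evy measures is invoked, and feeding in the extra regularity of one of the solutions through the second conclusion of Lemma~\ref{lem:properties of w}. Without loss of generality assume $u\in C^{0,r}(\mathcal{O})$, the case $v\in C^{0,r}(\mathcal{O})$ being symmetric. Arguing by contradiction, suppose $\sup_{\mathbb{R}^d}(u-v)=\ell>0$, pass to the sup/inf-convolutions $u^\delta,v_\delta$, and localise the supremum of $u^\delta-v_\delta$ to $\overline{\mathcal{O}}_{2h_0}$ exactly as in Step~1 of the proof of Theorem~\ref{thm:Main Comparison Result}. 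The one new observation at this stage is that, by part~(6) of Proposition~\ref{prop:sup/inf conv basic properties}, for $\delta$ small enough $u^\delta$ inherits, locally in $\mathcal{O}_{h_0}$ and with a constant uniform in $\delta$, the $C^{0,r}$-bound of $u$ (for small increments the modulus $Cs^{r}$ dominates $\tfrac{2}{h}\|u\|_\infty s$).

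Next, double variables with $w(x,y)=u^\delta(x)-v_\delta(y)-\tfrac{1}{\varepsilon}\psi_\kappa(x-y)$ and let $(x_\varepsilon,y_\varepsilon)$ be a maximiser. As in Step~2 of the proof of Theorem~\ref{thm:Main Comparison Result}, for $\varepsilon,\delta,\kappa$ small $(x_\varepsilon,y_\varepsilon)\in\mathcal{O}_{h_0}\times\mathcal{O}_{h_0}$, the functions $u^\delta,v_\delta$ are pointwise-$C^{1,1}$ at $x_\varepsilon,y_\varepsilon$, and Proposition~\ref{prop:equations for sup/inf convolutions} together with Lemma~\ref{lem:classical evaluation pointwise C1p-1} produce points $x_\delta^*,y_\delta^*$ with $I^{(x_\delta^*)}(u^\delta,x_\varepsilon)\le\delta$, $I^{(y_\delta^*)}(v_\delta,y_\varepsilon)\ge-\delta$ and $|x_\varepsilon-x_\delta^*|,|y_\varepsilon-y_\delta^*|\le h:=(2\delta(\|u\|_\infty+\|v\|_\infty))^{1/2}$. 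The novelty is that, since $u^\delta$ is $C^{0,r}$ near these points and they all lie in $\mathcal{O}_{h_0}$, the second part of Lemma~\ref{lem:properties of w} now applies and gives
\begin{equation*}
  \limsup_{\kappa\to0}\frac{|x_\varepsilon-y_\varepsilon|^{p-r}}{\varepsilon}\le C_1,
\end{equation*}
with $C_1$ independent of $\delta,\varepsilon,\kappa$ (here the uniform-in-$\delta$ H\"older bound just noted is what keeps $C_1$ from blowing up).

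The estimate of Step~3 of the proof of Theorem~\ref{thm:Main Comparison Result} carries over word for word, except that in bounding $L_{\mu^{\alpha\beta}_{x_\delta^*}}(u^\delta,x_\varepsilon)-L_{\mu^{\alpha\beta}_{y_\delta^*}}(v_\delta,y_\varepsilon)$ via Corollary~\ref{cor:measures comparison full} we use Assumption~A1 in place of Assumption~A; together with $|x_\delta^*-y_\delta^*|\le|x_\varepsilon-y_\varepsilon|+2h$ this gives
\begin{equation*}
  \frac{C}{\varepsilon}\,\textnormal{d}_{\mathbb{L}_p}(\hat\mu^{\alpha\beta}_{x_\delta^*},\hat\mu^{\alpha\beta}_{y_\delta^*})^{p}\le\frac{C}{\varepsilon}\big(|x_\varepsilon-y_\varepsilon|+2h\big)^{ps}\le\frac{C}{\varepsilon}|x_\varepsilon-y_\varepsilon|^{ps}+\rho_\varepsilon(\delta),
\end{equation*}
where $\rho_\varepsilon(\delta)\to0$ as $\delta\to0$ (note $h^{ps}/\varepsilon=c\,\delta^{ps/2}/\varepsilon$, and the cross terms are absorbed similarly). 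Keeping the unchanged estimates for the zero-order and the $f$-terms and using $I^{(x_\delta^*)}(u^\delta,x_\varepsilon)-I^{(y_\delta^*)}(v_\delta,y_\varepsilon)\le2\delta$, we reach
\begin{equation*}
  \lambda\ell\le2\delta+\frac{C}{\varepsilon}|x_\varepsilon-y_\varepsilon|^{ps}+C\,\theta(|x_\varepsilon-y_\varepsilon|)+\rho_\varepsilon(\delta).
\end{equation*}

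The crux — and the only place the hypothesis $1-\tfrac{r}{p}<s$ is used — is passing to the limit in this inequality in the order $\lim_{\varepsilon\to0}\lim_{\delta\to0}\limsup_{\kappa\to0}$. Taking $\limsup_{\kappa\to0}$, the refined bound gives $\limsup_{\kappa\to0}|x_\varepsilon-y_\varepsilon|\le(C_1\varepsilon)^{1/(p-r)}$ and
\begin{equation*}
  \limsup_{\kappa\to0}\frac{|x_\varepsilon-y_\varepsilon|^{ps}}{\varepsilon}=\limsup_{\kappa\to0}\Big(\frac{|x_\varepsilon-y_\varepsilon|^{p-r}}{\varepsilon}\Big)^{\frac{ps}{p-r}}\varepsilon^{\frac{ps}{p-r}-1}\le C_1^{\frac{ps}{p-r}}\,\varepsilon^{\frac{ps}{p-r}-1};
\end{equation*}
then $\lim_{\delta\to0}$ kills $2\delta$ and $\rho_\varepsilon(\delta)$, leaving $\lambda\ell\le C\,C_1^{ps/(p-r)}\varepsilon^{ps/(p-r)-1}+C\,\theta\big((C_1\varepsilon)^{1/(p-r)}\big)$. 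Since $1-\tfrac{r}{p}<s$ is equivalent to $\tfrac{ps}{p-r}>1$, the exponent $ps/(p-r)-1$ is strictly positive, so letting $\varepsilon\to0$ forces $\lambda\ell\le0$, a contradiction. The main obstacle is therefore not a single hard estimate but the bookkeeping around this iterated limit: guaranteeing that the constant $C_1$ in \eqref{eq:calphaeps} stays bounded as $\delta\to0$ (via the uniform-in-$\delta$ $C^{0,r}$ control of $u^\delta$), and taking the three limits in the correct order so that each family of error terms is disposed of at the right stage.
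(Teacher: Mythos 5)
Your proposal is correct and follows essentially the same route as the paper: the paper's proof simply re-runs the argument of Theorem \ref{thm:Main Comparison Result}, replacing \eqref{eq:lim0} by the refined bound \eqref{eq:calphaeps} from Lemma \ref{lem:properties of w} and Assumption A by A1, and concludes from $sp>p-r$ exactly as you do. The only difference is that you make explicit some bookkeeping the paper leaves implicit (the uniform-in-$\delta$ H\"older control of $u^\delta$ via Proposition \ref{prop:sup/inf conv basic properties}(6) and the order of the iterated limits), which is a welcome clarification rather than a deviation.
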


\begin{proof}
  The proof follows the lines of the proof of Theorem \ref{thm:Main Comparison Result}. The only difference is that when either $u$ or $v$ is 
  { $C^{0,r}$}, instead of \eqref{eq:lim0} we now have (\ref{eq:calphaeps}), i.e. 
  \begin{align*}
   \limsup_{\kappa\to 0} \frac{|x_{\varepsilon}-y_{\varepsilon}|^{p-r}}{\varepsilon}\leq C
  \end{align*}
  and in this case, following the original proof we obtain
  \begin{eqnarray*}
    \lambda \ell    \leq  2\delta +\frac{C}{\varepsilon}|x_{\varepsilon}-y_{\varepsilon}|^{sp}+C \theta\big(|x_\varepsilon-y_\varepsilon|) +\rho_\varepsilon(\delta).
  \end{eqnarray*} 
  which produces a contradiction after taking $\lim_{\varepsilon \to 0}\lim_{\delta\to 0}\limsup_{\kappa\to 0}$ if $s p>p-r$, which is the case precisely when $1-\frac{r}{p}<s$.
\end{proof}

{  The previous theorem does not cover the limiting situation where $s=1-\frac{r}{p}$, however, with extra work one can show that if $u$ or $v$ is of class $C^1$ then we can choose $s=1-\tfrac{1}{p}$ and we still have comparison.} The proof is different from that of Theorem \ref{thm:Main Comparison Result} since we do not use the sup/inf-convolutions. 
  
 \begin{thm}\label{thm:comparison principle with C1 subsolution}
  Let Assumptions B-F hold, and Assumption A1 hold with $p>1$ and $s=1-\tfrac{1}{p}$. Suppose that $u$ and $v$ are respectively a viscosity subsolution and viscosity supersolution of \eqref{eqn:intro equation form} and $u(x)\leq v(x)$ for all $x\not\in \mathcal{O}$. If either $u$ or $v$ is in $C^1(\mathcal{O})$ then 
  \begin{align*}
    u(x)\leq v(x)\;\;\forall\;x\in\mathcal{O}.
  \end{align*}
\end{thm}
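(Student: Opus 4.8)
The plan is to repeat the contradiction argument behind Theorem~\ref{thm:Main Comparison Result}, but \emph{without} sup/inf-convolutions: if $u\in C^1(\mathcal{O})$ we may differentiate $u$ directly at the doubling point, and the loss of classical evaluation of the singular operator is compensated by the truncation device of Remarks~\ref{rem:distance to mu_r}--\ref{rem:distance to mu_r follow up}. Assume $u\in C^1(\mathcal{O})$ and, by contradiction, $\ell:=\sup_{\mathbb{R}^d}(u-v)>0$. For $\varepsilon,\kappa>0$ let $(x_\varepsilon,y_\varepsilon)$ be a global maximum point of $w(x,y)=u(x)-v(y)-\tfrac1\varepsilon\psi_\kappa(x-y)$. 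Running Steps~1--2 of the proof of Theorem~\ref{thm:Main Comparison Result} with $\delta=0$ (directly on $u,v$, which is legitimate since only uniform continuity was used) gives $(x_\varepsilon,y_\varepsilon)\in\mathcal{O}_{h_0}\times\mathcal{O}_{h_0}$ and $u(x_\varepsilon)-v(y_\varepsilon)\ge\ell$ for all small $\varepsilon,\kappa$; moreover $u\in C^{0,1}(\mathcal{O}_{h_0})$, so Lemma~\ref{lem:properties of w} yields \eqref{eq:lim0} and \eqref{eq:calphaeps} with $r=1$, i.e. $\limsup_{\kappa\to0}\tfrac1\varepsilon|x_\varepsilon-y_\varepsilon|^{p-1}\le C_1$ and $\limsup_{\kappa\to0}\tfrac1\varepsilon|x_\varepsilon-y_\varepsilon|^{p}\le\omega((C\varepsilon)^{1/p})$. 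Since $\phi(x)=v(y_\varepsilon)+\tfrac1\varepsilon\psi_\kappa(x-y_\varepsilon)$ touches $u$ from above at $x_\varepsilon$ and $\xi(y)=u(x_\varepsilon)-\tfrac1\varepsilon\psi_\kappa(x_\varepsilon-y)$ touches $v$ from below at $y_\varepsilon$, both $Du(x_\varepsilon)$ and the distinguished gradient $D\xi(y_\varepsilon)$ entering the supersolution inequality equal $\tfrac1\varepsilon D\psi_\kappa(x_\varepsilon-y_\varepsilon)=:q_\varepsilon$.

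\textbf{Equations at a truncation level.} Because $u$ is only $C^1$, we use the equivalent definition (Definition~\ref{def:viscosity solution ver1}, Proposition~\ref{prop:equiv}) at an arbitrary level $r\in(0,1)$: write the subsolution inequality (test function $\phi$, level $\delta=r$) and the supersolution inequality (test function $\xi$, level $\delta=r$), subtract, and perform the usual $\sup_\alpha\inf_\beta$ selection. For each $\eta>0$ this produces indices $\alpha,\beta$ (depending on $\varepsilon,\kappa,r,\eta$) such that, writing $\mu=\mu_{x_\varepsilon}^{\alpha\beta}$, $\nu=\mu_{y_\varepsilon}^{\alpha\beta}$, decomposing the $|z|\ge r$ parts as $L_{(\hat\mu)_r}+L_{\check\mu}$ with $(\hat\mu)_r:=\hat\mu|_{B_r^c}$ (and likewise for $\nu$), and estimating the zeroth-order and $f$-terms by Assumptions~C--D as in Step~3 of the proof of Theorem~\ref{thm:Main Comparison Result} (using $u(x_\varepsilon)-v(y_\varepsilon)\ge\ell$),
\begin{align*}
\lambda\ell\le{}& 2\eta+\Big|\int_{|z|<r}\delta\phi(x_\varepsilon,z)\,d\mu\Big|+\Big|\int_{|z|<r}\delta\xi(y_\varepsilon,z)\,d\nu\Big|\\
&+\big(L_{(\hat\mu)_r}(u,x_\varepsilon)-L_{(\hat\nu)_r}(v,y_\varepsilon)\big)+\big(L_{\check\mu}(u,x_\varepsilon)-L_{\check\nu}(v,y_\varepsilon)\big)+C\theta(|x_\varepsilon-y_\varepsilon|).
\end{align*}

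\textbf{The three non-local pieces.} By Lemma~\ref{lem:psigamma}, $|\delta\phi(x_\varepsilon,z)|,|\delta\xi(y_\varepsilon,z)|\le\tfrac{C_p}{\varepsilon}|z|^p$ for $|z|<1$, so by \eqref{eqn:uniform integrability Levy measures} the first two integrals are $\le\tfrac{C_p}{\varepsilon}\theta(r)$. Since $w$ has a global maximum at $(x_\varepsilon,y_\varepsilon)$, the computation in the proof of Corollary~\ref{cor:measures comparison full} together with Assumption~B gives $L_{\check\mu}(u,x_\varepsilon)-L_{\check\nu}(v,y_\varepsilon)\le 2\|v\|_\infty\theta(|x_\varepsilon-y_\varepsilon|)$. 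Finally — and this is where the truncation pays off — $(\hat\mu)_r,(\hat\nu)_r$ are \emph{finite} measures in $\mathbb{L}_p(B_1)$, so the proof of Lemma~\ref{lem:measures comparison singular} goes through verbatim with ``pointwise-$C^{1,1}$'' weakened to ``having the distinguished gradient $q_\varepsilon$ at the doubling point'' (that hypothesis was needed there only to give classical meaning to $L_{\hat\mu}(u,\cdot)$, which for a finite measure follows from boundedness of $u$), giving $L_{(\hat\mu)_r}(u,x_\varepsilon)-L_{(\hat\nu)_r}(v,y_\varepsilon)\le\tfrac{C_p}{\varepsilon}\textnormal{d}_{\mathbb{L}_p}((\hat\mu)_r,(\hat\nu)_r)^p$. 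Combining, letting $\eta\to0$ and then $\limsup_{\kappa\to0}$,
\begin{align*}
\lambda\ell\le\frac{C_p}{\varepsilon}\theta(r)+\frac{C_p}{\varepsilon}\limsup_{\kappa\to0}\textnormal{d}_{\mathbb{L}_p}\big((\hat\mu)_r,(\hat\nu)_r\big)^p+C\,\theta\Big(\limsup_{\kappa\to0}|x_\varepsilon-y_\varepsilon|\Big).
\end{align*}

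\textbf{The crux.} By the triangle inequality, Remark~\ref{rem:distance to mu_r} and \eqref{eqn:uniform integrability Levy measures}, $\textnormal{d}_{\mathbb{L}_p}((\hat\mu)_r,(\hat\nu)_r)^p\le C\big(\theta(r)+\textnormal{d}_{\mathbb{L}_p}(\hat\mu,\hat\nu)^p\big)$, and Assumption~A1 with $s=1-\tfrac1p$ bounds $\textnormal{d}_{\mathbb{L}_p}(\hat\mu,\hat\nu)^p$ by $C|x_\varepsilon-y_\varepsilon|^{p-1}$; but \eqref{eq:calphaeps} only makes $\tfrac1\varepsilon|x_\varepsilon-y_\varepsilon|^{p-1}$ bounded, not infinitesimal. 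This borderline term is exactly the obstruction that Theorem~\ref{thm:Comparison Principle 2} could not remove, and overcoming it is the heart of the proof: one uses $u\in C^1$ together with Assumption~F to upgrade, for each fixed $r\in(0,1)$, the merely $s$-Hölder dependence of the \emph{truncated} families to a Lipschitz one, $\textnormal{d}_{\mathbb{L}_p}((\hat\mu_x^{\alpha\beta})_r,(\hat\mu_y^{\alpha\beta})_r)\le C(r)|x-y|$, so that the middle term becomes $\le\tfrac{C(r)^p}{\varepsilon}|x_\varepsilon-y_\varepsilon|^{p}$, which does tend to $0$ as $\kappa\to0$ then $\varepsilon\to0$ for $r$ fixed, thanks to $\tfrac1\varepsilon\limsup_{\kappa}|x_\varepsilon-y_\varepsilon|^p\le\omega((C\varepsilon)^{1/p})$. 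Since the near-origin error $\tfrac1\varepsilon\theta(r)$ blows up when $r$ is kept fixed, the final subtlety is to let $r=r(\varepsilon)\to0$ at a rate making $\tfrac1\varepsilon\theta(r(\varepsilon))\to0$ and $C(r(\varepsilon))^p\,\omega((C\varepsilon)^{1/p})\to0$ simultaneously; with such a choice the right-hand side vanishes as $\varepsilon\to0$ and we conclude $\lambda\ell\le0$, a contradiction. I expect this last balancing act — extracting genuine decay from the interplay of the $C^1$-regularity and the $r$-dependent Lipschitz estimate of Assumption~F — to be the main difficulty; the rest is a bookkeeping variant of the proof of Theorem~\ref{thm:Main Comparison Result}.
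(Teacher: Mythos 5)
Your setup (unperturbed doubling, truncation at level $r$, classical evaluation of the truncated operators, the $\check L$ and zeroth-order estimates) is sound, and you correctly locate the obstruction in the borderline term $\tfrac{C}{\varepsilon}|x_\varepsilon-y_\varepsilon|^{p-1}$ produced by Assumption A1 with $s=1-\tfrac1p$. But the step you propose to overcome it is a genuine gap: the claimed upgrade $\textnormal{d}_{\mathbb{L}_p}\big((\hat\mu_x^{\alpha\beta})_r,(\hat\mu_y^{\alpha\beta})_r\big)\le C(r)|x-y|$ is neither proved nor provable from the stated hypotheses. The regularity of $u$ has no bearing on the $x$-dependence of the measures, and Assumption F only gives $|K_{\alpha\beta}(x,z)-K_{\alpha\beta}(y,z)|\le \bar\Lambda|x-y|^{\gamma}|z|^{-d-\sigma}$ with $\gamma\le 1<p$; feeding this into Proposition \ref{prop:bound via dual form} (or any coupling that ships the excess mass of the truncated measures to the origin) yields at best $\textnormal{d}_{\mathbb{L}_p}\big((\hat\mu_x)_r,(\hat\mu_y)_r\big)^p\le C\,|x-y|^{\gamma}\int_{r\le|z|<1}|z|^{p-d-\sigma}dz$, which for $p>\sigma$ is $C|x-y|^{\gamma}$ uniformly in $r$ — i.e. only H\"older, and with $\gamma=p-1$ (the case relevant to Corollary \ref{thm:comparison with one solution elliptic case}) you are back to exactly the borderline quantity $\tfrac{C}{\varepsilon}|x_\varepsilon-y_\varepsilon|^{p-1}$, which Lemma \ref{lem:properties of w} only makes bounded. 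So truncation buys no extra smallness here, the proposed $r(\varepsilon)$ balancing has nothing to balance, and (since the theorem is stated for general families, Assumption F playing no role in its proof) an argument hinging on the specific kernel structure would in any case prove less than the statement. You acknowledge this step as the expected main difficulty; it is precisely the part that is missing.

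The paper closes the gap differently: it does not improve the measure estimates but perturbs the doubling using the $C^1$ regularity of $u$. One picks $\phi_n\in C^3(\mathbb{R}^d)\cap\textnormal{BUC}(\mathbb{R}^d)$ with $\phi_n\to u$ uniformly and $D\phi_n\to Du$ uniformly on a compact neighborhood $K$ of the maximum set of $u-v$, and maximizes $w^{n,\varepsilon}(x,y)=(u(x)-\phi_n(x))-(v(y)-\phi_n(y))-\tfrac{1}{\varepsilon}\psi_\kappa(x-y)$. The first-order condition at the maximum now reads $\tfrac1\varepsilon D\psi_\kappa(x_{n,\varepsilon}-y_{n,\varepsilon})=Du(x_{n,\varepsilon})-D\phi_n(x_{n,\varepsilon})$, so $\limsup_{\varepsilon\to0}\limsup_{\kappa\to0}\tfrac{|x_{n,\varepsilon}-y_{n,\varepsilon}|^{p-1}}{\varepsilon}=o_{1/n}(1)$ — exactly the smallness your unperturbed condition $Du(x_\varepsilon)=q_\varepsilon$ (merely bounded) cannot deliver — and this kills the borderline term in the limit $n\to\infty$. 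The price is the extra terms $\hat L^{\alpha\beta}(\phi_n,x_{n,\varepsilon})-\hat L^{\alpha\beta}(\phi_n,y_{n,\varepsilon})$, which vanish as $\varepsilon\to0$ for fixed $n$ thanks to the bounded second and third derivatives of $\phi_n$, a cutoff near the origin, Proposition \ref{prop:bound for distances of restricted measures}, Assumption A1 and Assumption E; the truncation level $r$ (your $(\hat\mu)_r$, Remark \ref{rem:distance to mu_r}) is only a device to give classical meaning to the equations and is sent to $0$ first, at fixed $\varepsilon$, with no balancing against $\varepsilon$. Your remaining bookkeeping agrees with the paper's, but without this perturbation the contradiction does not close.
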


\begin{rem}
  If we allowed $C^1$ functions to be test functions in the case $p=1$ then Theorem \ref{thm:comparison principle with C1 subsolution} would trivially hold for $p=1$ without the need for Assumptions A, B and the continuity of the coefficients, since then either $u$ or $v$ would be a classical sub/supersolution of \eqref{eqn:intro equation form} and could thus be used as a test function.
\end{rem}

\begin{proof}
  Without loss of generality, let us say that $u\in C^{1}(\mathcal{O})$. As before we argue by contradiction, in which case there is some $\ell>0$ such that
  \begin{align*}
    \sup \limits_{x\in\mathbb{R}^d} \{u(x)-v(x)\} = \ell.
  \end{align*}	  	
  
  Step 1. (Doubling of variables and perturbation)
  
  Let $K\subset\mathcal{O}$ be a compact neighborhood of the set of maximum points of $u-v$ in $\mathcal{O}$. There exists a sequence of $C^{3}(\mathbb R^d)\cap \textnormal{BUC}(\mathbb R^d)$ functions $\{\phi_n\}_{n}$ each of which has second and third derivatives bounded in $\mathbb{R}^d$ and such that $|u-\phi_n|\to 0$ uniformly in $\mathbb{R}^d$ and 
  \begin{align*}
    \lim \limits_{n\to \infty}\sup \limits_{K}| D u- D \phi_n| = 0.
  \end{align*}
  Now, let $(x_{n,\varepsilon},y_{n,\varepsilon})\in \mathbb R^d\times \mathbb R^d$ be a global maximum point of $w^{n,\varepsilon}$ over $\mathbb{R}^d\times\mathbb{R}^d$, where
  \begin{equation*}
    w^{n,\varepsilon}(x,y):=\big(  u(x)-\phi_n(x)\big)-\big( v(y)- \phi_n(y)\big)-\frac{\psi_\kappa(x-y)}{\varepsilon}.
  \end{equation*}
  Similarly to Lemma \ref{lem:properties of w}, one can show that for any $n$
   \begin{align*}
    \limsup_{\varepsilon\to 0}\limsup_{\kappa\to 0} \frac{|x_{n,\varepsilon}-y_{n,\varepsilon}|^p}{\varepsilon}=0, 
  \end{align*}  
  \begin{align*}
  \limsup_{\varepsilon\to 0}\limsup_{\kappa\to 0}\big( u(x_{n,\varepsilon})- v(y_{n,\varepsilon})\big)=\ell.
  \end{align*} 
  Observe that $u$ is touched from above at $x_{n,\varepsilon}$ by
  \begin{align*}
    \bar \phi(x) := u(x_{n,\varepsilon}) - \phi_n(x_{n,\varepsilon}) + \phi_n(x)+ \frac{1}{\varepsilon}\left( \psi_{\kappa}(x-y_{n,\varepsilon}) - \psi_{\kappa}(x_{n,\varepsilon}-y_{n,\varepsilon}) \right ),
  \end{align*}
  while  $v$ is touched from below at $y_{n,\varepsilon}$ by
  \begin{align*}
    \underline \phi(y) := v(y_{n,\varepsilon}) - \phi_n(y_{n,\varepsilon}) + \phi_n(y) - \frac{1}{\varepsilon} \left ( \psi_{\kappa}(x_{n,\varepsilon}-y) -\psi_{\kappa}(x_{n,\varepsilon}-y_{n,\varepsilon}) \right ).	  
  \end{align*}	  
  Since $u$ is $C^1$ this means first that 
  \begin{align*}
    D u(x_{n,\varepsilon})-D\phi_n(x_{n,\varepsilon}) = p(\kappa+|x_{n,\varepsilon}-y_{n,\varepsilon}|^{2})^{\frac{p}{2}-1}\frac{x_{n,\varepsilon}-y_{n,\varepsilon}}{\varepsilon}.
  \end{align*}
  There is some small $c>0$ such that $B_c(x_{n,\varepsilon})\cup B_c(y_{n,\varepsilon})\subset K$
  if $\varepsilon$ and $\kappa$ are sufficiently small. Therefore, 
  \begin{equation}\label{eqn:second comparison theorem w epsilon limit as epsilon to zero}
    \limsup \limits_{\varepsilon\to 0}\limsup_{\kappa\to 0}\frac{|x_{n,\varepsilon}-y_{n,\varepsilon}|^{p-1}}{\varepsilon} = o_\frac{1}{n}(1).
  \end{equation}
  On the other hand, since $u$ is a viscosity subsolution and $v$ a viscosity supersolution, for any $0<r<1$ we have
  \begin{align*}
    I(u_r,x_{n,\varepsilon}) \leq 0 \textnormal{ and } I(v_r,y_{n,\varepsilon}) \geq 0,
  \end{align*}
  where (recall Definition \ref{def:viscosity solution})
  \begin{align*}
    u_r(x) := \left \{ \begin{array}{rr}
      \bar \phi(x) & \textnormal{ in } B_r(x_{n,\varepsilon})\\
      u(x) & \textnormal{ in } B_r^c(x_{n,\varepsilon})	   	
    \end{array}\right.\;\;\textnormal{ and } \;\;\;\;  v_r(x) := \left \{ \begin{array}{rr}
      \underline \phi(x) & \textnormal{ in } B_r(y_{n,\varepsilon})\\
      v(x) & \textnormal{ in } B_r^c(y_{n,\varepsilon}).	   	
    \end{array}\right. 
  \end{align*}
  
  Step 2. (Equation structure, main term)

  Using that $I(\cdot,x)$ has the inf-sup representation in \eqref{eqn:intro equation form}, it follows that
  \begin{align}   
    I(v_r,y_{n,\varepsilon})-I(u_r,x_{n,\varepsilon}) & \leq  \sup \limits_{\alpha,\beta} \Big \{ \hat L^{\alpha\beta}(u_r,x_{n,\varepsilon}) - \hat L^{\alpha\beta}(v_r,y_{n,\varepsilon}) \Big \} \notag \\
      & \;\;\;\;+ \sup \limits_{\alpha,\beta} \Big \{  \check L^{\alpha\beta}(u_r,x_{n,\varepsilon}) - \check L^{\alpha\beta}(v_r,y_{n,\varepsilon}) \Big \} \notag \\
      & \;\;\;\;+ \sup \limits_{\alpha,\beta} \Big \{ c_{\alpha\beta}(y_{n,\varepsilon})v(y_{n,\varepsilon}) - c_{\alpha\beta}(x_{n,\varepsilon})u(x_{n,\varepsilon}) \Big \} \notag \\
      & \;\;\;\;+ \sup \limits_{\alpha,\beta} \Big \{ f_{\alpha\beta}(y_{n,\varepsilon}) - f_{\alpha\beta}(x_{n,\varepsilon}) \Big \}.\label{eqn:C1 comparison structure}	  	     
  \end{align}
  Let us bound each of the terms on the right hand side of \eqref{eqn:C1 comparison structure}. As before, the most delicate term is the first one. Fix $\alpha$ and $\beta$, we note that
  \begin{align*}
    \hat L^{\alpha\beta}(u_r,x_{n,\varepsilon}) &  =  \int_{B_r}\delta u_r(x_{n,\varepsilon},x)  \;d\hat \hat \mu^{\alpha\beta}_{x_{n,\varepsilon}}(x)  + \int_{B_r^c}\delta u(x_{n,\varepsilon},x)  \;d\hat \mu^{\alpha\beta}_{x_{n,\varepsilon}}(x),\\
     \hat L^{\alpha\beta}(v_r,y_{n,\varepsilon}) &   =  \int_{B_r}\delta v_r(y_{n,\varepsilon},y)  \;d\hat \mu^{\alpha\beta}_{y_{n,\varepsilon}}(y) + \int_{B_r^c}\delta v(y_{n,\varepsilon},y)  \;d\hat \mu^{\alpha\beta}_{y_{n,\varepsilon}}(y).
  \end{align*}
  
  Let us choose $\gamma_{r} \in \textnormal{Adm}(\hat \mu^{\alpha\beta}_{x_{n,\varepsilon},r},\hat \mu^{\alpha\beta}_{y_{n,\varepsilon},r})$ (using the notation introduced in Remark \ref{rem:distance to mu_r}) which minimizes the $p$-cost. Denote
\[
A_r:=(\{0\}\times(B_1\setminus B_r))\cup ((B_1\setminus B_r)\times\{0\})\cup ((B_1\setminus B_r)\times(B_1\setminus B_r)).
\]
Since $\delta u(x_{n,\varepsilon},0)=\delta v(y_{n,\varepsilon},0)=0$ and
\begin{equation}\label{eq;gammar}
\gamma_{r}\left((((B_1\setminus B_r)\cup\{0\})^c\times \mathbb{R}^d)\cup(\mathbb{R}^d\times
((B_1\setminus B_r)\cup\{0\})^c)\right)=0,
\end{equation}
we have
\begin{align*}
&\int_{B_r^c}\delta u(x_{n,\varepsilon},x)  \;d\hat \mu^{\alpha\beta}_{x_{n,\varepsilon}}(x)
=\int_{B_r^c}\delta u(x_{n,\varepsilon},x)  \;d\hat \mu^{\alpha\beta}_{x_{n,\varepsilon},r}(x)\\
&=\int_{\mathbb{R}^d\times\mathbb{R}^d}\delta u(x_{n,\varepsilon},x)  \;d\gamma_r(x,y)
=\int_{A_r}\delta u(x_{n,\varepsilon},x)  \;d\gamma_r(x,y).
\end{align*} 
Similarly,
\[
\int_{B_r^c}\delta v(y_{n,\varepsilon},y)  \;d\hat \mu^{\alpha\beta}_{y_{n,\varepsilon}}(y)
=\int_{A_r}\delta v(y_{n,\varepsilon},y)  \;d\gamma_r(x,y).
\]
Therefore we obtain
  \begin{align*}
    & \hat L^{\alpha\beta}(u_r,x_{n,\varepsilon}) - \hat L^{\alpha\beta}(v_r,y_{n,\varepsilon}) \\
    & = \int_{B_r}\delta u_r(x_{n,\varepsilon},x)  \;d\hat \mu^{\alpha\beta}_{x_{n,\varepsilon}}(x) - \int_{B_r}\delta v_r(y_{n,\varepsilon},y)  \;d\hat \mu^{\alpha\beta}_{y_{n,\varepsilon}}(y)\\
    & \;\;\;\;+\int_{A_r} [\delta u(x_{n,\varepsilon},x)-\delta v(y_{n,\varepsilon},y)]\;d\gamma_r(x,y).
  \end{align*}  
  Using that $(x_{n,\varepsilon},y_{n,\varepsilon})$ is a maximum point of $w^{n,\varepsilon}$, we have the following pointwise bound for pairs $(x,y) \in A_r$
  \begin{align*}
    \delta u(x_{n,\varepsilon},x)-\delta v (y_{n,\varepsilon},y) \leq \frac{C}{\varepsilon}|x-y|^p+\delta \phi(x_{n,\varepsilon},x)-\delta \phi(y_{n,\varepsilon},y),
  \end{align*}
  It thus follows (again using $\delta \phi(x_{n,\varepsilon},0)=\delta \phi(y_{n,\varepsilon},0)=0$ and \eqref{eq;gammar}) that 
  \begin{align*}
    & \int_{A_r} [\delta u(x_{n,\varepsilon},x)-\delta v(y_{n,\varepsilon},y)]\;d\gamma_r(x,y) \\
    & \leq \frac{C}{\varepsilon}\int_{A_r} |x-y|^{p}\;d\gamma_r(x,y) \\
    & \;\;\;\;+\int_{B_r^c}\delta \phi(x_{n,\varepsilon},x)  \;d\hat \mu^{\alpha\beta}_{x_{n,\varepsilon}}(x)-\int_{B_r^c}\delta \phi(y_{n,\varepsilon},y)  \;d\hat \mu^{\alpha\beta}_{y_{n,\varepsilon}}(y), 	
  \end{align*}	  
  and since $\gamma_r$ is the optimizer in $\textnormal{Adm}(\hat \mu_{x_{n,\varepsilon}},\hat \mu_{y_{n,\varepsilon}})$, 
  \begin{align*}
    & \int_{A_r} [\delta u(x_{n,\varepsilon},x)-\delta v(y_{n,\varepsilon},y)]\;d\gamma_r(x,y) \\
    & \leq \frac{C}{\varepsilon}\textnormal{d}_{\mathbb{L}_p}(\hat \mu_{x_{n,\varepsilon},r}^{\alpha\beta},\hat \mu_{y_{n,\varepsilon},r}^{\alpha\beta} )^p +\int_{B_r^c}\delta \phi(x_{n,\varepsilon},x)  \;d\hat \mu^{\alpha\beta}_{x_{n,\varepsilon}}(x)-\int_{B_r^c}\delta \phi(y_{n,\varepsilon},y)  \;d\hat \mu^{\alpha\beta}_{y_{n,\varepsilon}}(y). 	
  \end{align*}
  As for the integrals over $B_r(0)$, note that 
  \begin{align*}
    & \int_{B_r}\delta u_r(x_{n,\varepsilon},x)  \;d\hat \mu^{\alpha\beta}_{x_{n,\varepsilon}}(x) - \int_{B_r}\delta v_r(y_{n,\varepsilon},y)  \;d\hat \mu^{\alpha\beta}_{y_{n,\varepsilon}}(y) \\
    & = \int_{B_r} [\delta \phi_n(x_{n,\varepsilon},x) + \frac{1}{\varepsilon} \delta \psi_\kappa(\cdot-y_{n,\varepsilon})(x_{n,\varepsilon},x)]\;d\hat \mu^{\alpha\beta}_{x_{n,\varepsilon}}(x)\\
    & -\int_{B_r} [\delta \phi_n(y_{n,\varepsilon},y) - \frac{1}{\varepsilon} \delta \psi_\kappa(x_{n,\varepsilon}-\cdot)(y_{n,\varepsilon},y)]\;d\hat \mu^{\alpha\beta}_{y_{n,\varepsilon}}(x).	 	
  \end{align*}
  Putting the last inequality and last equality together, we have
  \begin{align*}
    & \hat L^{\alpha\beta}(u_r,x_{n,\varepsilon}) - \hat L^{\alpha\beta}(v_r,y_{n,\varepsilon}) \\
    & \leq \frac{C}{\varepsilon}\textnormal{d}_{\mathbb{L}_p}(\hat \mu^{\alpha\beta}_{x_{n,\varepsilon},r},\hat \mu^{\alpha\beta}_{y_{n,\varepsilon},r})^p + \hat L^{\alpha\beta}(\phi_n,x_{n,\varepsilon})-\hat L^{\alpha\beta}(\phi_n,y_{n,\varepsilon}) +
    \frac{C}{\varepsilon}\theta(r),
  \end{align*}
  where we used Assumption E and Lemma \ref{lem:psigamma}.

  Next, we use Remark \ref{rem:distance to mu_r} to get $\textnormal{d}_{\mathbb{L}_p}(\hat \mu_{x_{n,\varepsilon},r}^{\alpha\beta},\hat \mu_{y_{n,\varepsilon},r}^{\alpha\beta} )^p \leq \textnormal{d}_{\mathbb{L}_p}(\hat \mu_{x_{n,\varepsilon}}^{\alpha\beta},\hat \mu_{y_{n,\varepsilon}}^{\alpha\beta} )^p+\rho(r)$, where $\rho(r)\to 0$ as $r\to 0$. Then using Assumption A1 (recall $s=1-1/p$) it follows that 
 {  \begin{align*}
    \textnormal{d}_{\mathbb{L}_p}(\hat \mu_{x_{n,\varepsilon},r}^{\alpha\beta},\hat \mu_{y_{n,\varepsilon},r}^{\alpha\beta} )^p \leq C|x_{n,\varepsilon}-y_{n,\varepsilon}|^{p-1}+\rho(r).
  \end{align*}}
  Thus 
  { \begin{align*}
    \hat L^{\alpha\beta}(u_r,x_{n,\varepsilon}) - \hat L^{\alpha\beta}(v_r,y_{n,\varepsilon})  &\leq \frac{C}{\varepsilon}|x_{n,\varepsilon}-y_{n,\varepsilon}|^{p-1} 
    \\
    &+ \hat L^{\alpha\beta}(\phi_n,x_{n,\varepsilon})-\hat L^{\alpha\beta}(\phi_n,y_{n,\varepsilon}) + 
 \frac{C}{\varepsilon}(\theta(r) +\rho(r)).
 \end{align*}}
  Letting $r\to 0$, it follows that for every $\kappa$, $n$, and $\varepsilon$, 
  \begin{align}
    & \limsup\limits_{r \to 0} \sup \limits_{\alpha,\beta} \Big \{ \hat L^{\alpha\beta}(u_r,x_{n,\varepsilon}) - \hat L^{\alpha\beta}(v_r,y_{n,\varepsilon})\Big \} \notag \\
    & \leq \frac{C}{\varepsilon}|x_{n,\varepsilon}-y_{n,\varepsilon}|^{p-1} + \sup \limits_{\alpha,\beta} \big \{ \hat L^{\alpha\beta}(\phi_n,x_{n,\varepsilon})-\hat L^{\alpha\beta}(\phi_n,y_{n,\varepsilon}) \big \}.  \label{eq:contradiction307}
  \end{align}    

  Step 3. (Equation structure, remaining terms)

  For any $r \in (0,1)$ and any $\alpha,\beta$ we have
  \begin{align*}
    & \check L^{\alpha\beta}(u_r,x_{n,\varepsilon})-\check L^{\alpha\beta}(v_r,y_{n,\varepsilon}) = \check L^{\alpha\beta}(u,x_{n,\varepsilon})-\check L^{\alpha\beta}(v,y_{n,\varepsilon}).
  \end{align*}   
  Furthermore, arguing as in Step 3 of the proof of Theorem \ref{thm:Main Comparison Result}
  \begin{align}
    c_{\alpha\beta}(x_{n,\varepsilon})u(x_{n,\varepsilon})-c_{\alpha\beta}(y_{n,\varepsilon})v(y_{n,\varepsilon})
  &  \geq  \lambda(u(x_{n,\varepsilon})-v({y_{n,\varepsilon}}))-\|v\|_{L^{\infty}}\theta(|x_{n,\varepsilon}-y_{n,\varepsilon}|)\label{eq:contradiction309},\\
 | f_{\alpha\beta}(x_{n,\varepsilon})-f_{\alpha\beta}(y_{n,\varepsilon})| & \leq \theta(|x_{n,\varepsilon}-y_{n,\varepsilon}|).\label{eq:contradiction310}
  \end{align}
  Going back to \eqref{eqn:C1 comparison structure} and combining it with \eqref{eq:contradiction307}-\eqref{eq:contradiction310}, it follows that for any $\kappa, n,$ and $\varepsilon$
  \begin{align}\label{eq:vsusupa1}
    \limsup \limits_{r\to 0} \big \{ I(v_r,y_{n,\varepsilon})-I(u_r,x_{n,\varepsilon}) \big \} & \leq \frac{C}{\varepsilon}|x_{n,\varepsilon}-y_{n,\varepsilon}|^{p-1} + (1+\|v\|_{L^\infty}) \theta(|x_{n,\varepsilon}-y_{n,\varepsilon})\nonumber\\
    & \;\;\;\;\;+\lambda (v(y_{n,\varepsilon})-u(x_{n,\varepsilon}))\nonumber\\
    & \;\;\;\;\;+ \sup \limits_{\alpha,\beta} \big \{ \check L^{\alpha\beta}(u,x_{n,\varepsilon})-\check L^{\alpha\beta}(v,y_{n,\varepsilon}) \big \}\nonumber\\
    & \;\;\;\;\;+ \sup \limits_{\alpha,\beta} \big \{ \hat L^{\alpha\beta}(\phi_n,x_{n,\varepsilon})-\hat L^{\alpha\beta}(\phi_n,y_{n,\varepsilon}) \big \}.	
  \end{align}
  Let us handle the last two terms on the right. Using Assumption B and the fact that $\phi_n\in C^{0,1}(\mathbb R^d)$, we have for any $\alpha$ and $\beta$,
  \begin{align*}
    & \check L^{\alpha\beta}(u,x_{n,\varepsilon}) - \check L^{\alpha\beta}(v,y_{n,\varepsilon})\notag\\
    & = \int_{B_1^c}\left[\delta u(x_{n,\varepsilon},x) - \delta v(y_{n,\varepsilon},x)\right]d\check \mu_{x_{n,\varepsilon}}^{\alpha\beta}(x)
+ \int_{B_1^c}\delta v(y_{n,\varepsilon},y)d\left[\check \mu_{x_{n,\varepsilon}}^{\alpha\beta}(y) - \check \mu_{y_{n,\varepsilon}}^{\alpha\beta}(y)\right]\notag\\
    & \leq \int_{B_1^c}\left(\delta \phi_{n}(x_{n,\varepsilon},x)-\delta \phi_{n}(y_{n,\varepsilon},x)\right)d\check \mu_{x_{n,\varepsilon}}^{\alpha\beta}(x)+2\|v\|_{L^\infty}\textnormal{d}_{\textnormal{TV}}(\check \mu_{x_{n,\varepsilon}}^{\alpha\beta},\check \mu_{y_{n,\varepsilon}}^{\alpha\beta})\notag\\
    & \leq  C(n)|x_{n,\varepsilon}-y_{n,\varepsilon}| + 2\|v\|_{L^\infty}\theta(|x_{n,\varepsilon}-y_{n,\varepsilon}|).
 \end{align*}
  Then, we have
  \begin{align}
    \sup \limits_{\alpha,\beta} \big \{ \check L^{\alpha\beta}(u,x_{n,\varepsilon}) - \check L^{\alpha\beta}(v,y_{n,\varepsilon}) \big \}  \leq  C(n)|x_{n,\varepsilon}-y_{n,\varepsilon}| + 2\|v\|_{L^\infty}\theta(|x_{n,\varepsilon}-y_{n,\varepsilon}|). \label{eq:aa112}
  \end{align}
  For the other remaining term, we note that 
  \begin{align*}
   \hat L^{\alpha\beta}(\phi_n,x_{n,\varepsilon})-\hat L^{\alpha\beta}(\phi_n,y_{n,\varepsilon}) & =\int_{B_1} [\delta\phi_n(x_{n,\varepsilon},x)-\delta\phi_n(y_{n,\varepsilon},x)]\;d\hat \mu_{x_{n,\varepsilon}}^{\alpha\beta}(x)\\
   & \;\;\;\; +\int_{B_1}\delta\phi_n(y_{n,\varepsilon},x)d\hat \mu_{x_{n,\varepsilon}}^{\alpha\beta}(x)-\int_{B_1}\delta\phi_n(y_{n,\varepsilon},y)d\hat \mu_{y_{n,\varepsilon}}^{\alpha\beta}(y).
  \end{align*}
  Since the third derivatives of $\phi_n$ are bounded, we have
  \begin{align*}
  &\left|\int_{B_1} [\delta\phi_n(x_{n,\varepsilon},x)-\delta\phi_n(y_{n,\varepsilon},x)]\;d\hat \mu_{x_{n,\varepsilon}}^{\alpha\beta}(x)\right|
  \\
  &\quad\quad\leq
     \int_{B_1}\int_0^1|\langle (D^2\phi_n(x_{n,\varepsilon}+sx)-D^2\phi_n(y_{n,\varepsilon}+sx))x,x\rangle(1-s)|ds\;d\hat \mu_{x_{n,\varepsilon}}^{\alpha\beta}(x)
     \\
     &\quad\quad\quad\quad
     \leq C(n)|x_{n,\varepsilon}-y_{n,\varepsilon}|.
  \end{align*}		  
  The remaining integrals are estimated as follows. For $\tau \in (0,1)$, let $\eta_\tau$ be a smooth function such that $0\leq \eta_\tau\leq 1$,\; $\eta_{\tau} \equiv 1$ in $B_\tau(0)$ and  $\eta_\tau \equiv 0$ outside of $B_{2\tau}$. Then, we may write
  \begin{align*}
   & \int_{B_1}\delta\phi_n(y_{n,\varepsilon},x)d\hat \mu_{x_{n,\varepsilon}}^{\alpha\beta}(x)  = \int_{B_1}(1-\eta_\tau(x))\delta\phi_n(y_{n,\varepsilon},x)d\hat \mu_{x_{n,\varepsilon}}^{\alpha\beta}(x) + \int_{B_1}\eta_\tau(x)\delta\phi_n(y_{n,\varepsilon},x)d\hat \mu_{x_{n,\varepsilon}}^{\alpha\beta}(x),\\
   & \int_{B_1}\delta\phi_n(y_{n,\varepsilon},y)d\hat \mu_{y_{n,\varepsilon}}^{\alpha\beta}(y)  = \int_{B_1}(1-\eta_\tau(y))\delta\phi_n(y_{n,\varepsilon},y)d\hat \mu_{y_{n,\varepsilon}}^{\alpha\beta}(y) + \int_{B_1}\eta_\tau(y)\delta\phi_n(y_{n,\varepsilon},y)d\hat \mu_{y_{n,\varepsilon}}^{\alpha\beta}(y). 
  \end{align*}
  Applying Proposition \ref{prop:bound for distances of restricted measures}, together with \eqref{eqn:uniform integrability Levy measures1}, and using again Assumption A1, it is straightforward to observe that for fixed $n$ and $\tau$,
  { \begin{align*}
    \lim \limits_{\varepsilon\to 0} \sup \limits_{\alpha,\beta}\left | \int_{B_1} (1-\eta_{\tau}(x)) \delta\phi_n(y_{n,\varepsilon},x)d\hat \mu_{x_{n,\varepsilon}}^{\alpha\beta}(x) - \int_{B_1} (1-\eta_{\tau}(y)) \delta\phi_n(y_{n,\varepsilon},y)d\hat \mu_{y_{n,\varepsilon}}^{\alpha\beta}(y) \right | = 0.
  \end{align*}}
  On the other hand, since each $\phi_n$ is $C^3$, we have $|\delta \phi_n(y_{n,\varepsilon},x)| \leq C_n|x|^{2}$ for all $x \in B_1$. Therefore
 {  \begin{align*}
    \left | \int_{B_1} \eta_{\tau}(x) \delta\phi_n(y_{n,\varepsilon},x)d\hat \mu_{x_{n,\varepsilon}}^{\alpha\beta}(x)\right | & \leq \int_{B_1} |\eta_{\tau}(x) \delta\phi_n(y_{n,\varepsilon},x)|d\hat \mu_{x_{n,\varepsilon}}^{\alpha\beta}(x),\\
 & \leq C_n\int_{B_\tau} |x|^{2}d\hat \mu_{x_{n,\varepsilon}}^{\alpha\beta}(x),\\
    \left | \int_{B_1} \eta_{\tau}(y) \delta\phi_n(y_{n,\varepsilon},y)d\hat \mu_{y_{n,\varepsilon}}^{\alpha\beta}(y)\right | & \leq C_n\int_{B_\tau} |y|^{2}d\hat \mu_{y_{n,\varepsilon}}^{\alpha\beta}(y), 
  \end{align*}}
  and we have
 {  \begin{align*}
    \limsup \limits_{\tau \to 0}\sup \limits_{\alpha,\beta} \int_{B_\tau} |x|^{2}d\hat \mu_{x_{n,\varepsilon}}^{\alpha\beta}(x) = \limsup \limits_{\tau \to 0}\sup \limits_{\alpha,\beta} \int_{B_\tau} |y|^{2}d\hat \mu_{y_{n,\varepsilon}}^{\alpha\beta}(y) = 0. 
  \end{align*}}
  Gathering these estimates, we conclude that for every $n$,
  \begin{align}
    & \lim \limits_{\varepsilon \to 0} \sup\limits_{\alpha,\beta} \big |\hat L^{\alpha\beta}(\phi_n,x_{n,\varepsilon})-\hat L^{\alpha\beta}(\phi_n,y_{n,\varepsilon}) \big | =0.  \label{eq:aa113}
  \end{align}
  
  Step 4. (Using the subsolution and supersolution property)
  
  Using Definition \ref{def:viscosity solution ver1} we obtain from \eqref{eq:vsusupa1} that
  \begin{align}\label{eq:aa111}
    \lambda (u(x_{n,\varepsilon})-v(y_{n,\varepsilon})) & \leq \frac{C}{\varepsilon}|x_{n,\varepsilon}-y_{n,\varepsilon}|^{p-1} + (1+\|v\|_{L^\infty}) \theta(|x_{n,\varepsilon}-y_{n,\varepsilon}|)\notag \\
    & \;\;\;\;\;+ \sup \limits_{\alpha\beta} \big \{ \check L^{\alpha\beta}(u,x_{n,\varepsilon})-\check L^{\alpha\beta}(v,y_{n,\varepsilon}) \big \}\notag\\	
    & \;\;\;\;\;+ \sup \limits_{\alpha\beta} \big \{ \hat L^{\alpha\beta}(\phi_n,x_{n,\varepsilon})-\hat L^{\alpha\beta}(\phi_n,y_{n,\varepsilon}) \big \}.	
  \end{align}
  Letting {  $\kappa\to 0$ first and then $\varepsilon\to 0$} in \eqref{eq:aa111} and using \eqref{eqn:second comparison theorem w epsilon limit as epsilon to zero}, \eqref{eq:aa112} and \eqref{eq:aa113}, we now obtain
  \begin{align*}
    0<\lambda l\leq o_{\frac{1}{n}}(1),
  \end{align*}
  which gives a contradiction.
\end{proof}

  The following is an example of measures satisfying the assumptions of Theorem \ref{thm:comparison principle with C1 subsolution}.
\begin{ex}\label{ex:uniform}
Let $d\mu_x^{\alpha\beta}(z):=K_{\alpha\beta}(x,z)dz$ be such that \eqref{eq:int111222} and \eqref{eq:K reg} hold for some $p\in(1,2]$ and $\gamma\in(0,1]$, where $K$ satisfies \eqref{eq:3.122aa}. Using \eqref{eq:3.11aa}, Assumption A1 is satisfied with { $s=1-\frac{1}{p}$} if $\gamma=p-1$. 
\end{ex}

\noindent\textbf{Assumption F.}
There is $\sigma \in (1,2)$ and positive constants $\bar\lambda<\bar\Lambda$ such that the measures $\mu_x^{\alpha\beta}$ are all of the form 
  $d\mu_x^{\alpha\beta}(z)=K_{\alpha\beta}(x,z)dz$, with $K_{\alpha\beta}(x,z)=K_{\alpha\beta}(x,-z)$, and
  \begin{align}\label{eqn:kernels alpha stable}
    \frac{\bar\lambda}{|z|^{d+\sigma}}\leq K_{\alpha\beta}(x,z)\leq \frac{\bar\Lambda}{|z|^{d+\sigma}}
  \end{align}
  and 
  \begin{align*}
  | K_{\alpha\beta}(x,z)-K_{\alpha\beta}(y,z)|\leq  \frac{\bar\Lambda|x-y|^{\gamma}}{|z|^{d+\sigma}}.
  \end{align*}

\begin{cor}\label{thm:comparison with one solution elliptic case}
 Let the measures $\mu_x^{\alpha\beta}$ be as above. Assume that Assumptions C, D and F hold with some $\sigma \in (1,2)$ and $\gamma>\sigma-1$. Then, given a viscosity solution $u$ and a viscosity subsolution (respectively, supersolution) $v$ of \eqref{eqn:intro equation form} such that $v\leq u$ (respectively, $u\leq v$) in 
 $\mathcal{O}^c$, we have
  \begin{align*}
    v\leq u\textnormal{ in } \mathcal{O}\,\,\,(\textnormal{respectively}, u\leq v\textnormal{ in } \mathcal{O}).
  \end{align*}
\end{cor}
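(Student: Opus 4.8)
The plan is to deduce this from Theorem~\ref{thm:comparison principle with C1 subsolution}, exploiting the fact that a viscosity solution of a uniformly elliptic non-local equation of order $\sigma\in(1,2)$ is automatically $C^1$ in the interior. There are two tasks: (i) pick the exponent $p$ and check that the family $\{\mu_x^{\alpha\beta}\}$ together with the coefficients satisfy the hypotheses of Theorem~\ref{thm:comparison principle with C1 subsolution}, and (ii) prove interior $C^1$ regularity of the solution $u$. The second task is where the real work lies.

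For (i) I would set $p:=\gamma+1$. One may assume $\gamma\le 1$, since if $|K_{\alpha\beta}(x,z)-K_{\alpha\beta}(y,z)|\le\bar\Lambda|x-y|^\gamma|z|^{-d-\sigma}$ holds with $\gamma>1$ then every $K_{\alpha\beta}(\cdot,z)$ is constant, the kernels are translation invariant, and the full comparison principle already follows from Theorem~\ref{thm:Main Comparison Result}. Hence $p\in(\sigma,2]$ by the hypothesis $\gamma>\sigma-1$, and in particular $p>1$. Writing $K(z):=\bar\Lambda|z|^{-d-\sigma}$, one has $0\le K_{\alpha\beta}(x,z)\le K(z)$ and $\int_{\mathbb{R}^d}\min(1,|z|^p)K(z)\,dz<\infty$ because $p>\sigma$, which yields Assumption~E (for \eqref{eqn:uniform integrability Levy measures}, $\int_{B_r}|z|^pK(z)\,dz=Cr^{p-\sigma}\to 0$ as $r\to 0$). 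Assumption~F provides \eqref{eq:K reg} with exponent $\gamma$, so Example~\ref{EX:measures with kernels of order sigma<1}, in particular \eqref{eq:3.11aa}, gives
\begin{align*}
\textnormal{d}_{\mathbb{L}_p}(\hat\mu_x^{\alpha\beta},\hat\mu_y^{\alpha\beta})\le C|x-y|^{\gamma/p}=C|x-y|^{1-\frac1p},
\end{align*}
which is precisely Assumption~A1 with $s=1-\tfrac1p$ (compare Example~\ref{ex:uniform}). Integrating the H\"older-in-$x$ bound over $B_1^c$ gives $\textnormal{d}_{\textnormal{TV}}(\check\mu_x^{\alpha\beta},\check\mu_y^{\alpha\beta})\le C|x-y|^\gamma$, i.e.\ Assumption~B; Assumptions~C and~D are assumed directly.

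For (ii) I would first note that, since $u\in\textnormal{BUC}(\mathbb{R}^d)$ and $c_{\alpha\beta},f_{\alpha\beta}$ are bounded, comparing each $L^{\alpha\beta}$ with the extremal (Pucci) operators $\mathcal{M}^{\pm}$ for the class of symmetric kernels $K$ with $\bar\lambda|z|^{-d-\sigma}\le K(z)\le\bar\Lambda|z|^{-d-\sigma}$ shows that $u$ satisfies $\mathcal{M}^{+}u\ge -C_0$ and $\mathcal{M}^{-}u\le C_0$ in the viscosity sense, with $C_0:=\lambda_1\|u\|_\infty+\sup_{\alpha,\beta}\sup_{x\in\mathcal{O}}|f_{\alpha\beta}(x)|<\infty$. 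The interior regularity theory for uniformly elliptic non-local equations of order $\sigma\in(1,2)$ (Caffarelli--Silvestre \cite{CafSil-2009} and its subsequent developments, with the H\"older dependence of the kernels on $x$ absorbed by a standard freezing/perturbation argument) then yields $u\in C^{1,\bar\alpha}_{\loc}(\mathcal{O})$ for some $\bar\alpha>0$, hence $u\in C^1(\mathcal{O})$. Making this regularity step precise — in particular identifying the form of the non-local regularity theory that applies for all $\sigma\in(1,2)$ to kernels that are merely measurable in $z$ — is the main obstacle of the proof; everything else is bookkeeping.

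Granting (i) and (ii), the conclusion follows from Theorem~\ref{thm:comparison principle with C1 subsolution}. If $v$ is a viscosity subsolution of \eqref{eqn:intro equation form} with $v\le u$ on $\mathcal{O}^c$, apply that theorem with $v$ as the subsolution and $u$ (which is $C^1$ in $\mathcal{O}$) as the supersolution to obtain $v\le u$ in $\mathcal{O}$; if $v$ is a viscosity supersolution with $u\le v$ on $\mathcal{O}^c$, apply it with $u$ as the subsolution and $v$ as the supersolution to obtain $u\le v$ in $\mathcal{O}$.
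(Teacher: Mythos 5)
Your proposal follows essentially the same route as the paper: set $p=1+\gamma$, verify Assumptions A1 (with $s=1-\tfrac1p$), B and E via the computation of Example \ref{EX:measures with kernels of order sigma<1}, and then apply Theorem \ref{thm:comparison principle with C1 subsolution} using interior $C^1$ regularity of the viscosity solution $u$. The one step you flag as "the main obstacle" -- a $C^{1,\alpha}$ interior estimate valid for all $\sigma\in(1,2)$ and kernels merely measurable in $z$ (symmetric and comparable to $|z|^{-d-\sigma}$, with H\"older dependence on $x$, exactly as in Assumption F) -- is precisely what the paper discharges by citing Theorem 4.1 of \cite{Kri}, so no new argument is needed there; and your reduction to $\gamma\le 1$ can be done more simply by replacing $\gamma$ with $\min(\gamma,1)$, since $|x-y|^{\gamma}\leq C|x-y|^{\min(\gamma,1)}$ on the bounded domain $\mathcal{O}$ and $\min(\gamma,1)>\sigma-1$ still holds.
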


\begin{proof}
  The proof is an immediate application of Theorem \ref{thm:comparison principle with C1 subsolution} with $p=1+\gamma$, the computation in Example \ref{EX:measures with kernels of order sigma<1} and the fact that $u\in C^1(\mathcal{O})$ by Theorem 4.1 in \cite{Kri}.
\end{proof}

\begin{rem}\label{rem:lambda0}
The comparison result of Corollary \ref{thm:comparison with one solution elliptic case} can be extended to the case $\sigma=1$
if we use Theorem \ref{thm:Comparison Principle 2} instead of Corollary \ref{thm:comparison with one solution elliptic case}. The result of
Corollary \ref{thm:comparison with one solution elliptic case} can be also extended to the case $\lambda=0$, see Theorem 4.1 in \cite{MS}.
\end{rem}

It is worth noting that in \cite{MS}, two of the authors obtained uniqueness results under similar assumptions to those of Corollaries \ref{cor:Comparison Theorem Ops Order Less than 1} and \ref{thm:comparison with one solution elliptic case} including Lipschitz-type assumption on the continuity of the kernels with respect to $x$. However, uniqueness results in \cite{MS} cover only $\sigma$  in the range $(0,3/2)$, whereas the combination of Corollaries
\ref{cor:Comparison Theorem Ops Order Less than 1} and \ref{thm:comparison with one solution elliptic case} (see also Remark \ref{rem:lambda0})
covers all $\sigma$ up to 2.

\begin{rem}
The assumption \eqref{eqn:kernels alpha stable} used in Corollary \ref{thm:comparison with one solution elliptic case} can be relaxed a great deal. This assumption was used merely in order to guarantee that the viscosity solution is $C^{1+\alpha}$ in the interior. Indeed, interior $C^{\alpha}$ and $C^{1+\alpha}$ regularity estimates are now available for non-local equations for a far larger class of kernels, including those $K(x,z)$ which may not be symmetric in $z$ or which vanish even for large sets of directions of $z$. See works of Schwab and Silvestre \cite[Section 8]{SS} and Kriventsov \cite{Kri}.
\end{rem}

\appendix

%%%%%%%%%%%%%%%%%%%%%%%%%%%%%%%%%%%%%%%%%%%%%%%%
%%%%%%%%%%%%%%%%%%%%%%%%%%%%%%%%%%%%%%%%%%%%%%%%
%%%%%%%%%%%%%%%%%%%%%%%%%%%%%%%%%%%%%%%%%%%%%%%%
%%%%%%%%%%%%%%%%%%%%%%%%%%%%%%%%%%%%%%%%%%%%%%%%
%%%%%%%%%%%%%%%%%%%%%%%%%%%%%%%%%%%%%%%%%%%%%%%%
%%%%%%%%%%%%%%%%%%%%%%%%%%%%%%%%%%%%%%%%%%%%%%%%
\section{A variant of the optimal transportation problem}\label{sec:appendix OT}

In this appendix, which follows \cite{FigalliGigli2010}, we describe the optimal transport problem ``with boundary''. Throughout we make the following assumptions: $\Omega$ is an open subset of $\mathbb{R}^d$ and $\Gamma$ is a compact subset of 
$\overline{\Omega}$. We are also given a function $c:\overline{\Omega}\times \overline{\Omega}\to \mathbb{R}$, known as the cost. We impose several assumptions on $c(x,y)$ and $\Gamma$, recorded in \eqref{eqn:cost assumptions1}, \eqref{eqn:app projections onto Gamma}.

First of all, we assume $c$ satisfies
\begin{align}\label{eqn:cost assumptions1}
  c(x,y) \textnormal{ is continuous};\; c(x,y) = c(y,x), \;\;c(x,x) = 0, \;\; c(x,y)>0 \textnormal{ if } x\neq y, \forall\ x,y 
\end{align}
Secondly, $\Gamma$ and $c$ must be such that there is a measurable function  
\begin{align*}
  P:\overline{\Omega} \to \Gamma
\end{align*}
which plays the role of the ``projection'' onto $\Gamma$, in the sense that 
\begin{align}\label{eqn:app projections onto Gamma}
  c(x,P(x)) = \inf \limits_{y\in \Gamma} c(x,y).
\end{align}

\begin{DEF}\label{def:app tilde c}
  Let $E$ be a Borel subset of $\overline{\Omega}$, we define the function
  \begin{align*}
    c(x,E) = \inf\limits_{y\in E} c(x,y).  
  \end{align*}
  Lastly, the following auxiliary cost will be relevant in what follows
  \begin{align*}
    \tilde c(x,y) = \min \{ c(x,y),\; c(x,\Gamma)+c(y,\Gamma) \}.	  
  \end{align*}	  
  We also consider the set
  \begin{align}\label{eqn:set of directions for the optimizer with boundary}
    \mathcal{K} = \{ (x,y) \in \overline{\Omega}\times\overline{\Omega} \;\mid\; c(x,y) \leq c(x,\Gamma)+c(y,\Gamma) \}.	  
  \end{align}	  
\end{DEF}

\begin{DEF}\label{def:Mc}
  Given $\Omega$ and $\Gamma$ we let $\mathcal{M}_c(\overline{\Omega})$ be the set of positive Borel measures $\mu$ on $\overline{\Omega}\setminus \Gamma$ such that 
  \begin{align*}
    \int_{\overline{\Omega}} c(x,\Gamma) \;d\mu(x) < \infty,
  \end{align*}
  {  and
  \begin{align*}
    \mu(\{x\in\overline{\Omega} \mid d(x,\Gamma) > r \} ) <\infty \textnormal{ for every } r>0.  
  \end{align*}}	  
 \end{DEF}

\begin{DEF}\label{def:Appendix admissible measures}
  Let $\mu,\nu \in \mathcal{M}_c(\overline{\Omega})$. By an admissible coupling of $\mu$ and $\nu$, we mean a positive Borel measure $\gamma$ over $\overline{\Omega}\times \overline{\Omega}$, satisfying $\gamma(\Gamma\times\Gamma)=0$ and
  \begin{align*}
    \pi_{\#}^1\gamma\mid_{\overline{\Omega} \setminus \Gamma} = \mu,\;\;\;\;\pi_{\#}^2\gamma\mid_{\overline{\Omega} \setminus \Gamma} = \nu.
  \end{align*}
  The set of admissible couplings will be denoted by $\textnormal{Adm}_{\Gamma}(\mu,\nu)$.
\end{DEF}
  Note that a measure in $\mathcal{M}_c(\overline{\Omega})$ may fail to have finite mass since $\inf c(x,\Gamma) = 0$. We are now ready to state the optimal transport problem ``with boundary''.
\begin{prob}\label{prob:OPT with boundary}
  Consider two measures $\mu,\nu \in \mathcal{M}_c(\overline{\Omega})$. Among all admissible measures $\gamma \in \textnormal{Adm}_{\Gamma}(\mu,\nu)$, find one that minimizes the functional
  \begin{align*}
    J_c(\gamma) := \int_{\overline{\Omega}\times \overline{\Omega}}c(x,y)d\gamma(x,y). 
  \end{align*}
  \end{prob}

We make no claim as to whether all of the assumptions on the cost and $\Omega$ are necessary, but they are sufficiently general for our purposes and make most of the proofs relatively straightforward (for instance, the symmetry assumption on $c(x,y)$ is not necessary but makes the notation simpler). In any case, the costs we care about in the main body of the paper are
\begin{align*}
  c_p(x,y) := |x-y|^p, 
 \;\;1\leq p \leq 2.
\end{align*}
Since we are specially concerned with these costs, we shall write $J_p(\gamma)$ to refer to the above functional when the cost is $c_p(x,y)$. At the same time, the main $\Omega$ and $\Gamma$ we care about are
\begin{align*}
  \Omega = \mathbb{R}^d\setminus \{0\},\;\textnormal{ and } \Gamma = \{0\}.
\end{align*}
Evidently, these sets, together with the costs $c_p$, comply with our requirements. The first basic fact about Problem \ref{prob:OPT with boundary} is the existence of minimizers. The proof is essentially the same as in the optimal transport case (compactness of the measures and lower semi-continuity of $J_c(\gamma)$) (cf. \cite[Theorem 1.5]{AG} and \cite[Section 2]{FigalliGigli2010}). 

\begin{thm}\label{thm:existence OT with boundary}
  Let $\mu,\nu \in \mathcal{M}_c(\overline{\Omega})$. Then $J_c(\gamma)<\infty$ for at least one $\gamma^* \in \textnormal{Adm}_{\Gamma}(\mu,\nu)$. Moreover, there exists at least one minimizer $\gamma$ for Problem \ref{prob:OPT with boundary}. 
	  
\end{thm}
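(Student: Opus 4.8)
The plan is to follow the standard direct method of the calculus of variations, adapted to the ``with boundary'' setting by using the auxiliary cost $\tilde c$ and the projection $P$. First I would exhibit a single admissible coupling with finite cost: take $\gamma^* = (P \times \mathrm{Id})_\# \nu + (\mathrm{Id} \times P)_\# \mu$, i.e. send all of $\mu$ to $\Gamma$ via $P$ and pull all of $\nu$ from $\Gamma$ via $P$. One checks $\gamma^* \in \textnormal{Adm}_\Gamma(\mu,\nu)$ directly from Definition \ref{def:Appendix admissible measures} (its marginals restricted to $\overline\Omega \setminus \Gamma$ are $\mu$ and $\nu$, and it puts no mass on $\Gamma \times \Gamma$), and
\begin{align*}
  J_c(\gamma^*) = \int_{\overline\Omega} c(x,P(x))\,d\mu(x) + \int_{\overline\Omega} c(P(y),y)\,d\nu(y) = \int_{\overline\Omega} c(x,\Gamma)\,d\mu(x) + \int_{\overline\Omega} c(y,\Gamma)\,d\nu(y),
\end{align*}
which is finite by the definition of $\mathcal{M}_c(\overline\Omega)$, using \eqref{eqn:app projections onto Gamma}.

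Next I would take a minimizing sequence $\{\gamma_n\} \subset \textnormal{Adm}_\Gamma(\mu,\nu)$, so that $J_c(\gamma_n) \to m := \inf J_c$, and without loss of generality $J_c(\gamma_n) \le J_c(\gamma^*) + 1$ for all $n$. The key point is to obtain tightness (prokhorov compactness) of the sequence. The issue is that the $\gamma_n$ need not have bounded total mass because $c(x,\Gamma)$ may vanish. To handle this, I would first reduce to couplings that are ``$\tilde c$-monotone'', i.e. replace each $\gamma_n$ by a coupling $\tilde\gamma_n$ supported in the set $\mathcal{K}$ of \eqref{eqn:set of directions for the optimizer with boundary} with $J_c(\tilde\gamma_n) \le J_c(\gamma_n)$: concretely, any mass that $\gamma_n$ transports along a pair $(x,y) \notin \mathcal{K}$ can be rerouted through $\Gamma$ (split into a $\mu$-to-$\Gamma$ piece and a $\Gamma$-to-$\nu$ piece) without increasing the cost and without changing the marginals away from $\Gamma$. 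Once supported in $\mathcal{K}$, one has the pointwise bound $c(x,y) \ge \max\{c(x,\Gamma), c(y,\Gamma)\} \ge \tfrac12(c(x,\Gamma)+c(y,\Gamma))$... more usefully $c(x,y) \ge c(x,\Gamma)$ and $c(x,y) \ge c(y,\Gamma)$ on $\mathcal K$, combined with the finiteness condition $\mu(\{d(\cdot,\Gamma)>r\}) < \infty$ and its analogue for $\nu$. For fixed $r > 0$, the mass $\tilde\gamma_n$ assigns to $\{(x,y) : c(x,y) \ge \delta\}$ (for a suitable $\delta = \delta(r)$) is bounded by $\delta^{-1} J_c(\tilde\gamma_n) \le \delta^{-1}(J_c(\gamma^*)+1)$, and outside this set both $x$ and $y$ are within distance $r$ of $\Gamma$; combining with the marginal constraints and the integrability of $\mu,\nu$ near $\Gamma$ gives a uniform mass bound on $\tilde\gamma_n$ restricted to $\{c(x,\Gamma) > r\} \times \overline\Omega$ and similarly for the second coordinate, hence tightness on compact subsets of $(\overline\Omega \times \overline\Omega) \setminus (\Gamma \times \Gamma)$. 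Passing to a subsequence, $\tilde\gamma_n$ converges weakly-$*$ (vaguely) on $(\overline\Omega\times\overline\Omega)\setminus(\Gamma\times\Gamma)$ to some positive Borel measure $\gamma$; one extends $\gamma$ by zero on $\Gamma\times\Gamma$.

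Finally I would verify that the limit $\gamma$ is admissible and optimal. Admissibility: the marginal maps $\pi^i$ restricted to $\overline\Omega\setminus\Gamma$ are continuous and proper on the relevant sets, so weak convergence of $\tilde\gamma_n$ passes to the marginals, giving $\pi^1_\#\gamma|_{\overline\Omega\setminus\Gamma} = \mu$ and $\pi^2_\#\gamma|_{\overline\Omega\setminus\Gamma} = \nu$ (here one uses that no mass escapes to $\Gamma\times\Gamma$ in the limit thanks to the tightness estimate, or simply that any such escaping mass can only decrease the cost and we may discard it). Optimality: since $c$ is continuous and nonnegative, $\gamma \mapsto J_c(\gamma) = \int c\,d\gamma$ is lower semicontinuous under vague convergence (approximate $c$ from below by compactly supported continuous functions and use monotone convergence), so $J_c(\gamma) \le \liminf_n J_c(\tilde\gamma_n) = m$, and since $\gamma \in \textnormal{Adm}_\Gamma(\mu,\nu)$ we also have $J_c(\gamma) \ge m$, whence $\gamma$ is a minimizer. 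The main obstacle is the tightness/compactness step: unlike classical optimal transport, the couplings need not have uniformly bounded (or even finite) mass, and one must exploit the cost-lowering rerouting through $\Gamma$ together with the $\sigma$-finiteness hypotheses in Definition \ref{def:Mc} to localize away from $\Gamma$; everything else is routine once that is in place. (One may alternatively cite \cite[Section 2]{FigalliGigli2010} and \cite[Theorem 1.5]{AG} for the details, as the excerpt does.)
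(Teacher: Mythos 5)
Your overall scheme is the same as the paper's: the finite-cost competitor $\gamma^*=(\textnormal{Id}\times P)_{\#}\mu+(P\times\textnormal{Id})_{\#}\nu$, a minimizing sequence, vague compactness on $(\overline{\Omega}\times\overline{\Omega})\setminus(\Gamma\times\Gamma)$, and lower semicontinuity of $J_c$ via a monotone approximation of $c$ by compactly supported continuous functions. The first and last steps are fine. The genuine problem is in your tightness step. The pointwise bound you invoke on $\mathcal{K}$, namely $c(x,y)\geq \max\{c(x,\Gamma),c(y,\Gamma)\}$, is false: for any $x=y\notin\Gamma$ we have $(x,x)\in\mathcal{K}$ and $c(x,x)=0<c(x,\Gamma)$. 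Consequently the Chebyshev argument built on it fails — the set $\{c(x,y)<\delta\}$ contains the entire diagonal, including points far from $\Gamma\times\Gamma$, so a bound on $J_c(\tilde\gamma_n)$ alone cannot control the mass of $\tilde\gamma_n$ on compact sets away from $\Gamma\times\Gamma$, and the rerouting into $\mathcal{K}$ does not help with this.

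The fix is simpler than what you attempt, and it is what the paper does: the uniform local mass bound comes from admissibility alone, with no cost estimate and no reduction to $\mathcal{K}$. Given a compact $K\subset(\overline{\Omega}\times\overline{\Omega})\setminus(\Gamma\times\Gamma)$, choose a compact $\tilde K\subset\overline{\Omega}\setminus\Gamma$ with $K\subset(\tilde K\times\overline{\Omega})\cup(\overline{\Omega}\times\tilde K)$; since $\inf_{\tilde K}c(\cdot,\Gamma)>0$ and $\int c(\cdot,\Gamma)\,d\mu,\int c(\cdot,\Gamma)\,d\nu<\infty$, every $\gamma\in\textnormal{Adm}_{\Gamma}(\mu,\nu)$ satisfies $\gamma(K)\leq\mu(\tilde K)+\nu(\tilde K)<\infty$, a bound independent of $\gamma$; this gives the vague compactness directly for the original minimizing sequence. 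Relatedly, your parenthetical for the marginal passage ("any escaping mass can only decrease the cost and we may discard it") does not work: if mass escapes, the limit's marginals fall short of $\mu,\nu$ and the limit is simply not admissible, so there is nothing to discard. The correct control (needed only when $\overline{\Omega}$ is unbounded) is again via the marginals: $\gamma_n(\{d(x,\Gamma)\geq r\text{ or }d(y,\Gamma)\geq r\})\leq\mu(\{d(\cdot,\Gamma)\geq r\})+\nu(\{d(\cdot,\Gamma)\geq r\})$, which tends to $0$ as $r\to\infty$ by the second condition in Definition \ref{def:Mc}, uniformly in $n$; this lets you test the vague limit against functions of $x$ (or $y$) alone supported away from $\Gamma$ and conclude $\gamma\in\textnormal{Adm}_{\Gamma}(\mu,\nu)$.
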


\begin{proof}
  With the map $P$ is as in \eqref{eqn:app projections onto Gamma}, we define the measure
  \begin{align*}
    \gamma^* := (\textnormal{Id} \times P)_{\#} \mu + (P \times \textnormal{Id})_{\#}\nu.
  \end{align*}
  It is clear that $\gamma^* \in \textnormal{Adm}_{\Gamma}(\mu,\nu)$. At the same time, 
  \begin{align*}
    J_c(\gamma^*) = \int_{\overline{\Omega}\times \overline{\Omega}}c(x,y)\;d\gamma^*(x,y) = \int_{\Omega}c(x,\Gamma)\;d\mu(x)+\int_{\Omega}c(y,\Gamma)\;d\nu(y),
  \end{align*}	  
  and thus $J_c(\gamma^*)<\infty$ since $\mu,\nu\in \mathcal{M}_c(\overline{\Omega})$. 
  
{  In order to prove  the infimum is achieved we will first prove that $\textnormal{Adm}_{\Gamma}(\mu,\nu)$ is compact with respect to a certain notion of convergence. Let $K$ be any compact subset of $\overline \Omega\times \overline\Omega\setminus \Gamma\times\Gamma$. Since $\Gamma\times\Gamma$ and $K$ are compact, we have $d(K,\Gamma\times\Gamma)>0$. Then there exists a compact subset $\tilde K$ of $\overline\Omega\setminus \Gamma$ such that $K\subset (\tilde K \times \overline{\Omega}) \cup (\overline{\Omega} \times \tilde K)$. Since $\Gamma$ is compact and \eqref{eqn:cost assumptions1} holds, there is an $\varepsilon_0>0$ such that $\inf_{x\in\tilde K}c(x,\Gamma)>\varepsilon_0$. And thus $\mu(\tilde K)<+\infty$ since $\mu\in \mathcal{M}_c(\overline{\Omega})$. Similarly, we have $\nu(\tilde K)<+\infty$. Therefore, if $\gamma\in \textnormal{Adm}_{\Gamma}(\mu,\nu)$, we have
  \begin{align*}
    \gamma(K)\leq \mu(\tilde K)+\nu(\tilde K)<\infty,
  \end{align*}
  Since $\mu(\tilde K)+\nu(\tilde K)$ is independent of $\gamma$, it follows that given a sequence $\{\gamma_n\}$ in $\textnormal{Adm}_{\Gamma}(\mu,\nu)$ there is a subsequence $\gamma_{n_k}$ and a measure $\gamma$ in $\overline{\Omega}\times \overline{\Omega}$ such that $\gamma_{n_k} \rightharpoonup \gamma$, the convergence being in the following sense
  \begin{align}\label{equation:appendix weak convergence}
    \int_{\overline{\Omega}\times \overline{\Omega}} \phi(x,y) \;d\gamma(x,y) = \lim \limits_{k\to\infty} \int_{\overline{\Omega}\times \overline{\Omega}} \phi(x,y)\;d\gamma_{n_k}(x,y),\;\; \phi \in C^0_c ( \overline{\Omega}\times \overline{\Omega} \setminus \Gamma\times \Gamma).
  \end{align}	   
  Now we must show that $\gamma \in \textnormal{Adm}_{\Gamma}(\mu,\nu)$. Observe that for each $n$
  \begin{align*}
    & \gamma_{n} \in \textnormal{Adm}_{\Gamma}(\mu,\nu) \Rightarrow \\
    & \gamma_n ( \{ (x,y) \mid d(x,\Gamma) \geq r \textnormal{ or } d(y,\Gamma) \geq r\} ) \leq \mu( \{ d(x,\Gamma) \geq r\} ) + \nu ( \{ d(x,\Gamma) \geq r \} ),
  \end{align*}
  so from the assumptions on $\mu$ and $\nu$ (Definition \ref{def:Mc}) it follows that the right hand side goes to zero as $r\to \infty$ with a rate depending only on $\mu$ and $\nu$ (note that when $\overline{\Omega}$ is compact this last assertion holds trivially). From this estimate and the convergence in \eqref{equation:appendix weak convergence} it is not hard to see that 
  \begin{align*}
    \int_{\overline{\Omega}\times \overline{\Omega}} \phi(x)\;d\gamma(x,y) = \lim \limits_{k\to\infty}\int_{\overline{\Omega}\times \overline{\Omega}} \phi(x)\;d\gamma_{n_k}(x,y),\;\;\forall\;\phi\in C^0_c(\overline{\Omega}\setminus \Gamma),
  \end{align*}
  a similar statement holds for functions of $y$ with support away from $\Gamma$. In particular, 
  \begin{align*}
     \int_{\overline{\Omega}\times \overline{\Omega}} \phi(x)\;d\gamma(x,y) =  \int_{\overline{\Omega}}\phi(x)\;d\mu(x),\;\;\int_{\overline{\Omega}\times \overline{\Omega}} \psi(x)\;d\gamma(x,y) =  \int_{\overline{\Omega}}\psi(y)\;d\nu(y),
  \end{align*}    
  which shows that $\gamma \in \textnormal{Adm}_{\Gamma}(\mu,\nu)$. In conclusion, the set of admissible couplings $\textnormal{Adm}_{\Gamma}(\mu,\nu)$ is sequentially compact with respect to the notion of convergence in \eqref{equation:appendix weak convergence}.

  }

  Let $\gamma_n$ be a minimizing sequence in $\textnormal{Adm}_{\Gamma}(\mu,\nu)$, that is a sequence such that $J_c(\gamma_n)\to \inf J_c$ as $n\to\infty$. At the same time, let $c_k$ be a monotone increasing sequence of continuous functions with compact support in $\overline\Omega\times\overline \Omega \setminus \Gamma\times \Gamma$ and such that $c_k(x,y) \to c(x,y)$ locally uniformly in $\overline\Omega\times\overline\Omega \setminus \Gamma\times\Gamma$. Using a diagonal argument, there exist a subsequence, still denoted by $\gamma_n$, and $\gamma_*\in \textnormal{Adm}_{\Gamma}(\mu,\nu)$ such that for every fixed $k$
  \begin{align*}
    \lim\limits_{n\to \infty}\int_{\overline{\Omega}\times \overline{\Omega}} c_k(x,y)\;d\gamma_n(x,y) = \int_{\overline{\Omega}\times \overline{\Omega}} c_k(x,y)\;d\gamma_*(x,y).    
  \end{align*}
  Now, by the monotonicity of the $c_k$, we have
  \begin{align*}
    J_c(\gamma_*) = \int_{\overline{\Omega}\times \overline{\Omega}} c(x,y)\;d\gamma_*(x,y) = \sup \limits_{k}J_{c_k}(\gamma_*),
  \end{align*}
  while for any $k$ we have
  \begin{align*}
  J_{c_k}(\gamma_*)  =\lim\limits_{n\to\infty} \int_{\overline{\Omega}\times \overline{\Omega}}c_k(x,y)\;d\gamma_n(x,y)\leq \lim\limits_{n\to\infty} J(\gamma_n) = \inf\limits_{\gamma \in \textnormal{Adm}_{\Gamma}(\mu,\nu)} J(\gamma).
  \end{align*}
  This proves that $\gamma_*$ achieves the minimum value of $J_c$ among all admissible plans.
\end{proof}

We now characterize minimizers for Problem \ref{prob:OPT with boundary} using $c$-concave functions and $c$-cyclical monotonicity.
\begin{DEF}\label{def:c-concavity and c-transform}
  For a function  $\phi:\overline{\Omega}\to \mathbb{R} \cup \{\pm \infty\}$ with $\phi \neq -\infty$ for at least some $x$, its $c$-transform $\phi^c:\overline{\Omega} \to \mathbb{R}\cup \{-\infty\}$ is the function given by
  \begin{align*}
    \phi^c(y) & = \inf \limits_{x \in \overline{\Omega}} \big \{ c(x,y)-\phi(x) \big \}.
  \end{align*}
  A function $\phi$ is said to be $c$-concave if there is some $\psi$ such that
  \begin{align*}
    \phi = \psi^c.
  \end{align*}
  If $\phi$ and $\psi$ are two $c$-concave functions such that $\phi = \psi^c$ and $\psi = \phi^c$ then we say they are $c$-conjugate to one another. Just the same, we talk about $\tilde c$-transforms and $\tilde c$-concave functions.
\end{DEF}
{ 
\begin{rem}\label{remark:appendix c-transform of a function is upper semicontinuous}
  Since the cost is assumed to be continuous it follows that $\phi^c$ is the infimum of a family of continuous functions of $y$ ($\{ y\mapsto c(x,y)-\phi(x)\}_x$), accordingly, $\phi^c$ is upper semicontinuous. In particular, if $(\phi,\psi)$ is a $c$-conjugate pair then both $\phi$ and $\psi$ are upper semicontinuous functions. 

\end{rem}

\begin{rem}\label{remark:appendix inequality for c conjugate pairs}
  Suppose that $(\phi,\psi)$ are $c$-conjugate. Then for every $x$ and $y$ we have 
  \begin{align*}
    \phi(x)+\psi(y) \leq c(x,y).	  
  \end{align*}	  
\end{rem}
The set of pairs $(x,y)$ for which we have equality will be important in what follows.
\begin{DEF}\label{definition:appendix subdifferential}
  Let $\phi$ be a $c$-concave function and $\psi = \phi^c$. The $c$-subdifferential of $\phi$, denoted by $\partial^c \phi$, is defined as the set of pairs $(x,y)$ such that
  \begin{align*}
    \phi(x)+\psi(y) = c(x,y).
  \end{align*}
  Moreover, for each $x$ we define $\partial^c \phi(x)$ to be the set of all $y$ such that $(x,y) \in \partial^c\phi$. We define $\partial^{\tilde c}\phi$ and $\partial^{\tilde c}(x)$ for a $\tilde c$-concave $\phi$ in the same manner.
\end{DEF}

}
\begin{DEF}
  A subset of $\overline{\Omega} \times \overline{\Omega}$ is said to be $c$-cyclically monotone if given a finite sequence $\{(x_i,y_i) \}_{i=0}^n$ and any permutation $\sigma$, we have
  \begin{align*}
    \sum \limits_{i=1}^n c(x_i,y_i) \leq \sum \limits_{i=1}^n c(x_i,y_{\sigma(i)}).
  \end{align*}  
  If $c$ is replaced by $\tilde c$, we have $\tilde c$-cyclical monotonicity.
\end{DEF}

The following Proposition is a (minor) modification of a well known convex analysis result of Rockafellar (previously extended for $c$-concave functions). This modification pertains the set $\Gamma$ and the costs $c(x,y)$ and $\tilde c(x,y)$.

\begin{prop}\label{prop:Rockafellar theorem}
  Let $\gamma$ be a measure concentrated on $\mathcal{K}$ and such that $\textnormal{spt}(\gamma) \cup \Gamma \times \Gamma$ is $\tilde c$-cyclically monotone. Then, there are $c$-conjugate functions $\phi$ and $\psi$ such that
  \begin{align*}
    \phi \equiv \psi \equiv 0\textnormal{ on } \Gamma, \textnormal{ and }\;\textnormal{spt}(\gamma) \subset \partial^c \phi.
  \end{align*}	  
  
\end{prop}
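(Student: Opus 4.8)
The plan is to obtain $\phi$ and $\psi$ by applying the classical Rockafellar construction to the \emph{auxiliary} cost $\tilde c$ rather than to $c$, and then to upgrade the resulting $\tilde c$-conjugate pair to a genuine $c$-conjugate pair using that $\gamma$ is concentrated on $\mathcal K$ and that $\tilde c\equiv 0$ on $\Gamma\times\Gamma$.

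First I would fix a point $\bar x\in\Gamma$ (if $\Gamma=\emptyset$ the statement is the classical Rockafellar theorem and there is nothing to normalize), and set $S:=\textnormal{spt}(\gamma)\cup(\Gamma\times\Gamma)$, which is $\tilde c$-cyclically monotone by hypothesis and contains $(\bar x,\bar x)$. Define the Rockafellar function with base point $(\bar x,\bar x)$ by
\begin{align*}
  \phi(x):=\inf\ \sum_{i=0}^{m}\big(\tilde c(x_{i+1},y_i)-\tilde c(x_i,y_i)\big),
\end{align*}
where the infimum runs over all $m\ge 0$ and all $(x_1,y_1),\dots,(x_m,y_m)\in S$, with the conventions $(x_0,y_0)=(\bar x,\bar x)$ and $x_{m+1}=x$. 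By the usual argument, $\tilde c$-cyclical monotonicity of $S$ forces $\phi$ to be real-valued, $\phi$ is visibly an infimum of functions of the form $x\mapsto\tilde c(x,y)+a$ and hence $\tilde c$-concave, and, setting $\psi:=\phi^{\tilde c}$, one has $S\subset\partial^{\tilde c}\phi$ and $\phi=\psi^{\tilde c}$; by Remark \ref{remark:appendix c-transform of a function is upper semicontinuous}, $\phi$ and $\psi$ are upper semicontinuous.

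Next I would record the normalization on $\Gamma$. Since $c\ge 0$ and $c(z,z)=0$ we have $c(z,\Gamma)=0$ for $z\in\Gamma$, hence $\tilde c\equiv 0$ on $\Gamma\times\Gamma$. The trivial chain gives $\phi(\bar x)\le 0$, while closing up an arbitrary chain through $(\bar x,\bar x)$ and using $\tilde c$-cyclical monotonicity gives $\phi(\bar x)\ge 0$; thus $\phi(\bar x)=0$, and $(\bar x,\bar x)\in\partial^{\tilde c}\phi$ then forces $\psi(\bar x)=0$. Using the relations $\phi(z)+\psi(\bar x)=\tilde c(z,\bar x)=0$ and $\phi(\bar x)+\psi(z)=\tilde c(\bar x,z)=0$, which come from $\Gamma\times\Gamma\subset S\subset\partial^{\tilde c}\phi$, we conclude $\phi\equiv\psi\equiv 0$ on $\Gamma$. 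In particular, for every $x$ and every $z\in\Gamma$, $\phi(x)\le\tilde c(x,z)-\psi(z)\le c(x,z)$, so $\phi(x)\le c(x,\Gamma)$, and symmetrically $\psi(y)\le c(y,\Gamma)$.

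The main step, which I expect to be the crux, is to show that this same pair is $c$-conjugate. From $\tilde c\le c$ one gets immediately $\phi^c\ge\phi^{\tilde c}=\psi$ and $\psi^c\ge\psi^{\tilde c}=\phi$. For the reverse inequality $\phi^c\le\psi$, fix $y$ and $\varepsilon>0$ and pick $x_\varepsilon$ with $\tilde c(x_\varepsilon,y)-\phi(x_\varepsilon)<\psi(y)+\varepsilon$. If $(x_\varepsilon,y)\in\mathcal K$ then $\tilde c(x_\varepsilon,y)=c(x_\varepsilon,y)$ and $\phi^c(y)\le c(x_\varepsilon,y)-\phi(x_\varepsilon)<\psi(y)+\varepsilon$. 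Otherwise $\tilde c(x_\varepsilon,y)=c(x_\varepsilon,\Gamma)+c(y,\Gamma)$, and since $\phi(x_\varepsilon)\le c(x_\varepsilon,\Gamma)$ this yields $c(y,\Gamma)<\psi(y)+\varepsilon$, which combined with $\phi^c(y)\le\inf_{z\in\Gamma}\big(c(z,y)-\phi(z)\big)=c(y,\Gamma)$ (using $\phi\equiv 0$ on $\Gamma$) again gives $\phi^c(y)<\psi(y)+\varepsilon$. Letting $\varepsilon\to 0$ yields $\phi^c=\psi$, and the argument with the two variables and the roles of $\phi,\psi$ exchanged gives $\psi^c=\phi$. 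Thus $(\phi,\psi)$ is a $c$-conjugate pair vanishing on $\Gamma$. Finally, since $\textnormal{spt}(\gamma)\subset\mathcal K$ (as $\mathcal K$ is closed) we have $\tilde c=c$ on $\textnormal{spt}(\gamma)$, so for $(x,y)\in\textnormal{spt}(\gamma)\subset S\subset\partial^{\tilde c}\phi$ we get $\phi(x)+\psi(y)=\tilde c(x,y)=c(x,y)$, i.e. $\textnormal{spt}(\gamma)\subset\partial^c\phi$, which completes the proof.
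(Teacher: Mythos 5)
Your proof is correct and follows essentially the same route as the paper: apply the classical cyclical-monotonicity/Rockafellar machinery to the auxiliary cost $\tilde c$, use $\Gamma\times\Gamma\subset\partial^{\tilde c}\phi$ together with $\tilde c\equiv 0$ on $\Gamma\times\Gamma$ to normalize $\phi\equiv\psi\equiv 0$ on $\Gamma$, and use that $\gamma$ is concentrated on $\mathcal{K}$, where $\tilde c=c$, to pass from $\partial^{\tilde c}\phi$ to $\partial^{c}\phi$. The only difference is one of detail: you construct $\phi$ explicitly by the Rockafellar formula and verify the full $c$-conjugacy $\phi^c=\psi$, $\psi^c=\phi$ via the dichotomy $(x_\varepsilon,y)\in\mathcal{K}$ versus $\tilde c(x_\varepsilon,y)=c(x_\varepsilon,\Gamma)+c(y,\Gamma)$ (exploiting $\phi\le c(\cdot,\Gamma)$ and $\phi\equiv 0$ on $\Gamma$), whereas the paper invokes \cite[Theorem 1.13]{AG} and checks the equality $\phi(x)+\phi^c(y)=c(x,y)$ only at points of $\mathcal{K}\cap\partial^{\tilde c}\phi$ and on $\Gamma$, so your extra verification fills in a step the paper leaves largely implicit.
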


\begin{proof}
  This follows from the standard optimal transport theory. Indeed, as shown in the proof of \cite[Theorem 1.13, (ii) $\Rightarrow$ (iii)]{AG}, since $\textnormal{spt}(\gamma) \cup \Gamma \times \Gamma$ is $\tilde c$-cyclically monotone, there must be a $\tilde c$-concave function $\phi$ such that
  \begin{align*}  
    \textnormal{spt}(\gamma) \cup \Gamma \times \Gamma \subset \partial^{\tilde c} \phi.
  \end{align*}
  Since any pair $(x,y) \in \Gamma\times \Gamma$ belongs to $\partial^{\tilde c}\phi$, it follows that
  \begin{align*}
    \phi(x)+\phi^{\tilde c}(y) = \tilde c(x,y) = 0 \;\;\forall\;x,y\in\Gamma.
  \end{align*}
  We emphasize that the above holds for any two points $x$ and $y$ in $\Gamma$, which in particular means that $\phi$ and $\phi^{\tilde c}$ are constant on $\Gamma$. Adding a constant to $\phi$ we can assume without loss of generality that $\phi=0$ on $\Gamma$, which in turn guarantees that $\phi^{\tilde c}=0$ on $\Gamma$ as well.
   
  We claim that $\partial^{\tilde c}\phi \cap \mathcal{K} \subset \partial^c \phi$. Indeed, if $(x_0,y_0) \in \mathcal{K}$ is such that $y_0 \in \partial^{\tilde c}\phi(x_0)$ then
  \begin{align*}
    \phi(x) \leq \tilde c(x,y_0) - \phi^c(y_0) \leq c(x,y_0)-\phi^c(y_0),	  
  \end{align*}	  
  since $\tilde c(x,y) \leq c(x,y)$ for all $x$ and $y$. Since $(x_0,y_0)\in\mathcal{K}$ we have $\tilde c(x_0,y_0) = c(x_0,y_0)$, so
  \begin{align*}
    \phi(x_0) = \tilde c(x,y_0) - \phi^c(y_0) = c(x,y_0)-\phi^c(y_0).
  \end{align*}	  
 It follows from this that $(x_0,y_0) \in \partial^c\phi(x)$, and the claim is proved. The same argument also shows that if $y\in \Gamma$, then $\phi^{c}(y) = \phi^{\tilde c}(y) = 0$. Since $\phi$ was chosen so that $\textnormal{spt}(\gamma)\subset \partial^{\tilde c}\phi$ and $\gamma$ is supported in $\mathcal{K}$, it follows that $\textnormal{spt}(\gamma) \subset \partial^c\phi$.  Therefore $\phi$ and $\phi^c$ are the desired $c$-conjugate functions.
  \end{proof}

As in the usual optimal transport problem, a basic tool for the analysis of Problem \ref{prob:OPT with boundary} is a \emph{dual problem}. This problem deals with a family of admissible pairs of functions
{ 
\begin{align}  
\textnormal{Adm}^c & := \Big \{ (\phi,\psi) \mid \phi\in L^1(\mu) \textnormal{ and } \psi \in L^1(\nu), \notag \\ 
      & \quad\quad\quad \quad \quad \;\; \phi \textnormal{ and } \;\psi \textnormal{ are upper semicontinuous},\notag\\
      & \quad\quad\quad \quad \quad \;\; \phi\equiv \psi \equiv 0 \textnormal{ on } \Gamma, \notag\;\\ 
      & \quad\quad\quad \quad \quad \;\; \textnormal{and } \phi(x)+\psi(y) \leq c(x,y) \textnormal{ in } \overline{\Omega}\times \overline{\Omega} \Big \}.\label{eqn:def dual problem admissible functions}	  	  
\end{align}	  
}
We now can state the problem dual to Problem \eqref{prob:OPT with boundary dual}.
	  
\begin{prob}\label{prob:OPT with boundary dual}	  
  Among all pairs $(\phi,\psi) \in \textnormal{Adm}^c$, find one that maximizes the functional
  \begin{align*}
    J^*(\phi,\psi) := \int_{\overline{\Omega}} \phi(x) \;d\mu(x) + \int_{\overline{\Omega}} \psi(y) \;d\nu(y).
  \end{align*}
\end{prob}	  

The characterization of minimizers in Problem \ref{prob:OPT with boundary} and maximizers for Problem \ref{prob:OPT with boundary dual} is the content of Theorem \ref{thm:characterization minimizing plan} and Lemma \ref{lem:duality general}. In the proof we will make use of Proposition \ref{prop:Rockafellar theorem}, together with the characterization of optimizers for the usual optimal transportation problem \cite[Theorem 1.13]{AG}.

\begin{thm}\label{thm:characterization minimizing plan}
  Let $\gamma \in \textnormal{Adm}_{\Gamma}(\mu,\nu)$ for two measures $\mu,\nu \in \mathcal{M}_c(\Omega)$. 
  Then $\gamma$ is a minimizer for Problem \ref{prob:OPT with boundary} if and only if $\gamma$ is concentrated on the set $\mathcal{K}$ defined in \eqref{eqn:set of directions for the optimizer with boundary} and $\textnormal{spt}(\gamma) \cup \Gamma \times \Gamma$ is a $\tilde c$-cyclically monotone set.

\end{thm}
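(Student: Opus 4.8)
The plan is to follow the now-standard template for characterizing optimal transport plans via $c$-cyclical monotonicity and duality, adapting it to the present setting where one has a ``reservoir'' set $\Gamma$ and the truncated cost $\tilde c$. The proof splits into two implications. For the easy direction (``optimal $\Rightarrow$ concentrated on $\mathcal{K}$ and $\tilde c$-cyclically monotone''), I would first argue that any minimizer $\gamma$ must be concentrated on $\mathcal{K}$: if $\gamma$ gave positive mass to the complement of $\mathcal{K}$, i.e.\ to pairs $(x,y)$ with $c(x,y) > c(x,\Gamma) + c(y,\Gamma)$, then one could strictly lower $J_c(\gamma)$ by rerouting that mass through $\Gamma$ — replacing the portion of $\gamma$ on $\mathcal{K}^c$ by $(\mathrm{Id}\times P)_\#\mu' + (P\times\mathrm{Id})_\#\nu'$ where $\mu',\nu'$ are the marginals of that portion. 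This uses the projection map $P$ from \eqref{eqn:app projections onto Gamma} and the fact that $\gamma(\Gamma\times\Gamma)=0$ is preserved. Once $\gamma$ is concentrated on $\mathcal{K}$, on that set $c = \tilde c$, so $\gamma$ is also optimal for the cost $\tilde c$ among couplings in $\textnormal{Adm}_\Gamma(\mu,\nu)$, and then the standard argument (as in \cite[Theorem 1.13]{AG}) shows $\textnormal{spt}(\gamma)$ is $\tilde c$-cyclically monotone; including $\Gamma\times\Gamma$ costs nothing since $\tilde c$ vanishes there and one checks the cyclical-monotonicity inequality is not violated by inserting pairs from $\Gamma\times\Gamma$.

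**For the converse** (``concentrated on $\mathcal{K}$ and $\textnormal{spt}(\gamma)\cup\Gamma\times\Gamma$ is $\tilde c$-cyclically monotone $\Rightarrow$ optimal''), I would invoke Proposition \ref{prop:Rockafellar theorem}: under the stated hypotheses there are $c$-conjugate functions $\phi,\psi$ with $\phi\equiv\psi\equiv 0$ on $\Gamma$ and $\textnormal{spt}(\gamma)\subset\partial^c\phi$. By Remark \ref{remark:appendix c-transform of a function is upper semicontinuous} these are upper semicontinuous, so $(\phi,\psi)\in\textnormal{Adm}^c$ provided $\phi\in L^1(\mu)$, $\psi\in L^1(\nu)$ — which I would verify using $\mu,\nu\in\mathcal{M}_c(\overline\Omega)$ together with the bound $-c(x,\Gamma)\le\phi(x)\le c(x,y_0)-\psi(y_0)$ coming from $c$-concavity and $\phi=0$ on $\Gamma$. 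Then for any competitor $\gamma'\in\textnormal{Adm}_\Gamma(\mu,\nu)$,
\begin{align*}
J_c(\gamma') \ge \int \big(\phi(x)+\psi(y)\big)\,d\gamma'(x,y) = \int\phi\,d\mu + \int\psi\,d\nu = \int\big(\phi(x)+\psi(y)\big)\,d\gamma = J_c(\gamma),
\end{align*}
where the first inequality is Remark \ref{remark:appendix inequality for c conjugate pairs}, the middle equalities use that $\gamma'$ and $\gamma$ have the same marginals off $\Gamma$ and that $\phi=\psi=0$ on $\Gamma$, and the last uses $\textnormal{spt}(\gamma)\subset\partial^c\phi$ so that $\phi(x)+\psi(y)=c(x,y)$ $\gamma$-a.e. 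This shows $\gamma$ is a minimizer.

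**The main obstacle** I anticipate is integrability and the careful bookkeeping of the ``boundary'' contributions. One must be attentive that the various integrals $\int\phi\,d\mu$, $\int\psi\,d\nu$, $\int(\phi+\psi)\,d\gamma$ are well-defined and finite (not $\infty-\infty$), which is exactly where the definition of $\mathcal{M}_c(\overline\Omega)$ — in particular the local finiteness condition $\mu(\{d(x,\Gamma)>r\})<\infty$ — enters, and where one needs that $\gamma$ and $\gamma'$ assign no mass to $\Gamma\times\Gamma$ so the decomposition of the marginal integrals into ``on $\Gamma$'' and ``off $\Gamma$'' pieces is valid. A secondary subtlety is verifying, in the easy direction, that the rerouting construction genuinely produces an admissible coupling with strictly smaller cost unless $\gamma$ was already concentrated on $\mathcal{K}$; this requires checking that the rerouted measure still lies in $\textnormal{Adm}_\Gamma(\mu,\nu)$ and that $c(x,P(x))=c(x,\Gamma)$ makes the cost comparison strict on the set where $c(x,y)>c(x,\Gamma)+c(y,\Gamma)$. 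Both points are routine given the hypotheses \eqref{eqn:cost assumptions1}, \eqref{eqn:app projections onto Gamma} but deserve explicit care.
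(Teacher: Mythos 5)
Your converse direction is essentially the paper's: Proposition \ref{prop:Rockafellar theorem} produces $c$-conjugate $\phi,\psi$ vanishing on $\Gamma$ with $\textnormal{spt}(\gamma)\subset\partial^c\phi$, and the inequality chain against an arbitrary competitor is exactly the one used in the paper. (One small caveat: your lower bound $-c(x,\Gamma)\le\phi(x)$ does not follow from the stated hypotheses on $c$, which need not satisfy any triangle-type inequality; but it is also not needed — the upper bounds $\phi\le c(\cdot,\Gamma)$, $\psi\le c(\cdot,\Gamma)$ together with $\mu,\nu\in\mathcal{M}_c(\overline\Omega)$ already make the integrals well defined, which is all the comparison requires, and this is how the paper handles it.) The first half of the necessity direction — rerouting the mass on $\mathcal{K}^c$ through $\Gamma$ via $P$ to force concentration on $\mathcal{K}$ — also matches the paper.

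The genuine gap is in your second step of the necessity direction, where you say that ``the standard argument (as in \cite[Theorem 1.13]{AG})'' gives $\tilde c$-cyclical monotonicity of $\textnormal{spt}(\gamma)$. That theorem is about couplings with \emph{prescribed} marginals of finite mass, whereas here $\textnormal{Adm}_\Gamma(\mu,\nu)$ only fixes the marginals off $\Gamma$, and $\mu,\nu$ (hence $\gamma$) may have infinite mass accumulating at $\Gamma$; in particular $\textnormal{spt}(\gamma)$ can contain points of $\Gamma\times\Gamma$ near which $\gamma$ has infinite local mass, and the cited result simply does not apply as stated. The paper bridges exactly this: it first shows (for finite-mass $\gamma$) that $\gamma$ is optimal for the \emph{classical} problem with cost $\tilde c$ between its full marginals $\bar\mu=\pi^1_{\#}\gamma$, $\bar\nu=\pi^2_{\#}\gamma$, so the standard theory becomes applicable, handles the infinite-mass case by the argument of \cite[Proposition 2.3]{FigalliGigli2010}, and obtains the statement for $\textnormal{spt}(\gamma)\cup\Gamma\times\Gamma$ by adding an auxiliary measure $\gamma_0$ supported on $\Gamma\times\Gamma$ and noting $\gamma+\gamma_0$ remains optimal for its own marginals. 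You would need either this reduction or a reworking of the cyclical-monotonicity proof adapted to $\textnormal{Adm}_\Gamma$ and infinite mass. By contrast, your ``one checks'' claim for inserting pairs of $\Gamma\times\Gamma$ is a legitimate alternative to the paper's $\gamma+\gamma_0$ device, but it should be spelled out: split each permutation cycle at the $\Gamma$-points and close each arc of $\textnormal{spt}(\gamma)$-points using the $\tilde c$-cyclical monotonicity of $\textnormal{spt}(\gamma)$ together with $\tilde c(x,y)\le c(x,\Gamma)+c(y,\Gamma)$ and $\tilde c(x,b)=c(x,\Gamma)$ for $b\in\Gamma$.
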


\begin{proof}
  Assume first that $\gamma \in \textnormal{Adm}_{\Gamma}(\mu,\nu)$ is optimal.  Consider $\tilde \gamma$, the plan given { by $\tilde \gamma = \hat \gamma_{\mid \overline{\Omega}\times \overline{\Omega}\setminus \Gamma\times \Gamma}$ where $\hat \gamma$ is defined as
  \begin{align*}
    \hat \gamma := \gamma_{\mid_{ \mathcal{K}}}+(\pi_1,P\circ \pi_1)_{\#} \left ( \gamma_{\mid_{\overline{\Omega}\times \overline{\Omega}\setminus \mathcal{K} }} \right )\;\;+(P\circ \pi_2,\pi_2)_{\#} \left ( \gamma_{\mid_{\overline{\Omega}\times \overline{\Omega}\setminus \mathcal{K} }} \right ),	
  \end{align*}	
  here $P$ is as in \eqref{eqn:app projections onto Gamma}. What the plan $\hat \gamma$ is meant to do is adjusting the original plan $\gamma$ by shifting the transport of some of the mass so that it is sent to $\Gamma$, whenever it is advantageous to do so (and only for points $(x,y)$ outside of $\mathcal{K}$). The coupling $\tilde \gamma$ comes from taking $\hat \gamma$ and discarding any potential mass $\Gamma \times \Gamma$, this makes sure we have an admissible coupling. Therefore $\tilde \gamma \in \textnormal{Adm}_\Gamma(\mu,\nu)$. Moreover, we have the formula}
  \begin{align*}	
    \int_{\overline{\Omega}\times \overline{\Omega}} c(x,y)\;d\tilde \gamma(x,y) = \int_{\mathcal{K}}c(x,y)d\gamma(x,y) + \int_{\overline{\Omega}\times \overline{\Omega}\setminus \mathcal{K}} [c(x,\Gamma) + c(\Gamma,y)]\; d\gamma(x,y).
  \end{align*}	
  From the definition of $\mathcal{K}$, we have { $c(x,y) > c(x,\Gamma)+c(\Gamma,y)$} outside of $\mathcal{K}$, thus
  \begin{align*}	
    \int_{\overline{\Omega}\times\overline{\Omega}\setminus \mathcal{K}} [c(x,\Gamma) + c(\Gamma,y)] \;d\gamma(x,y) \leq  \int_{\overline{\Omega}\times \overline{\Omega}\setminus \mathcal{K}} c(x,y) \;d\gamma(x,y).
  \end{align*}
  It follows that
  \begin{align*}	
    \int_{\overline{\Omega}\times \overline{\Omega}} c(x,y)\;d\tilde \gamma(x,y)\leq \int_{\overline{\Omega}\times \overline{\Omega}} c(x,y)\;d\gamma(x,y),
  \end{align*}
  with strict inequality if and only if $\gamma(\overline{\Omega} \times \overline{\Omega} \setminus \mathcal{K})>0$. By the optimality of $\gamma$ we then conclude that $\gamma(\overline{\Omega} \times \overline{\Omega} \setminus \mathcal{K}) = 0$, that is, $\gamma$ is supported in $\mathcal{K}$.
  
  Now, we must show that $\textnormal{spt}(\gamma) \cup \Gamma\times \Gamma$ is $\tilde c$-monotone. We deal first with the case where $\gamma$ has finite mass. In this instance, let us write 
  \begin{align}\label{eqn:app bar mu and bar nu}
    \bar \mu = \pi_{\#}^1\gamma,\;\bar \nu = \pi_{\#}^2\gamma.
  \end{align}
  Then, as $\bar \mu$ and $\bar \nu$ are the marginals of $\gamma$ (in all of $\overline{\Omega}$), they must have the same total mass which is finite since $\gamma$ has finite mass. Let $\gamma_0$ denote the optimal transport plan between $\bar\mu$ and $\bar \nu$ according to $\tilde c$, and let $\tilde \gamma_0$ be constructed from $\gamma_0$ in the same way as $\tilde \gamma$ was constructed from $\gamma$ { (first by pushing parts of its mass to the boundary as done above, yielding a measure $\hat \gamma_0$, and then restricting to $\overline{\Omega}\times \overline{\Omega} \setminus \Gamma\times \Gamma$). Since $\bar \mu_{\mid \overline \Omega} = \mu$ and $\bar \nu_{\mid \overline \Omega}=\nu$ we have that $\tilde \gamma_0$ is a measure in $\textnormal{Adm}_{\Gamma}(\mu,\nu)$, and as argued above for $\gamma$ and $\tilde \gamma$ if $\gamma_0$ were not supported in $\mathcal{K}$ then $\tilde \gamma_0$ would be a better coupling. This shows that $c=\tilde c$ $\gamma$-a.e. and $\gamma_0$-a.e. and thus
  \begin{align*}
    J_{\tilde c}(\gamma) = J_c(\gamma),\;J_{\tilde c}(\gamma_0) = J_c(\gamma_0).
  \end{align*}
  Combining these identities with the optimality of $\gamma$ and $\gamma_0$ yields the inequalities
  \begin{align*}
    J_{\tilde c}(\gamma) \geq J_{\tilde c}(\gamma_0) = J_{c}(\gamma_0) \geq J_{c}({\gamma_0}_{\mid \overline{\Omega}\times \overline{\Omega}\setminus \Gamma\times\Gamma}) \geq J_{c}(\gamma),
  \end{align*}
  (we used that ${\gamma_0}_{\mid \overline{\Omega}\times \overline{\Omega}\setminus \Gamma\times\Gamma} \in \textnormal{Adm}_{\Gamma}(\mu,\nu)$ and that $c(x,y)\geq 0$) and we conclude that
  \begin{align*}
    J_{\tilde c}(\gamma_0) = J_{\tilde c}(\gamma).
  \end{align*}}
  Thus $\gamma$ is an optimal plan for the usual transport problem with cost $\tilde c$. By optimal transport theory, the support set $\textnormal{spt}(\gamma)$ is $\tilde c$-cyclically monotone. To prove that $\textnormal{spt}(\gamma) \cup \Gamma\times \Gamma$ is still $\tilde c$-cyclically monotone, simply note that if $\gamma_0$ is any measure supported in $\Gamma\times \Gamma$, {  then $\gamma+\gamma_0$ may not belong to $\textnormal{Adm}_\Gamma(\mu,\nu)$ but arguing as above we can show that it is optimal for the standard optimal transport problem with cost $\tilde c$ and marginals $\pi^1_{\#}(\gamma+\gamma_0)$ and $\pi^2_{\#}(\gamma+\gamma_0)$. This shows $\textnormal{spt}(\gamma+\gamma_0) = \textnormal{spt}(\gamma) \cup \Gamma\times \Gamma$ is $\tilde c$-cyclically monotone.}

  This covers the case where $\gamma$ has finite mass. For the general case, we argue just as in \cite[Proposition 2.3]{FigalliGigli2010}, that the one property from the classical optimal transport problem that we needed was that if the support of $\gamma$ is not $\tilde c$-cyclically monotone, then $\gamma$ cannot be optimal with respect to $\tilde c$. It is worth noting that that even if $\bar \mu$ and $\bar \nu$ do not have finite mass, they are still the marginals of $\gamma$ by definition \eqref{eqn:app bar mu and bar nu}, so the set of measures with marginals $\bar \mu$ and $\bar \nu$ is non-empty, so one can proceed with the Kantorovich problem as in the standard optimal transport theory. Therefore, the above argument extends to the case of $\gamma$ with infinite mass and we conclude that $\textnormal{spt}(\gamma) \cup \Gamma \times \Gamma$ is $\tilde c$-cyclically monotone in all cases.
   
  Conversely, assume that $\gamma$ is supported in $\mathcal{K}$ and that $\textnormal{spt}(\gamma) \cup \Gamma \times \Gamma$ is a $\tilde c$-cyclically monotone set. Then Proposition \ref{prop:Rockafellar theorem} says that there is a function 
  $\phi$ which is $c$-concave, such that $\phi$ and $\phi^c$ both vanish on $\Gamma$, and
  \begin{align*}
    \textnormal{spt}(\gamma) \subset \partial^c\phi.
  \end{align*}
  In particular, this means that  $\phi(x)+\phi^{c}(y) = c(x,y)$ on $\textnormal{spt}(\gamma)$, so
   \begin{align*}
    \int_{\overline{\Omega}\times \overline{\Omega}} c(x,y)\;d\gamma(x,y) & = \int_{\overline{\Omega}\times \overline{\Omega}} [\phi(x)+\phi^{c}(y)]\;d\gamma(x,y),\\
	  & = \int_{(\overline\Omega\setminus\Gamma)\times\overline{\Omega}} \phi(x)\;d\gamma(x,y) + \int_{\overline{\Omega}\times(\overline\Omega\setminus\Gamma)} \phi^{c}(y)\;d\gamma(x,y),\\
	  & = \int_{\overline{\Omega}\setminus\Gamma} \phi(x)\;d\mu(x) + \int_{\overline{\Omega}\setminus
	  \Gamma} \phi^c(y)\;d\nu(y).	  	  
  \end{align*}	  
  This suffices to guarantee the optimality of $\gamma$. Indeed, take any $\tilde \gamma \in \textnormal{Adm}_{\Gamma}(\mu,\nu)$, then
  \begin{align*}
    \int_{\overline{\Omega}\times \overline{\Omega}}  c(x,y)\;d\tilde \gamma(x,y) & \geq \int_{\overline{\Omega}
    \times \overline{\Omega}}  [\phi(x)+\phi^c(y)]\;d\tilde \gamma(x,y)\\
	  & = \int_{\overline{\Omega}\setminus\Gamma} \phi(x)\;d\mu(x) + \int_{\overline{\Omega}\setminus\Gamma} \phi^c(y)\;d\nu(y) = 
	  \int_{\overline{\Omega}\times \overline{\Omega}} c(x,y)\;d\gamma(x,y), 
  \end{align*}
  and we conclude that $\gamma$ achieves the minimum value. 
\end{proof}  

Just as in the usual optimal transport problem, a solution to Problem \ref{prob:OPT with boundary} corresponds to a solution to Problem \ref{prob:OPT with boundary dual}, and the corresponding values coincide.
\begin{lem}\label{lem:duality general}
  The problems \eqref{prob:OPT with boundary} and \eqref{prob:OPT with boundary dual} are dual, meaning that
  \begin{align*}
    \inf \limits_{\gamma \in \textnormal{Adm}_\Gamma(\mu,\nu)} J(\gamma) = \sup \limits_{(\phi,\psi) \in \textnormal{Adm}^c} J^*(\phi,\psi).
  \end{align*}
  \end{lem}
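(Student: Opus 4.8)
\textbf{Proof plan for Lemma \ref{lem:duality general}.}
The plan is to establish the two inequalities $\inf J \geq \sup J^*$ and $\inf J \leq \sup J^*$ separately. The first (``weak duality'') is entirely routine: for any $\gamma \in \textnormal{Adm}_\Gamma(\mu,\nu)$ and any $(\phi,\psi) \in \textnormal{Adm}^c$, the constraint $\phi(x)+\psi(y)\leq c(x,y)$ together with the fact that $\phi,\psi$ vanish on $\Gamma$ and the marginal conditions give
\[
  J^*(\phi,\psi) = \int_{\overline\Omega\setminus\Gamma}\phi\,d\mu + \int_{\overline\Omega\setminus\Gamma}\psi\,d\nu
  = \int_{\overline\Omega\times\overline\Omega}[\phi(x)+\psi(y)]\,d\gamma(x,y) \leq J_c(\gamma),
\]
where passing from the single integrals over $\overline\Omega\setminus\Gamma$ to the integral over the product uses $\gamma(\Gamma\times\Gamma)=0$ and that $\phi\equiv\psi\equiv 0$ on $\Gamma$ so the contributions of $(\Gamma\times(\overline\Omega\setminus\Gamma))$ and its transpose vanish. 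Taking $\inf$ over $\gamma$ and $\sup$ over $(\phi,\psi)$ yields $\sup J^* \leq \inf J$.

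For the reverse inequality, I would invoke the structural results already proved. By Theorem \ref{thm:existence OT with boundary} there exists a minimizer $\gamma$ for Problem \ref{prob:OPT with boundary}, and by Theorem \ref{thm:characterization minimizing plan} this $\gamma$ is concentrated on $\mathcal{K}$ and $\textnormal{spt}(\gamma)\cup\Gamma\times\Gamma$ is $\tilde c$-cyclically monotone. Then Proposition \ref{prop:Rockafellar theorem} produces a $c$-conjugate pair $(\phi,\phi^c)$ with $\phi\equiv\phi^c\equiv 0$ on $\Gamma$ and $\textnormal{spt}(\gamma)\subset\partial^c\phi$. By Remark \ref{remark:appendix c-transform of a function is upper semicontinuous} both $\phi$ and $\phi^c$ are upper semicontinuous, and by Remark \ref{remark:appendix inequality for c conjugate pairs} they satisfy $\phi(x)+\phi^c(y)\leq c(x,y)$ everywhere. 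Exactly the computation already displayed at the end of the proof of Theorem \ref{thm:characterization minimizing plan} shows that, since $\phi(x)+\phi^c(y) = c(x,y)$ on $\textnormal{spt}(\gamma)$,
\[
  J_c(\gamma) = \int_{\overline\Omega\setminus\Gamma}\phi\,d\mu + \int_{\overline\Omega\setminus\Gamma}\phi^c\,d\nu.
\]
Once we know $(\phi,\phi^c)\in\textnormal{Adm}^c$, this identity forces $\inf J = J_c(\gamma) = J^*(\phi,\phi^c) \leq \sup J^*$, which combined with weak duality closes the argument.

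The one point requiring care — and the main obstacle — is verifying the integrability condition $\phi\in L^1(\mu)$ and $\phi^c\in L^1(\nu)$ needed for $(\phi,\phi^c)\in\textnormal{Adm}^c$; a priori $\phi$ is only known to be $\tilde c$-concave with $\phi=0$ on $\Gamma$, which gives the two-sided bound $-c(x,\Gamma)\leq\phi(x)\leq c(x,\Gamma)$ (the lower bound from $\phi(x)+\phi^c(P(x))\leq c(x,P(x))=c(x,\Gamma)$ together with $\phi^c(P(x))=0$, after replacing $\phi$ by its $\tilde c$-biconjugate if necessary to ensure $\phi(x) = \inf_y\{\tilde c(x,y)-\phi^c(y)\}\geq \inf_y\{c(x,\Gamma)+c(y,\Gamma)-\phi^c(y)-c(y,\Gamma)\}=c(x,\Gamma)+\inf_y\{-\phi^c(y)-c(y,\Gamma)\}$, and symmetrically; one argues more cleanly that $|\phi(x)|\leq c(x,\Gamma)$ directly from $c$-concavity with $\phi\equiv 0$ on $\Gamma$). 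Since $\mu,\nu\in\mathcal{M}_c(\overline\Omega)$ have $\int c(x,\Gamma)\,d\mu<\infty$ and $\int c(y,\Gamma)\,d\nu<\infty$ by Definition \ref{def:Mc}, the required $L^1$ bounds follow, and the displayed manipulations are justified. This is essentially \cite[Theorem 2.5, Proposition 2.6]{FigalliGigli2010} adapted to our cost, so the argument is standard once the bookkeeping with $\Gamma$ is handled.
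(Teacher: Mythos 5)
Your overall route is the same as the paper's: weak duality from the constraint $\phi(x)+\psi(y)\leq c(x,y)$ and the vanishing on $\Gamma$, then, for the reverse inequality, take an optimal $\gamma$, use Theorem \ref{thm:characterization minimizing plan} and Proposition \ref{prop:Rockafellar theorem} to get a $c$-conjugate pair $(\phi,\phi^c)$ vanishing on $\Gamma$ with $\phi(x)+\phi^c(y)=c(x,y)$ on $\textnormal{spt}(\gamma)$, and conclude via the displayed identity. You also correctly identify the one genuinely delicate point, namely that $(\phi,\phi^c)\in\textnormal{Adm}^c$ requires $\phi\in L^1(\mu)$, $\phi^c\in L^1(\nu)$.

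However, your resolution of that point has a real gap. The claimed two-sided pointwise bound $-c(x,\Gamma)\leq\phi(x)\leq c(x,\Gamma)$ does not follow from $c$-concavity together with $\phi\equiv\phi^c\equiv 0$ on $\Gamma$. The upper bound is fine ($\phi(x)\leq c(x,y)$ for $y\in\Gamma$), but the lower bound in your parenthetical rests on the inequality $\tilde c(x,y)\geq c(x,\Gamma)$ (equivalently a triangle-type inequality $c(y,\Gamma)\leq c(x,y)+c(x,\Gamma)$), which is not among the standing assumptions \eqref{eqn:cost assumptions1} and is false for the costs that matter here: for $c_2(x,y)=|x-y|^2$, $\Gamma=\{0\}$, one has $\inf_y\{c(x,y)-c(y,\Gamma)\}=\inf_y\{|x|^2-2x\cdot y\}=-\infty$, so knowing only $\phi=\psi^c$ with $\psi\leq c(\cdot,\Gamma)$ gives no pointwise lower bound at all, and the "cleaner" claim $|\phi(x)|\leq c(x,\Gamma)$ is unjustified. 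The correct completion (and what the paper does) avoids any lower bound: the upper bounds $\phi\leq c(\cdot,\Gamma)$ and $\phi^c\leq c(\cdot,\Gamma)$ together with $\mu,\nu\in\mathcal{M}_c(\overline{\Omega})$ show that $\max\{\phi,0\}\in L^1(\mu)$ and $\max\{\phi^c,0\}\in L^1(\nu)$, so both integrals are well defined in $[-\infty,\infty)$; then the identity $\int_{\overline{\Omega}}\phi\,d\mu+\int_{\overline{\Omega}}\phi^c\,d\nu=\int_{\overline{\Omega}\times\overline{\Omega}}c\,d\gamma$, whose right-hand side is finite by Theorem \ref{thm:existence OT with boundary}, forces both integrals to be finite, hence $\phi\in L^1(\mu)$ and $\phi^c\in L^1(\nu)$ and $(\phi,\phi^c)\in\textnormal{Adm}^c$. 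With that replacement your argument closes; as written, the integrability step would fail.
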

  
\begin{proof}  
  If $(\phi,\psi) \in \textnormal{Adm}^c$, then $\phi(x)+\psi(y) \leq c(x,y)$ for all $x$ and $y$ and $\phi\equiv\psi\equiv 0$ 
  on $\Gamma$. Therefore, for any $\gamma \in \textnormal{Adm}_\Gamma(\mu,\nu)$ we have
  \begin{align*}	
    \int_{\overline{\Omega}\times \overline{\Omega}} c(x,y) \;d\gamma(x,y) & \geq \int_{\overline{\Omega} \times \overline{\Omega}} [\phi(x)+\psi(y)]\;d\gamma(x,y)\\
	& = \int_{\overline{\Omega}\setminus\Gamma} \phi(x)\;d\mu(x) + \int_{\overline{\Omega}\setminus\Gamma}\psi(y)\;d\nu(y)
	\\
	& = \int_{\overline{\Omega}} \phi(x)\;d\mu(x) + \int_{\overline{\Omega}}\psi(y)\;d\nu(y).	 
  \end{align*}	 
  Since $(\phi,\psi) \in \textnormal{Adm}^c$ and $\gamma \in \textnormal{Adm}_{\Gamma}(\mu,\nu)$ were arbitrary, it follows that
  \begin{align}\label{equation:appendix inf sup easy inequality}
    \inf \limits_{\gamma \in \textnormal{Adm}_{\Gamma}(\mu,\nu) }\int_{\overline{\Omega}\times \overline{\Omega}} c(x,y) \;d\gamma(x,y) \geq \sup \limits_{(\phi,\psi) \in \textnormal{Adm}^c} \; \Big \{  \int_{\overline{\Omega}} \phi \;d\mu(x) + \int_{\overline{\Omega}} \psi \;d\nu(y) \Big \}.
  \end{align}
  The reverse inequality follows from Theorem \ref{thm:characterization minimizing plan}. To see why, let $\pi \in \textnormal{Adm}_{\Gamma}(\mu,\nu)$ be the minimizer, then the theorem says that { $\textnormal{spt}(\gamma) \cup \Gamma \times \Gamma$ is $\tilde c$-cyclically monotone and its support is} contained in $\mathcal{K}$, in which case Proposition \ref{prop:Rockafellar theorem} says that there are functions $\phi$ and $\psi$ which are $c$-conjugate, vanish on $\Gamma$, and such that $\phi(x)+\psi(y)=c(x,y)$ for $\gamma$-almost every $(x,y)$. {  The functions $\phi,\psi$ have a couple of extra properties. First, since $\psi(y) = 0$ for $y\in \Gamma$, we have $\phi(x) \leq c(x,y)$ for every $y\in \Gamma$ and taking the infimum in $y$ it follows that
  \begin{align*}
    \phi(x) \leq c(x,\Gamma) \;\forall\;x\in \Gamma.
  \end{align*}
  Likewise, it follows that $\psi(y) \leq c(y,\Gamma)$ for every $y\in \Gamma$. This implies that
  \begin{align}\label{equation:appendix max phi max psi in L1}
    \max\{\phi,0\} \in L^1(\mu),\; \max\{\psi,0\} \in L^1(\nu).
  \end{align}
  In particular, the integrals $\int_{\overline\Omega}\phi(x)d\mu(x)$ and $\int_{\overline{\Omega}}\psi(y)d\nu(y)$ are well defined. Secondly, using that $\phi(x)+\psi(y)=c(x,y)$ for $\gamma$-almost every $(x,y)$, that $\phi\equiv \psi \equiv 0$ on $\Gamma$, and $\gamma \in \textnormal{Adm}_{\Gamma}(\mu,\nu)$, it follows that
  \begin{align*}
     \int_{\overline{\Omega}} \phi(x)\;d\mu(x)+\int_{\overline{\Omega}}\psi(y)\;d\nu(y) & = \int_{\overline{\Omega}\setminus \Gamma} \phi(x)\;d\mu(x)+\int_{\overline{\Omega}\setminus \Gamma}\psi(y)\;d\nu(y)\\
	& = \int_{\overline{\Omega}\times \overline{\Omega}}[\phi(x)+\psi(y)]\;d\gamma(x,y),\\
	& = \int_{\overline{\Omega}\times \overline{\Omega}}c(x,y)\;d\gamma(x,y).
  \end{align*}
 Since this last integral is finite it follows that $\int_{\overline\Omega}\phi(x)d\mu(x)$ and $\int_{\overline{\Omega}}\psi(y)d\nu(y)$ are finite and in light of \eqref{equation:appendix max phi max psi in L1} it follows that $\phi \in L^1(\mu)$ and $\psi \in L^1(\nu)$. This shows that $(\phi,\psi) \in \textnormal{Adm}^c$ and this yields the reverse inequality to \eqref{equation:appendix inf sup easy inequality}, proving the lemma.}
  \end{proof}

The following lemma is a minor modification of \cite[Lemma 2.1]{FigalliGigli2010} and we omit its proof. The lemma itself is a variant of a standard lemma in optimal transport theory \cite[Lemma 5.3.2]{AGS}. We recall that below $\mathcal{M}_p(\overline\Omega):=\mathcal{M}_c(\overline\Omega)$ for $c(x,y)=|x-y|^p$.
\begin{lem}\label{lem:app gluing lemma}
  Let $p\geq 1$ and consider measures $\mu_1,\mu_2,\mu_3 \in \mathcal{M}_p(\overline\Omega)$, $\gamma^{12} \in \textnormal{Adm}_{\Gamma}(\mu_1,\mu_2)$, and $\gamma^{23} \in \textnormal{Adm}_{\Gamma}(\mu_2,\mu_3)$. 
  Then, there is a Borel measure in $\overline{\Omega}\times \overline{\Omega}\times \overline{\Omega}$, denoted $\gamma^{123}$, whose $2$-marginals satisfy
  \begin{align}\label{eqn:app gluing lemma marginals}
    \pi^{12}_{\#}\gamma^{123} =  \gamma^{12}+\sigma^{12},\;\pi^{23}_{\#}\gamma^{123} =  \gamma^{23}+\sigma^{23},
  \end{align}	  
  where $\sigma^{12}$ and $\sigma^{23}$ are measures concentrated on the  set $\{(x,x)\;|\;x\in\Gamma\}$ and
  $\pi^{12}(x_1,x_2,x_3)=(x_1,x_2), \pi^{2,3}(x_1,x_2,x_3)=(x_2,x_3)$.
\end{lem}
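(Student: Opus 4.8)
The plan is to adapt the proof of \cite[Lemma 2.1]{FigalliGigli2010} with only cosmetic changes; the statement here differs from that reference essentially only in the choice of $\Omega$ and $\Gamma$ and in making the correction measures $\sigma^{12},\sigma^{23}$ explicit, so I will describe the construction and leave the routine verifications to the reader. The obstruction to applying the classical gluing lemma (see for instance \cite[Lemma 5.3.2]{AGS}) directly is that $\gamma^{12}$ and $\gamma^{23}$ need not share a common middle marginal: by Definition \ref{def:Appendix admissible measures} one only knows that $(\pi^2_\#\gamma^{12})|_{\overline\Omega\setminus\Gamma}=\mu_2=(\pi^1_\#\gamma^{23})|_{\overline\Omega\setminus\Gamma}$, while over $\Gamma$ these two marginals may disagree. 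So the first step is to enlarge both plans, by adding to each some mass supported on the diagonal of $\Gamma\times\Gamma$, until their middle marginals coincide; the second step is then the usual disintegrate-and-glue argument.

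For the first step, recall that $\mu_2$ is concentrated on $\overline\Omega\setminus\Gamma$ (Definition \ref{def:Mc}), so we may write $\pi^2_\#\gamma^{12}=\mu_2+\rho$ and $\pi^1_\#\gamma^{23}=\mu_2+\tau$, where $\rho:=(\pi^2_\#\gamma^{12})|_\Gamma$ and $\tau:=(\pi^1_\#\gamma^{23})|_\Gamma$ are positive Borel measures on $\Gamma$. Let $i:\Gamma\to\Gamma\times\Gamma$ be given by $i(x)=(x,x)$, and set $\sigma^{12}:=i_\#\tau$ and $\sigma^{23}:=i_\#\rho$; these are concentrated on $\{(x,x)\mid x\in\Gamma\}$, as demanded by the statement. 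Put $\nu_2:=\mu_2+\rho+\tau$. Then the enlarged plans $\hat\gamma^{12}:=\gamma^{12}+\sigma^{12}$ and $\hat\gamma^{23}:=\gamma^{23}+\sigma^{23}$ satisfy
\[
  \pi^2_\#\hat\gamma^{12}=(\mu_2+\rho)+\tau=\nu_2=(\mu_2+\tau)+\rho=\pi^1_\#\hat\gamma^{23},
\]
so they have the common middle marginal $\nu_2$.

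For the second step, disintegrate $\hat\gamma^{12}=\int_{\overline\Omega}\gamma^{12}_{x_2}\otimes\delta_{x_2}\,d\nu_2(x_2)$ with respect to the projection onto the second coordinate, and $\hat\gamma^{23}=\int_{\overline\Omega}\delta_{x_2}\otimes\gamma^{23}_{x_2}\,d\nu_2(x_2)$ with respect to the projection onto the first coordinate, where $\{\gamma^{12}_{x_2}\}$ and $\{\gamma^{23}_{x_2}\}$ are the corresponding measurable families of probability measures on $\overline\Omega$. Then
\[
  \gamma^{123}:=\int_{\overline\Omega}\gamma^{12}_{x_2}\otimes\delta_{x_2}\otimes\gamma^{23}_{x_2}\,d\nu_2(x_2)
\]
is a Borel measure on $\overline\Omega\times\overline\Omega\times\overline\Omega$ whose $(1,2)$- and $(2,3)$-marginals are, by construction, $\hat\gamma^{12}=\gamma^{12}+\sigma^{12}$ and $\hat\gamma^{23}=\gamma^{23}+\sigma^{23}$, which is precisely \eqref{eqn:app gluing lemma marginals}. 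The one point that is genuinely delicate --- and the step I expect to be the main obstacle --- is justifying the two disintegrations when $\nu_2$ is not $\sigma$-finite, which can only occur because of the mass of $\rho+\tau$ sitting over $\Gamma$; this is handled exactly as in \cite[Lemma 2.1]{FigalliGigli2010}, by performing the disintegrations over $\overline\Omega\setminus\Gamma$ (where $\nu_2$ is $\sigma$-finite, being locally finite there by Definition \ref{def:Mc}) and treating the part of the plans lying over $\Gamma$ --- on which the middle factor is a point of $\Gamma$ and the gluing is trivial --- separately. No other difficulties are expected.
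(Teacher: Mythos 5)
Your construction is correct and is essentially the argument the paper intends: it omits the proof precisely because it is the proof of \cite[Lemma 2.1]{FigalliGigli2010} with $\partial\Omega$ replaced by $\Gamma$ --- split off the parts of $\gamma^{12},\gamma^{23}$ whose middle coordinate lies in $\Gamma$, glue the interior parts (whose common middle marginal $\mu_2$ is $\sigma$-finite by Definition \ref{def:Mc}) by the classical disintegration argument, and extend the $\Gamma$-parts diagonally via $(x_1,x_2)\mapsto(x_1,x_2,x_2)$ and $(x_2,x_3)\mapsto(x_2,x_2,x_3)$, which is exactly what produces $\sigma^{12}=i_\#\tau$ and $\sigma^{23}=i_\#\rho$ in \eqref{eqn:app gluing lemma marginals}. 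Your identification of the only delicate point (possible failure of $\sigma$-finiteness of $\rho+\tau$ over $\Gamma$) and the proposed fix are the right ones.
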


We can now prove that $\textnormal{d}_{\mathbb{L}_p}(\mu,\nu)$ is a metric in $\mathcal{M}_p(\overline\Omega)$.
\begin{thm}\label{th:metric}
The quantity
\[
\textnormal{d}_{\mathbb{L}_p}(\mu,\nu):=\inf_{\gamma\in\textnormal{Adm}_{\Gamma}(\mu,\nu)}\left( \int_{\overline{\Omega}\times \overline{\Omega}}|x-y|^pd\gamma(x,y)\right)^{\frac{1}{p}}
\]
defines a metric in $\mathcal{M}_p(\overline\Omega)$. 
\end{thm}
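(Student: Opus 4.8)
The plan is to verify the three defining properties of a metric: non-negativity together with the identity of indiscernibles, symmetry, and the triangle inequality. Non-negativity is immediate since $|x-y|^p\geq 0$, so $\textnormal{d}_{\mathbb{L}_p}(\mu,\nu)\geq 0$ for all $\mu,\nu\in\mathcal{M}_p(\overline\Omega)$. Symmetry follows from the fact that if $\gamma\in\textnormal{Adm}_\Gamma(\mu,\nu)$, then its push-forward under the swap map $(x,y)\mapsto(y,x)$ lies in $\textnormal{Adm}_\Gamma(\nu,\mu)$ and has the same value of $J_p$, because $|x-y|^p=|y-x|^p$; taking the infimum over all admissible plans gives $\textnormal{d}_{\mathbb{L}_p}(\mu,\nu)=\textnormal{d}_{\mathbb{L}_p}(\nu,\mu)$.

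For the identity of indiscernibles, one direction is trivial: if $\mu=\nu$, the plan $\gamma=(\textnormal{Id},\textnormal{Id})_\#\mu$ (restricted away from $\Gamma\times\Gamma$, with the diagonal mass on $\Gamma$ discarded) is admissible and yields $J_p(\gamma)=0$, hence $\textnormal{d}_{\mathbb{L}_p}(\mu,\mu)=0$. Conversely, suppose $\textnormal{d}_{\mathbb{L}_p}(\mu,\nu)=0$; by Theorem~\ref{thm:existence OT with boundary} the infimum is attained by some $\gamma$ with $J_p(\gamma)=0$, so $\gamma$ is concentrated on the diagonal $\{x=y\}$. For any Borel set $A$ compactly contained in $\overline\Omega\setminus\Gamma$ we then have, using the marginal conditions and $\gamma(\{x\neq y\})=0$,
\[
\mu(A)=\gamma(A\times\overline\Omega)=\gamma(A\times A)=\gamma(\overline\Omega\times A)=\nu(A),
\]
and since such sets generate the Borel $\sigma$-algebra of $\overline\Omega\setminus\Gamma$ and both measures are, by definition of $\mathcal{M}_c(\overline\Omega)$, finite on $\{d(\cdot,\Gamma)>r\}$ for each $r>0$, we conclude $\mu=\nu$.

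The main work is the triangle inequality. Given $\mu_1,\mu_2,\mu_3\in\mathcal{M}_p(\overline\Omega)$, fix optimal (or near-optimal) plans $\gamma^{12}\in\textnormal{Adm}_\Gamma(\mu_1,\mu_2)$ and $\gamma^{23}\in\textnormal{Adm}_\Gamma(\mu_2,\mu_3)$. The plan is to invoke the gluing Lemma~\ref{lem:app gluing lemma} to obtain a measure $\gamma^{123}$ on $\overline\Omega\times\overline\Omega\times\overline\Omega$ whose $(1,2)$- and $(2,3)$-marginals are $\gamma^{12}+\sigma^{12}$ and $\gamma^{23}+\sigma^{23}$ with $\sigma^{12},\sigma^{23}$ supported on the diagonal of $\Gamma$. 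Let $\gamma^{13}:=\pi^{13}_\#\gamma^{123}$, restricted to $\overline\Omega\times\overline\Omega\setminus\Gamma\times\Gamma$; one checks from \eqref{eqn:app gluing lemma marginals} and the fact that the extra pieces $\sigma^{12},\sigma^{23}$ live on $\Gamma\times\Gamma$ that $\gamma^{13}\in\textnormal{Adm}_\Gamma(\mu_1,\mu_3)$. Then, writing $L^p=L^p(\gamma^{123})$ for the space of functions on the triple product and using the triangle inequality $|x_1-x_3|\leq |x_1-x_2|+|x_2-x_3|$ together with the Minkowski inequality in $L^p(\gamma^{123})$,
\[
\left(\int|x_1-x_3|^p\,d\gamma^{123}\right)^{1/p}
\leq \left(\int|x_1-x_2|^p\,d\gamma^{123}\right)^{1/p}
+\left(\int|x_2-x_3|^p\,d\gamma^{123}\right)^{1/p}.
\]
Since $\sigma^{12}$ and $\sigma^{23}$ are supported where the integrand vanishes, the right-hand side equals $J_p(\gamma^{12})^{1/p}+J_p(\gamma^{23})^{1/p}$, while the left-hand side dominates $J_p(\gamma^{13})^{1/p}\geq\textnormal{d}_{\mathbb{L}_p}(\mu_1,\mu_3)$ because $\gamma^{13}$ is admissible and discarding the $\Gamma\times\Gamma$ part only lowers the cost (the integrand is $0$ there anyway). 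Taking the infimum over $\gamma^{12}$ and $\gamma^{23}$ gives $\textnormal{d}_{\mathbb{L}_p}(\mu_1,\mu_3)\leq\textnormal{d}_{\mathbb{L}_p}(\mu_1,\mu_2)+\textnormal{d}_{\mathbb{L}_p}(\mu_2,\mu_3)$. The one subtlety to watch is the admissibility of $\gamma^{13}$: one must confirm that restricting $\pi^{13}_\#\gamma^{123}$ to the complement of $\Gamma\times\Gamma$ recovers exactly $\mu_1$ and $\mu_3$ as marginals away from $\Gamma$, which follows because the diagonal corrections $\sigma^{12},\sigma^{23}$ contribute mass only within $\Gamma$, so they do not affect the marginals outside $\Gamma$. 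This, and checking finiteness of all integrals (guaranteed since $\mu_i\in\mathcal{M}_p(\overline\Omega)$), are the only places requiring care; everything else is the standard optimal transport argument.
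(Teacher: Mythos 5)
Your proof is correct and follows essentially the same route as the paper: symmetry and non-negativity are immediate, the identity of indiscernibles comes from a zero-cost plan concentrated on the diagonal (you test with Borel sets away from $\Gamma$ where the paper uses functions in $C^0_c(\overline{\Omega}\setminus\Gamma)$, an inessential difference), and the triangle inequality uses the gluing Lemma \ref{lem:app gluing lemma} together with Minkowski's inequality in $L^p(\gamma^{123})$, with the diagonal corrections $\sigma^{12},\sigma^{23}$ contributing nothing to the cost. Your explicit check that the restriction of $\pi^{13}_{\#}\gamma^{123}$ to the complement of $\Gamma\times\Gamma$ lies in $\textnormal{Adm}_{\Gamma}(\mu_1,\mu_3)$ is a point the paper leaves implicit, and it is correctly handled.
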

\begin{proof}
  It is clear that $\textnormal{d}_{\mathbb{L}_p}(\mu,\nu)=\textnormal{d}_{\mathbb{L}_p}(\nu,\mu)$ and that $\textnormal{d}_{\mathbb{L}_p}(\mu,\nu)\geq 0$ for all $\mu$ and $\nu$. Moreover, if $\textnormal{d}_{\mathbb{L}_p}(\mu,\nu)=0$ that means there is some $\gamma \in \textnormal{Adm}_{\Gamma}(\mu,\nu)$ such that
  \begin{align*}
   0= \int_{\overline{\Omega}\times \overline{\Omega}}|x-y|^p\;d\gamma(x,y) \Rightarrow \textnormal{spt}(\gamma) \subset \{ (x,y) \in \overline{\Omega}\times \overline{\Omega}\;\mid x=y\}.
  \end{align*}
  This implies that for any $\phi \in C^0_c(\overline{\Omega}\setminus \Gamma)$ we have
  \begin{align*}
    \int_{\overline{\Omega}\setminus \Gamma}\phi(x)\;d\mu(x) = \int_{\overline{\Omega}\times \overline{\Omega}}\phi(x)\;d\gamma(x,y) = \int_{\overline{\Omega}\times \overline{\Omega}}\phi(y)\;d\gamma(x,y) = \int_{\overline{\Omega}\setminus \Gamma}\phi(y)\;d\nu(y),	 
  \end{align*}	  
  in other words, $\mu=\nu$. It remains to prove the triangle inequality. Consider measures $\mu_1,\mu_2,\mu_3$ in $\mathcal{M}_p(\overline\Omega)$ and let the measures $\gamma^{12} \in \textnormal{Adm}_{\Gamma}(\mu_1,\mu_2)$ and $\gamma^{23} \in \textnormal{Adm}_{\Gamma}(\mu_2,\mu_3)$ be optimizers for the respective problems. Then Lemma \ref{lem:app gluing lemma} guarantees there is a measure $\gamma^{123}$ satisfying \eqref{eqn:app gluing lemma marginals}.
  
  It will be convenient to denote an element $\overline{\Omega}\times \overline{\Omega} \times \overline{\Omega}$ as $(x_1,x_2,x_3)$. At the same time, the ``coordinates'' $x_1,x_2,x_3$ define three functions $\overline{\Omega}\times \overline{\Omega} \times \overline{\Omega} \to \overline{\Omega} \subset \mathbb{R}^d$. With this in mind, we note that the function $|x_1-x_3|^p$ is independent of $x_2$, so (denoting $\pi^{13}(x_1,x_2,x_3)=(x_1,x_3)$)
  \begin{align}\label{eqn:app triangle inequality first measure}
    \textnormal{d}_{\mathbb{L}_p}(\mu_1,\mu_3)^p \leq \int_{\overline{\Omega}\times\overline{\Omega}}|x_1-x_3|^p\;d\pi^{13}_{\#}\gamma^{123}(x_1,x_3)= \int_{\overline{\Omega}\times \overline{\Omega}\times\overline{\Omega}}|x_1-x_3|^p\;d\gamma^{123}(x_1,x_2,x_3) 
  \end{align}
  On the other hand, applying the Minkowski's inequality in $L^p(\overline{\Omega}\times \overline{\Omega} \times \overline{\Omega}, d\gamma^{123})$ for the functions $x_1-x_2,$ and $x_2-x_3$, we have 
  \begin{align*} 
    & \left ( \int_{\overline{\Omega}\times \overline{\Omega}}|x_1-x_3|^p\;d\gamma^{123}(x_1,x_2,x_3)\right )^{\frac{1}{p}} \\
    & \leq \left ( \int_{\overline{\Omega}\times \overline{\Omega}}|x_1-x_2|^p\;d\gamma^{123}(x_1,x_2,x_3)\right )^{\frac{1}{p}}  + \left ( \int_{\overline{\Omega}\times \overline{\Omega}}|x_2-x_3|^p\;d\gamma^{123}(x_1,x_2,x_3)\right )^{\frac{1}{p}}. 	
  \end{align*}
  Then, using the optimality of $\gamma^{12}$ as well as \eqref{eqn:app gluing lemma marginals},
  \begin{align*}
     \int_{\overline{\Omega}\times \overline{\Omega}}|x_1-x_2|^p\;d\gamma^{123}(x_1,x_2,x_3) & =  \int_{\overline{\Omega}\times \overline{\Omega}}|x_1-x_2|^p\;d(\gamma^{12}+\sigma^{12})(x_1,x_2) \\
      & = \int_{\overline{\Omega}\times \overline{\Omega}}|x_1-x_2|^p(x_1,x_2)\;d\gamma^{12} = \textnormal{d}_{\mathbb{L}_p}(\mu_1,\mu_2)^p,
  \end{align*}
  where the second to last inequality used the fact that $\sigma^{12}$ is supported on the diagonal, so that $\sigma^{12}$-a.e. we have $|x_1-x_2|=0$. Just the same, we can see that 
  \begin{align*}
     \int_{\overline{\Omega}\times \overline{\Omega}}|x_2-x_3|^p\;d\gamma^{123}(x_1,x_2,x_3) = \textnormal{d}_{\mathbb{L}_p}(\mu_2,\mu_3)^p.	  
  \end{align*}  
  Then, recalling \eqref{eqn:app triangle inequality first measure}, we conclude that 
  \begin{align*}
    \textnormal{d}_{\mathbb{L}_p}(\mu_1,\mu_3) \leq \textnormal{d}_{\mathbb{L}_p}(\mu_1,\mu_2) + \textnormal{d}_{\mathbb{L}_p}(\mu_2,\mu_3),
  \end{align*}	  
  which finishes the proof that $\textnormal{d}_{\mathbb{L}_p}(\mu,\nu)$ is a metric.
\end{proof}

\begin{proof}[Proof of Proposition \ref{prop:bound for distances of restricted measures}]
  { For any $\gamma \in \textnormal{Adm}(\mu,\nu)$ (recall that now ${\Gamma}=\{0\}$),} we have
  \begin{align*}
    \int_{B_1} \psi \;d\mu(x) - \int_{B_1} \psi\;d\nu(y) & = \int_{\textnormal{spt}(\psi)\times \mathbb{R}^d} \psi(x)\;d\gamma(x,y) - \int_{\mathbb{R}^d\times \textnormal{spt}(\psi)} \psi(y)\;d\gamma(x,y) \\ 
	  & = \int_{A_\psi} [\psi(x)-\psi(y)]\;d\gamma(x,y),
  \end{align*}
  where $A_\psi := (\textnormal{spt}(\psi)\times \mathbb{R}^d) \cup (\mathbb{R}^d\times \textnormal{spt}(\psi))$. Then
  \begin{align*}
    \left | \int_{B_1} \psi \;d\mu(x) - \int_{B_1} \psi\;d\nu(y) \right | \leq \int_{A_\psi} |\psi(x)-\psi(y)|\;d\gamma(x,y)
	  \leq\int_{A_\psi} [\psi]_{\textnormal{Lip}}|x-y|\;d\gamma(x,y).
  \end{align*}
  Since $\textnormal{spt}(\psi)$ is a positive distance away from $\Gamma$, for any admissible $\gamma$ we have
  $\gamma(A_\psi)\leq \mu(\textnormal{spt}(\psi))+\nu(\textnormal{spt}(\psi))<+\infty$. Thus, by H\"older's inequality,
  \begin{align*}
    \left | \int_{B_1} \psi \;d\mu - \int_{B_1}\psi\;d\nu \right |  \leq  [\psi]_{\textnormal{Lip}} \gamma( A_\psi)^{\frac{p-1}{p}}\left ( \int_{A_\psi} |x-y|^p\;d\gamma(x,y) \right )^{\frac{1}{p}}.
  \end{align*}
  Taking infimum over all $\gamma\in\textnormal{Adm}(\mu,\nu)$, we thus obtain
  \begin{align*}
    \left | \int_{B_1} \psi \;d\mu - \int_{B_1}\psi\;d\nu \right | & \leq \left(\mu(\textnormal{spt}(\psi))+\nu(\textnormal{spt}(\psi))\right)^{\frac{p-1}{p}}[\psi]_{\textnormal{Lip}}\textnormal{d}_{\mathbb{L}_p}(\mu,\nu).
  \end{align*}	
  \end{proof}

%%%%%%%%%%%%%%%%%%%%%%%%%%%%%%%%%%%%%%%%%%%%%%

\end{document}